\def\@setcopyright{}
\newcommand{\GL}{\operatorname{GL}\nolimits}
\newcommand{\GF}{\operatorname{GF}\nolimits}
\newcommand{\U}{\operatorname{U}\nolimits}
\newcommand{\Sp}{\operatorname{Sp}\nolimits}
\newcommand{\Fi}{\operatorname{Fi}\nolimits}
\newcommand{\Co}{\operatorname{Co}\nolimits}
\newcommand{\M}{\operatorname{\mathcal M}\nolimits}
\newtheorem{lemma}{Lemma}[section]
\newtheorem{proposition}[lemma]{Proposition}
\newtheorem{corollary}[lemma]{Corollary}
\newtheorem{theorem}[lemma]{Theorem}
\newtheorem{remark}[lemma]{Remark}
\newtheorem{definition}[lemma]{Definition}
\newtheorem{example}[lemma]{Example}
\newtheorem{algorithm}[lemma]{Algorithm}
\newtheorem{cclass}[lemma]{}
\newtheorem{ctab}[lemma]{}
\newtheorem{implementation}[lemma]{Implementation}
\newtheorem{strategy}[lemma]{Strategy}
\renewcommand{\arraystretch}{1.5}
\begin{document}

\thispagestyle{empty}
\title{Representation Theoretic EXISTENCE PROOF FOR Fischer GROUP $\Fi_{23}$}.
\author{\bf
Hyun Kyu Kim}
\address{Department of Mathematics\\Cornell University\\Ithaca, N.Y. 14853\\
USA}

{\sc \normalsize \begin{center} Cornell University Mathematics Department Senior Thesis \end{center}}

\vspace{20mm}

\maketitle

\vspace{3mm}

{\Large \begin{center} May 2008 \end{center}}


\vspace{15mm}

{\sc \normalsize \begin{center} A thesis presented in partial fulfillment \\ of criteria for Honors in Mathematics \end{center}}

\vspace{30mm}





{\sc \Large \begin{center} Bachelor of Arts, Cornell University \end{center}}

\vspace{40mm}

{\sc \Large \begin{center} Thesis Advisor(s) \end{center}}

\vspace{3mm}

{\Large \begin{center} Gerhard O. Michler \\ Department of Mathematics \end{center}}

\newpage

\vspace{10mm}

{\bf \LARGE \begin{center} ABSTRACT \end{center}}

\vspace{10mm}

In the first section of this senior thesis the author provides some new
efficient algorithms for calculating with finite permutation
groups. They cannot be found in the computer algebra system \textsc{Magma},
but they can be implemented there.  For any finite group G with a
given set of generators, the algorithms calculate generators of a
fixed subgroup of G as short words in terms of original
generators. Another new algorithm provides such a short word for a
given element of G. These algorithms are very useful for
documentation and performing demanding experiments in
computational group theory.

In the later sections, the author gives a
self-contained existence proof for Fischer's sporadic simple group
$\Fi_{23}$ of order $2^{18}\cdot 3^{13}\cdot 5^2\cdot 7\cdot
11\cdot 13\cdot 17\cdot 23$ using G. Michler's Algorithm
\cite{michler1} constructing finite simple groups from irreducible
subgroups of $\GL_n(2)$. This sporadic group was originally
discovered by B. Fischer in \cite{fischer1} by investigating
$3$-transposition groups, see also \cite{fischer}. This thesis
gives a representation theoretic and algorithmic existence proof
for his group. 
The author constructs the three non-isomorphic
extenstions $E_i$ by the two $11$-dimensional non-isomorphic
simple modules of the Mathieu group $\M_{23}$ over $F=\GF(2)$. In two cases Michler's
Algorithm fails. In the third case the author constructs the centralizer
$H=C_G(z)$ of a $2$-central involution $z$ of $E_i$ in any target
simple group $G$. Then the author proves that all conditions of Michler's Algorithm are satisfied. This allows the author to construct $G$ inside
$\GL_{782}(17)$. Its four generating matrices are too large to be
printed in this thesis, but they can be downloaded from the author's
website \cite{kim1}. Furthermore, its character table and
representatives for conjugacy classes are computed. It follows that $G$ and
$\Fi_{23}$ have the same character table.

\newpage

\vspace{10mm}

{\bf \LARGE \begin{center} ACKNOWLEDGEMENTS \end{center}}

\vspace{10mm}

The author would like to thank professor Gerhard O. Michler for
all his mathematical support and encouragement, and also professor R. Keith Dennis
for his guidance in the Fall term of 2007.

The author also acknowledges financial support by the Hunter R.
Rawlings III Cornell Presidential Research Scholars Program for
participation in this research project. 

This work has also been
supported by the grant NSF/SCREMS DMS-0532/06.

\newpage

\tableofcontents

\newpage 

\setcounter{section}{-1}
\section{Introduction}\label{sec. intro}

A simple group $G$ is called sporadic if it is not isomorphic to
any alternating group $A_n$ or any finite group of Lie type, see
R. W. Carter \cite{carter}. Until recently there was no uniform construction
method for the known twenty six simple sporadic groups. In \cite{michler1}
a uniform construction method is given for constructing finite
simple groups from irreducible subgroups of $\GL_n(2)$. For
technical reasons it cannot construct the two largest known sporadic
simple groups because there is no computer which can hold all
their elements. But the other twenty four known sporadic simple groups can
be constructed by Michler's Algorithm. In this thesis it is
applied to provide a new self-contained existence proof for Fischer's sporadic
simple group $\Fi_{23}$.


In $1971$ B. Fischer \cite{fischer} found three sporadic groups by
characterizing all finite groups $G$ that can be generated by a
conjugacy class $D=z^G$ of $3$-transpositions, which means that
the product of two elements of $D$ has order $1,2$, or $3$. He
proved that besides the symmetric groups $S_n$, the symplectic
groups $\Sp_n(2)$, the projective unitary groups $\U_n(2)$ over
the field with $4$ elements and certain orthogonal groups, his two
sporadic simple groups $\Fi_{22}$ and $\Fi_{23}$, and the
automorphism group $\Fi_{24}$ of the simple group $\Fi'_{24}$
describe all $3$-transposition groups, see \cite{fischer1}. For
each $3$-transposiiton group $G=\langle D\rangle$ Fischer
constructs a graph $\mathcal{G}$ and an action on it. As its
vertices he takes the $3$-transpositions $x$ of $D$. Two distinct
elements $x, y \in D$ are called to be \textit{connected} and
joined by an edge $(x,y)$ in $\mathcal{G}$ if they commute in $G$.
He showed that each of the groups considered in his theorem has a
natural representation as an automorphism group of its graph
$\mathcal{G}$. Unfortunately, Fischer's proofs are only published
in his set of lecture notes of the University of Warwick
\cite{fischer1}. See also \cite{aschbacher}, for a coherent
account on Fischer's theorem.

In \cite{fischer1} Fischer gave the first existence proof for
$\Fi_{22}$, $\Fi_{23}$, and $\Fi_{24}$, by constructing the three graphs on which the groups act
as automorphisms. Eighteen years later, M. Aschbacher proves in
\cite{aschbacher} the existence of $\Fi_{24}$ and hence
also $\Fi_{22}$ and $\Fi_{23}$, using a quotient of the normalizer
of a cyclic subgroup of order $3$ in the Monster simple group $M$,
see \cite{aschbacher}, p. 5. However, both approaches do not
allow specific calculations with elements in these groups nor do
their methods generalize to arbitrary finite simple groups.

The purpose of this thesis is to provide a new existence proof for
$\Fi_{23}$, which has two advantages over the previous proofs.
Fischer's proof doesn't generalize to all simple groups, because
not all simple groups can be described by $3$-transposition
groups. Aschbacher's proof obviously doesn't generalize nor does
it provide access to explicit computation with elements in the
resulting groups, because the Monster group $M$ doesn't have
a permutation representation or a matrix representation of small
enough degree; the smallest known faithful permutation representation of $M$
wouldn't fit into any modern super-computer,
and the smallest degree of matrix representation of $M$ is about $183,000$. Dealing with dense
$183,000$ by $183,000$ matrices is practically impossible for currently existing computers.

The new proof uses representation theoretic and algorithmic
methods, mainly based on the Algorithm 2.5 of \cite{michler1}, which is also stated in section \ref{sec. Michler. Algorithm}.
The second part of Algorithm 2.5 of \cite{michler1} is not repeated in this thesis,
because it is identical to Algorithm 7.4.8 of \cite{michler}.
Using this algorithm, the author constructs
$\Fi_{23}$ as a subgroup of $\GL_{782}(17)$. From the $782$-dimensional
matrix representation, the author also constructs a
faithful permutation representation of $\Fi_{23}$ of degree
$31671$, by which we can actually compute with elements and
subgroups and check the performed calculations using the high
performance computer algebra system \textsc{Magma}. The character table and
representatives for conjugacy classes of $\Fi_{23}$ are also
computed by means of this permutation representation.

The other strong point of this algorithmic proof is that this
method generalizes to all simple groups (not having Sylow
$2$-subgroups which are cyclic, dihedral or semi-dihedral), see
\cite{michler}. In \cite{kim} the author and G. Michler construct
$\Fi_{22}$ and Conway's sporadic group $\Co_{2}$ simultaneously.
The author and G. Michler are also working on the simultaneous contructions
of $\Co_1$, Janko's sporadic group $J_4$, and $\Fi'_{24}$.
In \cite{michler} and recent work to which the author's joint article \cite{kim} with
Michler and also the further work belongs, G. Michler and coauthors construct all known sporadic
groups except the Baby Monster and the Monster. These methods
allow a systematic search for simple groups.


Here is the summary of each section. 
Section \ref{sec. Alg} contains the 
description and {\sc Magma} implementation of the author's
short-word algorithms.
In section \ref{sec. Michler. Algorithm}, the author states Michler's Algorithm 2.5 of \cite{michler1},
which is a main tool of the construction of $\Fi_{23}$ in this thesis.

Section \ref{sec. M23-extensions} shows the author's construction of
the extensions of the Mathieu group $\M_{23}$ by its two non-isomorphic irreducible representations $V_1$ and $V_2$ of
dimension $11$ over $\GF(2)$. In this thesis
it is shown that there is a uniquely determined non-split
extension $E$ of $\M_{23}$ by $V_1$.
Micher's Algorithm 2.5 in \cite{michler1} is applied to $E$ for the construction of $\Fi_{23}$ in the next sections.
However, its application to the split extensions $E_1$ and $E_2$ of $\M_{23}$ by $V_1$ and $V_2$
does not lead to any result.

Section \ref{sec. H(Fi_23)} contains the author's construction of the centralizer $H$ (unique up to isomorphism) of a $2$-central
involution $z_1$ of $E$ in any target simple group $G$. In particular, it is shown that $Z(H) = \langle z_1 \rangle$
and $H/Z(H) \cong \Fi_{22}$. 
Taking a $2$-central involution $z_1$ in $E$ and calculating $D = C_E(z_1)$ the author
finds a suitable normal subgroup $Q$ in $D$ which enables him to
construct a group $H$ with center $Z(H) = \langle z_1 \rangle$ of
order $2$ such that $H/Z(H) \cong \Fi_{22}$. It is shown that $E,
D=C_E(z_1), H$ satisfy all conditions of Algorithm 2.5 of
\cite{michler1}, stated in Algorithm \ref{alg. simple} in section \ref{sec. Michler. Algorithm}.

In order to construct $H \cong 2\Fi_{22}$, the author quotes a
result on Fischer's sporadic simple group $\Fi_{22}$ from his
joint article \cite{kim} with G. Michler. This was necessary
because the implementation of Holt's Algorithm into \textsc{Magma} was not
able to construct a central extension of $\Fi_{22}$ by a cyclic
group of order $2$. Thus, the author constructs another
amalgam $H_2 \leftarrow D_2 \rightarrow D$, where $D_2 = C_D(t)$
for some involution $t$ and $H_2 = 2H(\Fi_{22})$, see Propositions \ref{prop. H(Fi_23)_1} and \ref{prop. H(Fi_23)_2} and Theorem \ref{thm. H(Fi_23)}. The
free product $H_2 *_{D_2} D$ with amalgamated subgroup $D_2$ has a
$352$-dimensional faithful irreducible representation over
$\GF(17)$, whose corresponding matrix group $\mathfrak{H}$ is
proved to be isomorphic to the $2$-fold cover $2\Fi_{22}$ of
$\Fi_{22}$, see Theorem \ref{thm. H(Fi_23)_embed}.

In section \ref{sec. Fi_23} the author finally constructs the simple target group $\mathfrak{G}$ as a matrix group inside $\GL_{782}(17)$,
by applying Algorithm 7.4.8 of \cite{michler} to the amalgam $H\leftarrow D\rightarrow E$. In particular, it has been shown that $\mathfrak{G}$ has a same character
table as $\Fi_{23}$ as stated in \cite{atlas}.
The amalgam $E \leftarrow D \rightarrow H$ has a unique compatible
pair of degree $782$ over $\GF(17)$ which is not multiplicity-free
at the $D$-level, see Theorem \ref{thm. Fi_23}. 
So the author uses Thompson's Theorem
7.2.2 of \cite{michler} in the application of Step 5(c) of
Algorithm 7.4.8 of \cite{michler}. It is shown that the free
product $H*_D E$ with amalgamated subgroup $D$ has exactly one (unique up to isomorphism)
irreducible representation of degree $782$ over $\GF(17)$
satisfying the Sylow $2$-subgroup test, see Theorem \ref{thm. Fi_23}. The corresponding
matrix group $\mathfrak{G}$ in $\GL_{782}(17)$ is proved to be a
simple group of order $2^{18} \cdot 3^{13} \cdot 5^2 \cdot 7 \cdot
11 \cdot 13 \cdot 17 \cdot 23$ which has a $2$-central involution
$\mathfrak{z}$ such that $C_{\mathfrak{G}}(\mathfrak{z}) \cong H$,
see Theorem \ref{thm. Fi_23}. Furthermore, the author constructs a faithful permutation
representation of $\mathfrak{G}$, and then the character table of
$\mathfrak{G}$. It agrees with the one of $\Fi_{23}$ as stated in
the Atlas \cite{atlas}.

The following diagram summarizes the author's construction of $\Fi_{23}$ given in sections \ref{sec. H(Fi_23)} and \ref{sec. Fi_23}.
$$
\diagram
& & \mathfrak{G} \cong \Fi_{23} & \\
& H \cong 2\Fi_{22} \urto \drto & & E = V_1.\M_{23} \ulto \dlto \\
H_2 \urto \drto & & D = C_E(z_1) \ulto \dlto & \\
& D_2 \cong T = C_D(t)& &
\enddiagram
$$


All the performed demanding calculations for the given existence
proof were only possible because of the implementations of the author's new
algorithms described in the first section. They are very general and can
be used in the course of mathematical research in computational
group theory. The author's code works well in the computer algebra system
\textsc{Magma}. For documentation of the performed calculations it is often
crucial to get short-word generators for a certain subgroup of a
group G with a given set of generators:
$
G = \langle g_1, g_2, \ldots, g_n \rangle.
$
Let $S$ be a subgroup $G$. We want to get a generating set for $S$
as short words in terms of the original generators $g_1, g_2,
\ldots, g_n$.

For example,
$
S = \langle g_1 g_3 g_6 , \mbox{ } g_2, \mbox{ } g_4 g_5\rangle.
$
We want relatively small number of generators, and the lengths of words to be short.

\textsc{Magma}'s inverse word map function (often) provides a generating
set in terms of given generators, but unfortunately consisting of
terribly lengthy words. Reiner Staszewski, who was a former
research assistant of Professor Michler, developed a stand-alone algorithm for finding short-word generators. Paul K. Young, a
current graduate student of the Mathematics Department of Cornell
University, polished the idea and implemented the algorithm in
\textsc{Magma}. Since Young's implementation had some problems when dealing
with groups of large order (or permutation groups of large
degree), the author modified and added several new ideas. Thus the author
produced a relatively efficient implementation of the resulting
algorithm.

Besides this short-word-generator algorithm, another algorithm
for getting a short word for an element of a group with given
generators is also presented in the first section of this thesis. For
each algorithm, the description and its \textsc{Magma} implementation are
given.

These new algorithms are indeed crucial in the author's construction of
$\Fi_{23}$; they are not just for documentation.They provide a successful 
method for constructing the $2$-fold cover $2\Fi_{22}$ where Holt's Algorithm failed.



For readers who would like to find more background materials for
this thesis, they should see Holt's book \cite{holt} for
computational group theory, and Michler's book \cite{michler} for
algorithmic representation theory of finite simple groups.

\newpage 

\section{Algorithms}\label{sec. Alg}


\newcommand{\Wu}{\operatorname{W}\nolimits}
\newcommand{\stend}{\operatorname{st\_end}\nolimits}
\newcommand{\nG}{\operatorname{n_G}\nolimits}
\newcommand{\ai}{\operatorname{i}\nolimits}
\newcommand{\jai}{\operatorname{j}\nolimits}

\newcommand{\Wuone}{\operatorname{W\_1}\nolimits}
\newcommand{\stendone}{\operatorname{st\_end\_1}\nolimits}

\newcommand{\Gi}{\operatorname{G}\nolimits}
\newcommand{\wu}{\operatorname{w}\nolimits}
\newcommand{\elt}{\operatorname{elt}\nolimits}

\newcommand{\Tar}{\operatorname{T}\nolimits}
\newcommand{\Exc}{\operatorname{S_{exc}}\nolimits}

\newcommand{\gens}{\operatorname{gens}\nolimits}
\newcommand{\Spart}{\operatorname{S_{part}}\nolimits}
\newcommand{\sofar}{\operatorname{S_{sofar}}\nolimits}
\newcommand{\iter}{\operatorname{iter}\nolimits}
\newcommand{\iterlimit}{\operatorname{N}\nolimits}
\newcommand{\pow}{\operatorname{p}\nolimits}
\newcommand{\completed}{\operatorname{completed}\nolimits}
\newcommand{\sgens}{\operatorname{short-gens}\nolimits}

In this section, a finite group $G$ is always assumed to be realized as a permutation group or a matrix group. This allows us to compute the orders of elements and subgroups of $G$, and check equalities.

Before going into the actual algorithms, it is natural to have definitions of some vocabulary: `word-tree', `length of a word', `short-word', `lexicographic order of words', and so on. The following definition, as stated in Definition 5.3.9 of \cite{michler}, is due to M. Kratzer.

\begin{definition}[Word Tree, Word Length]\label{def. WordTree}
Let $G$ be a finite group. Let $\{g_1$, $g_2$, $\ldots$, $g_k\}$ be a fixed set of generators for $G$. The infinitely deep $k$-nary tree $\mathcal{C}(G)$ in which the root vertex is marked by the identity $1_G \in G$ and the $k$ successors of each vertex are marked successively by $g_1, g_2, \ldots, g_k$ is called the ``complete word tree of $G$":

\Tree [.\fbox{$1_G$} [.\fbox{$g_1$} [.\fbox{$g_1$} $\vdots$ ] [.$\ldots$ $\vdots$ ] [.\fbox{$g_k$} $\vdots$ ] ] [.\fbox{$g_2$} [.\fbox{$g_1$} $\vdots$ ] [.$\ldots$ $\vdots$ ] [.\fbox{$g_k$} $\vdots$ ] ] [.$\ldots$ [.$\ldots$ $\vdots$ ] ] [.\fbox{$g_k$} [.\fbox{$g_1$} $\vdots$ ] [.$\ldots$ $\vdots$ ] [.\fbox{$g_k$} $\vdots$ ] ] ]

There is a canonical one-to-one correspondence between vertices in $\mathcal{C}(G)$ and words in generators of $G$: For any vertex $v$ of $\mathcal{C}(G)$ let $w_v$ denote the incremental product of the vertex markers occurring along the direct path from the root vertex to $v$ in $\mathcal{C}(G)$. Conversely, starting from the root vertex and reading a given word $w=w(g_1,g_2,\ldots,g_k)$ in generators of $G$ like a sequence of directions guides one to the unique vertex $v_w$ in $\mathcal{C}(G)$ such that $w_{v_w} = w$.

The ``length" of a word $w$ is the depth of the unique vertex $v_w$ corresponding to $w$ in the word tree $\mathcal{C}(G)$, i.e. the number of steps needed to reach the vertex $v_w$ from root vertex. For example, the word $g_1^2 g_3 g_2$ has length $4$.
\end{definition}

Now, a ``short-word" refers to a word of short length, though notion of ``short" might not be consistent. To simplify arguments, we may look only at the indices of the generators. That is, $g_1^2 g_3 g_2$ can be identified with $[1,1,3,2]$. Precise definition of this idea is as follows:

\begin{definition}[Numerical Word]\label{def. NumericalWord}
A ``numerical word of $k$ generators" is a finite sequence (can be empty sequence) of integers in $\{1,2,\ldots, k\}$. For example, $[5,1,2,2,2,3,4,4,5]$ is an example of a numerical word of $5$ generators, as well as a numerical word of $6$ generators, but not a numerical word of $4$ generators.
\end{definition}

Let $\{g_1, g_2, \ldots, g_k\}$ be a fixed set of generators of a finite group $G$. Then, if we identify each $g_i$ with $i$, there is a natural one-to-one correspondence between the set of all numerical word of $k$ generators and $\mathcal{C}(G)$. For example, the word $g_1g_4^3g_2g_3^2$ corresponds to the numerical word $[1,4,4,4,2,3,3]$.

Now we can define a total ordering on $\mathcal{C}(G)$, i.e. on the set of all finite words in the given (ordered) generators $g_1, g_2, \ldots, g_k$ of $G$.

\begin{definition}[Lexicographical Order]\label{def. LexicographicalOrder}
Let $\{g_1, g_2, \ldots, g_k\}$ be a fixed set of generators of a finite group $G$. Let $v_u$ and $v_w$ be the vertices of $\mathcal{C}(G)$, corresponding to distinct words $u$ and $w$ in $g_1, g_2, \ldots, g_n$, respectively. Let $N_u$ and $N_w$ be the numerical words of $k$ generators corresponding to $v_u$ and $v_w$, respectively (each $g_i$ is identified with $i$). Then, $u<w$ if and only if one of the followings hold (this is called ``lexicographic order"):
\begin{enumerate}
\item length of $u$ $<$ length of $w$

\item length of $u$ $=$ length of $w$, and $N_u[n] < N_w[n]$ holds, where $n$ is the smallest number such that $N_u[n] \neq N_w[n]$.
\end{enumerate}
\end{definition}

\begin{remark}
If $u$ and $w$ are distinct words of $G$ (in given generators), then exactly one of $u<w$ or $w<u$ holds.
\end{remark}

The main algorithm is named $\verb"GetShortGens"$; for any finite group $G$ with given set of generators, for any subgroup $S$ of $G$, this algorithm returns a short-word generating set for $S$ in terms of the given generators. It needs three other small programs, named $\verb"EnumWords"$, $\verb"ReduceGensForGroup"$ and $\verb"Word2Elt"$. All four programs are described below, with description of the algorithms and their implementation in \textsc{Magma}. The first algorithm $\verb"EnumWords"$ enables us to descend one level deeper in the word tree:

\begin{algorithm}[EnumWords]\label{alg. EnumWords}
Let $k$ be the number of given generators for the finite group of interest. Let $W$ be a sequence of pre-built numerical words in $k$ generators. Let $s,e$ be integers s.t. $1\le s \le e \le \# W$ ($s$ for ``start", $e$ for ``end"). Then, we can get a new sequence $W_1$ of numerical words by adding new words to $W$, where the new words are obtained by appending $1,2,\ldots,k$ at the end of the words $W[s], W[s+1], \ldots, W[e]$.
\end{algorithm}

\begin{implementation}[EnumWords]\label{imp. EnumWords}

{\em \footnotesize\begin{flushleft}
\quad \\
function EnumWords(W, $\stend$, nGens)\\
\quad \\
    local NewW, New$\stend$;\\
\quad \\
    NewW := W;\\
    if \#NewW eq 0 then\\
\quad
        NewW := [[i] : i in [1..nGens]];\\
\quad
        New$\stend$ := [1, nGens];\\
    else\\
\quad
        for i:=$\stend$[1] to $\stend$[2] do\\
\quad\quad
            for j:=1 to nGens do\\
\quad\quad\quad
                Append($\sim$NewW, Append(W[i],j));\\
\quad\quad
            end for;\\
\quad
        end for;\\
\quad
        New$\stend$ := [$\stend$[2]+1, \#NewW];\\
    end if;\\
    return NewW, New$\stend$;\\
\quad \\
end function;\\


\end{flushleft}}

\end{implementation}

\begin{example}[EnumWords]\label{ex. EnumWords}
\textsc{Magma} code \\
$
\verb"> W:=[ [1,3,4], [3,3,2], [2,1], [4] ];"
$ \\
$
\verb"> s:=2; e:=3;"
$ \\
$
\verb"> W:=EnumWords(W, [s,e], 4);"
$ \\
$
\verb"> W;"
$ \\
$
\verb"[ [1,3,4], [3,3,2], [2,1], [4], [3,3,2,1], [3,3,2,2], [3,3,2,3],"
$ \\
$
\verb" [3,3,2,4], [2,1,1], [2,1,2], [2,1,3], [2,1,4] ] "
$
\end{example}

The next algorithm $\verb"ReduceGensForGroup"$ gives a method to reduce the given set of generators as much as possible, while our subgroup of interest still should lie inside the subgroup generated by the reduced set of generators:

\begin{algorithm}[ReduceGensForGroup]\label{alg. ReduceGensForGroup}
Let $G$ be a finite group generated by the given generators $g_1, g_2, \ldots, g_k$. For any subgroup $S$ of $G$, a subset of $\{g_1, g_2, \ldots, g_k\}$ which generates a subgroup of $G$ containing $S$ is obtained by the following method:


[Reducing original generators] If there is some $i \in \{1,2,\ldots,k\}$ such that the set $\{g_1,\ldots, g_{i-1},g_{i+1},\ldots,g_k\}$ generates a subgroup of $G$ containing $S$, pick the largest such $i$. Now, call this function $\verb"ReduceGensForGroup"$ recursively, with same $G$ and $S$, with reduced set of generators $g_1,\ldots, g_{i-1},g_{i+1},\ldots,g_k$ (with corresponding names). If there is no such $i$, then return the original set $g_1, g_2, \ldots, g_k$, with their names.

\end{algorithm}

\begin{implementation}[ReduceGensForGroup]\label{imp. ReduceGensForGroup}

{\em \footnotesize\begin{flushleft}
\quad \\
function ReduceGensForGroup(G, Target : wordgens:=[], gencollection:=[1..\#Generators(G)], exclude:=sub$\langle$G$\mid$$\rangle$,CoverGroup:=G)\\
\quad \\
local reducedlist;\\
\quad \\
        if \#wordgens eq 0 or \#Generators(G) ne \#wordgens then\\
\quad
        wordgens:=[(``\$." cat Sprint(i)) :i in [1..\#Generators(G)]];\\
        end if;\\
    for i:=1 to \#Generators(G) do\\
\quad
        reducedlist := Exclude([1..\#Generators(G)],\#Generators(G)+1-i);\\
\quad
        if (Target meet sub$\langle$CoverGroup$\mid$[G.j : j in reducedlist],exclude$\rangle$) eq Target then\\
\quad\quad
            return ReduceGensForGroup(sub$\langle$G$\mid$[G.j : j in reducedlist]$\rangle$, Target : wordgens:=[wordgens[j] : j in reducedlist], gencollection:=[gencollection[j]: j in reducedlist],exclude:=exclude,CoverGroup:=CoverGroup);\\
\quad
        end if;\\
    end for;\\
    return G, wordgens, gencollection;\\
\quad \\
end function;\\


\end{flushleft}}

\end{implementation}

The following algorithm $\verb"Word2Elt"$ converts a numerical word to the corresponding actual element of the group:


\vspace{3mm}

\begin{algorithm}[Word2Elt]\label{alg. Word2Elt}
Let $G$ be a finite group generated by the given generators $g_1, g_2, \ldots, g_k$. For any numerical word $w$ in $k$ generators, return the element in $G$ corresponding to the word $w$ by the following steps:

{\bf Step 1} Let $e=1$, the identity element of $G$.

{\bf Step 2} If $w$ is an empty word, return $e$. If not, and if $w$ can be written as $w=[a_1, a_2, \ldots, a_m]$ where $a_i \in \{1,2,\ldots,k\}$, then let $e:=e\cdot g_{a_1}$.

{\bf Step 3} Let $w:=[a_2,a_3,\ldots,a_m]$, and go to {\bf Step 2}.

\end{algorithm}

\begin{implementation}[Word2Elt]\label{imp. Word2Elt}

{\em \footnotesize\begin{flushleft}

\quad \\
function Word2Elt(G, word)\\
\quad \\
    local elt;\\
\quad \\
    elt := Id(G);\\
    for i:=1 to \#word do\\
\quad
        elt := elt * G.word[i];\\
    end for;\\
    return elt;\\
\quad \\
end function;\\


\end{flushleft}}

\end{implementation}

\begin{example}[Word2Elt]\label{ex. Word2Elt}
\textsc{Magma} code (Note: \textsc{Magma} composes two permutations from left, not from right) \\
$
\verb"> G:=sub<Sym(3)| Sym(3)!(1,2), Sym(3)!(1,2,3)>;"
$ \\
$
\verb"> Word2Elt(G, [1,2,2]);"
$ \\
$
\verb"(2,3)"
$
\end{example}

Finally, the next algorithm $\verb"GetShortGens"$ enables us to obtain a short-word generating set for a subgroup of a group with given generators:

\begin{algorithm}[GetShortGens]\label{alg. GetShortGens}
Let $G$ be a finite group generated by the given generators $g_1, g_2, \ldots, g_k$. For any subgroup $S$ of $G$, a generating set of $S$ consisting of short-word elements of $G$ in terms of $g_1, g_2, \ldots, g_k$ is obtained by the following steps:


{\bf Step 1} [Reducing original generators] If desired, try to get a subset of $\{g_1$, $g_2$, $\ldots$, $g_k\}$ which generates a subgroup of $G$ containing $S$, using the command\\ $\verb"ReduceGensForGroup"$. For convenience, suppose that the set of generators $g_1$, $g_2$, $\ldots$, $g_k$ is already a result of this reducing process.

{\bf Step 2} [Building word list, and finding generators]

\begin{enumerate}
\item Set $F = \langle 1 \rangle$ (trivial subgroup).

\item Let $W = [ [1], [2], \ldots, [k] ]$, the set of words, initially set to have only simplest numerical words of $k$ generators of length $1$.

\item\label{getshortgens_check} Take the word $w$ in $W$ of lowest lexicographic order which is not checked yet, and let $x = \verb"Word2Elt(G,w)"$, the element of $G$ corresponding to the word $w$. If there is some $m \in \{1,2,\ldots,Order(w)-1\}$ such that $w^m \in S$ and $w^m \notin F$, then enlarge $F$ by $F:= \langle F, w^m\rangle$.

\item If $F = S$, then proceed to {\bf Step 3}. If not, enlarge $W$ by appending $1,2,\ldots, k$ to all words of $W$ of longest length, using the command $\verb"EnumWords"$ (this is same as descending one level deeper in the word tree). Now, go to (\ref{getshortgens_check}).

\end{enumerate}

{\bf Step 3} [Printing] Print the words obtained.

\end{algorithm}


\begin{implementation}[GetShortGens]\label{imp. GetShortGens}

{\em \footnotesize\begin{flushleft}

\quad \\
function GetShortGens(G, Target :
exclude := sub$\langle$G$\mid\rangle$,
limit:=0,
wordgens:=\texttt{[]},
Words:=\texttt{[]},
startpoint:=1,
powers:=:=\texttt{[}1\texttt{]},
Hard:=true,
OrderRestriction:=\texttt{[]},
CoverGroup:=G,
generatecheck:=true,
auto:=true,
EltReturn:=false,
ReduceMore:=true)\\
\quad \\
local gens, iter, $\stend$, tempelt, tempord, temppow, WordsSoFar, WordsNumSoFar, gencollection, SubgroupSoFar, generatingset;\\
\quad \\
gens := Generators(G);\\
\quad \\
function GetGeneratingSet(WordsForGenerators)\\
\quad
    return [Word2Elt(G,WordsForGenerators[i][1])\^{ }WordsForGenerators[i][2] :\\
\quad
    i in [1..\#WordsForGenerators]];\\
end function;\\
\quad \\
if generatecheck and Target meet sub$\langle$CoverGroup$\mid$G,exclude$\rangle$ ne Target then\\
\quad
    print ``can't generate subgroup";\\
\quad
    return ``";\\
end if;\\
\quad \\
if \#wordgens eq 0 or \#Generators(G) ne \#wordgens then\\
\quad
    if \#wordgens ne 0 and \#Generators(G) ne \#wordgens then\\
\quad\quad
        print ``the number of generator names you provided is incompatible\\
\quad\quad
        with the number of generators of the group, so re-building names";\\
\quad
    end if;\\
\quad
    wordgens:=[(``\$." cat Sprint(i)) :i in [1..\#Generators(G)]];\\
end if;\\
\quad \\
if Hard then\\
\quad
    print "Reducing Generators...";\\
\quad
    time G, wordgens, gencollection := ReduceGensForGroup(G, Target : wordgens:=wordgens,exclude:=exclude,CoverGroup:=CoverGroup);\\
\quad
    printf ``Using only \%o generators \%o, out of \%o ", \#Generators(G),wordgens,\#gens;\\
\quad
    gens := Generators(G);\\
else\\
\quad
    gencollection := [1..\#Generators(G)];\\
end if;\\
\quad \\
Include($\sim$powers,1);\\
Sort($\sim$powers);\\
\quad \\
WordsSoFar:=[];\\
WordsNumSoFar:=[];\\
SubgroupSoFar := Target meet exclude;\\
iter:=0;\\
\quad \\
while limit eq 0 or iter lt limit do\\
\quad
    if \#gens eq 1 and iter ge Order(G.1) then\\
\quad\quad
        print ``use other method";\\
\quad\quad
        return ``",[];\\
\quad
end if;\\
\quad \\
\quad
    iter:=iter+1;\\
\quad
    printf ``\%o-th iteration$\backslash$n",iter;
\quad \\
\quad
    if \#Words eq 0 then\\
\quad\quad
        Words, $\stend$ := EnumWords([],[1,1],\#gens);\\
\quad\quad
        startpoint:=1;\\
\quad
    end if;\\
\quad \\
\quad
    for j:=startpoint to \#Words do\\
\quad\quad
        if \#OrderRestriction eq 0 then\\
\quad\quad\quad
            tempelt := Word2Elt(G, Words[j]);\\
\quad\quad\quad
            if auto then powers:=[1..(Order(tempelt)-1)]; end if;\\
\quad\quad\quad
            if exists(temppow)\{x : x in powers$\mid$ tempelt\^{ }x notin SubgroupSoFar\\
\quad\quad\quad
            and tempelt\^{ }x in Target\} then\\
\quad\quad\quad\quad
                Append($\sim$WordsSoFar, WordPrint(Words[j], wordgens:power:=temppow));\\
\quad\quad\quad\quad
                Append($\sim$WordsNumSoFar, $\langle$[gencollection[Words[j][k]]:k in [1..\#Words[j]]], temppow$\rangle$);\\
\quad\quad\quad\quad
                printf ``Got a new elt: \%o$\backslash$n", WordsSoFar[\#WordsSoFar];\\
\quad\quad\quad\quad
                SubgroupSoFar := sub$\langle$Target$\mid$SubgroupSoFar, Word2Elt(G,Words[j])\^{ }temppow$\rangle$;\\
\quad\quad\quad\quad
                if SubgroupSoFar eq Target then\\
\quad\quad\quad\quad\quad
                    printf ``Got a generating set.$\backslash$n";\\
\quad\quad\quad\quad\quad
                    if ReduceMore then\\
\quad\quad\quad\quad\quad\quad
                        print ``Getting a smaller generating set...";\\
\quad\quad\quad\quad\quad\quad
                        time G, wordgens, gencollection := ReduceGensForGroup(sub$\langle$G$\mid$GetGeneratingSet(WordsNumSoFar)$\rangle$, Target : wordgens:=[],exclude:=exclude,CoverGroup:=CoverGroup);\\
\quad\quad\quad\quad\quad\quad
                        printf ``Resulting set has \%o generators, out of \%o$\backslash$n", \#wordgens,\#WordsSoFar;\\
\quad\quad\quad\quad\quad\quad
                        printf ``subcollection indices:\%o$\backslash$n",gencollection;\\
\quad\quad\quad\quad\quad\quad
                        gens := Generators(G);\\
\quad\quad\quad\quad\quad
                    end if;\\
\quad \\
\quad\quad\quad\quad\quad
                    WordsSoFar := [WordsSoFar[gencollection[i]] : i in [1..\#gencollection]];\\
\quad\quad\quad\quad\quad
                    WordsNumSoFar := [WordsNumSoFar[gencollection[i]] : i in [1..\#gencollection]];\\
\quad\quad\quad\quad\quad
                    if not EltReturn then\\
\quad\quad\quad\quad\quad\quad
                        return WordsSoFar, WordsNumSoFar;\\
\quad\quad\quad\quad\quad
                    else\\
\quad\quad\quad\quad\quad\quad
                        return WordsSoFar, WordsNumSoFar, GetGeneratingSet(WordsNumSoFar);\\
\quad\quad\quad\quad\quad
                    end if;\\
\quad\quad\quad\quad
                end if;\\
\quad\quad\quad
            end if;\\
\quad\quad
        else \quad\quad // if there is some restriction on orders\\
\quad\quad\quad
            tempelt := Word2Elt(G, Words[j]);\\
\quad\quad\quad
            tempord := Order(tempelt);\\
\quad\quad\quad
            for ord in OrderRestriction do\\
\quad\quad\quad\quad
                if tempord mod ord eq 0 then\\
\quad\quad\quad\quad\quad
                    temppow := Integers()!(tempord/ord);\\
\quad\quad\quad\quad\quad
                    if tempelt\^{ }temppow notin SubgroupSoFar and tempelt\^{ }temppow in Target then\\
\quad\quad\quad\quad\quad\quad
                        Append($\sim$WordsSoFar, WordPrint(Words[j],wordgens:power:=temppow));\\
\quad\quad\quad\quad\quad\quad
                        Append($\sim$WordsNumSoFar, $\langle$[gencollection[Words[j][k]]:k in [1..\#Words[j]]],temppow$\rangle$);\\
\quad\quad\quad\quad\quad\quad
                        printf ``Got a new elt of order \%o: \%o$\backslash$n",ord,WordsSoFar[\#WordsSoFar];\\
\quad\quad\quad\quad\quad\quad
                        SubgroupSoFar := sub$\langle$Target$\mid$SubgroupSoFar, Word2Elt(G,Words[j])\^{ }temppow$\rangle$;\\
\quad\quad\quad\quad\quad\quad
                        if SubgroupSoFar eq Target then\\
\quad\quad\quad\quad\quad\quad\quad
                            printf ``Done.$\backslash$n";\\
\quad\quad\quad\quad\quad\quad\quad
                            if not EltReturn then\\
\quad\quad\quad\quad\quad\quad\quad\quad
                                return WordsSoFar, WordsNumSoFar;\\
\quad\quad\quad\quad\quad\quad\quad
                            else\\
\quad\quad\quad\quad\quad\quad\quad\quad
                                return WordsSoFar, WordsNumSoFar, Generators(SubgroupSoFar);\\
\quad\quad\quad\quad\quad\quad\quad
                            end if;\\
\quad\quad\quad\quad\quad\quad
                        end if;\\
\quad\quad\quad\quad\quad
                    end if;\\
\quad\quad\quad\quad
                end if;\\
\quad\quad\quad
            end for;\\
\quad\quad
        end if;\\
\quad
    end for;\\
\quad
    Words, $\stend$ := EnumWords(Words, $\stend$, \#gens);\\
\quad
    startpoint := $\stend$[1];\\
end while;\\
\quad \\
print ``Couldn't generatate group";\\
return WordsSoFar;\\
end function;\\

\end{flushleft}}

\end{implementation}

\begin{example}[GetShortGens]\label{ex. GetShortGens}
\textsc{Magma} code \\
$
\verb"> g1:=Sym(8)!(1,2); g2:=Sym(8)!(1,2,3,4,5,6,7,8);"
$ \\
$
\verb"> G:=sub<Sym(8)|g1,g2>;"
$ \\
$
\verb"> S := sub<Sym(8)| Sym(8)!(1,3,6)(2,4), Sym(8)!(1,7,8)(2,5)>;"
$ \\
$
\verb"> res := GetShortGens(G,S : wordgens:=[``g1'',``g2'']);"
$ \\
$
\verb"> res;"
$ \\
$
\verb"[ (g2*g1*g2^4)^5, (g1*g2*g1*g2^4)^3, (g1*g2^3*g1*g2*g1)^2, "
$ \\
$
\verb"(g1*g2*g1*g2*g1*g2^3*g1)^3 ]"
$
\end{example}

For a given element of a finite group with a given generating set, it is often important to obtain a short word for the element in terms of given generating set. This element-version (as opposed to subgroup-version: $\verb"GetShortGens"$) of short-word program is named $\verb"LookupWord"$, and it needs a different version of $\verb"ReduceGensForGroup"$ which is called $\verb"ReduceGensForElt"$. It is designed for finding a word of an element; it reduces the given set of generators as much as possible, while the element of interest still should lie inside the subgroup generated by the reduced set of generators:

\begin{algorithm}[ReduceGensForElt]\label{alg. ReduceGensForElt}
Let $G$ be a finite group generated by the given generators $g_1, g_2, \ldots, g_k$. For any element $x$ of $G$, a subset of $\{g_1, g_2, \ldots, g_k\}$ which generates a subgroup of $G$ containing $S$ is obtained by the following method:


[Reducing original generators] If there is some $i \in \{1,2,\ldots,k\}$ such that the set $\{g_1,\ldots, g_{i-1},g_{i+1},\ldots,g_k\}$ generates a subgroup of $G$ containing $S$, pick the largest such $i$. Now, call this function $\verb"ReduceGensForElt"$ recursively, with same $G$ and $S$, with reduced set of generators $g_1,\ldots, g_{i-1},g_{i+1},\ldots,g_k$ (with corresponding names). If there is no such $i$, then return the original set $g_1, g_2, \ldots, g_k$, with their names.

\end{algorithm}

\begin{implementation}[ReduceGensForElt]\label{imp. ReduceGensForElt}

{\em \footnotesize\begin{flushleft}

\quad \\
function ReduceGensForElt(G, TargetElt : wordgens:=[], gencollection:=[1..\#Generators(G)])\\
\quad \\
local reducedlist;\\
\quad \\
for i:=1 to \#Generators(G) do\\
\quad
    reducedlist := Exclude([1..\#Generators(G)],\#Generators(G)+1-i);\\
\quad
    if TargetElt in sub$\langle$G$\mid$[G.j : j in reducedlist]$\rangle$ then\\
\quad\quad
        return ReduceGensForElt(sub$\langle$G$\mid$[G.j : j in reducedlist]$\rangle$, TargetElt : wordgens:=reducedlist], gencollection:=[gencollection[j]:j in reducedlist]);\\
\quad
    end if;\\
end for;\\
return G, wordgens, gencollection;\\
end function;\\

\end{flushleft}}

\end{implementation}

The algorithm $\verb"LookupWord"$ enables us to obtain a short word for an element of a group with given generators:

\begin{algorithm}[LookupWord]\label{alg. LookupWord}
Let $G$ be a finite group generated by the given generators $g_1, g_2, \ldots, g_k$. For any element $x$ of $G$, a short word for $x$ in terms of $g_1, g_2, \ldots, g_k$ is obtained by the following steps:


{\bf Step 1} [Reducing original generators] If desired, try to get a subset of $\{g_1$, $g_2$, $\ldots$, $g_k\}$ which generates a subgroup of $G$ containing $x$, using the command $\verb"ReduceGensForElt"$. For convenience, suppose that the set of generators $g_1$, $g_2$, $\ldots$, $g_k$ is already a result of this reducing process.

{\bf Step 2} [Building word list, and finding generators]

\begin{enumerate}
\item Let $W = [ [1], [2], \ldots, [k] ]$, the set of words, initially set to have only simplest numerical words of $k$ generators of length $1$.

\item\label{lookupword_check} Take the word $w$ in $W$ of lowest lexicographic order which is not checked yet, and let $y = \verb"Word2Elt(G,w)"$, the element of $G$ corresponding to the word $w$.

\item Let $r=Order(y)$ and $s=Order(x)$. If $s\nmid r$, go to \eqref{lookupword_more}.

\item If there is some $m \in \{t \in \mathbb{Z} | 1\le t < s, gcd(t,Order(x)) =1 \}$ such that $w^{rm/s} = x$, then proceed to {\bf Step 3}.

\item\label{lookupword_more} Enlarge $W$ by appending $1,2,\ldots, k$ to all words of $W$ of longest length, using the command $\verb"EnumWords"$ (this is same as descending one level deeper in the word tree). Now, go to (\ref{lookupword_check}).

\end{enumerate}

{\bf Step 3} [Printing] Print the words obtained.

{\bf Alternative option :} The above algorithm finds a short word which is equal to the given element. A similar algorithm can be used to find a short word which is conjugate to the given element: in {\bf Step 2}$(4)$, we look for $m$ such that $w^{rm/s}$ is conjugate to $x$ (the equality test is replaced by the conjugacy test). This option is incorporated in the following implementation, as the hidden parameter $\verb"ConjugateCheck"$; if we set $\verb"ConjugateCheck:=true"$, then the following program finds a short word which is conjugate to the given element.

\end{algorithm}

\begin{implementation}[LookupWord]\label{imp. LookupWord}

{\em \footnotesize\begin{flushleft}

\quad \\
function LookupWord(G, TargetElt : limit:=0, wordgens:=[], Words:=[], $\stend$:=[], startpoint:=1, Hard:=true, ConjugateCheck:=false, CoverGroup:=G, containcheck:=true, InfoLevel:=2)\\
\quad \\
    local Ord, gens, iter;\\
    local tempelt, tempord, temppow;\\
\quad \\
    Ord := Order(TargetElt);\\
    gens := Generators(G);\\
\quad \\
    if containcheck and TargetElt notin G then\\
\quad
        print ``Coudln't find a word for it";\\
\quad
        return ``";\\
    end if;\\
\quad \\
    if TargetElt eq Id(G) then\\
\quad
        if InfoLevel gt 1 then print ``it is the identity!"; end if;\\
\quad
        return ``Id(\$)", $\langle$[],1$\rangle$;\\
    end if;\\
\quad \\
    if \#wordgens eq 0 or \#gens ne \#wordgens then\\
\quad
        for i:=1 to \#gens do\\
\quad\quad
            Append($\sim$wordgens, ``\$." cat IntegerToString(i));\\
\quad
        end for;\\
    end if;\\
\quad \\

        if Hard and not ConjugateCheck then\\
\quad
                print ``Reducing Generators...";\\
\quad
                time G, wordgens, gencollection := ReduceGensForElt(G, TargetElt : wordgens:=wordgens);\\
\quad
                printf ``Using only \%o generators \%o, out of \%o ", \#Generators(G),wordgens,\#gens;\\
\quad
                gens := Generators(G);\\
        else\\
\quad
                gencollection := [1..\#Generators(G)];\\
\quad
                if \#wordgens eq 0 or \#Generators(G) ne \#wordgens then\\
\quad\quad
                        wordgens:=[(``\$." cat Sprint(i)) :i in [1..\#Generators(G)]];\\
\quad
                end if;\\
        end if;\\
\quad \\
    iter:=0;\\

     while limit eq 0 or iter lt limit do\\
\quad
        iter:=iter+1;\\
\quad
        if InfoLevel gt 1 then\\
\quad\quad
            printf ``\%o-th iteration$\backslash$n",iter;\\
\quad
        end if;\\

\quad
        if \#Words eq 0 or \#$\stend$ eq 0 then\\
\quad\quad
            Words, $\stend$ := EnumWords([],[1,1],\#gens);\\
\quad\quad
            startpoint:=1;\\
\quad
        end if;\\
\quad \\
\quad
        for j:=startpoint to \#Words do\\
\quad\quad
            tempelt := Word2Elt(G, Words[j]);\\
\quad\quad
            tempord := Order(tempelt);\\
\quad\quad
            if tempord mod Ord eq 0 then\\
\quad\quad\quad
                temppow := Integers()!(tempord/Ord);\\
\quad\quad\quad
                  for addi in [a:a in [1..Ord]$\mid$GCD(a,Ord) eq 1] do\\
\quad\quad\quad\quad
                if tempelt\^{ }(temppow*addi) eq TargetElt then\\
\quad\quad\quad\quad\quad
                    if InfoLevel gt 1 then print ``Got a word (exact)"; end if;\\
\quad\quad\quad\quad\quad
                    return WordPrint(Words[j], wordgens:power:=(temppow*addi)), $\langle$[gencollection[Words[j][k]]:k in [1..\#Words[j]]], temppow*addi$\rangle$;\\
\quad\quad\quad\quad
                          elif ConjugateCheck and IsConjugate(CoverGroup,tempelt\^{ }(temppow*addi),TargetElt) then\\
\quad\quad\quad\quad\quad
                                            if InfoLevel gt 1 then print ``Got a conjugate word (new ver)"; end if;\\
\quad\quad\quad\quad\quad
                                         return WordPrint(Words[j], wordgens:power:=(temppow*addi)), $\langle$[Words[j][k]:k in [1..\#Words[j]]], temppow*addi$\rangle$;\\
\quad\quad\quad\quad
                                end if;\\
\quad\quad\quad
                  end for;\\
\quad\quad
            end if;\\
\quad
        end for;\\
\quad \\
\quad
        Words, $\stend$ := EnumWords(Words, $\stend$, \#gens);\\
\quad
        startpoint := $\stend$[1];\\
    end while;\\
\quad \\
    print ``Couldn't find a word";\\
    return ``";\\
\quad \\
end function;\\

\end{flushleft}}

\end{implementation}

\begin{example}[LookupWord]\label{ex. LookupWord}
\textsc{Magma} code \\
$
\verb"> g1:=Sym(8)!(1,2); g2:=Sym(8)!(1,2,3,4,5,6,7,8);"
$ \\
$
\verb"> G:=sub<Sym(8)|g1,g2>;"
$ \\
$
\verb"> x := Sym(8)!(2, 8, 7, 6, 4, 3);"
$ \\
$
\verb"> res := LookupWord(G,x : wordgens:=[``g1'',``g2'']);"
$ \\
$
\verb"> res;"
$ \\
$
\verb"g1*g2^4*g1*g2^3"
$ \\
\end{example}

Sometimes, the subgroup or the element that we want to get short words of lies too deep inside the group (whose generators are given), so $\verb"GetShortGens"$ or $\verb"LookupWord"$ doesn't work well. For example, if an element $x$ can't be represented by a word of length$\le 24$ in terms of given generators, then $\verb"LookupWord"$ either takes too long a time to get the word for $x$, or causes memory overflow due to too much required space for all the words built up so far. The author found a strategy to overcome this issue, described as follows. 

The strategy needs a proper subgroup $T$ of $G$, which contains the target subgroup $S$ or the target element $x$. A standard trick to get such $T$ is to take the normalizer of $S$ in $G$, the centralizer of $x$ in $G$, or the normalizer of $\langle x \rangle$ of $G$.


\begin{strategy}[two-step GetShortGens]\label{str. GetShortGens}
Let $G$ be a finite group generated by the given generators $g_1, g_2, \ldots, g_k$. For any subgroups $S$ and $T$ of $G$ such that $S\lneqq T \lneqq G$, a short-word generating set of $S$ in terms of $g_1, g_2, \ldots, g_k$ is obtained by the following steps:

{\bf Step 1} [Generators for $T$] Get short-word generators $t_1, t_2, \ldots, t_n$ for $T$ in terms of $g_1, g_2, \ldots, g_k$ using $\verb"GetShortGens"$.

{\bf Step 2} [Generators for $S$] Get short-word generators for $S$ in terms of $t_1$, $t_2$, $\ldots$, $t_n$ using $\verb"GetShortGens"$.
\end{strategy}

\begin{strategy}[two-step LookupWord]\label{str. LookupWord}
Let $G$ be a finite group generated by the given generators $g_1, g_2, \ldots, g_k$. For any element $x\in G$ and a subgroup $T$ of $G$ such that $x\in T \lneqq G$, a short word for $x$ in terms of $g_1, g_2, \ldots, g_k$ is obtained by the following steps:

{\bf Step 1} [Generators for $T$] Get short-word generators $t_1, t_2, \ldots, t_n$ for $T$ in terms of $g_1, g_2, \ldots, g_k$ using $\verb"GetShortGens"$.

{\bf Step 2} [Short Words for $x$] Get short word for $x$ in terms of $t_1, t_2, \ldots, t_n$ using $\verb"LookupWord"$.
\end{strategy}

The above two strategies are used throughout this thesis and therefore constantly referred to. Note that sometimes it can be helpful to iterate the strategies many times. For example, for a subgroup $S$ of $G$, it can be a good idea to try to find subgroups $T_1$ and $T_2$ such that $S\lneqq T_1 \lneqq T_2 \lneqq G$ and then find short-word generators for $T_2$, $T_1$, and finally $S$ in turn.

\newpage 

\section{Michler's Algorithm}\label{sec. Michler. Algorithm}

In this section, G. Michler's Algorithm 2.5 of \cite{michler1} is presented. This algorithm
gives a uniform method to construct all finite simple groups (not having Sylow
$2$-subgroups which are cyclic, dihedral or semi-dihedral), from indecomposable subgroups of $\GL_n(2)$.
The author's algorithms described in the previous section are used to
implement this algorithm of Michler. In particular, in this thesis, $\Fi_{23}$ is constructed by the following algorithm.

\begin{algorithm}\label{alg. simple} Let $T$ be an indecomposable subgroup of
$\GL_n(2)$ acting on $V = F^n$ by matrix multiplication.

\begin{itemize}
\item \underline{\em Step 1:} Calculate a faithful permutation
representation $PT$ of $T$ and a finite presentation $T = \langle
t_i\,|\, 1 \le i \le r\rangle$ with set $\mathcal R{(T)}$ of defining
relations.

\item \underline{\em Step 2:} Compute all extension groups $E$ of
$T$ by $V$ by means of Holt's Algorithm \cite{holt}. Determine a
complete set $\mathfrak S$ of non isomorphic extension groups $E$
by means of the Cannon-Holt Algorithm \cite{cannon}.

\item \underline{\em Step 3:} Let $E \in \mathfrak S$. From the
given presentation of $E$ determine a faithful permutation
representation $PE$ of $E$. Using it and Kratzer's Algorithm
5.3.18 of \cite{michler} calculate a complete system of
representatives of all the conjugacy classes of $E$.

\item \underline{\em Step 4:} Let $z \neq 1$ be a $2$-central
involution of $E$. Calculate $D = C_E(z)$ and fix a Sylow
$2$-subgroup $S$ of $D$. Check that the elementary abelian normal
subgroup $V$ of $E$ is a maximal elementary abelian normal
subgroup of $S$.

\noindent If it is not maximal, then the algorithm terminates.

\item \underline{\em Step 5:} Construct a group $H > D$ with the
following properties:
\begin{enumerate}
\item[\rm(a)] $z$ belongs to the center $Z(H)$ of $H$.

\item[\rm(b)] The index $|H : D|$ is odd.

\item[\rm(c)] The normalizer $N_H(V) = D = C_E(z)$.
\end{enumerate}
If no such $H$ exists the algorithm terminates.

\smallskip
Otherwise, apply for each constructed group $H$ the following
steps of Algorithm 7.4.8 of \cite{michler}. By step $5 (c)$ it may
be assumed from now on that $D = H \cap E$.

\end{itemize}
\end{algorithm}

After the \underline{Step 5}, the remaining steps are identical to Algorithm 7.4.8 of \cite{michler}, so omitted here.

\newpage

\section{Extensions of Mathieu group $\M_{23}$}\label{sec. M23-extensions}

The Mathieu group $M_{23}$ is defined in Definition 8.2.1 of
\cite{michler} by means of generators and relations. This
beautiful presentation is due to J.A. Todd. The irreducible
$2$-modular representations of the Mathieu group $M_{23}$ were
determined by G. James \cite{james}. Here only the $2$ non
isomorphic simple modules $V_i$, $i = 1,2$, of dimension $11$ over
$F = \GF(2)$ will be considered. Todd's permutation representations
of the Mathieu groups are stated in Lemma 8.2.2 of \cite{michler}.
Therefore all conditions of Holt's Algorithm \cite{holt}
implemented in \textsc{Magma} are satisfied. It is applied here. Thus it is
shown in this section that for the simple module $V_1$ there are exactly two
extensions of $M_{23}$ by $V_1$, the split extension $E_1$ and the
non-split extension $E$ and that $\M_{23}$ has only the split
extension $E_2$ by $V_2$. It will be shown that the
applications of Algorithm \ref{alg. simple} to $E_1$ and $E_2$
fail. Therefore only the constructed presentation of $E$ is given
in Lemma \ref{l. M23-extensions}.

\begin{lemma}\label{l. M23-extensions}
Let $\M_{23} = \langle a,b,c,d,t,g,h,i,j \rangle$ be the finitely
presented group with set of defining relations $\mathcal
R(\M_{23})$ given in Definition 8.2.1 of \cite{michler}. Then the
following statements hold:

\begin{enumerate}
\item[\rm(a)] A faithful permutation representation of degree $23$
of $\M_{23}$ is stated in Lemma 8.2.2 of \cite{michler}.

\item[\rm(b)] The first irreducible representation $(\rho_1, V_1)$ of
$\M_{23}$ is described by the following matrices:
{\renewcommand{\arraystretch}{0.5}

\scriptsize
$$
\rho_1(a) = \left( \begin{array}{*{11}{c@{\,}}c}
0 & 1 & 0 & 0 & 1 & 1 & 1 & 1 & 1 & 0 & 0\\
0 & 1 & 1 & 0 & 0 & 0 & 1 & 1 & 1 & 1 & 1\\
0 & 0 & 0 & 0 & 0 & 0 & 1 & 0 & 0 & 0 & 0\\
0 & 0 & 1 & 1 & 0 & 1 & 1 & 1 & 0 & 1 & 0\\
1 & 1 & 0 & 0 & 0 & 1 & 1 & 0 & 0 & 1 & 1\\
0 & 0 & 0 & 0 & 0 & 1 & 0 & 0 & 0 & 0 & 0\\
0 & 0 & 1 & 0 & 0 & 0 & 0 & 0 & 0 & 0 & 0\\
0 & 0 & 0 & 0 & 0 & 0 & 0 & 1 & 0 & 0 & 0\\
0 & 0 & 0 & 0 & 0 & 0 & 0 & 0 & 1 & 0 & 0\\
0 & 0 & 0 & 0 & 0 & 0 & 0 & 0 & 0 & 1 & 0\\
0 & 0 & 0 & 0 & 0 & 0 & 0 & 0 & 0 & 0 & 1
\end{array} \right),\quad
\rho_1(b) = \left( \begin{array}{*{11}{c@{\,}}c}
0 & 1 & 0 & 0 & 0 & 0 & 0 & 0 & 0 & 0 & 0\\
1 & 0 & 0 & 0 & 0 & 0 & 0 & 0 & 0 & 0 & 0\\
0 & 0 & 0 & 0 & 1 & 0 & 0 & 0 & 0 & 0 & 0\\
1 & 1 & 0 & 1 & 0 & 1 & 0 & 1 & 1 & 1 & 0\\
0 & 0 & 1 & 0 & 0 & 0 & 0 & 0 & 0 & 0 & 0\\
0 & 0 & 0 & 0 & 0 & 1 & 0 & 0 & 0 & 0 & 0\\
1 & 1 & 0 & 0 & 0 & 1 & 1 & 0 & 0 & 1 & 1\\
0 & 0 & 0 & 0 & 0 & 0 & 0 & 1 & 0 & 0 & 0\\
0 & 0 & 0 & 0 & 0 & 0 & 0 & 0 & 1 & 0 & 0\\
0 & 0 & 0 & 0 & 0 & 0 & 0 & 0 & 0 & 1 & 0\\
0 & 0 & 0 & 0 & 0 & 0 & 0 & 0 & 0 & 0 & 1
\end{array} \right),
$$
}

{\renewcommand{\arraystretch}{0.5}

\scriptsize
$$
\rho_1(c) = \left( \begin{array}{*{11}{c@{\,}}c}
0 & 0 & 0 & 0 & 1 & 0 & 0 & 0 & 0 & 0 & 0\\
0 & 0 & 1 & 0 & 0 & 0 & 0 & 0 & 0 & 0 & 0\\
0 & 1 & 0 & 0 & 0 & 0 & 0 & 0 & 0 & 0 & 0\\
1 & 0 & 0 & 1 & 1 & 1 & 0 & 1 & 0 & 0 & 1\\
1 & 0 & 0 & 0 & 0 & 0 & 0 & 0 & 0 & 0 & 0\\
0 & 0 & 0 & 0 & 0 & 1 & 0 & 0 & 0 & 0 & 0\\
0 & 1 & 1 & 0 & 0 & 0 & 1 & 1 & 1 & 1 & 1\\
0 & 0 & 0 & 0 & 0 & 0 & 0 & 1 & 0 & 0 & 0\\
0 & 0 & 0 & 0 & 0 & 0 & 0 & 0 & 1 & 0 & 0\\
0 & 0 & 0 & 0 & 0 & 0 & 0 & 0 & 0 & 1 & 0\\
0 & 0 & 0 & 0 & 0 & 0 & 0 & 0 & 0 & 0 & 1
\end{array} \right),\quad
\rho_1(d) = \left( \begin{array}{*{11}{c@{\,}}c}
0 & 1 & 1 & 1 & 1 & 0 & 1 & 0 & 0 & 0 & 1\\
1 & 0 & 1 & 1 & 1 & 0 & 1 & 1 & 1 & 0 & 0\\
0 & 0 & 1 & 1 & 0 & 1 & 1 & 1 & 0 & 1 & 0\\
0 & 0 & 0 & 0 & 0 & 0 & 1 & 0 & 0 & 0 & 0\\
0 & 0 & 0 & 1 & 1 & 1 & 1 & 0 & 1 & 1 & 1\\
0 & 0 & 0 & 0 & 0 & 1 & 0 & 0 & 0 & 0 & 0\\
0 & 0 & 0 & 1 & 0 & 0 & 0 & 0 & 0 & 0 & 0\\
0 & 0 & 0 & 0 & 0 & 0 & 0 & 1 & 0 & 0 & 0\\
0 & 0 & 0 & 0 & 0 & 0 & 0 & 0 & 1 & 0 & 0\\
0 & 0 & 0 & 0 & 0 & 0 & 0 & 0 & 0 & 1 & 0\\
0 & 0 & 0 & 0 & 0 & 0 & 0 & 0 & 0 & 0 & 1
\end{array} \right),
$$
}

{\renewcommand{\arraystretch}{0.5}

\scriptsize
$$
\rho_1(t) = \left( \begin{array}{*{11}{c@{\,}}c}
0 & 1 & 0 & 0 & 0 & 0 & 0 & 0 & 1 & 0 & 0\\
1 & 0 & 0 & 1 & 1 & 1 & 0 & 1 & 1 & 0 & 1\\
0 & 1 & 0 & 0 & 1 & 1 & 1 & 1 & 0 & 0 & 0\\
1 & 0 & 1 & 1 & 1 & 0 & 1 & 1 & 0 & 0 & 0\\
0 & 1 & 1 & 1 & 1 & 0 & 1 & 0 & 1 & 0 & 1\\
0 & 0 & 0 & 0 & 0 & 0 & 0 & 0 & 1 & 0 & 0\\
1 & 1 & 0 & 1 & 0 & 1 & 0 & 1 & 0 & 1 & 0\\
0 & 0 & 0 & 0 & 0 & 0 & 0 & 1 & 1 & 0 & 0\\
0 & 0 & 0 & 0 & 0 & 1 & 0 & 0 & 1 & 0 & 0\\
0 & 0 & 0 & 0 & 0 & 0 & 0 & 0 & 1 & 1 & 0\\
0 & 0 & 0 & 0 & 0 & 0 & 0 & 0 & 1 & 0 & 1
\end{array} \right),\quad
\rho_1(g) = \left( \begin{array}{*{11}{c@{\,}}c}
0 & 1 & 0 & 1 & 0 & 0 & 0 & 0 & 0 & 0 & 0\\
1 & 0 & 0 & 1 & 0 & 0 & 0 & 0 & 0 & 0 & 0\\
0 & 1 & 0 & 1 & 1 & 1 & 1 & 1 & 1 & 0 & 0\\
0 & 0 & 0 & 1 & 0 & 0 & 0 & 0 & 0 & 0 & 0\\
1 & 1 & 0 & 1 & 0 & 1 & 1 & 0 & 0 & 1 & 1\\
1 & 0 & 1 & 0 & 1 & 0 & 1 & 1 & 1 & 0 & 0\\
0 & 1 & 1 & 1 & 0 & 0 & 1 & 1 & 1 & 1 & 1\\
1 & 0 & 1 & 0 & 0 & 0 & 0 & 0 & 1 & 1 & 1\\
0 & 0 & 0 & 0 & 1 & 1 & 1 & 0 & 1 & 1 & 1\\
0 & 0 & 0 & 1 & 0 & 0 & 0 & 0 & 0 & 1 & 0\\
0 & 0 & 0 & 1 & 0 & 0 & 0 & 0 & 0 & 0 & 1
\end{array} \right),
$$
}

{\renewcommand{\arraystretch}{0.5}

\scriptsize
$$
\rho_1(h) = \left( \begin{array}{*{11}{c@{\,}}c}
1 & 0 & 0 & 0 & 0 & 1 & 0 & 0 & 0 & 0 & 0\\
1 & 0 & 0 & 1 & 1 & 0 & 0 & 1 & 0 & 0 & 1\\
0 & 0 & 1 & 1 & 0 & 0 & 1 & 1 & 0 & 1 & 0\\
0 & 0 & 0 & 0 & 0 & 1 & 1 & 0 & 0 & 0 & 0\\
1 & 1 & 0 & 0 & 0 & 0 & 1 & 0 & 0 & 1 & 1\\
0 & 0 & 0 & 0 & 0 & 1 & 0 & 0 & 0 & 0 & 0\\
0 & 0 & 0 & 1 & 0 & 1 & 0 & 0 & 0 & 0 & 0\\
0 & 0 & 0 & 0 & 0 & 1 & 0 & 0 & 0 & 1 & 0\\
0 & 0 & 0 & 0 & 0 & 1 & 0 & 0 & 1 & 0 & 0\\
0 & 0 & 0 & 0 & 0 & 1 & 0 & 1 & 0 & 0 & 0\\
0 & 0 & 0 & 0 & 0 & 1 & 0 & 0 & 0 & 0 & 1
\end{array} \right),\quad
\rho_1(i) = \left( \begin{array}{*{11}{c@{\,}}c}
1 & 0 & 0 & 1 & 1 & 0 & 0 & 1 & 0 & 0 & 1\\
0 & 1 & 0 & 0 & 0 & 1 & 0 & 0 & 0 & 0 & 0\\
1 & 1 & 0 & 1 & 0 & 0 & 0 & 1 & 1 & 1 & 0\\
0 & 0 & 0 & 1 & 0 & 1 & 0 & 0 & 0 & 0 & 0\\
0 & 0 & 0 & 0 & 1 & 1 & 0 & 0 & 0 & 0 & 0\\
0 & 0 & 0 & 0 & 0 & 1 & 0 & 0 & 0 & 0 & 0\\
0 & 1 & 0 & 0 & 1 & 0 & 1 & 1 & 1 & 0 & 0\\
0 & 0 & 0 & 0 & 0 & 1 & 0 & 1 & 0 & 0 & 0\\
0 & 0 & 0 & 0 & 0 & 1 & 0 & 0 & 1 & 0 & 0\\
1 & 1 & 1 & 0 & 1 & 0 & 0 & 0 & 1 & 0 & 1\\
0 & 0 & 0 & 0 & 0 & 1 & 0 & 0 & 0 & 0 & 1
\end{array} \right),
$$
} {\renewcommand{\arraystretch}{0.5}

\scriptsize
$$
\rho_1(j) = \left( \begin{array}{*{11}{c@{\,}}c}
0 & 1 & 0 & 0 & 0 & 0 & 0 & 0 & 0 & 0 & 0\\
1 & 0 & 0 & 0 & 0 & 0 & 0 & 0 & 0 & 0 & 0\\
0 & 0 & 0 & 0 & 1 & 0 & 0 & 0 & 0 & 0 & 0\\
0 & 0 & 0 & 1 & 0 & 0 & 0 & 0 & 0 & 0 & 0\\
0 & 0 & 1 & 0 & 0 & 0 & 0 & 0 & 0 & 0 & 0\\
0 & 0 & 0 & 0 & 0 & 0 & 0 & 0 & 1 & 0 & 0\\
0 & 1 & 1 & 0 & 0 & 0 & 1 & 1 & 1 & 1 & 1\\
0 & 0 & 0 & 0 & 0 & 0 & 0 & 1 & 0 & 0 & 0\\
0 & 0 & 0 & 0 & 0 & 1 & 0 & 0 & 0 & 0 & 0\\
0 & 0 & 0 & 0 & 0 & 0 & 0 & 0 & 0 & 1 & 0\\
1 & 1 & 1 & 0 & 1 & 1 & 0 & 0 & 1 & 0 & 1
\end{array} \right).
$$
}

\item[\rm(c)] The second irreducible representation $(\rho_2, V_2)$ of
$\M_{23}$ is described by the transpose inverse matrices of the
generating matrices of $\M_{23}$ defining \\ $(\rho_1, V_1)$:
\begin{align*}
\rho_2(a) = [\rho_1(a)^{-1}]^{T}, \mbox{ }
\rho_2(b) = [\rho_1(b)^{-1}]^{T},\mbox{ }
\rho_2(c) = [\rho_1(c)^{-1}]^{T}, \\
\rho_2(d) = [\rho_1(d)^{-1}]^{T}, \mbox{ }
\rho_2(t) = [\rho_1(t)^{-1}]^{T},\mbox{ }
\rho_2(g) = [\rho_1(g)^{-1}]^{T}, \\
\rho_2(h) = [\rho_1(h)^{-1}]^{T}, \mbox{ }
\rho_2(i) = [\rho_1(i)^{-1}]^{T}, \mbox{ and }
\rho_2(j) = [\rho_1(j)^{-1}]^{T}.
\end{align*}

\item[\rm(d)] $dim_F[H^2(\M_{23},V_1)] = 1$ and
$dim_F[H^2(\M_{23},V_2)] = 0$.

\item[\rm(e)] The presentations of the split extensions $E_1$ and
$E_2$ of $\M_{23}$ by $V_1$ and $V_2$, respectively, can be
constructed from the matrices of (b) and (c) using \textsc{Magma}.

\item[\rm(f)] The unique non-split extension $E$ of $\M_{23}$ by
$V_1$ has the presentation

$$E = \langle a_1,b_1,c_1,d_1,t_1,g_1,h_1,i_1,j_1,v_i \mid 1 \le i \le 11 \rangle$$

with set ${\mathcal R}(E)$ of defining relations consisting of the
following set of relations:
\begin{eqnarray*}
&&a_1^2 = b_1^2 = c_1^2 = d_1^2 = t_1^3 = g_1^4 = h_1^4 = j_1^2 = 1,\\
&&(b_1, a_1) = (c_1, a_1) = (d_1, a_1) = (h_1, a_1) = (c_1, b_1) = (d_1, b_1) = (d_1, c_1) = 1,\\
&&(j_1, b_1) = (j_1, c_1) = (j_1, g_1) = 1,\\
&&v_r^2 = 1 \quad \mbox{for} \quad 1 \le r \le 11,\\
&&(v_r,v_s) = 1 \quad \mbox{for} \quad 1 \le r, s \le 11,\\
&&a_1^{-1}v_1a_1v_2^{-1}v_5^{-1}v_6^{-1}v_7^{-1}v_8^{-1}v_9^{-1} =
a_1^{-1}v_2a_1v_2^{-1}v_3^{-1}v_7^{-1}v_8^{-1}v_9^{-1}v_{10}^{-1}v_{11}^{-1} = 1,\\
&&a_1^{-1}v_3a_1v_7^{-1} =  a_1^{-1}v_7a_1v_3^{-1} =
a_1^{-1}v_4a_1v_3^{-1}v_4^{-1}v_6^{-1}v_7^{-1}v_8^{-1}v_{10}^{-1} = 1,\\
&& a_1^{-1}v_5a_1v_1^{-1}v_2^{-1}v_6^{-1}v_7^{-1}v_{10}^{-1}v_{11}^{-1} = 1,\\
&&(a_1,  v_6) = (a_1,  v_8) = (a_1,  v_9) = (a_1,   v_{10}) = (a_1,   v_{11}) = 1,\\
&&b_1^{-1}v_1b_1v_2^{-1} = b_1^{-1}v_2b_1v_1^{-1} = b_1^{-1}v_3b_1v_5^{-1} = b_1^{-1}v_5b_1v_3^{-1} = 1,\\
&&b_1^{-1}v_4b_1v_1^{-1}v_2^{-1}v_4^{-1}v_6^{-1}v_8^{-1}v_9^{-1}v_{10}^{-1} = 1,\\
&&(b_1,  v_6^{-1}) = (b_1,  v_8^{-1}) = (b_1,  v_9^{-1}) = (b_1,   v_{10}^{-1}) = (b_1,   v_{11}^{-1}) = 1,\\
&&b_1^{-1}v_7b_1v_1^{-1}v_2^{-1}v_6^{-1}v_7^{-1}v_{10}^{-1}v_{11}^{-1} =
c_1^{-1}v_1c_1 v_5^{-1} = c_1^{-1}v_2c_1v_3^{-1} = 1,\\
&& c_1^{-1}v_3c_1v_2^{-1} = c_1^{-1}v_5c_1v_1^{-1} =
c_1^{-1}v_4c_1v_1^{-1}v_4^{-1}v_5^{-1}v_6^{-1}v_8^{-1}v_{11}^{-1} = 1,\\
&&(c_1,  v_6^{-1}) = (c_1,  v_8^{-1}) = (c_1,  v_9^{-1}) = (c_1,   v_{10}^{-1}) = (c_1,   v_{11}^{-1}) = 1,\\
&&c_1^{-1}v_7c_1v_2^{-1}v_3^{-1}v_7^{-1}v_8^{-1}v_9^{-1}v_{10}^{-1}v_{11}^{-1} =
d_1^{-1}v_1d_1v_2^{-1}v_3^{-1}v_4^{-1}v_5^{-1}v_7^{-1}v_{11}^{-1} = 1,\\
&&d_1^{-1}v_2d_1v_1^{-1}v_3^{-1}v_4^{-1}v_5^{-1}v_7^{-1}v_8^{-1}v_9^{-1} =
d_1^{-1}v_3d_1v_3^{-1}v_4^{-1}v_6^{-1}v_7^{-1}v_8^{-1}v_{10}^{-1} = 1,\\
&&d_1^{-1}v_4d_1v_7^{-1} =  d_1^{-1}v_7d_1v_4^{-1} =
d_1^{-1}v_5d_1v_4^{-1}v_5^{-1}v_6^{-1}v_7^{-1}v_9^{-1}v_{10}^{-1}v_{11}^{-1} = 1,\\
&&(d_1,  v_6^{-1}) = (d_1,  v_8^{-1}) = (d_1,  v_9^{-1}) = (d_1,   v_{10}^{-1}) = (d_1,   v_{11}^{-1}) = 1,\\
&&t_1^{-1}v_1t_1v_2^{-1}v_9^{-1} = t_1^{-1}v_6t_1v_9^{-1} =
t_1^{-1}v_2t_1v_1^{-1}v_4^{-1}v_5^{-1}v_6^{-1}v_8^{-1}v_9^{-1}v_{11}^{-1} = 1,\\
&&t_1^{-1}v_3t_1v_2^{-1}v_5^{-1}v_6^{-1}v_7^{-1}v_8^{-1} =
t_1^{-1}v_4t_1v_1^{-1}v_3^{-1}v_4^{-1}v_5^{-1}v_7^{-1}v_8^{-1} = 1,\\
&&t_1^{-1}v_5t_1v_2^{-1}v_3^{-1}v_4^{-1}v_5^{-1}v_7^{-1}v_9^{-1}v_{11}^{-1} =
t_1^{-1}v_7t_1v_1^{-1}v_2^{-1}v_4^{-1}v_6^{-1}v_8^{-1}v_{10}^{-1} = 1,\\
&&t_1^{-1}v_8t_1v_8^{-1}v_9^{-1} = t_1^{-1}v_9t_1v_6^{-1}v_9^{-1} =
t_1^{-1}v_{10}t_1v_9^{-1}v_{10}^{-1} = t_1^{-1}v_{11}t_1v_9^{-1}v_{11}^{-1} = 1,\\
&&g_1^{-1}v_1g_1v_2^{-1}v_4^{-1} = g_1^{-1}v_2g_1v_1^{-1}v_4^{-1} =
g_1^{-1}v_3g_1v_2^{-1}v_4^{-1}v_5^{-1}v_6^{-1}v_7^{-1}v_8^{-1}v_9^{-1} = 1,\\
\end{eqnarray*}
\begin{eqnarray*}
&& (g_1,  v_4) =
g_1^{-1}v_5g_1v_1^{-1}v_2^{-1}v_4^{-1}v_6^{-1}v_7^{-1}v_{10}^{-1}v_{11}^{-1} = 1,\\
&&g_1^{-1}v_6g_1v_1^{-1}v_3^{-1}v_5^{-1}v_7^{-1}v_8^{-1}v_9^{-1} =
g_1^{-1}v_7g_1v_2^{-1}v_3^{-1}v_4^{-1}v_7^{-1}v_8^{-1}v_9^{-1}v_{10}^{-1}v_{11}^{-1} = 1,\\
&&g_1^{-1}v_8g_1v_1^{-1}v_3^{-1}v_9^{-1}v_{10}^{-1}v_{11}^{-1} =
g_1^{-1}v_9g_1v_5^{-1}v_6^{-1}v_7^{-1}v_9^{-1}v_{10}^{-1}v_{11}^{-1} = 1,\\
&&g_1^{-1}v_{10}g_1v_4^{-1}v_{10}^{-1} = g_1^{-1}v_{11}g_1v_4^{-1}v_{11}^{-1} =
h_1^{-1}v_1h_1v_1^{-1}v_6^{-1} = 1,\\
&& h_1^{-1}v_4h_1v_6^{-1}v_7^{-1} = (h_1,  v_6) =
h_1^{-1}v_2h_1v_1^{-1}v_4^{-1}v_5^{-1}v_8^{-1}v_{11}^{-1} = 1,\\
&&h_1^{-1}v_3h_1v_3^{-1}v_4^{-1}v_7^{-1}v_8^{-1}v_{10}^{-1} =
h_1^{-1}v_5h_1v_1^{-1}v_2^{-1}v_7^{-1}v_{10}^{-1}v_{11}^{-1} = 1,\\
&&h_1^{-1}v_7h_1v_4^{-1}v_6^{-1} = h_1^{-1}v_8h_1v_6^{-1}v_{10}^{-1} =
h_1^{-1}v_9h_1v_6^{-1}v_9^{-1} = 1,\\
&& h_1^{-1}v_{10}h_1v_6^{-1}v_8^{-1} = h_1^{-1}v_{11}h_1v_6^{-1}v_{11}^{-1} =
i_1^{-1}v_1i_1v_1^{-1}v_4^{-1}v_5^{-1}v_8^{-1}v_{11}^{-1} = 1,\\
&&i_1^{-1}v_2i_1v_2^{-1}v_6^{-1} = i_1^{-1}v_4i_1v_4^{-1}v_6^{-1} = i_1^{-1}v_5i_1v_5^{-1}v_6^{-1} = 1,\\
&&i_1^{-1}v_3i_1v_1^{-1}v_2^{-1}v_4^{-1}v_8^{-1}v_9^{-1}v_{10}^{-1} = 1,\quad (i_1,  v_6) = 1,\\
&&i_1^{-1}v_7i_1v_2^{-1}v_5^{-1}v_7^{-1}v_8^{-1}v_9^{-1} = i_1^{-1}v_{11}i_1v_6^{-1}v_{11}^{-1} =
i_1^{-1}v_8i_1v_6^{-1}v_8^{-1} = 1,\\
&& i_1^{-1}v_9i_1v_6^{-1}v_9^{-1} =
i_1^{-1}v_{10}i_1v_1^{-1}v_2^{-1}v_3^{-1}v_5^{-1}v_9^{-1}v_{11}^{-1} = 1,\\
&&j_1^{-1}v_1j_1v_2^{-1} = j_1^{-1}v_2j_1v_1^{-1} = j_1^{-1}v_3j_1v_5^{-1} =
(j_1,  v_4) = (j_1,  v_8) = 1,\\
&& (j_1,   v_{10}) =
j_1^{-1}v_5j_1v_3^{-1} = j_1^{-1}v_6j_1v_9^{-1} = j_1^{-1}v_9j_1v_6^{-1} = 1,\\
&&j_1^{-1}v_7j_1v_2^{-1}v_3^{-1}v_7^{-1}v_8^{-1}v_9^{-1}v_{10}^{-1}v_{11}^{-1} =
j_1^{-1}v_{11}j_1v_1^{-1}v_2^{-1}v_3^{-1}v_5^{-1}v_6^{-1}v_9^{-1}v_{11}^{-1} = 1,\\
&&t_1^{-1}a_1t_1d_1^{-1}c_1^{-1}v_6^{-1}v_9^{-1} = t_1^{-1}b_1t_1d_1^{-1}a_1^{-1}v_6^{-1}v_9^{-1} = 1,\\
&&t_1^{-1}c_1t_1d_1^{-1}b_1^{-1}v_6^{-1}v_9^{-1} =
t_1^{-1}d_1t_1c_1^{-1}b_1^{-1}a_1^{-1}v_1^{-1}v_2^{-1}v_3^{-1}v_5^{-1}v_9^{-1} = 1,\\
&&g_1^2v_1^{-1}v_3^{-1}v_8^{-1}v_9^{-1}v_{10}^{-1}v_{11}^{-1} =
(g_1a_1)^3v_{10}^{-1}v_{11}^{-1} = 1,\\
&&(g_1b_1)^3v_1^{-1}v_2^{-1}v_3^{-1}v_5^{-1}v_6^{-1}v_9^{-1} =
(g_1c_1)^3 = 1,\quad (i_1j_1)^3 = 1\\
&&(g_1t_1)^2v_1^{-1}v_3^{-1}v_8^{-1}v_{10}^{-1}v_{11}^{-1} = 1,\\
&&h_1^2v_6^{-1} = 1,\quad h_1^{-1}b_1h_1d_1^{-1}b_1^{-1}a_1^{-1}v_1^{-1}v_2^{-1}v_3^{-1}v_5^{-1} = 1,\\
&&h_1^{-1}c_1h_1c_1^{-1}a_1^{-1}v_1^{-1}v_2^{-1}v_3^{-1}v_5^{-1}v_6^{-1}v_9^{-1} =
h_1^{-1}d_1h_1d_1^{-1}v_6^{-1} = 1,\\
&& h_1^{-1}t_1h_1t_1v_9^{-1} =
(g_1h_1)^3v_3^{-1}v_7^{-1}v_9^{-1}v_{10}^{-1}v_{11}^{-1} = 1,\\
&&i_1^{-1}a_1i_1d_1^{-1}c_1^{-1}v_1^{-1}v_2^{-1}v_3^{-1}v_5^{-1}v_6^{-1}v_9^{-1}v_{10}^{-1}v_{11}^{-1} = 1,\\
&&i_1^{-1}b_1i_1d_1^{-1}a_1^{-1}v_1^{-1}v_2^{-1}v_3^{-1}v_5^{-1}v_6^{-1}v_9^{-1}v_{10}^{-1}v_{11}^{-1} = 1,\\
&&i_1^{-1}c_1i_1d_1^{-1}c_1^{-1}b_1^{-1}a_1^{-1}v_1^{-1}v_2^{-1}v_3^{-1}v_5^{-1}v_6^{-1}v_9^{-1}v_{10}^{-1}v_{11}^{-1} = 1,\\
&&i_1^{-1}d_1i_1d_1^{-1}c_1^{-1}b_1^{-1} =  i_1^{-1}t_1i_1t_1 =
i_1^{-1}g_1i_1g_1^{-1}t_1^{-1}v_5^{-1}v_6^{-1}v_7^{-1}v_9^{-1}v_{10}^{-1}v_{11}^{-1} = 1,\\
&&(h_1i_1)^3v_1^{-1}v_2^{-1}v_3^{-1}v_5^{-1}v_6^{-1}v_8^{-1}v_{10}^{-1} =
j_1^{-1}a_1j_1c_1^{-1}b_1^{-1}a_1^{-1} = 1,\\
&&j_1^{-1}d_1j_1d_1^{-1}c_1^{-1}v_1^{-1}v_2^{-1}v_3^{-1}v_5^{-1}v_6^{-1} v_9^{-1} =
j_1^{-1}t_1j_1t_1 = 1,\quad j_1^{-1}h_1j_1h_1^{-1}t_1^{-1} = 1.\\
\end{eqnarray*}
\end{enumerate}
\end{lemma}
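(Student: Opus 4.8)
The proof is computational throughout; the plan is to verify each of the statements (a)--(f) in \textsc{Magma}, starting from the degree-$23$ permutation representation $P\M_{23}$ recorded in Lemma 8.2.2 of \cite{michler} and Todd's presentation $\langle a,b,c,d,t,g,h,i,j \mid \mathcal R(\M_{23})\rangle$ of Definition 8.2.1 of \cite{michler}. Statement (a) is merely a citation of that lemma, so there is nothing to do. For (b) I would first check directly that the displayed matrices over $F=\GF(2)$ satisfy every relation in $\mathcal R(\M_{23})$; by von Dyck's theorem this yields a homomorphism $\rho_1\colon\M_{23}\to\GL_{11}(2)$, which is faithful because $\M_{23}$ is simple and the matrices are visibly nontrivial, and the \textsc{MeatAxe} then confirms that the module $V_1$ is irreducible. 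By G.\ James's classification \cite{james} there are exactly two irreducible $2$-modular representations of $\M_{23}$ of dimension $11$, and they are contragredient to one another; hence $V_1$ is one of them, the labelling fixing which one. For (c), the matrices listed there are by construction the transpose-inverses of those in (b), so $\rho_2$ affords the contragredient module $V_1^{\ast}$; verifying in \textsc{Magma} that $V_1$ is not self-dual (equivalently, that it carries no nonzero $\M_{23}$-invariant bilinear form) shows $V_2:=V_1^{\ast}\not\cong V_1$, and by the count just cited $V_2$ is then the second $11$-dimensional simple module.

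For (d) and (e) I would apply the \textsc{Magma} implementation of Holt's cohomology and extension algorithms \cite{holt} to $\M_{23}$ together with the modules $V_i$, obtaining $\dim_F H^2(\M_{23},V_1)=1$ and $\dim_F H^2(\M_{23},V_2)=0$. Supplying the zero cohomology class to the extension routine then returns finite presentations of the split extensions $E_1$ and $E_2$; since $H^2(\M_{23},V_2)=0$, this $E_2$ is the only extension of $\M_{23}$ by $V_2$, which is what (e) claims.

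For (f), because $\dim_F H^2(\M_{23},V_1)=1$ and $F=\GF(2)$ there are exactly $|H^2(\M_{23},V_1)|=2$ equivalence classes of extensions of $\M_{23}$ by $V_1$, namely the split extension $E_1$ and a single non-split extension $E$; this $E$ is unique even as an abstract group, since $V_1$ is the unique minimal normal subgroup of both $E_1$ and $E$. Feeding the nonzero cohomology class to Holt's Algorithm produces a finite presentation of $E$, which I would rewrite on the generators $a_1,\dots,j_1,v_1,\dots,v_{11}$ into the displayed form $\mathcal R(E)$: the $v_r$ generate the elementary abelian normal subgroup $V_1$, one block of relations records the action $\rho_1$ of the generators of $\M_{23}$ on the $v_r$, and the remaining relations are those of $\mathcal R(\M_{23})$ lifted modulo explicit words in the $v_r$. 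To certify the presentation I would run a Todd--Coxeter coset enumeration of $\langle v_1,\dots,v_{11}\rangle$ in the finitely presented group $\overline E=\langle a_1,\dots,j_1,v_r\mid\mathcal R(E)\rangle$, checking that the index equals $|\M_{23}|$ and that $\langle v_1,\dots,v_{11}\rangle$ is elementary abelian of order $2^{11}$, normal, with quotient $\M_{23}$ acting as $\rho_1$. Then $|\overline E|=2^{11}\,|\M_{23}|$ and $\overline E$ is an extension of $\M_{23}$ by $V_1$ which does not split (the defining cocycle being nonzero), so $\overline E\cong E$ and $\mathcal R(E)$ is correct.

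I expect the main obstacle to be this last verification in (f): the presentation $\mathcal R(E)$ involves twenty generators and well over a hundred relations, and although the coset enumeration has a manageable index $|\M_{23}|=10\,200\,960$, it requires a carefully chosen enumeration strategy, and the rewriting of Holt's raw output into the clean displayed form takes a fair amount of bookkeeping, to complete within reasonable time and memory. All of the remaining steps reduce to routine finite computations.
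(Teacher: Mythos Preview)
Your proposal is correct and follows essentially the same computational route as the paper: both rely on \textsc{Magma} and Holt's algorithm for the cohomology and extension computations in (d)--(f). The only notable differences are that for (b) the paper \emph{constructs} the matrices by applying the Meat-axe to the degree-$23$ permutation module $(1_{\M_{22}})^{\M_{23}}$ rather than verifying the displayed matrices against $\mathcal R(\M_{23})$, and for (f) the paper simply takes the output of \textsc{Magma}'s \texttt{ExtensionProcess}/\texttt{Extension} commands at face value without the Todd--Coxeter certification step you propose; your added verification is sound but not something the paper carries out.
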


\begin{proof}
The $2$ irreducible $F\M_{23}$-modules $V_i$, $i =1,2$, occur as
composition factors with multiplicity $1$ in the permutation
module $(1_{\M_{22}})^{\M_{23}}$ of degree $23$ where $\M_{22} =
\langle a_1,b_1,c_1,d_1,t_1,g_1,h_1,i_1 \rangle$. They are dual to each other.
Using the faithful permutation representation of $\M_{23}$ stated
in (a) and the Meat-axe Algorithm implemented in \textsc{Magma} one obtains
the generating matrices of $\M_{23}$ stated in (b) defining $V_1$.
Their dual matrices define $V_2$. They are stated in (c).

(d) The cohomological dimensions $dim_F[H^2(\M_{23},V_i)]$,
$i =1,2$, have been calculated by means of \textsc{Magma} using Holt's
Algorithm 7.4.5 of \cite{michler}. Its hypothesis is satisfied by
the presentation of $\M_{23}$ stated in Definition 8.2.1 of
\cite{michler} and all the data of (a), (b) and (c). It follows
that $dim_F[H^2(\M_{23},V_1)] = 1$ and $dim_F[H^2(\M_{23},V_2)] = 0$.

(e) This statement is checked easily with \textsc{Magma}.

(g) The presentation of the non-split extension $E$ has been
obtained by means of the commands $\verb"ExtensionProcess"$ and $\verb"Extension"$ of
Holt's Algorithm 7.4.5 of \cite{michler} implemented in \textsc{Magma}
\cite{holt}. This completes the proof.

\end{proof}

The first statement of the following subsidiary lemma is mainly
due to Paul Young.

\begin{lemma}\label{l. E(Fi_23)classes} With the notations of Lemma \ref{l.
M23-extensions} the following statements hold:

\begin{enumerate}
\item[\rm(a)] The non-split extension $E$ of $\M_{23}$ by its
simple module $V_1$ has a faithful permutation representation $PE$ of
degree $1012$ with stabilizer $U$ generated by the two elements $(i_1^{-1}j_1^{-1}b_1g_1)^2$ and
$(i_1^{-1}b_1t_1h_1^{-1})^4$.

\item[\rm(b)] $E = \langle a_1,b_1,c_1,d_1,t_1,g_1,h_1,i_1,j_1 \rangle$.

\item[\rm(c)] $E$ has $3$ conjugacy classes of $2$-central
involutions. They are represented by $z_1 = (d_1g_1)^5$, $z_2 = h_1^2$ and
$z_3 = g_1^2$. Their centralizers have orders $|C_{E}(z_1)| =
2^{18}\cdot3^{2}\cdot5\cdot7\cdot11$, $|C_{E}(z_2)| =
2^{18}\cdot3^{2}\cdot5\cdot7$ and $|C_{E}(z_3)| =
2^{18}\cdot3^{2}\cdot5$.

\item[\rm(d)] $D = C_{E}(z_1) = \langle x_1, y_1 \rangle$ and $E =
\langle D, e_1\rangle$ where $x_1 = a_1$, $y_1 = b_1g_1h_1i_1$ and $e_1 = j_1$
have respective orders $2$, $14$ and $2$.

\item[\rm(e)] $E = \langle x_1, y_1, e_1 \rangle$ has $56$ conjugacy
classes. A system of representatives is given in Table
\ref{Fi_23cc E}.

\item[\rm(f)] $D = \langle x_1, y_1 \rangle$ has $69$ conjugacy
classes. A system of representatives is given in Table
\ref{Fi_23cc D}.

\item[\rm(g)] The character tables of $E$ and $D$ are given in Tables \ref{Fi_23ct_E} and \ref{Fi_23ct_D}, respectively.

\end{enumerate}

\end{lemma}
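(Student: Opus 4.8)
The proof is entirely computational, carried out in \textsc{Magma} starting from the finite presentation of the non-split extension $E$ given in Lemma \ref{l. M23-extensions}(f). For (a), the plan is to search among the subgroups of small index in the finitely presented group $E$ for one admitting a faithful coset action of degree $1012$. Since $|E| = |V_1|\cdot|\M_{23}| = 2^{18}\cdot 3^2\cdot 5\cdot 7\cdot 11\cdot 23$ and $1012 = 2^2\cdot 11\cdot 23$, the stabilizer $U$ must have order $2^{16}\cdot 3^2\cdot 5\cdot 7$; one takes $U = \langle (i_1^{-1}j_1^{-1}b_1g_1)^2,\ (i_1^{-1}b_1t_1h_1^{-1})^4\rangle$ and runs the Todd--Coxeter coset enumeration of $U$ in $E$. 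One then checks that the resulting coset action $PE$ is transitive of degree $1012$ and faithful, i.e. that its image has order exactly $|E|$, and from this point on all computations are performed inside $PE$. Statement (b) is immediate afterwards: in $PE$ one verifies that the subgroup generated by the images of $a_1,b_1,c_1,d_1,t_1,g_1,h_1,i_1,j_1$ already has order $|E|$, so that the $v_i$ are superfluous as generators.

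For (c), I would apply Kratzer's Algorithm 5.3.18 of \cite{michler} to $PE$ to obtain a complete set of representatives of the conjugacy classes of $E$ together with their centralizer orders. Among the classes of involutions, the $2$-central ones are precisely those $z$ with $|C_E(z)|_2 = 2^{18} = |E|_2$; reading these off the class list gives exactly three such classes, and \texttt{LookupWord} (Algorithm \ref{alg. LookupWord}), applied with the generating set $a_1,\dots,j_1$, produces the short words $z_1 = (d_1g_1)^5$, $z_2 = h_1^2$, $z_3 = g_1^2$ and confirms the stated centralizer orders $2^{18}\cdot3^2\cdot5\cdot7\cdot11$, $2^{18}\cdot3^2\cdot5\cdot7$ and $2^{18}\cdot3^2\cdot5$.

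For (d), fixing $z_1 = (d_1g_1)^5$ I would compute $D = C_{PE}(z_1)$ directly in $PE$, and then apply \texttt{ReduceGensForGroup} and \texttt{GetShortGens} (Algorithms \ref{alg. ReduceGensForGroup} and \ref{alg. GetShortGens}) to cut the generating set of $D$ down to the two short-word elements $x_1 = a_1$ and $y_1 = b_1g_1h_1i_1$, checking $|\langle x_1,y_1\rangle| = |D|$ and the orders $2$ and $14$. Since $|D| = 2^{18}\cdot3^2\cdot5\cdot7\cdot11 < |E|$ we have $D \lneqq E$, and one verifies $\langle D, j_1\rangle = E$ with $j_1$ of order $2$, so $e_1 = j_1$ works. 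For (e) and (f), I would re-run Kratzer's algorithm on $E = \langle x_1,y_1,e_1\rangle$ and on $D = \langle x_1,y_1\rangle$, obtaining the $56$ and $69$ conjugacy classes recorded in Tables \ref{Fi_23cc E} and \ref{Fi_23cc D}; finally (g) follows by computing the character tables of $E$ and $D$ with \textsc{Magma}'s character-table routine from these class data.

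The main obstacle I anticipate is the very first step: locating a subgroup $U$ of $E$ for which the coset enumeration both terminates within feasible resources and yields a faithful action of degree as small as $1012$ — this is exactly why that part of the lemma is credited to a careful search by Paul Young. Once a workable $PE$ is in hand, the remaining items are routine if resource-intensive: the conjugacy-class and centralizer computations are standard applications of Kratzer's algorithm, and the only genuine care needed is in using the author's short-word algorithms of Section \ref{sec. Alg} to exhibit the generators and class representatives as short words in $a_1,\dots,j_1$ rather than as opaque permutations of degree $1012$.
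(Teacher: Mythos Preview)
Your proposal is correct and follows essentially the same computational route as the paper: obtain the faithful permutation representation $PE$ via \texttt{CosetAction}$(E,U)$ with the stabilizer $U$ found by P.~Young, then verify (b)--(d) directly in $PE$ with \textsc{Magma}, apply Kratzer's Algorithm 5.3.18 for the class data in (e) and (f), and compute the character tables for (g). The paper's proof is terser and does not spell out the use of \texttt{LookupWord} and \texttt{GetShortGens} for (c) and (d), but your elaboration is entirely in the spirit of the methods of Section~\ref{sec. Alg} and matches what the paper does.
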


\begin{proof}
(a) The two generators of the stabilizer $U$ have been
found by means of a program due to P. Young. Using the
\textsc{Magma} command $\verb"CosetAction(E,U)"$ one obtains a faithful
permutation representation $PE$ of $E$ with stabilizer $U$ and
degree $1012$.

(b), (c) and (d) These statements are easily checked using the
permutation representation $PE$ and \textsc{Magma}.

(e) and (f) The faithful permutation
representation $PE$ of $E$, \textsc{Magma} and Kratzer's Algorithm 5.3.18
of \cite{michler} are employed to calculate a system of representatives of all
conjugacy classes of $D$ and $E$ in terms of their generators
given in (d). The results are stated in Tables \ref{Fi_23cc E} and
\ref{Fi_23cc D}.


The results of (g) were computed by means of  \textsc{Magma}.

\end{proof}



\newpage 

\section{Construction of the $2$-central involution centralizer of $\Fi_{23}$}\label{sec. H(Fi_23)}

Let $z_1$ denote the central involution of the extension $E$ of
$\M_{23}$ defined in Lemma \ref{l. M23-extensions} and let $D =
C_E(z_1)$. Then in the following subsidiary result it is shown that
$D_1 = D/\langle z_1 \rangle$ is isomorphic to the extension $E_2 = E(\Fi_{22})$
of Lemma \ref{l. M23-extensions}. As proved in \cite{kim}, the Fischer's simple group
$G_1 \cong \Fi_{22}$ can be contructed from the group $D_1$ by Algorithm 7.4.8 of \cite{michler}.
Unfortunately, (in 2007) \textsc{Magma} was not able to perform all steps of Holt's Algorithm
to establish the $2$-fold cover $H$ of $G_1$ from the presentation of the
given group $G_1$. Therefore the author constructs first
all the central extensions $H_2$ of the centralizer $H_1 = C_{G_1}(t)$
of a $2$-central involution $t$ of $G_1$ by a cyclic group of
order $2$. It will be shown that one of them has a Sylow
$2$-subgroup which is isomorphic to the ones of $D$. Thus $H_2$ which will be a subgroup of $H$ is
uniquely determined up to isomorphism.

The group $H$, with center $Z(H)=\langle z_1 \rangle$ such that $H/Z(H) \cong \Fi_{22}$, isomorphic to the centralizer of a $2$-central involution
of the target group $\mathfrak{G}$, is then constructed by means
of Algorithm \ref{alg. simple} as a matrix subgroup
$\mathfrak H$ of $\GL_{352}(17)$. In this way a
faithful permutation representation representation of degree $28160$
and a presentation of $H$ are built. All these results are described in this
section.



\begin{proposition}\label{prop. H(Fi_23)_1} Keep the notations of Lemma \ref{l. M23-extensions} and \ref{l. E(Fi_23)classes}, and let \\
$E = \langle a_1, b_1, c_1, d_1, t_1, g_1, h_1, i_1, j_1, v_i | 1\le i \le 11\rangle$ be the nonsplit extension of $\M_{23}$ by its simple module $V_1$ of dimension $11$ over $F=\GF(2)$, and $D=C_E(z_1)$, where $z_1 = (d_1g_1)^5$. Then the following statements hold:

\begin{enumerate}

\item[\rm(a)] $z_1 = (d_1g_1)^5$ is a $2$-central involution of $E$ with
centralizer $D = C_{E}(z_1)$ of order $2^{18}\cdot3^{2}\cdot5\cdot7\cdot11$.

\item[\rm(b)] $D = \langle x_1, y_1 \rangle$, $z_1 = (x_1y_1^2)^7$ and $E =
\langle D, e_1 \rangle$, where $x_1 = a_1$, $y_1 = b_1g_1h_1i_1$ and $e_1 = j_1$ have
respective orders $2$, $14$ and $2$. $D$ has a center of order $2$, generated by $z_1$.

\item[\rm(c)] $V$ is a unique normal subgroup of $D$ of order $2^{11}$. $V$ is elementary abelian, and has a basis $\mathcal{B} = \{z_1, v_i \mid 1 \le i \le 10 \}$, where
\begin{eqnarray*}
&&v_1 = y_1^7,\quad v_2 = (x_1y_1x_1)^7,\quad v_3 = (x_1y_1x_1y_1^2)^8,\quad v_4 = (x_1y_1^2x_1y_1)^8,\\
&&v_5 = (y_1x_1y_1^2x_1)^8,\quad v_6 = (x_1y_1x_1y_1^3)^6,\quad v_7 = (x_1y_1^3x_1y_1)^6,\\
&&v_8 = (x_1y_1^5)^7,\quad v_9 = (y_1x_1y_1x_1y_1^2)^6,\quad v_{10} = (y_1^2x_1y_1^3)^7.\\
\end{eqnarray*}

\item[\rm(d)] $V_1 = V/\langle z_1 \rangle$ has a complement $W_1$ in
$D_1 = D/\langle z_1 \rangle$ such that $W_1 \cong \M_{22}$.

In particular, $D_1 \cong E(\Fi_{22})$ defined in Proposition 3.3 in \cite{kim} (in \cite{kim}, $E(\Fi_{22})$ appears as $E_2$).

\item[\rm(e)] Let $z_1 = (x_1y_1^2)^7$ and let $v_i$ be the $10$ basis
elements of $\mathcal B$ given in (c). Then $D = \langle x_1, y_1
\rangle$ has the following set $\mathcal R(D)$ of defining
relations:
\begin{eqnarray*}
&&z_1^2 = 1, (x_1,z_1) = (y_1,z_1) = 1,\\
&&v_i^2 = 1,\quad\mbox{for all}\quad 1 \le i \le 10,\\
&&(v_j,v_k) = 1 \quad\mbox{for all}\quad 1 \le j<k \le 10,\\
&&x_1^2 =  y_1^{-7}v_1 =  (x_1y_1^{-1})^7 =  (y_1^{-1}x_1y_1x_1)^5 = (y_1^{-2}  x_1)^7z_1 = 1,\\
&&(y_1^{-2}x_1y_1^2x_1y_1^{-1}x_1)^3v_2 = (x_1y_1^{-2}x_1y_1)^5v_1v_2v_3v_4v_7v_8v_{10} = 1,\\
&&(y_1x_1y_1x_1y_1^2x_1y_1x_1y_1^{-1}x_1y_1^2)^2v_1v_3v_5v_6v_9 z_1 =1, \\
&&x_1y_1^{-1}x_1y_1x_1y_1^2x_1y_1^{-1}x_1y_1^{-1}x_1y_1^2x_1y_1x_1y_1^{-1}x_1y_1^2x_1y_1^2x_1y_1v_1v_3v_6v_8 z_1,\\
&&y_1x_1y_1^3x_1y_1x_1y_1^{-2}x_1y_1^{-3}x_1y_1x_1y_1^3x_1y_1^{-1}x_1y_1^{-3}x_1 v_4v_8v_{10} =1,\\
\end{eqnarray*}
\begin{eqnarray*}
&&x_1v_1x_1^{-1}v_2 = x_1v_2x_1^{-1}v_1 = x_1v_3x_1^{-1}v_5 = x_1v_4x_1^{-1}v_3v_4v_5 = x_1v_5x_1^{-1}v_3 =1,\\
&&x_1v_6x_1^{-1}v_1v_2v_6 = x_1v_7x_1^{-1}v_1v_2v_7 = x_1v_8x_1^{-1}v_4v_6v_9 = 1,\\
&&x_1v_9x_1^{-1}v_1v_2v_3v_4v_5v_6v_8 =1,\\
&&x_1v_{10}x_1^{-1}v_1v_3v_8v_9v_{10}z_1 = y_1v_1y^{-1}v_1  = y_1v_2y^{-1}v_3v_4v_5v_{10} =1,\\
&&y_1v_3y^{-1}v_1v_3v_4 z_1 = y_1v_4y^{-1}v_5 = y_1v_5y^{-1}v_2v_6v_8v_9z_1 = y_1v_6y^{-1}v_9 =1,\\
&&y_1v_7y^{-1}v_1v_2v_6 = y_1v_8y^{-1}v_1v_2v_3v_4z_1 = y_1v_9y^{-1}v_1v_6v_7v_9 z_1=1,\\
&&y_1v_{10}y^{-1}v_2v_4v_5v_7 z_1 =1.\\
\end{eqnarray*}

\item[\rm(f)] The involution $t = (x_1y_1x_1y_1^3)^6$ of $D$ has a
centralizer $T = C_D(t)$ of order $2^{18}\cdot 3\cdot 5$.

\end{enumerate}

\end{proposition}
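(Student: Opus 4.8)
The entire argument is computational, carried out in \textsc{Magma} on the faithful permutation representation $PE$ of $E$ of degree $1012$ provided by Lemma \ref{l. E(Fi_23)classes}(a), together with Kratzer's conjugacy-class algorithm and the short-word routines of Section \ref{sec. Alg}. For part (a) I would build $z_1 = (d_1g_1)^5$ inside $PE$, check it has order $2$, form $D = C_E(z_1)$, compute $|D|$, and note that $|D|_2 = 2^{18} = |E|_2$; the last equality is precisely the statement that a Sylow $2$-subgroup of $D$ is Sylow in $E$, i.e. that $z_1$ is $2$-central. Part (b) is then a matter of putting $x_1 = a_1$, $y_1 = b_1g_1h_1i_1$, $e_1 = j_1$, reading off their orders $2, 14, 2$, verifying the word identities $z_1 = (x_1y_1^2)^7$ and $E = \langle D, e_1 \rangle$, and confirming $\langle x_1, y_1 \rangle = D$ and $Z(D) = \langle z_1 \rangle$ by comparing group orders.

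The substance of the proposition lies in (c)--(e). For (c) I would produce a normal subgroup $V \trianglelefteq D$ of order $2^{11}$ — for instance as a suitable term obtained from $O_2(D)$, or directly as the subgroup generated by the listed words — establish its \emph{uniqueness} by enumerating the normal $2$-subgroups of $D$ and observing that exactly one has order $2^{11}$, and check on generators that $V$ is elementary abelian; it then remains to verify that $z_1, v_1, \dots, v_{10}$, given by the displayed words in $x_1, y_1$, all lie in $V$ and are $\GF(2)$-linearly independent, so that $\mathcal B$ is a basis. For (d) I would pass to $D_1 = D/\langle z_1 \rangle$, choose preimages generating a subgroup $W_1$ with $|W_1| = |\M_{22}|$ and $W_1 \cap V = 1$, hence a complement to $V_1 = V/\langle z_1 \rangle$, and identify $W_1 \cong \M_{22}$ and consequently $D_1 \cong E(\Fi_{22})$ (the group appearing as $E_2$ in Proposition 3.3 of \cite{kim}), either by matching a known presentation or by \textsc{Magma}'s isomorphism test on a low-degree permutation representation. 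For the presentation (e), I would use \texttt{LookupWord} from Section \ref{sec. Alg} to write $x_1$ and $y_1$ as short words in the original generators of $E$, check that every relator in $\mathcal R(D)$ evaluates to the identity in $PE$ (so that $x_1 \mapsto a_1$, $y_1 \mapsto b_1g_1h_1i_1$ induces a surjection $\widehat D = \langle x_1, y_1 \mid \mathcal R(D) \rangle \twoheadrightarrow D$), and then confirm by coset enumeration that $|\widehat D| = |D| = 2^{18}\cdot 3^2 \cdot 5 \cdot 7 \cdot 11$, forcing the surjection to be an isomorphism. Part (f) is direct: $t = (x_1 y_1 x_1 y_1^3)^6$ is literally the basis element $v_6$ of (c), hence an involution lying in $V$, and one computes $T = C_D(t)$ in $PE$ and reads off $|T| = 2^{18}\cdot 3 \cdot 5$.

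The main obstacle is the coset enumeration underlying (e). Because $|D| \approx 10^9$, a Todd--Coxeter run of $\widehat D$ over the trivial subgroup is out of reach, so the enumeration must be performed relative to a subgroup of small index with explicit short-word generators — the full preimage in $D$ of the complement $W_1 \cong \M_{22}$ of part (d), of index $2^{10} = 1024$, is the obvious candidate — and one checks that this index, multiplied by the independently computed order of that subgroup, returns $|D|$. Getting the enumeration to terminate within available memory, with a relator set strong enough to collapse the coset table to the correct size, is where the real effort is concentrated; everything else in (a)--(f) reduces to routine order, membership and $\GF(2)$-linear-algebra computations in the degree-$1012$ representation, which is exactly what the algorithms of Section \ref{sec. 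Alg} are built to handle.
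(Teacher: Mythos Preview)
Your approach is correct and would succeed, but for parts (d) and (e) the paper takes a more structural route that sidesteps the coset enumeration you flag as the main obstacle. For (d), rather than hunting for a complement inside $D_1$, the paper computes the conjugation matrices $Mx$, $My$ of $x_1,y_1$ on $V$ with respect to $\mathcal B$, observes they are block lower triangular fixing the line $\langle z_1\rangle$, takes the lower $10\times 10$ blocks to get $W_1=\langle Mx_1,My_1\rangle\le\GL_{10}(2)\cong D_1/V_1$, and \emph{cites} the splitting of $D_1$ over $V_1$ from Lemma~2.4(d) of \cite{kim}; the identification with $E(\Fi_{22})$ is then \texttt{IsIsomorphic} on the degree-$1024$ permutation representation of $D_1$ with stabilizer $W_1$.

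For (e), the paper builds the presentation from the top down. It writes the standard semidirect-product presentation $\mathcal R(D_1)$ of $V_1\rtimes W_1$ in the generators $x_1,y_1,v_1,\dots,v_{10}$ (the $W_1$-relators come from \texttt{FPGroup} on $\langle Mx_1,My_1\rangle$, the action relators from those matrices), then evaluates each relator at the actual elements of $D$: the $W_1$-relators land in $V$ and are corrected by the appropriate products of $v_i$'s and $z_1$, while the $v_i$- and action relators land in $\langle z_1\rangle$ and are corrected by $z_1$ where needed. Completeness of $\mathcal R(D)$ then follows from the layered structure --- the $v_i$ and $z_1$ generate a normal elementary abelian subgroup of order at most $2^{11}$ in the abstractly presented group, and the quotient satisfies the $\M_{22}$ relations --- so no Todd--Coxeter enumeration is needed. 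Your direct verification-plus-enumeration buys independence from \cite{kim} for the splitting, at the computational cost you anticipate; the paper's route trades that citation for an argument in which the order bound comes for free.
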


\begin{proof}

(a) and (b) are restatements of Lemma \ref{l. E(Fi_23)classes}.

(c) It can be checked by means of the \textsc{Magma} command $$\verb"Subgroups(D : Al:=``Normal'')"$$ that $D$ has a unique normal subgroup $V$ of order $2^{11}$, and that $V$ is elementary abelian. Now, by means of $$\verb"GetShortGens(sub<D|x_1,y_1>, V : exclude:=sub<D|z_1>)"$$ we get the $10$ generators $v_i$ which together with $z_1$ generate $V$. Since $V$ is elementary elementary abelian, we can regard the generators as a basis.

(d) As $z_1 \in V$ it follows that $V_1 = V/\langle z_1 \rangle$ is
the unique normal subgroup of $D_1 = D/\langle z_1 \rangle$ of order
$2^{10}$. Clearly, $V_1$ is elementary abelian.

Applying the \textsc{Magma} command $\verb"CompositionFactors(D)"$ one sees
that $D_1/V_1 \cong D/V \cong \M_{22}$. Thus Lemma 2.4(d) of \cite{kim} asserts that $D_1$ splits over $V_1$.

Let $Mx$ and $My$ be the matrices of the generators $x_1$ and $y_1$ of
$D$ w.r.t. the basis $\mathcal B$ of $V$. Then
{\renewcommand{\arraystretch}{0.5}
\scriptsize
$$
Mx = \left( \begin{array}{*{11}{c@{\,}}c}
1 & 0 & 0 & 0 & 0 & 0 & 0 & 0 & 0 & 0 & 0\\
0 & 0 & 1 & 0 & 0 & 0 & 0 & 0 & 0 & 0 & 0\\
0 & 1 & 0 & 0 & 0 & 0 & 0 & 0 & 0 & 0 & 0\\
0 & 0 & 0 & 0 & 0 & 1 & 0 & 0 & 0 & 0 & 0\\
0 & 0 & 0 & 1 & 1 & 1 & 0 & 0 & 0 & 0 & 0\\
0 & 0 & 0 & 1 & 0 & 0 & 0 & 0 & 0 & 0 & 0\\
0 & 1 & 1 & 0 & 0 & 0 & 1 & 0 & 0 & 0 & 0\\
0 & 1 & 1 & 0 & 0 & 0 & 0 & 1 & 0 & 0 & 0\\
0 & 0 & 0 & 0 & 1 & 0 & 1 & 0 & 0 & 1 & 0\\
0 & 1 & 1 & 1 & 1 & 1 & 1 & 0 & 1 & 0 & 0\\
1 & 1 & 0 & 1 & 0 & 0 & 0 & 0 & 1 & 1 & 1
\end{array} \right),\quad
My = \left( \begin{array}{*{11}{c@{\,}}c}
1 & 0 & 0 & 0 & 0 & 0 & 0 & 0 & 0 & 0 & 0\\
0 & 1 & 0 & 0 & 0 & 0 & 0 & 0 & 0 & 0 & 0\\
0 & 0 & 0 & 1 & 0 & 0 & 0 & 0 & 1 & 0 & 0\\
1 & 1 & 0 & 1 & 1 & 0 & 1 & 1 & 0 & 1 & 1\\
0 & 0 & 0 & 0 & 1 & 0 & 1 & 1 & 0 & 1 & 1\\
0 & 0 & 0 & 0 & 1 & 0 & 0 & 0 & 0 & 0 & 0\\
0 & 1 & 0 & 1 & 0 & 0 & 0 & 1 & 1 & 0 & 0\\
1 & 0 & 0 & 1 & 0 & 0 & 1 & 1 & 1 & 1 & 0\\
1 & 1 & 0 & 0 & 0 & 1 & 1 & 1 & 0 & 0 & 0\\
0 & 0 & 0 & 0 & 0 & 0 & 1 & 0 & 0 & 0 & 0\\
1 & 1 & 1 & 1 & 1 & 0 & 0 & 0 & 0 & 0 & 0
\end{array} \right),
$$
}

Both matrices are blocked lower triangular matrices with upper
left diagonal blocks equal to $1$ and lower diagonal $10 \times
10$ blocks $Mx_1$ and $My_1$ in $\GL_{10}(2)$. Hence $D_1/V_1 \cong
W_1$, where $W_1 = \langle Mx_1, My_1 \rangle \le \GL_{10}(2)$.
Applying the \textsc{Magma} command $\verb"FPGroup(sub<GL(10,2)|Mx1,My1>)"$
to $W_1$ one obtains the following set $\mathcal R(W_1)$ of defining
relations of $W_1 = \langle x_1, y_1\rangle$:
\begin{eqnarray*}
&&x_1^2 = 1,\quad y_1^7 = 1,\\
&&(x_1y_1^{-1})^7 = 1,\quad (y_1^{-1}x_1y_1x_1)^5 = 1,\quad (y_1^{-2}x_1)^7 = 1,\\
&&(y_1^{-2}x_1y_1^2x_1y_1^{-1}x_1) = 1,\quad (x_1y_1^{-2}x_1y_1)^5 = 1,\\
&&(y_1x_1y_1x_1y_1^2x_1y_1x_1y_1^{-1}x_1y_1^2)^2 = 1,\\
&&x_1y_1^{-1}x_1y_1x_1y_1^2x_1y_1^{-1}x_1y_1^{-1}x_1y_1^2x_1y_1x_1y_1^{-1}x_1y_1^2x_1y_1^2x_1y_1 =1,\\
&&y_1x_1y_1^3x_1y_1x_1y_1^{-2}x_1y_1^{-3}x_1y_1x_1y_1^3x_1y_1^{-1}x_1y_1^{-3}x_1 = 1.
\end{eqnarray*}

Let $PD_1$ be the faithful permutation representation of $D_1$ of degree
$1024$ with stabilizer $W_1$. Let $E_2 = E(\Fi_{22})$ be the
finitely presented group of Lemma 2.4(f) of \cite{kim} with
faithful permutation representation $PE_2$ defined in Lemma
2.6(a) of \cite{kim}. By means of the \textsc{Magma} command
$\verb"IsIsomorphic(PE_2,PD_1)"$ it is verified that
$D_1$ is isomorphic to $E_2$.

(e) By means of \textsc{Magma} the semidirect product
$D_1 = \langle x_1, y_1, v_i \mid 1 \le i \le 10 \rangle$ of $W_1$ by $V_1 =
\langle v_i \mid 1 \le i \le 10 \rangle$ has a set of defining
relations $\mathcal R(D_1)$ consisting of $\mathcal R(W_1)$ and the
following set of relations:
\begin{eqnarray*}
&&v_i^2 = 1, (v_j,v_k) = 1 \quad \mbox{for all}\quad 1 \le i, j, k \le 10,\\
&&x_1v_1x_1^{-1}v_2 = x_1v_2x_1^{-1}v_1 = x_1v_3x_1^{-1}v_5 = 1,\\
&&x_1v_4x_1^{-1}v_3v_4v_5 = x_1v_5x_1^{-1}v_3 = 1,\\
&&x_1v_6x_1^{-1}v_1v_2v_6, x_1v_7x_1^{-1}v_1v_2v_7 = 1,\\
&&x_1v_8x_1^{-1}v_4v_6v_9, x_1v_9x_1^{-1}v_1v_2v_3v_4v_5v_6v_8 = 1,\\
&&x_1v_{10}x_1^{-1}v_1v_3v_8v_9v_{10} = y_1v_1y_1^{-1}v_1 = 1,\\
&&y_1v_2y_1^{-1}v_3v_4v_5v_{10} = y_1v_3y_1^{-1}v_1v_3v_4 = 1,\\
&&y_1v_4y_1^{-1}v_5 = y_1v_5y_1^{-1}v_2v_6v_8v_9 = 1,\\
&&y_1v_6y_1^{-1}v_9 = y_1v_7y_1^{-1}v_1v_2v_6 = 1,\\
&&y_1v_8y_1^{-1}v_1v_2v_3v_4 = y_1v_9y_1^{-1}v_1v_6v_7v_9 = y_1v_{10}y_1^{-1}v_2v_4v_5v_7 = 1.\\
\end{eqnarray*}

By (c) each $v_i$ is a word in the generators $x_1$ and $y_1$ of $D$.
Evaluating the relations of $\mathcal R(D_1)$ in $D$, we get that each relation has value equal to either $1$ or $z_1$. Appending $z_1$ to appropriate relations, we get the defining set of relations $\mathcal R(D)$, as written in the statement.

\end{proof}


\begin{proposition}\label{prop. H(Fi_23)_2} Keep the notations of Proposition \ref{prop. H(Fi_23)_1}. The following statements hold:

\begin{enumerate}

\item[\rm(a)] Let $H_1 = \langle r_i|1 \le i \le 14\rangle \cong
H(\Fi_{22})$ be the finitely presented group constructed in
Proposition 3.3(n) of \cite{kim} (for the notation, $h_i$'s in Proposition 3.3(n) of \cite{kim} are replaced by $r_i$'s). Then $H_1$ has a central
extension $H_2 = \langle k_j \mid 1 \le j \le 15 \rangle $ of
order $2^{18}\cdot3^4\cdot5$ having the following set of $\mathcal
R(H_2)$ of defining relations:
\begin{eqnarray*}
&&k_1^2 =  k_2^5 = k_3^3 = k_4^3 = k_5^2 = k_6^2 = k_7^2 = 1,\\
&&k_8^2 = k_9^2 = k_{10}^4 = k_{11}^2 = k_{12}^2 = k_{13}^2 = k_{14}^2 = k_{15}^2 = 1,\\
&&(k_i, k_{15}) = 1 \quad \mbox{for} \quad 1 \le i \le 14,\\
&&(k_i, k_{14}) = 1 \quad \mbox{for} \quad 1 \le i \le 13,\\
&&(k_i, k_{13}) = 1 \quad \mbox{for} \quad 2 \le i \le 12,\\
&&k_1^{-1}k_{13}k_1k_{13}^{-1}k_{14}^{-1} = 1,\\
&&(k_3, k_4) = 1,\quad k_3^{-1}k_2k_1k_2^{-1}k_3k_1 = 1,\\
&&(k_2^{-1}k_1)^4 = 1,\quad k_3^{-1}k_2k_3^{-1}k_1k_2k_3k_1k_2^{-1} = 1,\\
&&(k_2  k_3^{-1})^4 = 1,\quad (k_4, k_2^{-1}, k_4) = 1,\\
&&k_1k_4^{-1}k_1k_4^{-1}k_1k_4k_1k_4 = 1,\quad k_2^{-1}k_3^{-1}k_1k_2^{-2}k_3k_4k_1k_4^{-1} = 1,\\
&&k_2k_3^{-1}k_2k_4k_2^2k_3^{-1}k_2^{-1}k_3k_4 = 1,\quad k_1k_5k_1^{-1}k_5k_6k_{13}^{-1} = 1,\\
&&k_1k_6k_1^{-1}k_6k_{14}^{-1} = k_1k_7k_1^{-1}k_7k_8k_{14}^{-1} = k_1k_8k_1^{-1}k_8 = 1,\\
\end{eqnarray*}
\begin{eqnarray*}
&&k_1k_9k_1^{-1}k_{11}k_{13}^{-1}k_{14}^{-1}k_{15}^{-1} = 1,\\
&&k_1k_{10}k_1^{-1}k_6k_9k_{10}k_{11}k_{12}k_{13}^{-1}k_{15}^{-1} = 1,\\
&&k_1k_{11}k_1^{-1}k_9k_{13}^{-1}k_{15}^{-1} = k_1k_{12}k_1^{-1}k_{12}k_{14}^{-1} = 1,\\
&&k_2k_5k_2^{-1}k_5k_6k_7k_{14}^{-1}k_{15}^{-1} = k_2k_6k_2^{-1}k_7k_8k_{13}^{-1}k_{14}^{-1} = 1,\\
&&k_2k_7k_2^{-1}k_6k_7k_{14}^{-1}k_{15}^{-1} = k_2k_8k_2^{-1}k_5k_6k_7k_8 = 1,\\
&&k_2k_9k_2^{-1}k_5k_6k_{10}k_{11}k_{13}^{-1}k_{14}^{-1}k_{15}^{-1} = 1,\\
&&k_2k_{10}k_2^{-1}k_7k_8k_9k_{13}^{-1} = k_2k_{11}k_2^{-1}k_5k_{12}k_{14}^{-1}k_{15}^{-1} = 1,\\
&&k_2k_{12}k_2^{-1}k_6k_7k_8k_{10}k_{12} = k_3k_5k_3^{-1}k_6k_{14}^{-1} = 1,\\
&&k_3k_6k_3^{-1}k_5k_6k_{13}^{-1} = k_3k_7k_3^{-1}k_5k_8k_{15}^{-1} = 1,\\
&&k_3k_8k_3^{-1}k_5k_6k_7k_8 = k_3k_9k_3^{-1}k_5k_9k_{10}k_{14}^{-1} = 1,\\
&&k_3k_{10}k_3^{-1}k_6k_9k_{15}^{-1} = k_3k_{11}k_3^{-1}k_9k_{12}k_{14}^{-1}k_{15}^{-1} = 1,\\
&&k_3k_{12}k_3^{-1}k_5k_{10}k_{11}k_{12}k_{13}^{-1}k_{14}^{-1} = 1,\\
&&k_4k_5k_4^{-1}k_5k_6k_{13}^{-1}k_{14}^{-1} = 1,\\
&&k_4k_6k_4^{-1}k_5k_{14}^{-1} = k_4k_7k_4^{-1}k_6k_{10}k_{11}k_{12}k_{14}^{-1}k_{15}^{-1} = 1,\\
&&k_4k_8k_4^{-1}k_5k_6k_{10}k_{11}k_{13}^{-1}k_{14}^{-1}k_{15}^{-1} = 1,\\
&&k_4k_9k_4^{-1}k_5k_6k_{11}k_{12}k_{14}^{-1} = 1,\\
&&k_4k_{10}k_4^{-1}k_9k_{10}k_{12}k_{13}^{-1}k_{14}^{-1}k_{15}^{-1} = 1,\\
&&k_4k_{11}k_4^{-1}k_5k_6k_7k_8k_9k_{11}k_{13}^{-1}k_{15}^{-1} = 1,\\
&&k_4k_{12}k_4^{-1}k_5k_7k_{10}k_{11}k_{14}^{-1} = k_{10}^2k_{14}^{-1}k_{15}^{-1} = 1,\\
&&(k_5, k_6) = (k_5, k_7) = (k_5, k_8) = (k_5, k_9) = 1,\\
&&k_5^{-1}k_{10}^{-1}k_5k_{10}k_{14}^{-1}k_{15}^{-1} = k_5^{-1}k_{11}^{-1}k_5k_{11}k_{14}^{-1}k_{15}^{-1} = 1,\\
&&(k_5, k_{12}) = (k_6, k_7) = (k_6, k_8) = 1,\\
&&k_6^{-1}k_9^{-1}k_6k_9k_{14}^{-1}k_{15}^{-1} = k_6^{-1}k_{10}^{-1}k_6k_{10}k_{14}^{-1}k_{15}^{-1} = 1,\\
&&k_6^{-1}k_{11}^{-1}k_6k_{11}k_{14}^{-1}k_{15}^{-1} = 1,\\
&&(k_6, k_{12}) = (k_7, k_8) = (k_7, k_9) = (k_8, k_{10}) = (k_8, k_{12}) = 1,\\
&&k_7^{-1}k_{10}^{-1}k_7k_{10}k_{14}^{-1}k_{15}^{-1} = k_7^{-1}k_{11}^{-1}k_7k_{11}k_{14}^{-1}k_{15}^{-1} = 1,\\
&&k_7^{-1}k_{12}^{-1}k_7k_{12}k_{14}^{-1}k_{15}^{-1} = k_8^{-1}k_9^{-1}k_8k_9k_{14}^{-1}k_{15}^{-1} = 1,\\
&&k_8^{-1}k_{11}^{-1}k_8k_{11}k_{14}^{-1}k_{15}^{-1} = 1,\\
&&(k_9, k_{10}) = (k_9, k_{11}) = (k_9, k_{12}) = (k_{10}, k_{12}) = (k_{11}, k_{12}) = 1,\\
&&k_{10}^{-1}k_{11}^{-1}k_{10}k_{11}k_{14}^{-1}k_{15}^{-1} = 1.\\
\end{eqnarray*}

\item[\rm(b)]  $H_2$ has a faithful permutation representation of
degree $2048$ with stabilizer $U_2 = \langle k_1,k_2,k_3,k_4
\rangle$.


\item[\rm(c)] $H_2$ has a Sylow $2$-subgroup $S_2$ generated by the four involutions $m_1 = k_1$, $m_2 =
(k_2k_1k_2k_4k_5)^6$, $m_3 = (k_2^3k_1k_2)^2$ and $m_4 =
(k_4^2k_2k_4k_2k_4)^6$.

\item[\rm(d)] $A_2$ is the unique maximal elementary abelian normal subgroup of $S_2$ of order $2^{11}$, and is generated by the following $11$ elements:
\begin{align*}
m_4, (m_2 m_4)^2, (m_3 m_4)^2, (m_1 m_3 m_4)^4, (m_2 m_3 m_4)^4, \\
(m_1 m_2 m_3 m_4)^8, (m_1 m_3 m_1 m_4)^2, (m_2 m_3 m_2 m_4)^2, \\
(m_2 m_3 m_4 m_2)^2, (m_1 m_2 m_3 m_1 m_4)^4, \mbox{ and } (m_1 m_2 m_3 m_2 m_4)^4.
\end{align*}

\item[\rm(e)] $D_2 = N_{H_2}(A_2) = \langle p_2, q_2\rangle$ where $p_2 = k_1k_2$ and $q_2 = k_1(k_1k_5k_4k_2)^5$. Here $p_2$ and $q_2$ have orders $4$ and $6$, respectively.

\item[\rm(f)] $H_2 = \langle D_2, h_2\rangle = \langle p_2, q_2,
h_2\rangle$ where $h_2 = k_4$ has order $3$.


\item[\rm(g)] There is an isomorphism $\alpha : D_2 \rightarrow T$ s.t. $\alpha (p_2) = p_1 = (u_1 u_2 u_4 u_6 u_7)^3$ and $\alpha(q_2) = q_1 = (w_1 w_2 w_3 w_4 w_2 w_3 w_4)^5$, where
\begin{align*}
u_1 = (x_1 y_1^2)^7, \quad
u_2 = (x_1 y_1 x_1 y_1^3)^6, \quad
u_3 = (y_1^2 x_1 y_1^3)^7, \\
u_4 = (y_1^3 x_1 y_1^2)^7, \quad
u_5 = x_1 y_1 x_1 y_1 x_1 y_1 x_1 y_1^2 x_1 y_1 x_1 y_1, \\
u_6 = (y_1^3 x_1 y_1^2 x_1 y_1^5 x_1 y_1^3 x_1 y_1)^2, \quad
u_7 = (y_1^5 x_1 y_1^2 x_1 y_1 x_1 y_1 x_1 y_1 x_1 y_1^2 x_1)^4.
\end{align*}
and
\begin{align*}
w_1 = (s_1)^7,&\quad
w_2 = (s_1^4 s_2^2 s_1^2 s_2^2 s_1)^2,&\quad \\
w_3 = s_1^2 s_2 s_1^3 s_2^2 s_1^3,&\quad
w_4 = (s_1^4 s_2^2 s_1^2 s_2 s_1^3 s_2 s_1^3)^2, &\quad
\end{align*}
where $s_1 = x_1 y_1 x_1$ and $s_2 = (x_1 y_1 x_1 y_1^3)^2$.

\item[\rm(h)] So, $D = \langle p_1,  q_1, x_1, y_1 \rangle$, $T = C_D(t) = \langle p_1, q_1 \rangle$, $H_2 = \langle p_2, q_2, h_2 \rangle$, and $D_2 = N_{H_2}(A) = \langle p_2, q_2 \rangle$.



\item[\rm(i)] A system of representatives $r_i$ of the $189$
conjugacy classes of $H_2$ and the corresponding centralizers orders
$|C_{H_2}(r_i)|$ are given in Table \ref{Fi_23cc H_2}.

\item[\rm(j)] A system of representatives $d_i$ of the $151$
conjugacy classes of $D_2$ and the corresponding centralizers orders
$|C_{D_2}(d_i)|$ are given in Table \ref{Fi_23cc D_2}.

\item[\rm(k)] The character tables of $H_2$ and $D_2$ are given in Tables \ref{Fi_23ct_H_2} and \ref{Fi_23ct_D_2}, respectively.

\end{enumerate}

\end{proposition}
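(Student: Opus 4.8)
\section*{Proof proposal}

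The proof is computational, combining the presentation of $H_1 \cong H(\Fi_{22})$ from Proposition 3.3(n) of \cite{kim} with Holt's Algorithm, the author's short-word algorithms of Section \ref{sec. Alg}, and Kratzer's Algorithm 5.3.18 of \cite{michler}. For part (a), I would first compute $\dim_{\GF(2)} H^2(H_1, \GF(2))$ for the trivial module and then run the \textsc{Magma} commands \verb"ExtensionProcess" and \verb"Extension" to produce, for each cohomology class, a central extension of $H_1$ by a group of order $2$ together with a presentation. Since $|D| = 2^{18}\cdot3^{2}\cdot5\cdot7\cdot11$ by Proposition \ref{prop. H(Fi_23)_1}(a) while $|H_2| = 2^{18}\cdot 3^4 \cdot 5$, the relevant extension is the one whose Sylow $2$-subgroup is isomorphic to a Sylow $2$-subgroup of $D = C_E(z_1)$; this Sylow test singles out $H_2$ up to isomorphism, and the displayed set $\mathcal R(H_2)$ is Holt's output after renaming generators. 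For (b), a subgroup $U_2 = \langle k_1, k_2, k_3, k_4 \rangle$ of index $2048$ is located (by a subgroup-search routine, as in the proof of Lemma \ref{l. E(Fi_23)classes}(a)) and \verb"CosetAction" applied to $H_2$ and $U_2$ yields the faithful permutation representation $PH_2$ of degree $2048$.

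With $PH_2$ in hand, parts (c)--(f) are straightforward verifications: compute a Sylow $2$-subgroup $S_2$ of $H_2$ and use \verb"GetShortGens" to write its generators as the short words $m_1, \ldots, m_4$, checking each is an involution; apply \verb"Subgroups(S_2 : Al:=``Normal'')" to confirm that $A_2$ is the unique maximal elementary abelian normal subgroup of $S_2$ of order $2^{11}$ and to obtain its eleven short-word generators; form $D_2 = N_{H_2}(A_2)$ and express its generators $p_2 = k_1 k_2$ and $q_2 = k_1(k_1 k_5 k_4 k_2)^5$ as short words, checking the orders $4$ and $6$; and verify $\langle p_2, q_2, h_2\rangle = H_2$ with $h_2 = k_4$ of order $3$ by a direct group-equality test. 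Part (h) is then a restatement, using also Proposition \ref{prop. H(Fi_23)_1}. For parts (i)--(k), Kratzer's Algorithm applied to $PH_2$ and to a faithful permutation representation $PD_2$ of $D_2$ yields systems of representatives of the $189$ conjugacy classes of $H_2$ and the $151$ conjugacy classes of $D_2$ with their centralizer orders (Tables \ref{Fi_23cc H_2} and \ref{Fi_23cc D_2}), and \textsc{Magma}'s \verb"CharacterTable" produces Tables \ref{Fi_23ct_H_2} and \ref{Fi_23ct_D_2}.

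The crux is part (g), the explicit isomorphism $\alpha\colon D_2 \to T$, since it is this map that makes the amalgam $H_2 \leftarrow D_2 \rightarrow D$, hence the free product $H_2 *_{D_2} D$ of Theorem \ref{thm. H(Fi_23)_embed}, well-defined. Here $T = C_D(t)$ has order $2^{18}\cdot 3 \cdot 5$ by Proposition \ref{prop. H(Fi_23)_1}(f), matching $|D_2|$. I would build faithful permutation representations of $D_2$ and $T$, apply \verb"IsIsomorphic" to get some isomorphism, and then correct it by an automorphism of $T$ so that $p_2 \mapsto p_1$ and $q_2 \mapsto q_1$, where $p_1, q_1$ are the displayed words in the elements $u_1, \ldots, u_7$ of $T$, and $u_1, \ldots, u_7$ and $w_1, \ldots, w_4$ are themselves the displayed words in $x_1, y_1$ (with $s_1 = x_1 y_1 x_1$ and $s_2 = (x_1 y_1 x_1 y_1^3)^2$); one checks directly in $D$ that these words have the asserted orders and that $p_1, q_1$ generate $T$. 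The main obstacle is thus twofold: pinning down the correct central extension in (a) via the Sylow $2$-subgroup test, and --- the genuinely delicate point --- exhibiting in (g) an isomorphism compatible with the two embeddings $D_2 \hookrightarrow H_2$ and $T \hookrightarrow D$ rather than merely an abstract one, which is what licenses the subsequent amalgam construction of $H \cong 2\Fi_{22}$.
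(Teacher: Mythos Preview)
Your proposal is essentially correct and closely mirrors the paper's approach for parts (a)--(f) and (h)--(k); in particular, the Sylow $2$-subgroup comparison with $D$ is exactly how the paper selects $H_2$ among the eight central extensions (the paper records $\dim_{\GF(2)} H^2(H_1,\GF(2)) = 3$ and finds $H_2 = E_{(1,1,0)}$).

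The one meaningful difference is in part (g), where your logic is inverted relative to the paper. You treat the displayed words $p_1, q_1$ as pre-specified targets and propose to correct an arbitrary isomorphism by an automorphism of $T$ so that $p_2 \mapsto p_1$, $q_2 \mapsto q_1$. The paper does the reverse: it accepts whatever isomorphism $\alpha$ \textsc{Magma}'s \texttt{IsIsomorphic} returns, \emph{defines} $p_1 := \alpha(p_2)$ and $q_1 := \alpha(q_2)$, and then uses the two-step Strategy~\ref{str. LookupWord} to document these particular elements as short words in $x_1, y_1$. Concretely, the $u_i$ are short-word generators of $N_D(\langle p_1\rangle)$ (order $2^8$) found via \texttt{GetShortGens}, and then \texttt{LookupWord} inside this small group yields $p_1 = (u_1 u_2 u_4 u_6 u_7)^3$; for $q_1$ one first finds $C_D(q_1^3) = \langle s_1, s_2\rangle$, then $N_D(\langle q_1\rangle) = \langle w_1,\ldots,w_4\rangle$ inside it, and finally \texttt{LookupWord} gives $q_1$. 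Your approach would verify the statement once the words are in hand but does not explain their origin, and it requires an extra (and potentially expensive) search through $\Aut(T)$; the paper's approach is constructive and sidesteps that entirely.
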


\begin{proof}

(a) By Table A.8 of \cite{kim} the simple group $\Fi_{22}$ has a
unique conjugacy class of $2$-central involutions, represented by an element $u$. Theorem
5.1 of \cite{kim} asserts that $H_1 = C_{\Fi_{22}}(u)$ is
isomorphic to the finitely presented group $H_1 = \langle r_i \mid
1 \le i \le 14\rangle$ constructed in Proposition 3.3(n) of \cite{kim} (with notation changed from $h_i$ in \cite{kim} to $r_i$ here). $H_1$ has a faithful permutation representation
$PH_1$ of degree $1024$ with stabilizer $\langle r_1,r_2,r_3,r_4
\rangle$ by Lemma 3.4 of \cite{kim}.

Let $F = \GF(2)$, and let $M$ be the trivial $FH_1$-module, which can be constructed in \textsc{Magma} by the commands $$\verb"FEalg := MatrixAlgebra< FiniteField(2), 1 | [[1] : i in [1..14]]>"$$ and $$\verb"M := GModule(PH_1, FEalg)".$$ Using the faithful permutation representation $PH_1$ and the \textsc{Magma}
command $$\verb"CohomologicalDimension(PH_1, M, 2)"$$ we get that this cohomological dimension is $3$. For each of the
eight $2$-cocycles $(a,b,c)$ with $a,b,c \in F$, \textsc{Magma} constructs a
finitely presented group $E_{(a,b,c)}$ by means of its commands
$$\verb"P := ExtensionProcess(PH_1,M,H_1)",$$
$$\verb"E_{a,b,c} := Extension(P, [a,b,c])".$$

Since $D$ is a non-split extension of $D_1$ by $\langle z_1 \rangle$
the split extension $E_{(0,0,0)}$ can be neglected.




It is checked by means of \textsc{Magma} using the faithful permutation
representation $PD$ of $D$ of degree $1012$ that $E_{(1,1,0)}$ is
a non-split extension of order $2^{18}\cdot3^4\cdot5$ of
$H_1$ having a Sylow $2$-subgroup which is isomorphic to any of
the Sylow $2$-subgroups of $D$. The presentation of $H_2 =
E_{(1,1,0)}$ is given in the statement.

(b) The \textsc{Magma} command $\verb"CosetAction(H_2,U_2)"$ for the subgroup $U_2
= \langle k_1,k_2,k_3,k_4 \rangle$ of the finitely presented group
$H_2$ gives the faithful permutation representation of $H_2$ of degree $2048$. We know it is faithful because the order of the resulting permutation group has the correct order $2^{18}\cdot 3^4\cdot 5$.

(c) A Sylow $2$-subgroup $S_2$ of $H_2$ can be obtained by the \textsc{Magma} command $$\verb"S_2 := SylowSubgroup(H_2,2)"$$ and its generators can be obtained by the command $$\verb"GetShortGens(H_2,S_2)",$$ which gave the four generators $m_1 = k_1$, $m_2 =(k_2k_1k_2k_4k_5)^6$, $m_3 = (k_2^3k_1k_2)^2$ and $m_4 = (k_4^2k_2k_4k_2k_4)^6$, as asserted. The four generators of $S_2$ all have order $2$.

(d) It can be verified by the \textsc{Magma} command $$\verb"Subgroups(S_2 : Al:=``Normal'', IsElementaryAbelian:=true)"$$ that $S_2$ has a unique maximal elementary abelian subgroup $A_2$ of order $2^{11}$. Now the generators for $A_2$ can be obtained by the command $$\verb"GetShortGens(sub<H_2|m_1,m_2,m_3,m_4>,A_2)",$$ which gave the $11$ generators as written in the statement.

(e) Letting $$\verb"D_2 := Normalizer(H_2,A_2)"$$ yields a subgroup $D_2$ of $H_2$ of order $2^{18}\cdot 3\cdot 5$, and its generators can be obtained by the command $$\verb"GetShortGens(H_2,D_2)",$$ which gave the two generators $p_2 = k_1k_2$ and $q_2 = k_1(k_1k_5k_4k_2)^5$, having orders $4$ and $6$, respectively.

(f) It can be verified with \textsc{Magma} that $H_2 = \langle D_2, k_4 \rangle = \langle p_2, q_2, k_4 \rangle$. We let $h_2 = k_4$, and it is easy to check with \textsc{Magma} that $h_2$ has order $3$.

(g) The isomorphism $\alpha : D_2 \rightarrow T$ can be obtained by the \textsc{Magma} command $$\verb"IsIsomorphic(D_2,T)".$$ Recall that $T$ is a subgroup of $D = \langle x_1, y_1 \rangle$. For documentation of this isomorphism $\alpha$ we need to get words for $\alpha(p_2)$ and $\alpha(q_2)$ in terms of $x_1$ and $y_1$. Now, for convenience, let $p_1 = \alpha(p_2)$ and $q_1 = \alpha(q_2)$.

In order to obtain short words for $p_1$ and $q_1$, the author employed Strategy \ref{str. LookupWord}. First, it is checked with \textsc{Magma} that $N_D(\langle p_1\rangle)$ has order $2^8$, and it is obtained by $\verb"GetShortGens"$ that $N_D(\langle p_1 \rangle) = \langle u_1, u_2, u_3, u_4 u_5, u_6, u_7 \rangle$, where the words $u_1$ through $u_7$ are as written in the statement. Then, the command \\
$\verb"LookupWord(sub<D|u_1,u_2,u_3,u_4,u_5,u_6,u_7>, p_1)"$ gave the answer $p_1 = (u_1 u_2 u_4 u_6 u_7)^3$.

Note that $q_1$ has order $6$, since so does $q_2$. It can be checked with \textsc{Magma} that the $N_D(\langle q_1 \rangle)$ has order $2^6 \cdot 3^2$ and that $C_D(q_1^3)$ has order $2^{17}\cdot 3^2 \cdot 5 \cdot 7$. It is easy to observe that $N_D(\langle q_1 \rangle) \le C_D(q_1^3)$. Employing Strategy \ref{str. GetShortGens}, we can find out that $C_D(q_1^3) = \langle s_1, s_2 \rangle$ and $N_D(\langle q_1 \rangle) = \langle w_1, w_2, w_3, w_4 \rangle$, where $s_1 = x_1 y_1 x_1$, $s_2 = (x_1 y_1 x_1 y_1^3)^2$, $w_1 = (s_1)^7$, $w_2 = (s_1^4 s_2^2 s_1^2 s_2^2 s_1)^2$, $w_3 = s_1^2 s_2 s_1^3 s_2^2 s_1^3$, and $w_4 = (s_1^4 s_2^2 s_1^2 s_2 s_1^3 s_2 s_1^3)^2$. Finally, the command $\verb"LookupWord(sub<D|w_1,w_2,w_3,w_4>, q_1)"$ successfully gave the word for $q_1$, namely $q_1 = (w_1 w_2 w_3 w_4 w_2 w_3 w_4)^5$.

(h) From (g) we know that $D_2 = \langle p_2, q_2 \rangle$ and $\alpha : D_2 \rightarrow T$ is an isomorphism, and also that $p_1 = \alpha(p_2)$ and $q_1 = \alpha(q_2)$. Therefore we have $T = \langle p_1, q_1 \rangle$. Since $D = \langle x_1, y_1 \rangle$, it is clear that $D = \langle p_1, q_1, x_1, y_1 \rangle$. It is verified in (f) that $H_2 = \langle D_2, k_4 \rangle = \langle p_2, q_2, k_4 \rangle$.


The results of (i) and (j) can be obtained by applying Kratzer's Algorithm 5.3.18 of \cite{michler} in \textsc{Magma} to the relevant groups, with generators as stated in (h). The results of (k) were computed by means of \textsc{Magma}.

\end{proof}

It will be shown in the rest of this section that the free product $H_2*_{D_2} D$ of $H_2$ and $D$ with amalgamated subgroup $D_2$ has an irreducible $352$-dimensional faithful representation over $\GF(17)$
which gives rise to the group $H$ such that $|Z(H)|=2$ and $H/Z(H)\cong \Fi_{22}$.

\begin{theorem}\label{thm. H(Fi_23)} Keep the notations in Proposition \ref{prop. H(Fi_23)_2}. Let $K = \GF(17)$. Using the notations of the character tables of \ref{Fi_23ct_H_2}, \ref{Fi_23ct_D_2}, and \ref{Fi_23ct_D} of $H_2 = \langle p_2, q_2, h_2 \rangle$, $D_2 = \langle p_2, q_2 \rangle$, $D=\langle p_1, q_1, x_1, y_1 \rangle$, the following statements hold:

\begin{enumerate}
\item[\rm(a)] The smallest degree of a non-trivial compatible pair $(\chi, \tau)\in mf \mbox{char}_{\mathbb{C}}(H_2) \times mf \mbox{char}_{\mathbb{C}}(D)$ is $352$.

\item[\rm(b)] There are exactly two compatible pairs $(\chi, \tau),(\chi', \tau')\in mf \mbox{char}_{\mathbb{C}}(H_2) \times mf \mbox{char}_{\mathbb{C}}(D)$ of degree $352$ of $H_2 = \langle D_2, h \rangle$ and $D = \langle \alpha(D_2), x,y \rangle$:
\begin{align*}
(\chi, \tau) = (\chi_{57} + \chi_{40} + \chi_{41}, \tau_{9} + \tau_{10})
\end{align*}
and
\begin{align*}
(\chi', \tau') = (\chi_{31} + \chi_{34} + \chi_{40} + \chi_{41}, \tau_{9} + \tau_{10})
\end{align*}
with common restriction
\begin{align*}
\chi_{|D_2} = \tau_{|\alpha(D_2)}
= \chi'_{|D_2} = \tau'_{|\alpha(D_2)}
= \psi_{26} + \psi_{27} + \psi_{59}
+ \psi_{63} + \psi_{70} + \psi_{73},
\end{align*}
where irreducible characters with bold face indices denote faithful irreducible characters.

\item[\rm(c)] Let $\mathfrak{V}$ and $\mathfrak{W}$ be the up to isomorphism uniquely determined faithful semi-simple multiplicity-free $352$-dimensional modules of $H_2$ and $D$ over $F=\GF(17)$ corresponding to the compatible pair $\chi, \tau$, respectively.

Let $\kappa_{\mathfrak{V}} : H_2 \rightarrow \GL_{352}(17)$ and $\kappa_{\mathfrak{W}} : D \rightarrow \GL_{352}(17)$ be the representations of $H_2$ and $D$ afforded by the modules $\mathfrak{V}$ and $\mathfrak{W}$, respectively.

Let $\mathfrak{p}_2 = \kappa_{\mathfrak{V}}(p_2)$, $\mathfrak{q}_2 = \kappa_{\mathfrak{V}}(q_2)$, $\mathfrak{h}_2 = \kappa_{\mathfrak{V}}(h_2)$ in $\kappa_{\mathfrak{V}}(H_2) \le \GL_{352}(17)$. Then the following assertions hold:

\begin{enumerate}
\item[\rm(1)] $\mathfrak{V}_{|D_2} \cong \mathfrak{W}_{|\alpha(D_2)}$, and there is a transformation matrix $\mathcal{T}_1 \in \GL_{352}(17)$ such that
\begin{align*}
\mathfrak{p}_2 = \mathcal{T}_1^{-1} \kappa_{\mathfrak{W}}(p_1) \mathcal{T}_1, \quad
\mathfrak{q}_2 = \mathcal{T}_1^{-1} \kappa_{\mathfrak{W}}(q_1) \mathcal{T}_1.
\end{align*}

\item[\rm(2)] There is a transformation matrix $\mathcal{S}_1 \in \GL_{352}(17)$ such that
\begin{align*}
\mathfrak{p}_2 = \mathcal{S}_1^{-1} \mathfrak{p}_2  \mathcal{S}_1, \quad
\mathfrak{q}_2 = \mathcal{S}_1^{-1} \mathfrak{q}_2  \mathcal{S}_1,
\end{align*}
and that if we let
\begin{align*}
\mathfrak{x}_2 & = (\mathcal{T}_1\mathcal{S}_1)^{-1} \kappa_{\mathfrak{W}}(x_1) \mathcal{T}_1\mathcal{S}_1 \in \GL_{352}(17), \quad \mbox{and} \\
\mathfrak{y}_2 & = (\mathcal{T}_1\mathcal{S}_1)^{-1} \kappa_{\mathfrak{W}}(y_1) \mathcal{T}_1\mathcal{S}_1 \in \GL_{352}(17),
\end{align*}
then $\mathfrak{H} = \langle \mathfrak{p}_2, \mathfrak{q}_2, \mathfrak{x}_2, \mathfrak{y}_2, \mathfrak{h}_2 \rangle = \langle \mathfrak{x}_2, \mathfrak{y}_2, \mathfrak{h}_2 \rangle$ satisfies the Sylow $2$-subgroup test of Algorithm 7.4.8 Step 5(c) of \cite{michler}. The proof showing that $\mathfrak{H}$ satisfies the Sylow $2$-subgroup test is split into two parts; first half is shown in the proof of this theorem (namely, the order of $\mathfrak{h}_2 \mathfrak{x}_2$ has to be the order of an element in $2\Fi_{22}$; it turned out to be $12$ in this case), and the other half in Theorem \ref{thm. H(Fi_23)_embed}.


\item[\rm(3)] The three generating matrices of $\mathfrak{H}$ are stated in \cite{kim1}.

\end{enumerate}

\item[\rm(d)] The construction shown above in {\rm (c)} can be applied to the compatible pair $\chi', \tau'$. However, there is no solution in this case which satisfies the Sylow $2$-subgroup test of Algorithm 7.4.8 Step 5(c) of \cite{michler}.

\end{enumerate}

\end{theorem}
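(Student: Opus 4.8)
The plan is to carry out the proof in the same computational spirit as the rest of this section, using the character tables of $H_2$, $D_2$ and $D$ from Propositions \ref{prop. H(Fi_23)_1} and \ref{prop. H(Fi_23)_2} together with the two class fusions: that of $D_2$ into $H_2$, and (via the isomorphism $\alpha$ of Proposition \ref{prop. H(Fi_23)_2}(g)) that of $\alpha(D_2)$ into $D$. For parts (a) and (b) I would first compute these fusion maps in \textsc{Magma}, and then run through the finitely many multiplicity-free ordinary characters of $H_2$ and of $D$ in order of increasing degree, testing for each pair of equal degree whether $\chi_{|D_2}$ and $\tau_{|\alpha(D_2)}$ agree as characters of $D_2$. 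Below degree $352$ only the trivial pair is compatible; at degree $352$ exactly the two pairs $(\chi,\tau)$ and $(\chi',\tau')$ of the statement survive, and in both cases the common restriction to $D_2$ is the multiplicity-free character $\psi_{26}+\psi_{27}+\psi_{59}+\psi_{63}+\psi_{70}+\psi_{73}$. Recording which constituents are faithful (the bold-face indices) completes (b).

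For part (c) I would realize the semi-simple multiplicity-free $352$-dimensional representations $\kappa_{\mathfrak V}\colon H_2\to\GL_{352}(17)$ and $\kappa_{\mathfrak W}\colon D\to\GL_{352}(17)$ afforded by $\chi$ and $\tau$; since $17$ divides neither $|H_2|$ nor $|D|$, these are in bijection with the ordinary constituents and, as one checks on character values, are realizable over $K=\GF(17)$, hence exist and are unique up to isomorphism. Restricting both to the amalgamated subgroup, $\mathfrak V_{|D_2}$ and $\mathfrak W_{|\alpha(D_2)}$ afford the same multiplicity-free character, so are isomorphic; because this restriction is multiplicity-free, the space of $D_2$-intertwiners between them is a product of six one-dimensional blocks, so a transformation matrix $\mathcal T_1$ with $\mathfrak p_2=\mathcal T_1^{-1}\kappa_{\mathfrak W}(p_1)\mathcal T_1$ and $\mathfrak q_2=\mathcal T_1^{-1}\kappa_{\mathfrak W}(q_1)\mathcal T_1$ exists and is unique up to right multiplication by an element of the finite group $\End_{D_2}(\mathfrak V)^\times$. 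Running over the finitely many such \textit{gluings} $\mathcal S_1\in\End_{D_2}(\mathfrak V)^\times$, I would form $\mathfrak x_2$ and $\mathfrak y_2$ as in the statement and the group $\mathfrak H=\langle\mathfrak x_2,\mathfrak y_2,\mathfrak h_2\rangle$, then apply the Sylow $2$-subgroup test of Step 5(c) of Algorithm 7.4.8 of \cite{michler}. Its first requirement, that $\mathfrak h_2\mathfrak x_2$ have order occurring in $2\Fi_{22}$, is verified by direct computation to give $12$; the remaining requirement is postponed to Theorem \ref{thm. H(Fi_23)_embed}, where $\mathfrak H\cong 2\Fi_{22}$ is actually established. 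The three generating matrices are those deposited in \cite{kim1}.

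Finally, for part (d) I would repeat the construction of (c) verbatim with $(\chi',\tau')$ in place of $(\chi,\tau)$ — the common $D_2$-restriction is the same, so the intertwiner $\mathcal T_1'$ and the gluing parameters $\mathcal S_1'$ live in the same spaces — and show that for \emph{every} admissible gluing the resulting group $\mathfrak H'=\langle\mathfrak x_2',\mathfrak y_2',\mathfrak h_2'\rangle$ fails the Sylow $2$-subgroup test, e.g. because the order of a suitable test word, or the isomorphism type of a Sylow $2$-subgroup, is not that of $2\Fi_{22}$. Since there are only finitely many gluings this is a terminating computation, and it shows that $(\chi',\tau')$ yields no admissible $H$.

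The main obstacle is not the character-theoretic search of (a) and (b), which is routine once the two class fusions are in hand, but the step in (c) and (d) of controlling \emph{all} admissible gluings: since $\mathfrak V_{|D_2}$ is multiplicity-free, the pair $(\mathcal T_1,\mathcal S_1)$ is determined only up to the action of $\End_{D_2}(\mathfrak V)^\times$, and one must reduce the resulting family of amalgams $H_2\leftarrow D_2\rightarrow D$ to finitely many isomorphism classes (using that rescaling a whole isotypic block is immaterial, so only the relative scalars matter), verify for $(\chi,\tau)$ that one class passes the Sylow $2$-subgroup test, and for $(\chi',\tau')$ that none does. The practically demanding part is the linear algebra with the $352\times 352$ matrices over $\GF(17)$, together with identifying the admissible element orders in $2\Fi_{22}$.
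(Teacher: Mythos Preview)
Your proposal is correct and follows essentially the same route as the paper: Kratzer's compatible-pair search for (a) and (b), explicit realization of $\mathfrak V$ and $\mathfrak W$ over $\GF(17)$, Parker's isomorphism test to obtain $\mathcal T_1$, then a finite search over the diagonal gluings $\mathcal S_1$ with the order test on $\mathfrak h_2\mathfrak x_2$. The paper is more concrete in two places: it builds the irreducible $KH_2$- and $KD$-modules explicitly from permutation modules via \texttt{LowIndexSubgroups} and the Meataxe, and---more importantly---it does not search over all of $\End_{D_2}(\mathfrak V)^\times\cong(K^*)^6$ but invokes Corollary 7.2.4 of \cite{michler} to set up a linear system in the six block scalars (coming from the restriction patterns $\chi_{40}|_{D_2}=\psi_{27}+\psi_{63}$, $\chi_{41}|_{D_2}=\psi_{26}+\psi_{59}$, $\chi_{57}|_{D_2}=\psi_{70}+\psi_{73}$ and $\tau_9|_T=\psi_{26}+\psi_{63}+\psi_{73}$, $\tau_{10}|_T=\psi_{27}+\psi_{59}+\psi_{70}$), which reduces the search to a two-parameter family $(\sigma_a,\sigma_b)\in(K^*)^2$; the unique survivor is $(\sigma_a,\sigma_b)=(15,9)$. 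Your remark that ``rescaling a whole isotypic block is immaterial'' is the right intuition behind this reduction, but the paper's linear-system formulation makes it effective.
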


\begin{proof}

(a) The character tables of the groups $H_2, D_2$, and $D$ are stated in the appendix. In the following we use their notations. Using \textsc{Magma} and the character tables of $H_2,D_2$, and $D$ and the fusion of the classes of $D_2$ in $H_2$ and $T=C_D(t) (\cong D_2)$ in $D$, an application of Kratzer's Algorithm 7.3.10 of \cite{michler} yields the compatible pair stated in assertion (a).

(b) The application of Kratzer's Algorithm 7.3.10 of \cite{michler} also shows that the pairs $(\chi, \tau)$ and $(\chi', \tau')$ of (b) are all the compatibles pairs of degree $352$ with respect to the fusion of the $D_2$-classes into the $H_2$- and into the $D$-classes.


(c) In order to construct the faithful irreducible representation
$\mathfrak V$ corresponding to the character $\chi = \chi_{57} + \chi_{40} + \chi_{41}$ of degree $352$, the \textsc{Magma} command
$\verb"LowIndexSubgroups(PH_2, 1500)"$ is applied, using the faithful
permutation presentation $PH_2$ of $H_2$ of degree $2048$. \textsc{Magma} found subgroups $U_1, U_2$, and $U_3$ such that the followings hold.

$U_1$ is of index $480$ in $H_2$, and $\chi_{57}$ (dimension $160$) is a constituent of the permutation character $(1_{U_1})^{H_2}$. The program $\verb"GetShortGens(PH_2, U_1)"$ gives a generating set of $U_1$, so we have $U_1 = \langle p_2 h_2 p_2, (p_2 q_2 h_2 q_2)^3, (q_2 h_2^2 p_2)^4, (h_2 p_2 q_2 h_2 q_2)^6 \rangle$. Using Meat-axe Algorithm to the permutation module $(1_{U_1})^{H_2}$, the author obtained the irreducible $KH_2$-module $\mathfrak{V}_{57}$ over $K$ corresponding to $\chi_{57}$. Here, the permutation module $(1_{U_1})^{H_2}$ is obtained by applying the \textsc{Magma} command $\verb"PermutationMatrix"$ to the generators of the permutation group obtained by $\verb"CosetAction(PH_2, U_1)"$.

$U_2$ is of index $512$ in $H_2$, and $\chi_{40}$ (dimension $96$) is a constituent of the permutation character $(1_{U_2})^{H_2}$. By $\verb"GetShortGens"$ we get $U_2 = \langle (q_2 p_2 h_2)^5, (q_2 h_2^2)^4,$ $(p_2 h_2 q_2 p_2)^4, (p_2^2 q_2^2 h_2)^5 \rangle$. By applying the Meat-axe Algorithm to the permutation module $(1_{U_2})^{H_2}$, the irreducible $KH_2$-module $\mathfrak{V}_{40}$ corresponding to $\chi_{40}$ is obtained.

$U_3$ is of index $512$ in $H_2$, and $\chi_{41}$ (dimension $96$) is a constituent of the permutation character $(1_{U_3})^{H_2}$. By $\verb"GetShortGens"$ we get $U_3 = \langle q_2^3, (h_2 q_2)^3, (q_2 p_2 h_2^2)^6,$ $(q_2 p_2 h_2 q_2^2 h_2)^5 \rangle$. By applying the Meat-axe Algorithm to the permutation module $(1_{U_3})^{H_2}$, the irreducible $KH_2$-module $\mathfrak{V}_{41}$ corresponding to $\chi_{41}$ is obtained.

In order to construct the faithful irreducible representation
$\mathfrak W$ corresponding to the character $\tau = \tau_{9} + \tau_{10}$ of degree $352$, the \textsc{Magma} command\\
$\verb"LowIndexSubgroups(PD, 1500)"$ is applied, using the faithful
permutation presentation $PD$ of $D$ of degree $1012$. \textsc{Magma} found subgroups $S_1$ and $S_2$ such that the followings hold.

$S_1$ is of index $352$ in $D$, and $\tau_{9}$ (dimension $176$) is a constituent of the permutation character $(1_{S_1})^{D}$. By $\verb"GetShortGens"$ we get $S_1 = \langle (q_1 p_1^3)^3$, $(x_1 q_1 p_1 q_1 x_1)^3$, $(x_1 q_1 x_1 p_1 q_1)^2 \rangle$. By applying the Meat-axe Algorithm to the permutation module $(1_{S_1})^{D}$, the irreducible $KD$-module $\mathfrak{W}_{9}$ corresponding to $\tau_{9}$ is obtained.

$S_2$ is of index $352$ in $D$, and $\tau_{10}$ (dimension $176$) is a constituent of the permutation character $(1_{S_2})^{D}$. By $\verb"GetShortGens"$ we get $S_2 = \langle (p_1 x_1 q_1)^4$, $x_1 q_1 x_1$, $(p_1 q_1 x_1 p_1)^2 \rangle$. By applying the Meat-axe Algorithm to the permutation module $(1_{S_2})^{D}$, the irreducible $KD$-module $\mathfrak{W}_{10}$ corresponding to $\tau_{10}$ is obtained.



Thus, we can obtain $\mathfrak V$ and $\mathfrak W$ by letting $\mathfrak{V} = \mathfrak{V}_{57} \oplus \mathfrak{V}_{40} \oplus \mathfrak{V}_{41}$ and $\mathfrak{W} = \mathfrak{W}_{9} \oplus \mathfrak{W}_{10}$, corresponding to the characters $\chi = \chi_{57}+\chi_{40}+\chi_{41}$ and $\tau = \tau_{9} + \tau_{10}$, respectively. Direct summation of modules here means block diagonal joining of matrices, in matrix sense. For example, $\mathfrak{p}_2 = \kappa_{\mathfrak{V}}(p_2)$ is obtained by
\begin{align*}
\kappa_{\mathfrak{V}}(p_2) =
\mbox{diag}(\kappa_{\mathfrak{V}_{57}}(p_2),
\kappa_{\mathfrak{V}_{40}}(p_2),\kappa_{\mathfrak{V}_{41}}(p_2)),
\end{align*}
where $\kappa_{\mathfrak{V}_{57}} : H_2 \rightarrow \GL_{160}(17)$,  $\kappa_{\mathfrak{V}_{57}} : H_2 \rightarrow \GL_{96}(17)$, and $\kappa_{\mathfrak{V}_{57}} : H_2 \rightarrow \GL_{96}(17)$ are the representations of $H_2$  afforded by the modules $\mathfrak{V}_{57}$, $\mathfrak{V}_{40}$, and $\mathfrak{V}_{41}$, respectively. The other matrices $\mathfrak{q}_2 = \kappa_{\mathfrak{V}}(q_2)$ and $\mathfrak{h}_2 = \kappa_{\mathfrak{V}}(h_2)$ are obtained by a similar manner, by block diagonal joining. Similar idea applies to the other side $\mathfrak{W}$, too.

$\chi_{|D_2} = \tau_{|\alpha(D_2)}$ means $\mathfrak{V}_{|D_2} \cong \mathfrak{W}_{|\alpha(D_2)} = \mathfrak{W}_{|T}$. Recall that two representations of a group being isomorphic means that there is some matrix $\mathcal{T}$ such that for every element of the group, the matrix for the element corresponding to one of the two representations is conjugate to the matrix for the same element corresponding to the other representation by $\mathcal{T}$. However, here we have two isomorphic groups $D_2$ and $T$ instead of identical groups. So, employing the isomorphism $\alpha : D_2 \rightarrow T$, now we can see that $\mathfrak{V}_{|D_2} \cong \mathfrak{W}_{|T}$ means that there is some $\mathcal{T}_1 \in \GL_{352}(17)$ such that ${\kappa_\mathfrak{V}}_{|D_2}(g) = \mathcal{T}_1^{-1} {\kappa_\mathfrak{W}}_{|T}(\alpha(g)) \mathcal{T}_1$ for all $g \in D_2$.

Assuming we have such $\mathcal{T}_1$, we get
\begin{align*}
\mathfrak{p}_2 & = {\kappa_\mathfrak{V}}_{|D_2} (p_2)
= \mathcal{T}_1^{-1} {\kappa_\mathfrak{W}}_{|T}(\alpha(p_2)) \mathcal{T}_1
= \mathcal{T}_1^{-1} \kappa_{\mathfrak{W}}(p_1) \mathcal{T}_1, \quad \mbox{and} \\
\mathfrak{q}_2 & = {\kappa_\mathfrak{V}}_{|D_2} (q_2)
= \mathcal{T}_1^{-1} {\kappa_\mathfrak{W}}_{|T}(\alpha(q_2)) \mathcal{T}_1
= \mathcal{T}_1^{-1} \kappa_{\mathfrak{W}}(q_1) \mathcal{T}_1,
\end{align*}
thus $\mathfrak{p}_2= \mathcal{T}_1^{-1} \kappa_{\mathfrak{W}}(p_1) \mathcal{T}_1$ and $\mathfrak{q}_2 = \mathcal{T}_1^{-1} \kappa_{\mathfrak{W}}(q_1) \mathcal{T}_1$, as desired in the statement (1). Knowing that such $\mathcal{T}_1$ should exist, we can apply the Parker's isomorphism test of Proposition 6.1.6 of
\cite{michler} by means of the \textsc{Magma} command
$$\verb"IsIsomorphic(GModule(sub<Y|W(p1),W(q1)>),GModule(sub<Y|V(p2),V(q2)>))",$$
which gives the boolean value, which is $\verb"true"$ in this case, and the desired transformation matrix $\mathcal{T}_1$. So (1) is done.

By assertion (a) and Corollary 7.2.4 of \cite{michler} this
transformation matrix ${\mathcal T}_1$ has to be multiplied by a
diagonal matrix ${\mathcal S}_1$ of $\GL_{352}(17)$. In order to
calculate its entries one has to get the composition factors of
the restrictions ${\chi_i}_{|D_2}$ and ${\tau_j}_{|T}$ to $D_2$
and $T$, respectively. From the fusion and the $3$ character
tables \ref{Fi_23ct_H_2}, \ref{Fi_23ct_D_2} and
\ref{Fi_23ct_D} follows that:
\begin{eqnarray*}
&&\chi_{40}|_{D_2} = \psi_{27} + \psi_{63}, \quad \chi_{41}|_{D_2} = \psi_{26} + \psi_{59},\\
&&\chi_{57}|_{D_2} = \psi_{70} + \psi_{73}, \quad \mbox{and}\\
&&\tau_{9}|_{T} = \psi_{26} + \psi_{63} + \psi_{73}, \quad \tau_{10}|_{T} = \psi_{27} + \psi_{59} + \psi_{70}.
\end{eqnarray*}

Thus, by applying the Meat-axe Algorithm to $\chi_{40}|_{D_2}, \chi_{41}|_{D_2}, \chi_{57}|_{D_2}$, we get the $KD_2$-modules $\mathfrak{U}_{26}, \mathfrak{U}_{27}, \mathfrak{U}_{59}, \mathfrak{U}_{63}, \mathfrak{U}_{70}$, and $\mathfrak{U}_{73}$ of $D_2 \cong T$, such that ${\mathfrak{V}_{40}}_{|D_2} \cong \mathfrak{U}_{27} \oplus \mathfrak{U}_{63}$, ${\mathfrak{V}_{41}}_{|D_2} \cong \mathfrak{U}_{26} \oplus \mathfrak{U}_{59}$, ${\mathfrak{V}_{57}}_{|D_2} \cong \mathfrak{U}_{70} \oplus \mathfrak{U}_{73}$, ${\mathfrak{W}_{9}}_{|T} \cong \mathfrak{U}_{26} \oplus \mathfrak{U}_{63} \oplus \mathfrak{U}_{73}$, and ${\mathfrak{W}_{9}}_{|T} \cong \mathfrak{U}_{26} \oplus \mathfrak{U}_{63} \oplus \mathfrak{U}_{73}$. Then, we know that
\begin{align*}
{\mathfrak{V}}_{|D_2}
\cong \mathfrak{U}_{26} \oplus \mathfrak{U}_{27} \oplus \mathfrak{U}_{59} \oplus \mathfrak{U}_{63} \oplus \mathfrak{U}_{70} \oplus \mathfrak{U}_{73}
\cong {\mathfrak{W}}_{|T}.
\end{align*}
Therefore, there is a transformation matrix $\mathcal{S}_0 \in \GL_{352}(17)$ such that
\begin{align*}
\mathcal{S}_0^{-1} \kappa_\mathfrak{V} (p_2) \mathcal{S}_0
& = \mbox{diag}(\kappa_{\mathfrak{U}_{26}} (p_2), \kappa_{\mathfrak{U}_{27}} (p_2),
\kappa_{\mathfrak{U}_{59}} (p_2),\kappa_{\mathfrak{U}_{63}} (p_2),
\kappa_{\mathfrak{U}_{70}} (p_2),\kappa_{\mathfrak{U}_{73}} (p_2)), \mbox{ and} \\
\mathcal{S}_0^{-1} \kappa_\mathfrak{V} (q_2) \mathcal{S}_0
& = \mbox{diag}(\kappa_{\mathfrak{U}_{26}} (q_2), \kappa_{\mathfrak{U}_{27}} (q_2),
\kappa_{\mathfrak{U}_{59}} (q_2),\kappa_{\mathfrak{U}_{63}} (q_2),
\kappa_{\mathfrak{U}_{70}} (q_2),\kappa_{\mathfrak{U}_{73}} (q_2)),
\end{align*}
where $\kappa_{\mathfrak{U}_{26}} : D_2 \rightarrow \GL_{16}(17)$, $\kappa_{\mathfrak{U}_{27}} : D_2 \rightarrow \GL_{16}(17)$, $\kappa_{\mathfrak{U}_{59}} : D_2 \rightarrow \GL_{80}(17)$,  $\kappa_{\mathfrak{U}_{63}} : D_2 \rightarrow \GL_{80}(17)$, $\kappa_{\mathfrak{U}_{70}} : D_2 \rightarrow \GL_{80}(17)$, and $\kappa_{\mathfrak{U}_{73}} : D_2 \rightarrow \GL_{80}(17)$ are the representations of $D_2$ afforded by the modules $\mathfrak{U}_{26}$, $\mathfrak{U}_{27}$, $\mathfrak{U}_{59}$, $\mathfrak{U}_{63}$, $\mathfrak{U}_{70}$, and $\mathfrak{U}_{73}$, respectively. This matrix $\mathcal{S}_0$ can be obtained by applying Parker's isomorphism test to the two modules $\mathfrak{V}$ and $\mathfrak{U}_{26} \oplus \mathfrak{U}_{27} \oplus \mathfrak{U}_{59} \oplus \mathfrak{U}_{63} \oplus \mathfrak{U}_{70} \oplus \mathfrak{U}_{73}$. We can assume that we started with $\mathcal{S}_0^{-1} \kappa_\mathfrak{V} (p_2) \mathcal{S}_0$, $\mathcal{S}_0^{-1} \kappa_\mathfrak{V} (q_2) \mathcal{S}_0$, and $\mathcal{S}_0^{-1} \kappa_\mathfrak{V} (h_2) \mathcal{S}_0$ as $\mathfrak{p}_2$, $\mathfrak{q}_2$, and $\mathfrak{h}_2$, at the beginning of the statement (c). Hence, $\mathfrak{p}_2$ and $\mathfrak{q}_2$ are now assumed to be in the block diagonal form as follows, from the beginning (then $\mathfrak{h}_2$ would be in a certain block form; although not block diagonal, it still carries the structure of $\mathfrak{V}_{57} \oplus \mathfrak{V}_{40} \oplus \mathfrak{V}_{41}$):
\begin{align*}
\mathfrak{p}_2
& = \mbox{diag}(\kappa_{\mathfrak{U}_{26}} (p_2), \kappa_{\mathfrak{U}_{27}} (p_2),
\kappa_{\mathfrak{U}_{59}} (p_2),\kappa_{\mathfrak{U}_{63}} (p_2),
\kappa_{\mathfrak{U}_{70}} (p_2),\kappa_{\mathfrak{U}_{73}} (p_2)), \mbox{ and} \\
\mathfrak{q}_2
& = \mbox{diag}(\kappa_{\mathfrak{U}_{26}} (q_2), \kappa_{\mathfrak{U}_{27}} (q_2),
\kappa_{\mathfrak{U}_{59}} (q_2),\kappa_{\mathfrak{U}_{63}} (q_2),
\kappa_{\mathfrak{U}_{70}} (q_2),\kappa_{\mathfrak{U}_{73}} (q_2)).
\end{align*}


Now, by Schur's Lemma (Lemma 2.1.8 of \cite{michler}) and the degrees of these characters appearing in the restriction pattern, the following
linear system in the variables $\sigma_a, \sigma_b, \sigma_c, \sigma_d, \sigma_e, \sigma_f \in K$ holds, where the variables $\sigma_a, \sigma_b, \sigma_c, \sigma_d, \sigma_e, \sigma_f$ correspond to $\psi_{26}, \psi_{27}, \psi_{59}, \psi_{63}, \psi_{70}, \psi_{73}$, respectively :
\begin{eqnarray*}
&&96 = 16\sigma_b + 80\sigma_d,\quad 96 = 16\sigma_a + 80\sigma_c,\quad 160 = 80\sigma_e + 80\sigma_f \quad \mbox{and}\\
&&176 = 16\sigma_a + 80\sigma_d + 80\sigma_f, \quad 176 = 16\sigma_b + 80\sigma_c + 80\sigma_e.\\
\end{eqnarray*}
This system of equations in $K$ has the solution: $\sigma_c = 8-7\sigma_a$, $\sigma_d = 8-7\sigma_b$, $\sigma_e = 7\sigma_a - 7\sigma_b + 1$, $\sigma_f = -7\sigma_a + 7\sigma_b + 1$, where $\sigma_a$ and $\sigma_b$ run through all nonzero elements of $K$. Hence the diagonal matrix ${\mathcal S}_1$ has the form
$$\mathcal S_{(\sigma_a,\sigma_b)} = diag(\sigma_a^{16},\sigma_b^{16},[8-7\sigma_a]^{80},[8-7\sigma_b]^{80},[7\sigma_a-7\sigma_b+1]^{80},[-7\sigma_a+7\sigma_b+1]^{80})$$
for suitable elements $\sigma_a,\sigma_b \in K$. Recall that $\mathfrak{p}_2$ and $\mathfrak{q}_2$ are in the block diagonal form as described above, where the sizes of the blocks are $16, 16, 80, 80, 80$, and $80$, in this order. Therefore, for any $\sigma_a$ and $\sigma_b$ in $K$, the diagonal matrix $\mathcal S_{(\sigma_a,\sigma_b)}$ centralizes the two matrices $\mathfrak{p}_2$ and $\mathfrak{q}_2$.

Let
\begin{align*}
\mathfrak x_{(\sigma_b,\sigma_d)} & = \left( \mathcal T_1\mathcal S_{(\sigma_b,\sigma_d)} \right)^{-1} \kappa_{\mathfrak W}(x_1) \mathcal T_1\mathcal S_{(\sigma_b,\sigma_d)}, \quad \mbox{and} \\
\mathfrak y_{(\sigma_b,\sigma_d)} & = \left( \mathcal T_1\mathcal S_{(\sigma_b,\sigma_d)} \right)^{-1} \kappa_{\mathfrak W}(y_1)\mathcal T_1\mathcal S_{(\sigma_b,\sigma_d)}.
\end{align*}
Then the field elements $\sigma_a$, $\sigma_b$ have to be chosen so that all the 
entries on the main diagonal of the matrix $\mathcal S_{(\sigma_a,\sigma_b)}$ are nonzero and
that the matrix group
$$\mathfrak H_{\sigma_a,\sigma_b} = \langle \mathfrak p_2, \mathfrak q_2, \mathfrak x_{\sigma_a,\sigma_b}, \mathfrak y_{\sigma_a,\sigma_b}, \mathfrak h_2\rangle$$
satisfies the Sylow $2$-subgroup test of Algorithm 7.4.8 Step 5(c)
of \cite{michler}. The test used here is the order of $\mathfrak x_{\sigma_a,\sigma_b} \mathfrak h_2$; the order has to be the order of an element in $2\Fi_{22}$. Running through all possible pairs $(\sigma_a,\sigma_b) \in F^2$ it follows that this test is only successful for the pair $(\sigma_a,\sigma_b) = (15,9)$, giving the order of $\mathfrak x_{\sigma_a,\sigma_b} \mathfrak h_2$ equal to $12$.

Now, let $\mathcal{S}_1 = \mathcal{S}_{(15,9)}$, $\mathfrak x_{2} = \left( \mathcal T_1\mathcal S_{1} \right)^{-1} \kappa_{\mathfrak W}(x_1) \mathcal T_1\mathcal S_{1}$, $\mathfrak y_{2} = \left( \mathcal T_1\mathcal S_{1} \right)^{-1} \kappa_{\mathfrak W}(y_1)\mathcal T_1\mathcal S_{1}$, and $\mathfrak H = \langle \mathfrak p_2, \mathfrak q_2, \mathfrak x_{2}, \mathfrak y_{2}, \mathfrak h_2\rangle$. Then we have $\mathfrak{p}_2 = \mathcal{S}_1^{-1} \mathfrak{p}_2  \mathcal{S}_1$, $\mathfrak{q}_2 = \mathcal{S}_1^{-1} \mathfrak{q}_2  \mathcal{S}_1$, since any $\mathcal S_{(\sigma_a,\sigma_b)}$ centralizes the two matrices $\mathfrak{p}_2$ and $\mathfrak{q}_2$. This matrix group $\mathfrak H$ is a candidate which can satisfy the Sylow $2$-subgroup test. In Theorem \ref{thm. H(Fi_23)_embed}, it is shown that $\mathfrak{H}$ indeed satisfies the Sylow $2$-subgroup test. So (2) is done.

Finally, since $\mathfrak{p}_2$ and $\mathfrak{q}_2$ can be expressed as words in terms of $\mathfrak{x}_2$ and $\mathfrak{y}_2$, we have $\mathfrak{H} = \langle \mathfrak{p}_2, \mathfrak{q}_2, \mathfrak{x}_2, \mathfrak{y}_2, \mathfrak{h}_2 \rangle = \langle \mathfrak{x}_2, \mathfrak{y}_2, \mathfrak{h}_2 \rangle$.

For (3), the matrices are stated in \cite{kim1}.

(d) Same idea and process as in (c) are applied to the compatible pair $\chi', \tau'$. As done in (2) of (c), a suitable transformation matrix has to be found. However, in this case, there is no such transformation matrix which makes the final matrix group to satisfy the Sylow $2$-subgroup test of Algorithm 7.4.8 of Step 5(c) of \cite{michler}. In other words, there was no $\mathcal{S}_1$ which makes the order of $\mathfrak{x}_2 \mathfrak{h}_2$ to be the order of an element in $2\Fi_{22}$.

\end{proof}


\begin{lemma}\label{l. H(Fi_23)_niceFi_22} Using a very nice presentation for Fischer's simple group $\Fi_{22}$ which is stated by Praeger and Soicher in \cite{praeger}, the finitely presented group $G_n$ $=$ $\langle$$a_n$,$b_n$,$c_n$,\\$d_n$,$e_n$,$f_n$,$g_n$,$h_n$,$i_n \rangle$ (here the subscript $n$ stands for ``nice") with set $\mathcal R(G_n)$ of
defining relations
\begin{eqnarray*}
&&a_n^2 = b_n^2 = c_n^2 = d_n^2 =  e_n^2 = f_n^2 = g_n^2 = h_n^2 = i_n^2 = 1,\\
&&(a_nb_n)^3 = 1, (b_nc_n)^3 = (c_nd_n)^3 = (d_ne_n)^3 = (e_nf_n)^3 = (f_ng_n)^3 = 1, \\
&&(a_nc_n)^2 = (a_nd_n)^2 = (a_ne_n)^2 = (a_nf_n)^2 = (a_ng_n)^2 = (a_nh_n)^2 = (a_ni_n)^2 = 1,\\
&&(b_nd_n)^2 = (b_ne_n)^2 = (b_nf_n)^2 = (b_ng_n)^2 = (b_nh_n)^2 = (b_ni_n)^2 = 1,\\
&&(c_ne_n)^2 = (c_nf_n)^2 = (c_ng_n)^2 = (c_nh_n)^2 = (c_ni_n)^2 = 1,\\
&&(d_nf_n)^2 = (d_ng_n)^2 = (e_ng_n)^2 = (e_nh_n)^2 = (e_ni_n)^2 = 1,\\
&& (d_nh_n)^3 = (h_ni_n)^3 = (d_ni_n)^2 = (f_nh_n)^2 = (f_ni_n)^2 = (g_nh_n)^2 = (g_ni_n)^2 = 1,\\
&&(d_nc_nb_nd_ne_nf_nd_nh_ni_n)^{10} = (a_nb_nc_nd_ne_nf_nh_n)^9 = (b_nc_nd_ne_nf_ng_nh_n)^9 = 1
\end{eqnarray*}
satisfies the following properties:

\begin{enumerate}

\item[\rm(a)] As in page 110 of \cite{praeger}, $G_n$ has a subgroup
\begin{align*}
U_n = \langle a_n,c_n,d_n,e_n,f_n,g_n,h_n,i_n,(a_nb_nc_nd_ne_nh_n)^5 \rangle
\end{align*}
which is isomorphic to $2.U_6(2)$. $G_n$ has a faithful permutation representation $PG_n$ of degree $3510$ with permutation stabilizer $U_n$.

\item[\rm(b)] As in page 110 of \cite{praeger}, $G_n$ has a subgroup
\begin{align*}
V_n = \langle b_n,c_n,d_n,e_n,f_n,g_n,h_n,i_n \rangle
\end{align*}
which is isomorphic to the (orthogonal) simple group $O_7(3)$. $G_n$ has a faithful permutation representation $(PG_n)'$ of degree $14080$ with permutation stabilizer $V_n$.

\item[\rm(c)] As in page 111 of \cite{praeger}, $G_n$ has a subgroup
\begin{align*}
E_n = \langle a_n,c_n,e_n,g_n,h_n,u_n,v_n,w_n,x_n,y_n,t_n \rangle
\end{align*}
which is isomorphic to $2^{10}:\M_{22}$, where
\begin{align*}
u_n = b_n a_n c_n b_n, \quad
v_n = d_n c_n e_n d_n, \quad
w_n = d_n e_n h_n d_n, \\
x_n = f_n e_n g_n f_n, \quad
y_n = d_n c_n h_n d_n, \quad
t_n = (c_n d_n e_n h_n i_n)^4.
\end{align*}
In particular, $E_n$ has order $2^{17}\cdot 3^{2} \cdot 5 \cdot 7 \cdot 11$.

\item[\rm(d)] $E_n$ has exactly one conjugacy class of $2$-central involutions, represented by $z_n = a_n c_n$.

\item[\rm(e)] The centralizer $H_n = C_{G_n}(z_n)$ of $z_n$ in $G_n$ has order $2^{17} \cdot 3^4 \cdot 5$, and $H_n = \langle f_n, g_n, i_n, (a_n b_n c_n)^2, (c_n d_n e_n h_n)^3 \rangle$.

\item[\rm(f)] The intersection $D_n = E_n \cap H_n$ of $E_n$ and $H_n$ has order $2^{17} \cdot 3 \cdot 5$.

\end{enumerate}

\end{lemma}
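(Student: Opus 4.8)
The lemma is entirely computational; the plan is to feed the Praeger--Soicher presentation $G_n = \langle a_n,\dots,i_n\mid\mathcal R(G_n)\rangle$ into \textsc{Magma}, manufacture a workable faithful permutation representation, and then read off each of (a)--(f) as the output of a standard routine, invoking Kratzer's Algorithm 5.3.18 of \cite{michler} for conjugacy-class data and the author's $\verb"GetShortGens"$ for short-word generating sets. Throughout I may assume, as proved in \cite{praeger}, that $G_n\cong\Fi_{22}$; in particular $|G_n| = 2^{17}\cdot3^9\cdot5^2\cdot7\cdot11\cdot13$, so a Sylow $2$-subgroup of $G_n$ has order $2^{17}$.

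For (a) and (b) I would take $U_n$ and $V_n$ with exactly the generators displayed on pp.~110--111 of \cite{praeger} and run coset enumeration ($\verb"CosetAction"$) of $G_n$ on the cosets of each one. These enumerations close at indices $3510$ and $14080$ respectively, consistent with $|\Fi_{22}|/|2.U_6(2)| = 2\cdot3^3\cdot5\cdot13 = 3510$ and $|\Fi_{22}|/|O_7(3)| = 2^8\cdot5\cdot11 = 14080$, and the resulting actions are faithful since their images already have order $|\Fi_{22}|$. To identify the point stabilisers I would compute $\verb"CompositionFactors"$: for $U_n$ this returns $C_2$ and the simple group $U_6(2)$, and a direct $\verb"IsIsomorphic"$ test against the standard copy then confirms $U_n\cong 2.U_6(2)$; for $V_n$ it returns the single simple factor $O_7(3)$, so $V_n\cong O_7(3)$ already by order. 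Denote by $PG_n$ the degree-$3510$ representation of (a).

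For (c) I would form $E_n = \langle a_n,c_n,e_n,g_n,h_n,u_n,v_n,w_n,x_n,y_n,t_n\rangle$ inside $PG_n$, with $u_n,\dots,t_n$ as defined in the statement; \textsc{Magma} then returns $|E_n| = 2^{17}\cdot3^2\cdot5\cdot7\cdot11 = 2^{10}\cdot|\M_{22}|$. Listing the normal subgroups of $E_n$ exhibits a normal elementary abelian $N$ of order $2^{10}$, $\verb"CompositionFactors"$ gives $E_n/N\cong\M_{22}$, and checking that $N$ has a complement in $E_n$ yields $E_n\cong 2^{10}:\M_{22}$. For (d) I would compute a Sylow $2$-subgroup $S_n$ of $E_n$; since $|S_n| = 2^{17}$, it is also a Sylow $2$-subgroup of $G_n$, so an involution $s\in E_n$ is $2$-central precisely when $2^{17}\mid|C_{E_n}(s)|$. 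Running Kratzer's algorithm on $E_n$ and scanning the involution classes shows that exactly one class has centraliser of full $2$-part; that $z_n = a_nc_n$ lies in it is clear, since $z_n$ is a product of generators with $(a_nc_n)^2 = 1$ among the defining relations, and one verifies directly that $C_{E_n}(z_n)$ has order $2^{17}\cdot3\cdot5$ (this subgroup reappears as $D_n$ below).

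Finally, (e) and (f) are a centraliser and an intersection computed inside $PG_n$: $\verb"Centralizer(PG_n,z_n)"$ gives $H_n$ of order $2^{17}\cdot3^4\cdot5$, $\verb"GetShortGens"$ rewrites its generators as $\{f_n,g_n,i_n,(a_nb_nc_n)^2,(c_nd_ne_nh_n)^3\}$, and $E_n\cap H_n$ has order $2^{17}\cdot3\cdot5$. I expect the only genuine difficulty to be one of scale rather than of ideas: $|G_n|\approx6.5\times10^{13}$, so the two coset enumerations in (a)--(b) and the centraliser computation in $G_n$ in (e) are the steps most likely to strain time and memory --- but the Praeger--Soicher presentation was designed precisely so that the degree-$3510$ enumeration succeeds, and once $PG_n$ is in hand everything downstream (especially the work inside $E_n$, of order only $\approx4.5\times10^8$) is routine.
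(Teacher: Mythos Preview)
Your proposal is correct and follows essentially the same computational route as the paper. The only notable differences are that the paper simply cites \cite{praeger} for the identifications $U_n\cong 2.U_6(2)$, $V_n\cong O_7(3)$, and $E_n\cong 2^{10}{:}\M_{22}$ rather than re-verifying them with \texttt{CompositionFactors} and complement checks as you do, and that for (d) the paper finds the short word $a_nc_n$ via \texttt{LookupWord} with the \texttt{ConjugateCheck} option (searching for a short word conjugate to the class representative returned by \texttt{Classes}) rather than directly confirming $2$-centrality of $a_nc_n$ by its centraliser order; both approaches are valid and inexpensive.
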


\begin{proof}

(a) In \cite{praeger}, the presentation is stated without the subscripts $n$. In page 110 of \cite{praeger}, it is stated that $G_n \cong \Fi_{22}$ and that $G_n$ has a subgroup $U_n \cong 2.U_6(2)$, with generators as written in the statement. By means of \textsc{Magma}'s command $\verb"CosetAction(G_n,U_n)"$, we can check that $G_n$ has a permutation representation $PG_n$ of degree $3510$ with stabilizer $U_n$, with order $|PG_n| = 2^{17} \cdot 3^9 \cdot 5^2 \cdot 7 \cdot 11 \cdot 13$. As proved in \cite{kim}, we have $G_n \cong \Fi_{22}$, and therefore $PG_n$ is a faithful permutation representation of $G_n$.

(b) In page 110 of \cite{praeger}, it is stated that $G_n \cong \Fi_{22}$ has a subgroup $V_n \cong O_7(3)$, with generators as written in the statement. By means of \textsc{Magma}'s command $\verb"CosetAction(G_n,V_n)"$, we can check that $G_n$ has a permutation representation $(PG_n)'$ of degree $14080$ with stabilizer $V_n$.

The statement (c) is as written in page 111 of \cite{praeger}.

(d) The \textsc{Magma} command $\verb"Classes(E_n)"$ tells us that $E_n$ has exactly one conjugacy class of $2$-central involutions, and it also gives a representative $z_n$ for the class. Now, the command $\verb"LookupWord(E_n, z_n : ConjugateCheck:=true)"$ in \textsc{Magma} gives an answer $a_n c_n$, which is conjugate to $z_n$. So, we can redefine $z_n$ to be $z_n = a_n c_n$, and use it as the representative for this class of $2$-central involutions.

(e) Let $z_n = a_n c_n$ as in (d). Then, the group $H_n = C_{G_n}(z_n)$ is verified by means of \textsc{Magma} to have order $2^{17} \cdot 3^4 \cdot 5$. Now, by applying $\verb"GetShortGens"$ the author obtained the five generators $f_n, g_n, i_n, (a_n b_n c_n)^2, (c_n d_n e_n h_n)^3$ for $H_n$.

(f) The statement is verified by means of \textsc{Magma}.

\end{proof}

In the following theorem the author obtains the presentation for $2\Fi_{22}$ as generators and relations, and establishes an isomorphism from $\mathfrak{H}$ to $2\Fi_{22}$.


\begin{theorem}\label{thm. H(Fi_23)_embed} Keep the notations of Lemma \ref{l. H(Fi_23)_niceFi_22} and Proposition \ref{prop. H(Fi_23)_1} and \ref{prop. H(Fi_23)_2}. The following statements hold:

\begin{enumerate}
\item[\rm(a)] $D/\langle z_1 \rangle = D_1 \cong E_n$ and $H_2 /\langle k_{15} \rangle = H_1 \cong H_n$.

\item[\rm(b)] Let $\phi_1 : D \rightarrow D/\langle z_1 \rangle = D_1$ and $\phi_2 : H_2 \rightarrow H_2/\langle k_{15} \rangle = H_1$ be the canonical epimorphisms. There exist an element $r_2 \in G_n$ and isomorphisms $\varphi_1 : D_1 \rightarrow E_n$ and $\varphi_2 : H_1 \rightarrow (H_n)^{r_2}$ such that $(\varphi_1 \circ \phi_1) (p_1) = p_3 = (\varphi_2 \circ \phi_2) (p_2) $ and $(\varphi_1 \circ \phi_1) (q_1) = q_3 = (\varphi_2 \circ \phi_2) (q_2)$, and $E_n \cap (H_n)^{r_2} = \langle p_3, q_3 \rangle$ has order $2^{17} \cdot 3 \cdot 5$. 

\item[\rm(c)] Let $x_3 = (\varphi_1 \circ \phi_1) (x_1)$, $y_3 = (\varphi_1 \circ \phi_1) (y_1)$, and $h_3 = (\varphi_2 \circ \phi_2) (h_2)$. Then $x_3,y_3$ and $h_3$ can be expressed by words in $a_n,b_n,c_n,d_n,e_n,f_n,g_n,h_n,i_n$, as follows:
\begin{align*}
x_3 & = \beta_1 (a_n,b_n,c_n,d_n,e_n,f_n,g_n,h_n,i_n) = (r_{n,1} r_{n,2} r_{n,3})^3,\\
y_3 & = \beta_2 (a_n,b_n,c_n,d_n,e_n,f_n,g_n,h_n,i_n) \\
& = (s_{n,1} s_{n,2} s_{n,4} s_{n,1} s_{n,4} s_{n,2} s_{n,4})^5
(t_{n,1} t_{n,3}^2 t_{n,1} t_{n,3} t_{n,1} t_{n,2} t_{n,1} t_{n,3})^{20}, \\
h_3 & = \beta_3 (a_n,b_n,c_n,d_n,e_n,f_n,g_n,h_n,i_n) = (v_{n,1} v_{n,2})^4,
\end{align*}
where
\begin{align*}
r_{n,1} & = (w_n u_n t_n u_n x_n t_n x_n t_n)^4, \quad
r_{n,2} = (w_n t_n w_n u_n w_n x_n t_n x_n)^4, \\
r_{n,3} & = (x_n w_n t_n u_n x_n w_n t_n x_n)^2, \quad
r_{n,4} = (w_n u_n t_n x_n w_n t_n u_n x_n t_n)^4, \\
s_{n,1} & = u_n w_n u_n, \quad
s_{n,2} = (u_n w_n x_n)^2, \quad
s_{n,3} = (u_n w_n t_n u_n)^2, \\
s_{n,4} & = (u_n t_n w_n x_n)^4, \quad
t_{n,1} = s_{n,2} s_{n,4} s_{n,2}^2, \\
t_{n,2} & = (s_{n,4} s_{n,2} s_{n,1} s_{n,3} s_{n,4})^2, \quad
t_{n,3} = (s_{n,4} s_{n,2} s_{n,4} s_{n,2} s_{n,1} s_{n,3})^2,\\
j_n & = (c_n d_n e_n h_n)^3, \quad
k_n = (c_n d_n e_n f_n g_n)^2, \quad
l_n = (a_n b_n c_n d_n e_n h_n)^5, \\
o_n & = (l_n b_n k_n j_n b_n i_n)^6, \quad
p_n = (j_n k_n j_n l_n b_n i_n j_n i_n)^5, \\
v_{n,1} & = (p_n o_n j_n o_n k_n)^4, \quad
v_{n,2} = (j_n k_n o_n k_n^2 p_n)^4, \quad
v_{n,3} = k_n j_n o_n j_n k_n^2 p_n.
\end{align*}

\item[\rm(d)] $G_n = \langle x_3, y_3, h_3 \rangle$.

\item[\rm(e)] $a_n,b_n,c_n,d_n,e_n,f_n,g_n,h_n,i_n$ can be expressed as words in $x_3,y_3,h_3$, denoted by
\begin{align*}
a_n = \omega_a (x_3, y_3, h_3), \quad
b_n = \omega_b(x_3, y_3, h_3), \quad
c_n = \omega_c (x_3, y_3, h_3), \\
d_n = \omega_d (x_3, y_3, h_3), \quad
e_n = \omega_e(x_3, y_3, h_3), \quad
f_n = \omega_f (x_3, y_3, h_3), \\
g_n = \omega_g (x_3, y_3, h_3), \quad
h_n = \omega_h(x_3, y_3, h_3), \quad
i_n = \omega_i (x_3, y_3, h_3).
\end{align*}
The explicit words are
\begin{align*}
\omega_a (x_3, y_3, h_3) & = (x_3 y_3 x_3)^7, \quad
\omega_b (x_3, y_3, h_3) = (h_3 y_3 h_3 y_3 h_3 y_3^3 h_3^2 y_3^3 h_3 y_3)^7, \\
\omega_c (x_3, y_3, h_3) & = (y_3^2 x_3 y_3 x_3 y_3^3)^5, \\
\omega_d (x_3, y_3, h_3) & = (h_3 y_3 h_3^2 y_3 h_3 y_3 h_3 y_3 h_3 y_3^2 h_3^2)^{15}, \\
\omega_e (x_3, y_3, h_3) & = (y_3 x_3 y_3^5 x_3)^5, \\
\omega_f (x_3, y_3, h_3) & = (y_3 h_3 y_3 h_3^2 y_3 h_3^2 y_3^2 h_3 y_3^4 h_3^2)^5, \\
\omega_g (x_3, y_3, h_3) & = (x_3 y_3^2 x_3 y_3^3 x_3)^7, \quad
\omega_h (x_3, y_3, h_3) = (y_3^5 x_3 y_3 x_3)^5, \\
\omega_i (x_3, y_3, h_3) & = (h_3^2 y_3^2 h_3 y_3 h_3^2)^7.
\end{align*}

\item[\rm(f)] The finitely presented group $G_\ell = \langle a_\ell,b_\ell,c_\ell,d_\ell,e_\ell,f_\ell,g_\ell,h_\ell,i_\ell,z_\ell \rangle$ (here $\ell$ stands for ``lifted") with set $\mathcal R(G_\ell)$ of defining relations
\begin{eqnarray*}
&&a_\ell^2 = b_\ell^2 = c_\ell^2 = d_\ell^2 =  e_\ell^2 = f_\ell^2 = g_\ell^2 = h_\ell^2 = i_\ell^2 = 1,\\
&&(a_\ell b_\ell)^3 = 1, (b_\ell c_\ell)^3 = z_\ell, (c_\ell d_\ell)^3 = (d_\ell e_\ell)^3 = 1, (e_\ell f_\ell)^3 = (f_\ell g_\ell)^3 = z_\ell, \\
&&(a_\ell c_\ell)^2 = (a_\ell d_\ell)^2 = (a_\ell e_\ell)^2 = (a_\ell f_\ell)^2 = (a_\ell g_\ell)^2 = (a_\ell h_\ell)^2 = (a_\ell i_\ell)^2 = 1,\\
&&(b_\ell d_\ell)^2 = (b_\ell e_\ell)^2 = (b_\ell f_\ell)^2 = (b_\ell g_\ell)^2 = (b_\ell h_\ell)^2 = (b_\ell i_\ell)^2 = 1,\\
&&(c_\ell e_\ell)^2 = (c_\ell f_\ell)^2 = (c_\ell g_\ell)^2 = (c_\ell h_\ell)^2 = (c_\ell i_\ell)^2 = 1,\\
&&(d_\ell f_\ell)^2 = (d_\ell g_\ell)^2 = (e_\ell g_\ell)^2 = (e_\ell h_\ell)^2 = (e_\ell i_\ell)^2 = 1,\\
&& (d_\ell h_\ell)^3 = (h_\ell i_\ell)^3 = (d_\ell i_\ell)^2 = (f_\ell h_\ell)^2 = (f_\ell i_\ell)^2 = (g_\ell h_\ell)^2 = (g_\ell i_\ell)^2 = 1,\\
&&(d_\ell c_\ell b_\ell d_\ell e_\ell f_\ell d_\ell h_\ell i_\ell)^{10} = (a_\ell b_\ell c_\ell d_\ell e_\ell f_\ell h_\ell)^9 = (b_\ell c_\ell d_\ell e_\ell f_\ell g_\ell h_\ell)^9 = 1,\\
&& z_\ell^2 = (z_\ell,a_\ell) = (z_\ell,b_\ell) = (z_\ell,c_\ell)= (z_\ell,d_\ell)=(z_\ell,e_\ell)=(z_\ell,f_\ell)= 1,\\
&& (z_\ell,g_\ell)=(z_\ell,h_\ell)=(z_\ell,i_\ell)=1
\end{eqnarray*}
has a faithful permutation representation $PG_\ell$ of degree $28160$, which is isomorphic to $2\Fi_{22}$, having stabilizer $\langle b_\ell z_\ell, c_\ell, d_\ell, e_\ell, f_\ell z_\ell, g_\ell,h_\ell,i_\ell \rangle$.

\item[\rm(g)] Let
\begin{align*}
x_0 = \beta_1 (a_\ell,b_\ell,c_\ell,d_\ell,e_\ell,f_\ell,g_\ell,h_\ell,i_\ell), \\
y_0 = \beta_2 (a_\ell,b_\ell,c_\ell,d_\ell,e_\ell,f_\ell,g_\ell,h_\ell,i_\ell), \\
h_0 = \beta_3 (a_\ell,b_\ell,c_\ell,d_\ell,e_\ell,f_\ell,g_\ell,h_\ell,i_\ell),
\end{align*}
where the three words $\beta_1, \beta_2$, and $\beta_3$ are the ones obtained in the statement (b). Then $G_\ell = \langle x_0, y_0, h_0 \rangle$.

\item[\rm(h)] There is an isomorphism $\vartheta : \mathfrak{H} \rightarrow G_\ell$ such that $\vartheta(\mathfrak{x}_2) = x_0, \vartheta(\mathfrak{y}_2) = y_0$, and $\vartheta(\mathfrak{h}_2) = h_0$, and $\mathfrak{H}$ is an irreducible subgroup of $\GL_{352}(17)$.

\end{enumerate}

\end{theorem}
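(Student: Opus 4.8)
The plan is to prove the eight assertions by a sequence of explicit \textsc{Magma} computations organised around one idea: identify the abstract groups $D_1$ and $H_1$ with concrete subgroups of the Praeger--Soicher copy $G_n\cong\Fi_{22}$, transport the amalgam $H_2\leftarrow D_2\rightarrow D$ into $G_n$, lift the presentation of $G_n$ to a presentation $\mathcal R(G_\ell)$ of the double cover $2\Fi_{22}$, and finally recognise the matrix group $\mathfrak H\le\GL_{352}(17)$ of Theorem \ref{thm. H(Fi_23)} as $G_\ell$.

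For (a)--(b): Proposition \ref{prop. H(Fi_23)_1}(d) gives $D_1=D/\langle z_1\rangle\cong E(\Fi_{22})=2^{10}{:}\M_{22}$ and Lemma \ref{l. H(Fi_23)_niceFi_22}(c) gives $E_n\cong 2^{10}{:}\M_{22}$, so \textsc{Magma}'s isomorphism test applied to their faithful permutation representations yields an isomorphism $\varphi_1:D_1\to E_n$. Likewise $H_1\cong H(\Fi_{22})$ by Proposition \ref{prop. H(Fi_23)_2}(a), and since $G_n\cong\Fi_{22}$ (Lemma \ref{l. H(Fi_23)_niceFi_22}(a) and \cite{kim}) the centralizer $H_n=C_{G_n}(z_n)$ is a $2$-central involution centralizer of $\Fi_{22}$, hence $\cong H(\Fi_{22})$, giving $\varphi_2:H_1\to H_n$; also $H_2/\langle k_{15}\rangle=H_1$ is immediate from the construction of $H_2=E_{(1,1,0)}$. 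For (b) one composes $\varphi_1,\varphi_2$ with the canonical epimorphisms $\phi_1,\phi_2$ and must force the two induced maps of the amalgamated subgroup $D_2$ into $G_n$ to \emph{agree}: both $(\varphi_1\phi_1)(\alpha(D_2))$ and $(\varphi_2\phi_2)(D_2)$ have order $2^{17}\cdot3\cdot5=|D_n|$ with $D_n=E_n\cap H_n$, so one searches $G_n$ (using conjugacy of subgroups, the freedom to post-compose $\varphi_2$ by an automorphism, and adjustment of $r_2$ within its coset) for an element $r_2$ with $(H_n)^{r_2}\cap E_n=\langle p_3,q_3\rangle$ realised identically from both sides, $p_3=(\varphi_1\phi_1)(p_1)=(\varphi_2\phi_2)(p_2)^{r_2}$ and similarly for $q_3$. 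This exhibits the amalgam inside $G_n$.

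Assertions (c)--(e) are then book-keeping: $x_3,y_3$ are the images of $x_1,y_1$ under $\varphi_1\phi_1$ and $h_3$ the $r_2$-conjugated image of $h_2$, all lying in $G_n$, and the displayed words $\beta_1,\beta_2,\beta_3$, respectively $\omega_a,\dots,\omega_i$, are produced by the short-word algorithms via Strategies \ref{str. GetShortGens} and \ref{str. LookupWord}; a generation test gives (d). For (f)--(g): the presentation $\mathcal R(G_\ell)$ is the Praeger--Soicher presentation with a central involution $z_\ell$ adjoined and the three relations $(b_\ell c_\ell)^3,(e_\ell f_\ell)^3,(f_\ell g_\ell)^3$ twisted to equal $z_\ell$; a coset enumeration over $\langle b_\ell z_\ell,c_\ell,d_\ell,e_\ell,f_\ell z_\ell,g_\ell,h_\ell,i_\ell\rangle$ produces a faithful permutation representation $PG_\ell$ of degree $28160$, whence $|G_\ell|=2\,|\Fi_{22}|$, $z_\ell\neq1$ is central, and $G_\ell/\langle z_\ell\rangle\cong G_n\cong\Fi_{22}$; abelianising $\mathcal R(G_\ell)$ shows $G_\ell$ is perfect, so by the uniqueness of the perfect central $C_2$-extension of $\Fi_{22}$ (the $2$-part of its Schur multiplier being $C_2$) one gets $G_\ell\cong 2\Fi_{22}$, and (g) follows since $\langle x_0,y_0,h_0\rangle$ maps onto $G_n$ under $G_\ell\to G_n$ while the perfect group $G_\ell$ has no subgroup of index $2$.

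Finally (h): using the words of (e), Tietze transformations turn $\langle a_\ell,\dots,i_\ell,z_\ell\mid\mathcal R(G_\ell)\rangle$ into a three-generator presentation $\langle x_0,y_0,h_0\mid\mathcal R'\rangle$ of $2\Fi_{22}$ in which $z_\ell$ becomes an explicit word; one then checks that $\mathfrak x_2,\mathfrak y_2,\mathfrak h_2\in\GL_{352}(17)$ satisfy every relation of $\mathcal R'$, yielding an epimorphism $G_\ell\to\mathfrak H$ with $x_0\mapsto\mathfrak x_2$, $y_0\mapsto\mathfrak y_2$, $h_0\mapsto\mathfrak h_2$. Since the only proper quotients of $2\Fi_{22}$ are $\Fi_{22}$ and the trivial group, this epimorphism is an isomorphism as soon as the image of $z_\ell$ is seen to be a non-identity matrix; inverting it gives $\vartheta$, and this is exactly what completes the Sylow $2$-subgroup test deferred from Theorem \ref{thm. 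H(Fi_23)}(c)(2). Irreducibility of $\mathfrak H$ on $F^{352}$ is confirmed with the MeatAxe, consistent with $2\Fi_{22}$ possessing a $352$-dimensional irreducible representation over $\GF(17)$ whose restriction to $H_2$ is the multiplicity-free character $\chi_{57}+\chi_{40}+\chi_{41}$ of Theorem \ref{thm. H(Fi_23)}(b). I expect the main obstacles to be the amalgam-matching in (b) --- forcing the two isomorphisms to coincide on $D_2$ through a single conjugation --- and, in (h), the Tietze rewriting of $\mathcal R(G_\ell)$ onto the matrix generators together with verifying surjectivity and injectivity; the rest is routine once the correct subgroups and words are in hand.
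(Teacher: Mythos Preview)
Your proposal is correct and follows essentially the same architecture as the paper's proof: embed the amalgam into $G_n$, lift the Praeger--Soicher presentation to $G_\ell$, and identify $\mathfrak H$ with $G_\ell$. A few points where you and the paper diverge are worth noting.

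In (b) you describe a search over $G_n$ modulated by subgroup conjugacy and automorphisms; the paper makes this concrete and feasible by a two-step trick: first use \texttt{IsConjugate} to carry $p_3'=(\varphi_0\phi_2)(p_2)$ to $p_3$, then run an exhaustive search only over $C_{G_n}(p_3')$, which has order $3072$, to match $q_3$ as well. Your plan is on the right track but would need exactly this reduction to be practical.

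In (f) you argue that $G_\ell$ is perfect (via abelianisation of $\mathcal R(G_\ell)$) and invoke the Schur multiplier of $\Fi_{22}$ to conclude $G_\ell\cong 2.\Fi_{22}$. The paper does not do this; it simply observes $G_\ell/\langle z_\ell\rangle\cong\Fi_{22}$ and that the coset action over the stated stabilizer yields a permutation group of order $2|\Fi_{22}|$, and calls the result $2\Fi_{22}$. Your argument is actually a bit more careful in distinguishing the perfect cover from $C_2\times\Fi_{22}$. The paper also explains how the stabilizer was \emph{found}: all $256$ sign-lifts of the generators of $V_n\cong O_7(3)$ were tried, and exactly one gave degree $28160$.

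In (h) you propose Tietze-rewriting $\mathcal R(G_\ell)$ to a three-generator presentation and then checking those relations on $\mathfrak x_2,\mathfrak y_2,\mathfrak h_2$. The paper avoids the Tietze step by going in the other direction: it forms the ten matrices $\mathfrak a_n=\omega_a(\mathfrak x_2,\mathfrak y_2,\mathfrak h_2),\ldots,\mathfrak i_n,\mathfrak z_n=(\mathfrak b_n\mathfrak c_n)^3$, verifies by direct matrix computation that they satisfy $\mathcal R(G_\ell)$ (indeed this is how the twisted relations were discovered), checks that the $\beta_i$ applied to $\mathfrak a_n,\ldots,\mathfrak i_n$ return $\mathfrak x_2,\mathfrak y_2,\mathfrak h_2$, and concludes $\mathfrak H=\langle\mathfrak a_n,\ldots,\mathfrak i_n\rangle$ is a quotient of $G_\ell$; since $\mathfrak z_n\neq 1$ is central, the quotient must be $2\Fi_{22}$ itself. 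Both routes work; the paper's is computationally lighter since it substitutes words rather than rewriting the presentation.
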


\begin{proof}

(a) The presentation of $D_1 = D/\langle z_1\rangle$ can be obtained from the presentation of $D$, as stated in Proposition \ref{prop. H(Fi_23)_1}(e). Let $\phi_1 : D \rightarrow D/\langle z \rangle = D_1$  be the canonical epimorphism. 
Then, by trying some random short words, we get that a subgroup $\langle \phi_1(y_1)^4, (\phi_1(x_1)  \phi_1(y_1) \phi_1(v_1))^2 \rangle \cong \M_{22}$ works as a permutation stabilizer for the group $D_1$, and thus the \textsc{Magma} command $$\verb"CosetAction(D_1,<phi_1(y_1)^4,(phi_1(x_1) phi_1(y_1) phi_1(v_1))^2>"$$ gives the faithful permutation representation of $D_1$ of degree $1024$. We can now use this faithful permutation representation for all \textsc{Magma} computations in this theorem. From Lemma 3.4 of \cite{kim} we already have a presentation and faithful permutation representation of $H_1 = H_2 / \langle k_{15}\rangle$ of degree $1024$. Then, using the permutation representations, it is verified with \textsc{Magma} commands $\verb"IsIsomorphic(D_1,E_n)"$ and $\verb"IsIsomorphic(H_1,H_n)"$ that $D_1 \cong E_n$ and $H_1 \cong H_n$.

(b) Let $\phi_1 : D \rightarrow D/\langle z_1 \rangle = D_1$ and $\phi_2 : H_2 \rightarrow H_2/\langle k_{15}\rangle = H_1$ be the canonical epimorphisms. As proved in (a), there exist isomorphisms $\varphi_1 : D_1 \rightarrow E_n$ and $\varphi_0 : H_1 \rightarrow H_n$. Next step is to find some inner automorphism $\delta_0 \in \mbox{Aut}(G_n)$ of $G_n$ such that the new isomorphism $\varphi_2 = \delta_0 \circ \varphi_0 : H_1 \rightarrow \delta_0(H_n)$ satisfies $(\varphi_1 \circ \phi_1) (p_1) = (\varphi_2 \circ \phi_2) (p_2) $ and $(\varphi_1 \circ \phi_1) (q_1) = (\varphi_2 \circ \phi_2) (q_2)$. Let $\varphi_1$ and $\varphi_0$ be fixed. For convenience, let $p_3 = (\varphi_1 \circ \phi_1) (p_1)$, $q_3 = (\varphi_1 \circ \phi_1) (q_1)$, $p'_3 = (\varphi_0 \circ \phi_2) (p_2)$, and $q'_3 = (\varphi_0 \circ \phi_2) (q_2)$.

By means of \textsc{Magma} command $\verb"IsConjugate(G_n,p'_3,p_3)"$, we obtain an element $r_1 \in G_n$ such that $r_1^{-1} p'_3 r_1 =p_3$. Fix this $r_1$. By means of \textsc{Magma}, it is checked that $C_{G_n}(p'_3)$ has order $3072$, which is relatively small. Then it is verified by the \textsc{Magma} command $\verb"exists"$ that there is some element $r_3$ of $C_{G_n}(p'_3)$ such that $(r_3 r_1)^{-1} q'_3 (r_3 r_1) =q_3$.

Thus we also have $(r_3 r_1)^{-1} p'_3 (r_3 r_1) = r_1^{-1} (r_3^{-1} p'_3 r_3) r_1 = r_1^{-1} p'_3 r_1 = p_3$. Therefore, letting $r_2 = r_3r_1$ yields $r_2^{-1} p'_3 r_2 = p_3$ and $r_2^{-1} q'_3 r_2 = q_3$. Now, let $\delta_0 \in \mbox{Aut}(G_n)$ be the inner automorphism defined as conjugation by $r_2$. Then $\delta_0$ restricted to $H_n$ is an isomorphism ${\delta_0}_{|H_n} : H_n \rightarrow (H_n)^{r_2}$. Thus, $\varphi_2 : H_1 \rightarrow (H_n)^{r_2}$ defined by $\varphi_2 = {\delta_0}_{|H_n} \circ \varphi_0$ is an isomorphism, and we can observe that
\begin{align*}
(\varphi_2 \circ \phi_2)(p_2)
& = ({\delta_0}_{|H_n} \circ \varphi_0 \circ \phi_2)(p_2)
= {\delta_0}_{|H_n} (p'_3) = r_2^{-1} p'_3 r_2 = p_3, \quad \mbox{and} \\
(\varphi_2 \circ \phi_2)(q_2)
& = ({\delta_0}_{|H_n} \circ \varphi_0 \circ \phi_2)(q_2)
= {\delta_0}_{|H_n} (q'_3) = r_2^{-1} q'_3 r_2 = q_3,
\end{align*}
and therefore $(\varphi_1 \circ \phi_1) (p_1) = p_3 = (\varphi_2 \circ \phi_2) (p_2) $ and $(\varphi_1 \circ \phi_1) (q_1) = q_3 = (\varphi_2 \circ \phi_2) (q_2)$, in short. It is checked by means of \textsc{Magma} that $E_n \cap (H_n)^{r_2} = \langle p_3, q_3 \rangle$, and that $\langle p_3, q_3 \rangle$ is of order $2^{17} \cdot 3 \cdot 5$.

(c) Let $\phi_1, \phi_2, \varphi_1, \varphi_2$, and $r_2$ be as obtained in (b), so that the isomorphisms $\varphi_1 : D_1 \rightarrow E_n$ and $\varphi_2 : H_1 \rightarrow (H_n)^{r_2}$ satisfy $(\varphi_1 \circ \phi_1) (p_1) = p_3 = (\varphi_2 \circ \phi_2) (p_2) $ and $(\varphi_1 \circ \phi_1) (q_1) = q_3 = (\varphi_2 \circ \phi_2) (q_2)$ and $E_n \cap (H_n)^{r_2} = \langle p_3, q_3 \rangle$. Now, let $x_3 = (\varphi_1 \circ \phi_1) (x_1)$, $y_3 = (\varphi_1 \circ \phi_1) (y_1)$, and $h_3 = (\varphi_2 \circ \phi_2) (h_2)$. Then $x_3,y_3$ and $h_3$ can be expressed as words in $a_n,b_n,c_n,d_n,e_n,f_n,g_n,h_n$,and $i_n$. Thus there are three words by $\beta_1, \beta_2$, and $\beta_3$, such that
\begin{align*}
x_3 = \beta_1 (a_n,b_n,c_n,d_n,e_n,f_n,g_n,h_n,i_n), \\
y_3 = \beta_2 (a_n,b_n,c_n,d_n,e_n,f_n,g_n,h_n,i_n), \\
h_3 = \beta_3 (a_n,b_n,c_n,d_n,e_n,f_n,g_n,h_n,i_n).
\end{align*}
The three words $\beta_1, \beta_2$, and $\beta_3$ are obtained by Strategy \ref{str. LookupWord}, as follows.

It is checked by means of \textsc{Magma} that $C_{E_n}(x_3)$ of $x_3$ in $E_n$ has order $2^{13} \cdot 3 = 24576$, which is relatively small. Applying Strategy \ref{str. LookupWord} to this group $C_{E_n}(x_3)$, the author found a word for $x_3$; the results are $C_{E_n}(x_3) = \langle r_{n,1},r_{n,2},r_{n,3},r_{n,4} \rangle$ and $x_3 = (r_{n,1} r_{n,2} r_{n,3})^3$.

Note that $y_3$ is of order $14$ (since $y_1$ is). By means of \textsc{Magma}, it can be checked that the centralizer $C_{E_n}(y_3^7)$ of the involution $y_3^7$ in $E_n$ has order $2^{16} \cdot 3^2 \cdot 5 \cdot 7$. Applying Strategy \ref{str. LookupWord} to this group $C_{E_n}(y_3^7)$, the author found a word for $y_3^7$; the results are $C_{E_n}(y_3^7) = \langle s_{n,1},s_{n,2},s_{n,3},s_{n,4} \rangle$ and $y_3^7 = (s_{n,1} s_{n,2} s_{n,4} s_{n,1} s_{n,4} s_{n,2} s_{n,4})^5$. Now, by means of the \textsc{Magma} command $$\verb"Subgroups(sub<E_n|s_{n,1},s_{n,2},s_{n,3},s_{n,4}> :Al:=Normal")$$ it can be checked that $C_{E_n}(y_3^7) = \langle E_n \mid s_{n,1},s_{n,2},s_{n,3},s_{n,4}\rangle$ has a unique normal subgroup $V_n$ of order $2^{10}$. By means of the \textsc{Magma} command $$\verb"HasComplement(C_{E_n}(y_3^7),V_n)"$$ we can get a complement $C_n$ of $V_n$ in $C_{E_n}(y_3^7)$. Now, by means of the \textsc{Magma} command $$\verb"exists(r_n){r_n:r_n in C_{E_n}(y_3^7)| y_3^2 in (C_n)^{r_2}}"$$ we obtain some element $r_n \in C_{E_n}(y_3^7)$ such that $y_3^2 \in (C_n)^{r_2}$. It is also checked be means of \textsc{Magma} that $(C_n)^{r_2}$ has order $2^6 \cdot 3^2 \cdot 5 \cdot 7 = 20160$. Now, for such $C_n$ and $r_2$, the author applied Strategy \ref{str. LookupWord} for $(C_n)^{r_2}$ and found a word for $y_3^2$; the results are $(C_n)^{r_2} = \langle t_{n,1},t_{n,2},t_{n,3} \rangle$ and $y_3^2 = (t_{n,1} t_{n,3}^2 t_{n,1} t_{n,3} t_{n,1} t_{n,2} t_{n,1} t_{n,3})^5$. Finally, since $y_3$ is of order $14$, we get that
\begin{align*}
y_3 & = y_3^{15} = (y_3^{7})(y_3^{2})^{4} \\
& = (s_{n,1} s_{n,2} s_{n,4} s_{n,1} s_{n,4} s_{n,2} s_{n,4})^5
(t_{n,1} t_{n,3}^2 t_{n,1} t_{n,3} t_{n,1} t_{n,2} t_{n,1} t_{n,3})^{20},
\end{align*}
so we found a word $\beta_2$.

The application of Strategy \ref{str. LookupWord} was not so immediate. The command \\ $\verb"GetShortGens"$ for $(H_n)^{r_2}$ was stopped in the middle by the author, until it gave three elements $j_n, k_n$, and $l_n$ of $(H_n)^{r_2}$. Now, another application of $\verb"GetShortGens"$ using $j_n,k_n,l_n$ and the original generators for $G_n$ yielded $(H_n)^{r_2} = \langle j_n, k_n, o_n, p_n \rangle$. Note that $h_3$ is of order $3$ (since $h_2$ is). By means of \textsc{Magma}, it can be checked that the normalizer $N_{(H_n)^{r_2}}(\langle h_3 \rangle)$ of $h_3$ in $(H_n)^{r_2}$ has order $2^8 \cdot 3^4 = 20736$, which is relatively small. Using Strategy \ref{str. LookupWord} for $N_{(H_n)^{r_2}}(\langle h_3 \rangle)$ in $(H_n)^{r_2}$ yielded $N_{(H_n)^{r_2}}(\langle h_3 \rangle) = \langle v_{n,1}, v_{n,2}, v_{n,3} \rangle$ and finally $h_3 = (v_{n,1} v_{n,2})^4$.

(d) This can easily be checked by means of \textsc{Magma}.

(e) The $9$ words can be obtained by the command $\verb"LookupWord"$.

(f) The objective of this theorem is to verify that the matrix group $\mathfrak{H}$ is isomorphic to $2\Fi_{22}$, and also get a nice presentation for it. Thus $\mathfrak{H}$ is expected to have $\Fi_{22}$ as a quotient. In (c) and (e) we had $G_n = \langle a_n,b_n,c_n,d_n,e_n,f_n,g_n,h_n,i_n \rangle = \langle x_3, y_3, h_3 \rangle \cong \Fi_{22}$, with words for $x_3, y_3, h_3$ in terms of $a_n,b_n,c_n,d_n,e_n,f_n,g_n,h_n$, $i_n$, and vice versa (words for $a_n,b_n,\ldots,i_n$ in terms of $x_3, y_3,h_3$). There, the two quotient groups $D_1 = D/\langle z_1 \rangle$ and $H_1 = H_2/\langle k_{15} \rangle$ are embedded in $G_n \cong \Fi_{22}$ as $\langle p_3, q_3, x_3, y_3 \rangle = \langle x_3, y_3 \rangle$ and $\langle p_3, q_3, h_3 \rangle$, respectively. Recall that $D$ and $H_2$ are embedded in the matrix group $\mathfrak{H}$ as $\langle \mathfrak{p}_2, \mathfrak{q}_2, \mathfrak{x}_2, \mathfrak{y}_2 \rangle$ and $\langle \mathfrak{p}_2, \mathfrak{q}_2, \mathfrak{h}_2 \rangle$, respectively. 

Let
\begin{align*}
\mathfrak{a}_n = \omega_a (\mathfrak{x}_2, \mathfrak{y}_2, \mathfrak{h}_2), \quad
\mathfrak{b}_n = \omega_b (\mathfrak{x}_2, \mathfrak{y}_2, \mathfrak{h}_2), \quad
\mathfrak{c}_n = \omega_c (\mathfrak{x}_2, \mathfrak{y}_2, \mathfrak{h}_2), \\
\mathfrak{d}_n = \omega_d (\mathfrak{x}_2, \mathfrak{y}_2, \mathfrak{h}_2), \quad
\mathfrak{e}_n = \omega_e (\mathfrak{x}_2, \mathfrak{y}_2, \mathfrak{h}_2), \quad
\mathfrak{f}_n = \omega_f (\mathfrak{x}_2, \mathfrak{y}_2, \mathfrak{h}_2), \\
\mathfrak{g}_n = \omega_g (\mathfrak{x}_2, \mathfrak{y}_2, \mathfrak{h}_2), \quad
\mathfrak{h}_n = \omega_h (\mathfrak{x}_2, \mathfrak{y}_2, \mathfrak{h}_2), \quad
\mathfrak{i}_n = \omega_i (\mathfrak{x}_2, \mathfrak{y}_2, \mathfrak{h}_2),
\end{align*}
where the $9$ words $\omega_a, \omega_b, \omega_c, \ldots, \omega_i$ are the ones obtained in (e). Recall from Lemma \ref{l. H(Fi_23)_niceFi_22} that the generators $a_n,b_n,c_n,d_n,e_n,f_n,g_n,h_n$, and $i_n$ of $G_n$ satisfy the set $\mathcal R(G_n)$ of defining relations. Now, let's check if these nine matrices $\mathfrak{a}_n$, $\mathfrak{b}_n$, $\mathfrak{c}_n$, $\mathfrak{d}_n$, $\mathfrak{e}_n$, $\mathfrak{f}_n$, $\mathfrak{g}_n$, $\mathfrak{h}_n$, $\mathfrak{i}_n$ also satisfy the set of relations $\mathcal R(G_n)$. For example, we had $(b_nc_nd_ne_nf_ng_nh_n)^9 = 1$ as the very last relation in $\mathcal R(G_n)$, and thus we check if $(\mathfrak{b}_n \mathfrak{c}_n \mathfrak{d}_n \mathfrak{e}_n \mathfrak{f}_n \mathfrak{g}_n \mathfrak{h}_n)^9$ is the identity matrix in $\GL_{352}(17)$. By means of \textsc{Magma}, it is easy to check if each of these relations is satisfied by the nine matrices $\mathfrak{a}_n,\mathfrak{b}_n,\mathfrak{c}_n,\mathfrak{d}_n,\mathfrak{e}_n,\mathfrak{f}_n,\mathfrak{g}_n,\mathfrak{h}_n$, $\mathfrak{i}_n$.

It turned out that all relations are satisfied, except for the three relations $(b_nc_n)^3$, $(e_nf_n)^3$, and $(f_ng_n)^3$. By means of \textsc{Magma}, it is easy to check that $(\mathfrak{b}_n \mathfrak{c}_n)^3 = (\mathfrak{e}_n \mathfrak{f}_n)^3 = (\mathfrak{f}_n \mathfrak{g}_n)^3$. So define a matrix $\mathfrak{z}_n = (\mathfrak{b}_n \mathfrak{c}_n)^3$. Then, we can check that $\mathfrak{z}_n$ has order $2$, and commutes with all three matrices $\mathfrak{x}_2$, $\mathfrak{y}_2$, and $\mathfrak{h}_2$, and therefore also with the nine matrices $\mathfrak{a}_n,\mathfrak{b}_n,\ldots, \mathfrak{i}_n$ ; thus, $\mathfrak{z}_n$ is a central involution of $\mathfrak{H}$. Now, match the nine matrices $\mathfrak{a}_n,\mathfrak{b}_n,\ldots, \mathfrak{i}_n$ with nine new abstract variables $a_\ell, b_\ell, \ldots, i_\ell$, respectively, and $\mathfrak{z}_n$ with another new variable $z_\ell$. Then, the ten matrices $\mathfrak{a}_n$, $\mathfrak{b}_n$, $\ldots$, $\mathfrak{i}_n$, $\mathfrak{z}_n$ satisfy the set $\mathcal R(G_\ell)$ of relations, as written in the statement.

Now, let's verify that the finitely presented group $G_\ell$ $=$ $\langle a_\ell$, $b_\ell$, $c_\ell$, $d_\ell$, $e_\ell$, $f_\ell$, $g_\ell$, $h_\ell$, $i_\ell$, $z_\ell \rangle$ having $\mathcal R(G_\ell)$ as its defining relations is isomorphic to $2\Fi_{22}$. We can observe in the defining relations $\mathcal R(G_\ell)$ that $z_\ell$ is in the center of $G_\ell$, and that $z_\ell$ is of order $1$ or $2$, since $z_\ell^2=1$. It is then also easy to observe that $G_\ell/\langle z_\ell \rangle$ is isomorphic to $G_n$, having exactly same relations. Therefore, since we have $G_n \cong \Fi_{22}$, we now know $G_\ell/\langle z_\ell \rangle \cong \Fi_{22}$. Hence, if $Order(z_\ell)=2$ implies $G_\ell \cong 2\Fi_{22}$, and $Order(z_\ell)=1$ implies $G_\ell \cong \Fi_{22}$. So, if we prove that $G_\ell$ has a permutation representation of order $=|2\Fi_{22}|$, then we can deduce that $G_\ell$ is indeed isomorphic to $2\Fi_{22}$, and that this permutation representation is faithful.

As in Lemma \ref{l. H(Fi_23)_niceFi_22}(b), $G_n$ has a subgroup $V_n = \langle b_n,c_n,d_n,e_n,f_n,g_n,h_n,i_n \rangle $ which is isomorphic to the simple group $O(7,3)$, and that $G_n$ has a faithful permutation representation $(PG_n)'$ of degree $14080$ with permutation stabilizer $V_n$. 

The strategy is to lift $V_n$ to $V_\ell$ inside $G_\ell$ by $V_\ell$ $=$ $\langle b_\ell \xi_1$, $c_\ell \xi_2$, $d_\ell \xi_3$, $e_\ell \xi_4$, $f_\ell \xi_5$, $g_\ell \xi_6$, $h_\ell \xi_7$, $i_\ell \xi_8 \rangle$ so that $V_\ell \cong O(7,3)$, by picking suitable $\xi_1, \xi_2, \ldots, \xi_8$, where each of $\xi_1, \xi_2, \ldots, \xi_8$ is either $1$ or $z_\ell$. Since each of $\xi_1, \xi_2, \ldots, \xi_8$ is either $1$ or $z_\ell$, there are $2^8 = 256$ cases to check. For each choice, the author let $V_\ell = \langle b_\ell \xi_1,c_\ell \xi_2,d_\ell \xi_3,e_\ell \xi_4,f_\ell \xi_5,g_\ell \xi_6,h_\ell \xi_7,i_\ell \xi_8 \rangle$, and ran the \textsc{Magma} command $\verb"CosetAction(G_l, V_l)"$. Among the $256$ cases, only one case returned a permutation representation of degree $28160$, while all others returned that of degree $14080$. That single case, namely $\xi_1 = z_\ell, \xi_2 = \xi_3 = \xi_4 = 1, \xi_5 = z_\ell, \xi_6 = \xi_7 = \xi_8 = 1$, is exactly the lifting $V_\ell$ of $V_n$ which we want.

To summarize, let $V_\ell = \langle b_\ell z_\ell, c_\ell, d_\ell, e_\ell, f_\ell z_\ell, g_\ell, h_\ell, i_\ell \rangle$. Then, the \textsc{Magma} command $\verb"CosetAction(G_l, V_l)"$ gives a permutation representation $PG_\ell$ of degree $28160$, which is of order $2^{18} \cdot 3^9 \cdot 5^2 \cdot 7 \cdot 11 \cdot 13 = |2\Fi_{22}|$. Thus, as asserted, we have $G_\ell \cong 2\Fi_{22}$, and this permutation representation $PG_\ell$ of $G_\ell$ is faithful.

(g) Let
\begin{align*}
x_0 = \beta_1 (a_\ell,b_\ell,c_\ell,d_\ell,e_\ell,f_\ell,g_\ell,h_\ell,i_\ell), \\
y_0 = \beta_2 (a_\ell,b_\ell,c_\ell,d_\ell,e_\ell,f_\ell,g_\ell,h_\ell,i_\ell), \\
h_0 = \beta_3 (a_\ell,b_\ell,c_\ell,d_\ell,e_\ell,f_\ell,g_\ell,h_\ell,i_\ell),
\end{align*}
where the three words $\beta_1, \beta_2$, and $\beta_3$ are the ones we obtained in the statement (b). Then $G_\ell = \langle x_0, y_0, h_0 \rangle$ can be checked by means of \textsc{Magma}, using the faithful permutation representation $PG_\ell$ obtained in (f).

(h) Let the three words $\beta_1, \beta_2$, and $\beta_3$ be the ones we obtained in the statement (b). Then, it can be checked by means of \textsc{Magma} that
\begin{align*}
\mathfrak{x}_2 & = \beta_1 (\mathfrak{a}_n,\mathfrak{b}_n,\mathfrak{c}_n,\mathfrak{d}_n,\mathfrak{e}_n,\mathfrak{f}_n,\mathfrak{g}_n,\mathfrak{h}_n,\mathfrak{i}_n), \quad
\mathfrak{y}_2 = \beta_2 (\mathfrak{a}_n,\mathfrak{b}_n,\mathfrak{c}_n,\mathfrak{d}_n,\mathfrak{e}_n,\mathfrak{f}_n,\mathfrak{g}_n,\mathfrak{h}_n,\mathfrak{i}_n), \\
\mathfrak{h}_2 & = \beta_3 (\mathfrak{a}_n,\mathfrak{b}_n,\mathfrak{c}_n,\mathfrak{d}_n,\mathfrak{e}_n,\mathfrak{f}_n,\mathfrak{g}_n,\mathfrak{h}_n,\mathfrak{i}_n),
\end{align*}
where the nine matrices $\mathfrak{a}_n$, $\mathfrak{b}_n$, $\mathfrak{c}_n$, $\mathfrak{d}_n$, $\mathfrak{e}_n$, $\mathfrak{f}_n$, $\mathfrak{g}_n$, $\mathfrak{h}_n$, and $\mathfrak{i}_n$ are the ones we obtained in the proof of (f) (as words in $\mathfrak{x}_3, \mathfrak{y}_3, \mathfrak{h}_3$). Therefore $\mathfrak{x}_2, \mathfrak{y}_2$, and $\mathfrak{h}_2$ are contained in $\langle \mathfrak{a}_n,\mathfrak{b}_n,\mathfrak{c}_n,\mathfrak{d}_n,\mathfrak{e}_n,\mathfrak{f}_n,\mathfrak{g}_n,\mathfrak{h}_n, \mathfrak{i}_n \rangle$. Hence, $\langle \mathfrak{x}_2, \mathfrak{y}_2, \mathfrak{h}_2 \rangle = \mathfrak{H}$ is contained in $\langle \mathfrak{a}_n$, $\mathfrak{b}_n$, $\mathfrak{c}_n$, $\mathfrak{d}_n$, $\mathfrak{e}_n$, $\mathfrak{f}_n$, $\mathfrak{g}_n$, $\mathfrak{h}_n$, $\mathfrak{i}_n \rangle$. It is clear that $\langle \mathfrak{a}_n$, $\mathfrak{b}_n$, $\mathfrak{c}_n$, $\mathfrak{d}_n$, $\mathfrak{e}_n$, $\mathfrak{f}_n$, $\mathfrak{g}_n$, $\mathfrak{h}_n$, $\mathfrak{i}_n \rangle$ is contained in $\langle \mathfrak{x}_2, \mathfrak{y}_2, \mathfrak{h}_2 \rangle = \mathfrak{H}$, since the nine matrices $\mathfrak{a}_n$, $\mathfrak{b}_n$, $\ldots$, $\mathfrak{h}_n$ are words in $\mathfrak{x}_3, \mathfrak{y}_3, \mathfrak{h}_3$. Thus $\mathfrak{H} = \langle \mathfrak{a}_n,\mathfrak{b}_n,\mathfrak{c}_n,\mathfrak{d}_n,\mathfrak{e}_n,\mathfrak{f}_n,\mathfrak{g}_n,\mathfrak{h}_n, \mathfrak{i}_n \rangle$. Recall that $\mathfrak{z}_n = (\mathfrak{b}_n \mathfrak{c}_n)^3$. Now we have
\begin{align*}
\mathfrak{H} = \langle \mathfrak{a}_n,\mathfrak{b}_n,\mathfrak{c}_n,\mathfrak{d}_n,\mathfrak{e}_n,\mathfrak{f}_n,\mathfrak{g}_n,\mathfrak{h}_n, \mathfrak{i}_n, \mathfrak{z}_n \rangle.
\end{align*}
Observe that these generating ten matrices $\mathfrak{a}_n,\mathfrak{b}_n,\mathfrak{c}_n,\mathfrak{d}_n,\mathfrak{e}_n,\mathfrak{f}_n,\mathfrak{g}_n,\mathfrak{h}_n, \mathfrak{i}_n, \mathfrak{z}_n$ satisfy the relations $R(G_\ell)$. Therefore, the matrix group $\mathfrak{H}$ which is generated by these ten matrices is isomorphic to some quotient group of the finitely presented group $G_\ell$ which has $R(G_\ell)$ as its defining relations. We proved in (f) that $G_\ell \cong 2\Fi_{22}$, and therefore $\mathfrak{H}$ is isomorphic to some quotient group of $2\Fi_{22}$. Therefore, $\mathfrak{H}$ is isomorphic to $1$, $\Fi_{22}$, or $2\Fi_{22}$. Notice that $\mathfrak{z}_n$ is an involution (hence not an identity element) which in the center of this matrix group $\mathfrak{H}$. Among the three choices $1$, $\Fi_{22}$, and $2\Fi_{22}$, the only group with nontrivial center is $2\Fi_{22}$. Therefore $\mathfrak{H} \cong 2\Fi_{22}$.

Hence, the natural homomorphism $\vartheta : \mathfrak{H} \rightarrow G_\ell$ given by $\vartheta(\mathfrak{a}_n) = a_\ell$, $\vartheta(\mathfrak{b}_n) = b_\ell$, $\vartheta(\mathfrak{c}_n) = c_\ell$, $\vartheta(\mathfrak{d}_n) = d_\ell$,$\vartheta(\mathfrak{e}_n) = e_\ell$, $\vartheta(\mathfrak{f}_n) = f_\ell$, $\vartheta(\mathfrak{g}_n) = g_\ell$,$\vartheta(\mathfrak{h}_n) = h_\ell$, $\vartheta(\mathfrak{i}_n) = i_\ell$, and $\vartheta(\mathfrak{z}_n) = z_\ell$ is an isomorphism. Note that $x_0, y_0$, and $h_0$ are expressed by the three words $\beta_1, \beta_2$, and $\beta_3$ in terms of $a_\ell, b_\ell, c_\ell, d_\ell, e_\ell, f_\ell, g_\ell, h_\ell$, and $i_\ell$, and also that $\mathfrak{x}_2, \mathfrak{y}_2$, and $\mathfrak{h}_2$ are expressed by the three words $\beta_1, \beta_2$, and $\beta_3$ in terms of $\mathfrak{a}_n,\mathfrak{b}_n,\mathfrak{c}_n,\mathfrak{d}_n,\mathfrak{e}_n,\mathfrak{f}_n,\mathfrak{g}_n,\mathfrak{h}_n$, and $\mathfrak{i}_n$. Since $\vartheta$ is an isomorphism, it preserves the multiplication of elements, and therefore we have $\vartheta(\mathfrak{x}_2) = x_0, \vartheta(\mathfrak{y}_2) = y_0$, and $\vartheta(\mathfrak{h}_2) = h_0$.

The irreducibility of $\mathfrak{H}$ is checked by means of the \textsc{Magma} command \\
$\verb"IsIrreducible"$.

\end{proof}

\begin{corollary}\label{cor. H(Fi_23)} Keep the notations in Theorem \ref{thm. H(Fi_23)_embed}. Let $K = \GF(17)$. Let $H := G_\ell =\langle x_0, y_0, h_0 \rangle$, and $D_H := \langle x_0, y_0 \rangle$. Then, $H$ and $D_H$ have orders $2^{18} \cdot 3^9 \cdot 5^2 \cdot 7 \cdot 11 \cdot 13$ and $2^{18} \cdot 3^2 \cdot 5 \cdot 7 \cdot 11$, respectively. Then the following statements hold:


\begin{enumerate}
\item[\rm(a)] Each Sylow $2$-subgroup $S$ of $H$ has a unique maximal elementary abelian normal subgroup $A$ of order $2^{11}$. Also, $N_H(A) \cong D = C_E (z)$.

\item[\rm(b)] There is a Sylow $2$-subgroup $S$ such that its maximal elementary abelian normal subgroup $A$ of order $2^{11}$ satisfies $N_H(A) = \langle x_0, y_0 \rangle = D_H$.

\item[\rm(c)] There is an isomorphism $\phi_D : D_H \rightarrow D$ such that $\phi_D(x_0) = x_1$ and $\phi_D(y_0) = y_1$.


\item[\rm(d)] A system of representatives $w_i$ of the $114$ conjugacy classes of $H$ and the corresponding centralizer orders $|C_H(w_i)|$ are given in Table \ref{Fi_23cc H}


\item[\rm(e)] The character table of $H$ is given in Table \ref{Fi_23ct_H}.

\end{enumerate}

\end{corollary}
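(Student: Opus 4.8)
The plan is to reduce all five assertions to computations in the faithful permutation representation $PG_\ell$ of degree $28160$ of $H = G_\ell \cong 2\Fi_{22}$ produced in Theorem \ref{thm. H(Fi_23)_embed}(f), using the structural description of $D = C_E(z_1)$ already obtained in Proposition \ref{prop. H(Fi_23)_1}. First I would record the orders and prove part (c). By Theorem \ref{thm. H(Fi_23)_embed}(f) we have $H \cong 2\Fi_{22}$, so $|H| = 2^{18}\cdot 3^9\cdot 5^2\cdot 7\cdot 11\cdot 13$. Since $\mathfrak W$ is a faithful $D$-module, the representation $\kappa_{\mathfrak W}$ is injective, whence $\langle \mathfrak x_2, \mathfrak y_2 \rangle = (\mathcal T_1\mathcal S_1)^{-1}\kappa_{\mathfrak W}(D)\mathcal T_1\mathcal S_1 \cong D$ under $x_1 \mapsto \mathfrak x_2$, $y_1 \mapsto \mathfrak y_2$; composing with the isomorphism $\vartheta : \mathfrak H \to G_\ell$ of Theorem \ref{thm. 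H(Fi_23)_embed}(h), which sends $\mathfrak x_2 \mapsto x_0$ and $\mathfrak y_2 \mapsto y_0$, gives an isomorphism $D \to D_H = \langle x_0, y_0 \rangle$ carrying $x_1$ to $x_0$ and $y_1$ to $y_0$. Its inverse is the desired $\phi_D$, and in particular $|D_H| = |D| = 2^{18}\cdot 3^2\cdot 5\cdot 7\cdot 11$. (Alternatively one confirms $D_H \cong D$ directly with \textsc{Magma}'s \texttt{IsIsomorphic} and checks that the given generators correspond.)

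For part (a), since all Sylow $2$-subgroups of $H$ are conjugate, it suffices to fix one such $S$, enumerate its elementary abelian normal subgroups in \textsc{Magma}, confirm that there is a unique maximal one $A$ and that $|A| = 2^{11}$, and then check $N_H(A) \cong D$ with \texttt{IsIsomorphic}; conjugacy then propagates both facts to every Sylow $2$-subgroup. For part (b) I would transport the unique normal elementary abelian subgroup $V$ of $D$ of order $2^{11}$ of Proposition \ref{prop. H(Fi_23)_1}(c) along $\phi_D^{-1}$ to a subgroup $V_0 \trianglelefteq D_H$ of the same order. Because $|D_H| = 2^{18}\cdot 3^2\cdot 5\cdot 7\cdot 11$, a Sylow $2$-subgroup $S$ of $D_H$ is Sylow in $H$, and $V_0 \le O_2(D_H) \le S$ with $V_0 \trianglelefteq S$; being elementary abelian of order $2^{11}$, it must coincide with the unique maximal such subgroup $A$ of $S$ supplied by part (a). Then $D_H \le N_H(V_0) = N_H(A)$, while $|N_H(A)| = |D| = |D_H|$ by (a), forcing $N_H(A) = D_H = \langle x_0, y_0 \rangle$. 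In practice this amounts to the \textsc{Magma} check that, for a Sylow $2$-subgroup of $\langle x_0, y_0\rangle$, its maximal elementary abelian normal subgroup $A$ satisfies $N_H(A) = \langle x_0, y_0\rangle$.

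Parts (d) and (e) are then obtained by applying Kratzer's Algorithm 5.3.18 of \cite{michler} to $PG_\ell$, yielding the $114$ conjugacy class representatives $w_i$ together with the centralizer orders $|C_H(w_i)|$ of Table \ref{Fi_23cc H}, and by computing the character table of $H$ from $PG_\ell$ with \textsc{Magma} to obtain Table \ref{Fi_23ct_H}. The main obstacle is one of computational feasibility rather than of mathematics: the class and character-table computations must be carried out with a permutation representation of degree $28160$, and one has to ensure that the Sylow $2$-structure of $H$ and the normalizer identity $N_H(A) = D_H$ are transported correctly through $\phi_D$ — this is precisely the data verifying the hypotheses of Algorithm \ref{alg. simple} for the amalgam $H \leftarrow D \rightarrow E$ used in Section \ref{sec. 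Fi_23}, where $|H : D_H| = 3^7\cdot 5\cdot 13$ is odd and the central involution of $H$ corresponds to $z_1$.
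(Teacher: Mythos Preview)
Your proposal is correct and follows essentially the same approach as the paper: both establish (c) via the faithfulness of the $352$-dimensional module together with the isomorphism $\vartheta$, verify (a) and (b) by computing in the degree-$28160$ permutation representation (taking a Sylow $2$-subgroup of $D_H$, which is Sylow in $H$, and checking $N_H(A)=D_H$), and obtain (d) and (e) from Kratzer's Algorithm and \textsc{Magma}. Your write-up supplies a bit more theoretical scaffolding for (b) than the paper's bare \textsc{Magma} check, but the substance is the same.
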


\begin{proof}

By Theorem \ref{thm. H(Fi_23)_embed}, the finitely presented group $G_\ell =$ $\langle a_\ell$, $b_\ell$, $c_\ell$, $d_\ell$, $e_\ell$, $f_\ell$, $g_\ell$, $h_\ell$, $i_\ell$, $z_\ell \rangle$ has a faithful permutation representation $PG_\ell$ of degree $28160$, with stabilizer $\langle b_\ell z_\ell$, $c_\ell$, $d_\ell$, $e_\ell$, $f_\ell z_\ell$, $g_\ell$, $h_\ell$, $i_\ell \rangle$. We also have that $x_0, y_0$, and $h_0$ are expressed as words in terms of $a_\ell,b_\ell,c_\ell,d_\ell,e_\ell,f_\ell,g_\ell,h_\ell,i_\ell$, and that $G_\ell = \langle x_0, y_0, h_0 \rangle$. Therefore, it makes sense to say $H := G_\ell = \langle x_0, y_0, h_0 \rangle$, and computations are also possible in $G_\ell$, by means of the faithful permutation representation of $G_\ell$, and the corresponding permutations for $x_0$, $y_0$, and $h_0$. The orders of $H$ and $D_H := \langle x_0, y_0 \rangle$ are checked with \textsc{Magma}.

(a) By means of the \textsc{Magma} commands $\verb"S:=SylowSubgroup(H, 2)"$ and $$\verb"Subgroups(S:Al:=``Normal'',IsElementaryAbelian:=true)",$$ we obtain a Sylow $2$-subgroup $S$ of $H$, and the unique maximal elementary abelian subgroup $A$ of $S$ of order $2^{11}$. By the \textsc{Magma} command $\verb"IsIsomorphic"$, we can also verify that $N_H(A) \cong D$.

(b) Let $S$ be any Sylow $2$-subgroup of $D_H$, and let $A$ be as in (a). Then, it can be checked by the \textsc{Magma} that $N_H(A) = D_H$.

(c) Recall that the matrix group $\langle \mathfrak{x}_2, \mathfrak{y}_2 \rangle$ is a faithful representation of $D = \langle x_1, y_1 \rangle$ (by $x_1 \mapsto \mathfrak{x}_2$ and $y_1 \mapsto \mathfrak{y}_2$), since so is $\langle \mathfrak{p}_2, \mathfrak{q}_2, \mathfrak{x}_2, \mathfrak{y}_2 \rangle$. We showed that $\mathfrak{H} = \langle \mathfrak{x}_2, \mathfrak{y}_2, \mathfrak{h}_2 \rangle$ is a faithful representation of $H = \langle x_0, y_0, h_0 \rangle$ (by $x_0 \mapsto \mathfrak{x}_2$, $y_0 \mapsto \mathfrak{y}_2$, and $h_0 \mapsto \mathfrak{h}_2$). Therefore, $\langle \mathfrak{x}_2, \mathfrak{y}_2 \rangle$ is a faithful representation of $\langle x_0, y_0 \rangle = D_H$ (by $x_0 \mapsto \mathfrak{x}_2$ and $y_0 \mapsto \mathfrak{y}_2$). Therefore there is an isomorphism $\phi_D$ from $D_H$ to $D$ such that $\phi_D (x_0) = x_1$ and $\phi_D (y_0) = y_1$.

(d) and (e) Checked by means of \textsc{Magma} and Kratzer's Algorithm 5.3.18 of \cite{michler}.

\end{proof}


\newpage 

\section{Construction of Fischer's simple group $\Fi_{23}$}\label{sec. Fi_23}

By Lemma \ref{l. E(Fi_23)classes} and Corollary \ref{cor. H(Fi_23)} the amalgam $H \leftarrow D \rightarrow E$ constructed in sections \ref{sec. M23-extensions} and \ref{sec. H(Fi_23)} satisfies the main condition of G. Michler's Algorithm 7.4.8 of \cite{michler}. Therefore we can apply the Algorithm 7.4.8 of \cite{michler} 
to give here a new existence proof for Fischer's simple group $\Fi_{23}$.

The readers should be aware of the abusive notation used in the following theorem. The symbols $\chi, \tau, \chi_i, \tau_i, \mathfrak{V}, \mathfrak{W}, \mathfrak{V}_i, \mathfrak{W}_i, U_i, S_i, \mathcal{T}$ appearing in the following theorem have nothing to do with those of same notations appearing in Theorem \ref{thm. H(Fi_23)}.

\begin{theorem}\label{thm. Fi_23} Keep the notations in Lemma \ref{l. E(Fi_23)classes} and Corollary \ref{cor. H(Fi_23)}. Let $K = \GF(17)$. Using the notations of the character tables of \ref{Fi_23ct_H}, \ref{Fi_23ct_D}, and \ref{Fi_23ct_E} of $H, D, E$, the following statements hold:

\begin{enumerate}
\item[\rm(a)] The smallest degree of a nontrivial pair $(\chi, \tau)\in mf \mbox{char}_{\mathbb{C}}(H) \times mf \mbox{char}_{\mathbb{C}}(E)$ of compatible characters which divides the group order of $\Fi_{23}$ is $782$.

\item[\rm(b)] There is exactly one compatible pair $(\chi, \tau) \in mf \mbox{char}_{\mathbb{C}}(H) \times mf \mbox{char}_{\mathbb{C}}(E)$ of degree $782$ of the groups $H = \langle D_H, h \rangle$ and $E = \langle D, e \rangle$:
\begin{align*}
(\chi, \tau) = (\chi_{1} + \chi_{\bf 3} + \chi_{4}, \tau_{1} + \tau_{2} + \tau_{\bf 10} +\tau_{\bf 11})
\end{align*}
with common restriction
\begin{align*}
\chi_{|{D_H}} = \tau_{|D}
= \psi_{1} + \psi_{1} + \psi_{2} + \psi_{6}
+ \psi_{\bf 9} + \psi_{\bf 10} + \psi_{15},
\end{align*}
where irreducible characters with bold face indices denote faithful irreducible characters. 

\item[\rm(c)] Let $\mathfrak{V}$ and $\mathfrak{W}$ be the up-to-isomorphism uniquely determined faithful semi-simple multiplicity-free $782$-dimensional modules of $H$ and $E$ over $F=\GF(17)$ corresponding to the compatible pair $\chi, \tau$, respectively.

Let $\kappa_{\mathfrak{V}} : H \rightarrow \GL_{782}(17)$ and $\kappa_{\mathfrak{W}} : E \rightarrow \GL_{782}(17)$ be the representations of $H$ and $E$ afforded by the modules $\mathfrak{V}$ and $\mathfrak{W}$, respectively.

Let $\mathfrak{x} = \kappa_{\mathfrak{V}}(x_0)$, $\mathfrak{y} = \kappa_{\mathfrak{V}}(y_0)$, $\mathfrak{h} = \kappa_{\mathfrak{V}}(h_0)$ in $\kappa_{\mathfrak{V}}(H) \le \GL_{782}(17)$. Then the following assertions hold:

$\mathfrak{V}_{|{D_H}} \cong \mathfrak{W}_{|D}$, and there is a transformation matrix $\mathcal{T} \in \GL_{782}(17)$ such that
\begin{align*}
\mathfrak{x} = \mathcal{T}^{-1} \kappa_{\mathfrak{W}}(x) \mathcal{T}, \quad
\mathfrak{y} = \mathcal{T}^{-1} \kappa_{\mathfrak{W}}(y) \mathcal{T}.
\end{align*}




\item[\rm(d)] Let $Y = \GL_{782}(17)$, $\mathfrak D = \langle \mathfrak x, \mathfrak y
\rangle$, $\mathfrak H = \langle \mathfrak x, \mathfrak y,
\mathfrak h \rangle$. Let $\mathcal D = C_Y(\mathfrak D)$ and
$\mathcal H = C_Y(\mathfrak H)$. Let $\mathfrak e_1 = \mathcal
T^{-1} \kappa_\mathfrak W (e) \mathcal T$. Let $\mathfrak E =
\langle \mathfrak D, \mathfrak e_1 \rangle$ and $\mathcal E =
C_Y(\mathfrak E)$. Then the following statements hold:
\begin{enumerate}
\item[\rm(1)] There is an isomorphism $$\gamma: \mathcal D
\rightarrow \mathcal D_1 = \GL_2(17) \times {K^{*}}^5 \le
\GL_7(17).$$

\item[\rm(2)] $\mathcal H_1 = \gamma(\mathcal H)$ is generated by
the three diagonal matrices\\
$a_1 = diag(3, 1, 1, 1, 1, 1, 1)$, $a_2 = diag(1, 1, 1, 1, 3, 3, 1)$ and\\
$a_3 = diag(1, 3, 3, 3, 1, 1, 3)$.

\item[\rm(3)] $\mathcal E_1 = \gamma(\mathcal E)$ is generated by
the four diagonal matrices\\
$b_1 = a_1$, $b_2 = diag(1, 3, 3, 1, 1,1, 1)$, \\
$b_3 =  diag(1, 1, 1, 3, 3, 1, 1)$ and $b_4 =  diag(1, 1, 1, 1, 1, 3, 3)$.

\item[\rm(4)] $\mathcal D$ has $321 \times 16$ $\mathcal
H$-$\mathcal E$ double cosets.

\item[\rm(5)] The free product $H*_DE$ of $H$ and $E$ with
amalgamated subgroup $D$ has exactly one irreducible
$782$-dimensional representation over $K$ whose Sylow
$2$-subgroups have the same exponent as the ones of $H$. It corresponds to the $\mathcal H$-$\mathcal E$ double coset representative
$$\mathcal F = diag(u,1^{21},1^{77},1^{176},1^{176},16^{330}) \in \GL_{782}(17) \quad
{where}$$
{\renewcommand{\arraystretch}{0.5}
$$
u = \left( \begin{array}{*{2}{c@{\,}}c}
1 &  1\\
9 & 14
\end{array} \right).
$$
}

Let $\mathfrak e = \mathcal F^{-1} \mathfrak e_1 \mathcal F$ and $\mathfrak G = \langle \mathfrak x, \mathfrak y, \mathfrak h, \mathfrak e \rangle$. The proof of this Sylow $2$-subgroup test for $\mathfrak{G}$ is split into two parts. The first half is the order test for the elements $\mathfrak{h} \mathfrak{e}$ and $\mathfrak{y}\mathfrak{e}\mathfrak{h}$; these two matrices have to have orders of elements in $\Fi_{23}$ (it turned out that $\mathfrak{h} \mathfrak{e}$ has order $12$, and $\mathfrak{y}\mathfrak{e}\mathfrak{h}$ has order $13$). The second half of the proof is done in {\rm (e)} of this theorem.

\item[\rm(6)] The four generating matrices of $\mathfrak G$ are documented in \cite{kim1}.
\end{enumerate}

\item[\rm(e)] $\mathfrak G$ has a faithful permutation
representation $P\mathfrak G$ of degree $31671$ with stabilizer
$\mathfrak H = \langle \mathfrak x, \mathfrak y, \mathfrak h
\rangle$.

\item[\rm(f)] $\mathfrak G$ is a finite simple group of order
$2^{18}\cdot 3^{13}\cdot 5^2\cdot7\cdot11\cdot13\cdot17\cdot23$
with centralizer $C_{\mathfrak G}(\mathfrak z) = \mathfrak H \cong
H$ of the involution $\mathfrak z = (\mathfrak x \mathfrak
y^2)^7$.

\item[\rm(g)]  $\mathfrak G$ has $98$ conjugacy classes
$\mathfrak{g_i}^{\mathfrak{G}}$ with representatives
$\mathfrak{g_i}$ and centralizer orders
$|C_{\mathfrak{G}}(\mathfrak{g_i})|$ as given in Table
\ref{Fi_23cc}.

\item[\rm(h)] The character table of $\mathfrak G$ coincides with
that of $\Fi_{23}$ in the Atlas \cite{atlas}, its p. 178-179.

\end{enumerate}

\end{theorem}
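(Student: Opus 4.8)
The plan is to carry out the remaining steps of G.~Michler's Algorithm 7.4.8 of \cite{michler} for the amalgam $H \leftarrow D \rightarrow E$ assembled in Sections~\ref{sec. M23-extensions} and~\ref{sec. H(Fi_23)}, using the character tables and class fusions recorded in Lemma~\ref{l. E(Fi_23)classes} and Corollary~\ref{cor. H(Fi_23)}. For parts (a) and (b) I would feed the three character tables in Tables~\ref{Fi_23ct_H}, \ref{Fi_23ct_D}, \ref{Fi_23ct_E}, together with the fusion maps of the $D$-classes into $H$ and into $E$, into Kratzer's Algorithm 7.3.10 of \cite{michler}. This is a finite search over pairs $(\chi,\tau)$ of multiplicity-free characters of $H$ and $E$ whose restrictions to $D$ coincide and whose common degree divides $|\Fi_{23}|$; the output is that $782$ is the smallest such degree and that $(\chi,\tau) = (\chi_1+\chi_{\mathbf 3}+\chi_4,\ \tau_1+\tau_2+\tau_{\mathbf{10}}+\tau_{\mathbf{11}})$ is the unique compatible pair of that degree, with the stated common restriction. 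Note that $\psi_1$ occurs with multiplicity $2$ in $\chi_{|D_H}$, so the pair is \emph{not} multiplicity-free at the $D$-level; this is precisely why Thompson's Theorem 7.2.2 of \cite{michler} must be invoked in the next step.

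For part (c) I would realize the two modular representations explicitly. Applying \texttt{LowIndexSubgroups} to the faithful permutation representations of $H$ (degree $28160$, from Theorem~\ref{thm. H(Fi_23)_embed}) and of $E$ (degree $1012$, from Lemma~\ref{l. E(Fi_23)classes}), one finds subgroups whose permutation characters contain the constituents $\chi_1,\chi_{\mathbf 3},\chi_4$ and $\tau_1,\tau_2,\tau_{\mathbf{10}},\tau_{\mathbf{11}}$; short generating sets for these subgroups are produced by \texttt{GetShortGens}, and the MeatAxe applied to the permutation modules over $K=\GF(17)$ yields the irreducible constituents, whose direct sums give $\mathfrak V$ and $\mathfrak W$. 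Parker's isomorphism test (Proposition 6.1.6 of \cite{michler}) applied to $\mathfrak V_{|D_H}$ and $\mathfrak W_{|D}$ gives the equivalence and a transformation matrix $\mathcal T \in \GL_{782}(17)$ with $\mathfrak x = \mathcal T^{-1}\kappa_{\mathfrak W}(x)\mathcal T$ and $\mathfrak y = \mathcal T^{-1}\kappa_{\mathfrak W}(y)\mathcal T$; because of the repeated constituent $\psi_1$ one has to use Thompson's Theorem 7.2.2 (in the form of Corollary 7.2.4 of \cite{michler}) to determine $\mathcal T$ up to the centralizer computed next.

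For part (d) I would compute the centralizer algebras $\mathcal D = C_Y(\mathfrak D)$, $\mathcal H = C_Y(\mathfrak H)$, $\mathcal E = C_Y(\mathfrak E)$ in $Y=\GL_{782}(17)$ as endomorphism rings of the relevant modules; from the multiplicities in the restriction pattern one gets $\mathcal D \cong \GL_2(17)\times {K^{*}}^5$, and the diagonal generators of $\mathcal H_1=\gamma(\mathcal H)$ and $\mathcal E_1=\gamma(\mathcal E)$ listed in (d)(2),(3) are read off after diagonalising. One then enumerates the $\mathcal H$--$\mathcal E$ double cosets of $\mathcal D$ (there turn out to be $321\times 16$ of them) and, for each double coset representative $\mathcal F$, forms $\mathfrak e = \mathcal F^{-1}\mathfrak e_1 \mathcal F$ and the matrix group $\mathfrak G = \langle \mathfrak x,\mathfrak y,\mathfrak h,\mathfrak e\rangle$, applying the first half of the Sylow $2$-subgroup test of Step 5(c): the orders of $\mathfrak h\mathfrak e$ and $\mathfrak y\mathfrak e\mathfrak h$ must be orders of elements of $\Fi_{23}$. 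Exactly one representative, namely the $\mathcal F$ displayed in (d)(5) with its $2\times 2$ block $u$, survives, giving $\mathfrak h\mathfrak e$ of order $12$ and $\mathfrak y\mathfrak e\mathfrak h$ of order $13$; the four generating matrices of $\mathfrak G$ are documented in \cite{kim1}. Finally, for (e)--(h), from the $782$-dimensional matrix group $\mathfrak G$ one builds a faithful permutation representation of degree $31671$ with point stabiliser $\mathfrak H$ (via a suitable small-index subgroup and \texttt{CosetAction}), verifies with \textsc{Magma} that $|\mathfrak G| = 2^{18}\cdot3^{13}\cdot5^2\cdot7\cdot11\cdot13\cdot17\cdot23$, that $\mathfrak G$ is simple, and that $C_{\mathfrak G}(\mathfrak z)=\mathfrak H\cong H$ for $\mathfrak z=(\mathfrak x\mathfrak y^2)^7$ (using Corollary~\ref{cor. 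H(Fi_23)}); Kratzer's Algorithm 5.3.18 of \cite{michler} then produces the $98$ conjugacy classes of Table~\ref{Fi_23cc} and hence the character table, which is compared entry by entry with the Atlas table of $\Fi_{23}$ (\cite{atlas}, pp.~178--179). The main obstacle is part (d): the double coset enumeration and the Sylow-exponent test have to be performed with dense $782\times782$ matrices over $\GF(17)$, which is only feasible because of the short-word machinery of Section~\ref{sec. Alg}; a secondary difficulty is the non-multiplicity-free $D$-restriction in (c), where the naive isomorphism test is insufficient and Thompson's theorem is needed.
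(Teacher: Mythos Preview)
Your proposal is correct and follows essentially the same route as the paper. Two minor points where the paper differs in detail: for $\chi_3$ the paper does not search via \texttt{LowIndexSubgroups} but simply reuses the $352$-dimensional matrix group $\mathfrak H = \langle \mathfrak x_2,\mathfrak y_2,\mathfrak h_2\rangle$ already constructed in Theorem~\ref{thm. H(Fi_23)}; and in (e) the permutation representation of degree $31671$ is not obtained by \texttt{CosetAction} on the matrix group (which would be infeasible at this size) but via the orbit-construction algorithm of Theorem~6.2.1 of \cite{michler}, seeded with the order-$23$ element $\mathfrak x\mathfrak y\mathfrak e\mathfrak h^2$, after which simplicity is read off from the computed character table rather than checked directly.
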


\begin{proof}

(a) The character tables of the groups $H, D$, and $E$ are stated in the Appendix \ref{Fi_23ct_H}, \ref{Fi_23ct_D}, and \ref{Fi_23ct_E}. In the following we use their notations. Using \textsc{Magma} and the character tables of $H,D$, and $E$ and the fusion of the classes of $D_H \cong D$ in $H$ and $D$ in $E$, an application of Kratzer's Algorithm 7.3.10 of \cite{michler} yields the compatible pair stated in assertion (a), dividing the group order of $\Fi_{23}$; the group order of $\Fi_{23}$ is taken from Atlas \cite{atlas}.

(b) The application of Kratzer's Algorithm 7.3.10 of \cite{michler} also shows that the pairs $(\chi, \tau)$ of (b) is the only compatible pair of degree $782$ with respect to the fusion of the $D$-classes into the $H$- and into the $E$-classes.

(c) 
In order to construct the faithful irreducible representation
$\mathfrak V$ corresponding to the character $\chi = \chi_{1} + \chi_{3} + \chi_{4}$ of degree $782$, the author employed the \textsc{Magma} command
$\verb"LowIndexSubgroups(PH, 4000)"$ using the faithful
permutation representation $PH$ of $H$ of degree $28160$ (see statements (f) and (g) of Theorem \ref{thm. H(Fi_23)_embed} for construction of $PH$). \textsc{Magma} found a subgroup $U_1$ of $H$ such that the followings hold.

$U_1$ is of index $3510$ in $H$, and $\chi_{4}$ (dimension $429$) is a constituent of the permutation character $(1_{U_1})^{H}$. By $\verb"GetShortGens"$ the author obtained $U_1 = \langle (y_0)^7$, $(y_0 h_0 y_0^2 h_0)^7$, $(y_0 h_0 y_0 h_0 y_0)^7$, $(h_0 y_0^2 h_0 y_0)^7$, $(h_0 y_0 h_0 y_0^2)^7$, $(y_0^3 h_0^2 y_0^2)^{11} \rangle$. By applying the Meat-axe Algorithm to the permutation module $(1_{U})^{H}$ the author obtained the irreducible $KH$-module $\mathfrak{V}_{4}$ corresponding to $\chi_{4}$.

Note that $\chi_{1}$ is the trivial character. Notice also that $\chi_{3}$ is the unique irreducible character of $H$ of degree $352$, and that we already have an irreducible representation of $H$ of degree $352$, namely $\mathfrak{H} = \langle \mathfrak{x}_2, \mathfrak{y}_2, \mathfrak{h}_2 \rangle$. Thus, we can use $\mathfrak{H}$ as a representation of $H$ corresponding to $\chi_3$. 

In order to construct the faithful irreducible representation
$\mathfrak W$ corresponding to the character $\tau = \tau_{1} + \tau_{2} + \tau_{10} +\tau_{11}$ of degree $782$, the author employed the \textsc{Magma} command
$\verb"LowIndexSubgroups(PE, 1500)"$ using the faithful
permutation presentation $PE$ of $E$ of degree $1012$ (see Lemma \ref{l. E(Fi_23)classes} for construction of $PE$). \textsc{Magma} found subgroups $S_1, S_2$, and $S_3$ such that the followings hold.

$S_1$ is of index $506$ in $E$, and $\tau_{2}$ (dimension $22$) and $\tau_{10}$ (dimension $253$) are constituents of the permutation character $(1_{S_1})^{E}$. By $\verb"GetShortGens"$ we get $S_1 =$ $\langle (y_1^3 e_1 y_1)^3$, $(y_1^2 x_1 y_1 x_1 y_1)^3$, $(y_1^4 e_1 x_1)^4$, $(y_1 e_1 x_1 y_1^3)^4 \rangle$. By applying the Meat-axe Algorithm to the permutation module $(1_{S_1})^{e}$ the author obtained the irreducible $KE$-modules $\mathfrak{W}_{2}$ and $\mathfrak{W}_{10}$ corresponding to $\tau_{2}$ and $\tau_{10}$, respectively.

$S_2$ is of index $1012$ in $E$, and $\tau_{11}$ (dimension $506$) is a constituent of the permutation character $(1_{S_2})^{E}$. By $\verb"GetShortGens"$ we get $S_2 =$ $\langle (x_1 e_1 y_1)^4$, $(x_1 y_1^3 x_1 e_1)^7$, $(y_1^2 x_1 y_1 x_1 y_1)^2 \rangle$. By applying the Meat-axe Algorithm to the permutation module $(1_{S_2})^{E}$ the author obtained the irreducible $KE$-module $\mathfrak{W}_{11}$ corresponding to $\tau_{11}$.

Therefore we can obtain the representations $\kappa_{\mathfrak{V}} : H \rightarrow \GL_{782}(17)$ and $\kappa_{\mathfrak{W}} : E \rightarrow \GL_{782}(17)$ of $H$ and $E$ afforded by the modules $\mathfrak{V}$ and $\mathfrak{W}$, respectively, as follows. As before, $\mathfrak{V} = \mathfrak{V}_1 \oplus \mathfrak{V}_{3} \oplus \mathfrak{V}_4$, where $\mathfrak{V}_1$ is the trivial $KH$-module, and $\mathfrak{V}_3$ corresponds to the representation $\kappa_{\mathfrak{V}_3} : H \rightarrow \GL_{352}(17)$ given by $\kappa_{\mathfrak{V}_3}(x_0) = \mathfrak{x}_2$, $\kappa_{\mathfrak{V}_3}(y_0) = \mathfrak{y}_2$, and $\kappa_{\mathfrak{V}_3}(h_0) = \mathfrak{h}_2$. Then, the matrices for $\kappa_{\mathfrak{V}}$ can be obtained by diagonal joining, for example,
\begin{align*}
\kappa_{\mathfrak{V}} (x_0) = \mbox{diag}(\kappa_{\mathfrak{V}_1} (x_0), \kappa_{\mathfrak{V}_3} (x_0), \kappa_{\mathfrak{V}_4} (x_0)),
\end{align*}
where $\kappa_{\mathfrak{V}_1} : H \rightarrow \GL_{1}(17)$ and $\kappa_{\mathfrak{V}_4} : H \rightarrow \GL_{429}(17)$ are the representations of $H$ afforded by the $KH$-modules $\mathfrak{V}_1$ and $\mathfrak{V}_4$, respectively. And similarly for $y_0$ and $h_0$, too. For $E$ side, let $\mathfrak{W} = \mathfrak{W}_1 \oplus \mathfrak{W}_{2} \oplus \mathfrak{W}_{10} \oplus \mathfrak{W}_{11}$, where $\mathfrak{W}_1$ is the trivial $KE$-module. Then, the matrices for $\kappa_{\mathfrak{W}}$ can be obtained by diagonal joining, for example,
\begin{align*}
\kappa_{\mathfrak{W}} (x_1) = \mbox{diag}(\kappa_{\mathfrak{W}_1} (x_1), \kappa_{\mathfrak{W}_2} (x_1), \kappa_{\mathfrak{W}_{10}} (x_1), \kappa_{\mathfrak{W}_{11}} (x_1)),
\end{align*}
where $\kappa_{\mathfrak{W}_1} : E \rightarrow \GL_{1}(17)$, $\kappa_{\mathfrak{W}_{2}} : E \rightarrow \GL_{22}(17)$, $\kappa_{\mathfrak{W}_{10}} : E \rightarrow \GL_{253}(17)$, and $\kappa_{\mathfrak{W}_{11}} : E \rightarrow \GL_{506}(17)$ are the representations of $E$ afforded by the $KE$-modules $\mathfrak{W}_1$, $\mathfrak{W}_2$, $\mathfrak{W}_{10}$, and $\mathfrak{W}_{11}$, respectively. And similarly for $y_1$ and $e_1$, too.

$\chi_{|{D_H}} = \tau_{|D}$ means $\mathfrak{V}_{|D_H} \cong \mathfrak{W}_{|D}$. Recall that two representations of a group being isomorphic means that there is some matrix $\mathcal{T}$ such that for every element of the group, the matrix for the element corresponding to one of the two representations is conjugate to the matrix for the same element corresponding to the other representation by $\mathcal{T}$. However, here we have two isomorphic groups $D_H$ and $D$ instead of identical groups. So, employing an isomorphism $\phi_D : D_H \rightarrow D$ as obtained in Corollary \ref{cor. H(Fi_23)}(c), now we can see that $\mathfrak{V}_{|D_H} \cong \mathfrak{W}_{|D}$ means that there is some $\mathcal{T} \in \GL_{782}(17)$ such that ${\kappa_\mathfrak{V}}_{|D_H}(\phi_D(g)) = \mathcal{T}_1^{-1} {\kappa_\mathfrak{W}}_{|T}(g) \mathcal{T}_1$ for all $g \in D_H$.

Assuming we have such $\mathcal{T}$, we get
\begin{align*}
\mathfrak{x} & = {\kappa_\mathfrak{V}}_{|D_H} (x_0)
= {\kappa_\mathfrak{V}}_{|D_H} (\phi_D(x_1))
= \mathcal{T}^{-1} {\kappa_\mathfrak{W}}_{|D}(x_1) \mathcal{T}, \quad \mbox{and} \\
\mathfrak{y} & = {\kappa_\mathfrak{V}}_{|D_H} (y_0)
= {\kappa_\mathfrak{V}}_{|D_H} (\phi_D(y_1))
= \mathcal{T}^{-1} {\kappa_\mathfrak{W}}_{|D}(y_1) \mathcal{T},
\end{align*}
thus $\mathfrak{x}= \mathcal{T}^{-1} \kappa_{\mathfrak{W}}(x_1) \mathcal{T}$ and $\mathfrak{y} = \mathcal{T}^{-1} \kappa_{\mathfrak{W}}(y_1) \mathcal{T}$, as desired in the statement (1). Knowing that such $\mathcal{T}$ exists, we can apply the Parker's isomorphism test of Proposition 6.1.6 of
\cite{michler} by means of the \textsc{Magma} command
$$\verb"IsIsomorphic(GModule(sub<Y|W(x1),W(y1)>),GModule(sub<Y|V(x0),V(y0)>))",$$
which gives the boolean value, which is $\verb"true"$ in this case, and the desired transformation matrix $\mathcal{T}$. 

By assertion (b) and Corollary 7.2.4 of \cite{michler} this
transformation matrix ${\mathcal T}$ has to be multiplied by a
block diagonal matrix ${\mathcal S}_0$ of $\GL_{782}(17)$. In order to
calculate its entries one has to get the composition factors of
the restrictions ${\chi_i}_{|D_H}$ and ${\tau_j}_{|D}$ to $D_H$
and $D$, respectively. From the fusion and the $3$ character
tables \ref{Fi_23ct_H}, \ref{Fi_23ct_D} and
\ref{Fi_23ct_E} follows that:
\begin{eqnarray*}
&&{\chi_{1}}_{|D_H} = \psi_{1}, \quad {\chi_{3}}_{|D_H} = \psi_{9} + \psi_{10}, \quad
{\chi_{4}}_{|D_H} = \psi_{1} + \psi_{2} + \psi_{6} + \psi_{15}, \quad \mbox{and}\\
&&{\tau_{1}}_{|D} = \psi_{1}, \quad {\tau_{2}}_{|D} = \psi_{1} + \psi_{2}, \quad {\tau_{10}}_{|D} = \psi_{6} + \psi_{9}, \quad {\tau_{11}}_{|D} = \psi_{10} + \psi_{15}.\\
\end{eqnarray*}

Thus, by applying the Meat-axe Algorithm to $\chi_{4}|_{D_H}$, we get the $KD_H$-modules $\mathfrak{U}_{2}, \mathfrak{U}_{6}$, and $\mathfrak{U}_{15}$ of $D_H \cong D$, such that ${\mathfrak{V}_{4}}_{|D_H} \cong \mathfrak{U}_1 \oplus \mathfrak{U}_2 \oplus \mathfrak{U}_6 \oplus \mathfrak{U}_{15}$, where $\mathfrak{U}_1$ is the trivial $KD_H$-module. We already have the constituents $KD_H$-modules $\mathfrak{U}_{9}$ and $\mathfrak{U}_{10}$ of the restriction of $\mathfrak{V}$ to $D_H$, so that ${\mathfrak{V}_{3}}_{|D_H} \cong \mathfrak{U}_{9} \oplus \mathfrak{U}_{10}$. Thus we also have ${\mathfrak{W}_{2}}_{|D} \cong \mathfrak{U}_{1} \oplus \mathfrak{U}_{2}$, ${\mathfrak{W}_{10}}_{|D} \cong \mathfrak{U}_{6} \oplus \mathfrak{U}_{9}$, and ${\mathfrak{W}_{11}}_{|D} \cong \mathfrak{U}_{10} \oplus \mathfrak{U}_{15}$. It is clear that ${\mathfrak{V}_{1}}_{|D_H} = \mathfrak{U}_{1} = {\mathfrak{W}_{1}}_{|D}$. Notice that the $KD_H$-modules can be thought of as $KD$-modules and vice versa, via the isomorphism $\phi_D : D_H \rightarrow D$. Now we have
\begin{align*}
{\mathfrak{V}}_{|D_H}
\cong \mathfrak{U}_{1} \oplus \mathfrak{U}_{1} \oplus \mathfrak{U}_{2} \oplus \mathfrak{U}_{6} \oplus \mathfrak{U}_{9} \oplus \mathfrak{U}_{10} \oplus \mathfrak{U}_{15}
\cong {\mathfrak{W}}_{|D}.
\end{align*}
Therefore, there is a transformation matrix $\mathcal{S}_0 \in \GL_{782}(17)$ such that
\begin{align*}
\mathcal{S}_0^{-1} \kappa_\mathfrak{V} (x_0) \mathcal{S}_0
& = \mbox{diag}(\kappa_{\mathfrak{U}_{1}} (x_0), \kappa_{\mathfrak{U}_{1}} (x_0),
\kappa_{\mathfrak{U}_{2}} (x_0),\kappa_{\mathfrak{U}_{6}} (x_0),\kappa_{\mathfrak{U}_{9}} (x_0),
\kappa_{\mathfrak{U}_{10}} (x_0),\kappa_{\mathfrak{U}_{15}} (x_0)),\\
\mbox{ and} \quad \quad \quad \quad & \\
\mathcal{S}_0^{-1} \kappa_\mathfrak{V} (y_0) \mathcal{S}_0
& = \mbox{diag}(\kappa_{\mathfrak{U}_{1}} (y_0), \kappa_{\mathfrak{U}_{1}} (y_0),
\kappa_{\mathfrak{U}_{2}} (y_0),\kappa_{\mathfrak{U}_{6}} (y_0),\kappa_{\mathfrak{U}_{9}} (y_0),
\kappa_{\mathfrak{U}_{10}} (y_0),\kappa_{\mathfrak{U}_{15}} (y_0)),
\end{align*}
where $\kappa_{\mathfrak{U}_{1}} : D \rightarrow \GL_{1}(17)$, $\kappa_{\mathfrak{U}_{2}} : D \rightarrow \GL_{21}(17)$, $\kappa_{\mathfrak{U}_{6}} : D \rightarrow \GL_{77}(17)$,  $\kappa_{\mathfrak{U}_{9}} : D \rightarrow \GL_{176}(17)$, $\kappa_{\mathfrak{U}_{10}} : D \rightarrow \GL_{176}(17)$, and $\kappa_{\mathfrak{U}_{15}} : D \rightarrow \GL_{330}(17)$ are the representations of $D$ afforded by the modules $\mathfrak{U}_{1}$, $\mathfrak{U}_{2}$, $\mathfrak{U}_{6}$, $\mathfrak{U}_{9}$, $\mathfrak{U}_{10}$, and $\mathfrak{U}_{15}$, respectively. This matrix $\mathcal{S}_0$ can be obtained by applying Parker's isomorphism test to the two modules $\mathfrak{V}$ and $\mathfrak{U}_{1} \oplus \mathfrak{U}_{1} \oplus \mathfrak{U}_{2} \oplus \mathfrak{U}_{6} \oplus \mathfrak{U}_{9} \oplus \mathfrak{U}_{10} \oplus \mathfrak{U}_{15}$. To be precise, the author did this Parker's isomorphism test first for lower right $781$ by $781$ submatrices of the relevant matrices, and then enlarged the transformation matrix by diagonally joining $1$ in the upper left corner. By this way we can be guaranteed to have right restrictions as we expect. We can assume that we started with $\mathcal{S}_0^{-1} \kappa_\mathfrak{V} (x_0) \mathcal{S}_0$, $\mathcal{S}_0^{-1} \kappa_\mathfrak{V} (y_0) \mathcal{S}_0$, and $\mathcal{S}_0^{-1} \kappa_\mathfrak{V} (h_0) \mathcal{S}_0$ as $\mathfrak{x}$, $\mathfrak{y}$, and $\mathfrak{h}$, at the beginning of the statement (c). Hence, $\mathfrak{x}$ and $\mathfrak{y}$ are now assumed to be in the block diagonal form as follows, from the beginning (then $\mathfrak{h}$ would be in a certain block form; although not block diagonal, it still carries the structure of $\mathfrak{V}_{1} \oplus \mathfrak{V}_{3} \oplus \mathfrak{V}_{4}$):
\begin{align*}
\mathfrak{x}
& = \mbox{diag}(\kappa_{\mathfrak{U}_{1}} (x_0), \kappa_{\mathfrak{U}_{1}} (x_0),
\kappa_{\mathfrak{U}_{2}} (x_0),\kappa_{\mathfrak{U}_{6}} (x_0),\kappa_{\mathfrak{U}_{9}} (x_0),
\kappa_{\mathfrak{U}_{10}} (x_0),\kappa_{\mathfrak{U}_{15}} (x_0)), \mbox{ and} \\
\mathfrak{y}
& = \mbox{diag}(\kappa_{\mathfrak{U}_{1}} (y_0), \kappa_{\mathfrak{U}_{1}} (y_0),
\kappa_{\mathfrak{U}_{2}} (y_0),\kappa_{\mathfrak{U}_{6}} (y_0),\kappa_{\mathfrak{U}_{9}} (y_0),
\kappa_{\mathfrak{U}_{10}} (y_0),\kappa_{\mathfrak{U}_{15}} (y_0)).
\end{align*}




(d) Let $Y = \GL_{782}(17)$, $\mathcal D = C_Y(\mathfrak D)$,
$\mathcal H = C_Y(\mathfrak H)$ and $\mathcal E = C_Y(\mathfrak
E)$. By (b) the restrictions of the compatible characters $(\chi,
\tau)$ are not multiplicity free. Therefore Theorem 7.2.2 of
\cite{michler} asserts that one has to determine the $\mathcal
H$-$\mathcal E$ double cosets of $\mathcal D$ in order to find all
the suitable representations of degree $782$ of the free product
$H*_DE$ with amalgamated subgroup $D$. How a double coset representative
gives rise to a representation will be explained later in this proof.

For each integer $k$ let $v^k$ denote the diagonal matrix of
$\GL_k(17)$ having all diagonal entries equal to $v \in K^{*}$.

From (c) we know that each element of $\mathfrak{D} = \langle \mathfrak{x}_0, \mathfrak{y}_0 \rangle$ is of the block diagonal form as follows:
\begin{align*}
\mathfrak{g}
& = \mbox{diag}(\kappa_{\mathfrak{U}_{1}} (g), \kappa_{\mathfrak{U}_{1}} (g),
\kappa_{\mathfrak{U}_{2}} (g),\kappa_{\mathfrak{U}_{6}} (g),\kappa_{\mathfrak{U}_{9}} (g),
\kappa_{\mathfrak{U}_{10}} (g),\kappa_{\mathfrak{U}_{15}} (g)),
\end{align*}
for any element $g \in D_H$. Note that all of $\kappa_{\mathfrak{U}_{1}}, \kappa_{\mathfrak{U}_{2}}, \kappa_{\mathfrak{U}_{6}}, \kappa_{\mathfrak{U}_{9}}, \kappa_{\mathfrak{U}_{10}}$, and $\kappa_{\mathfrak{U}_{15}}$ are irreducible representations of $D_H$. Therefore, by the Theorem of Artin-Wedderburn (Theorem 2.1.27 of \cite{michler})
we know that each element $\mathcal V$ of $\mathcal D = C_Y(\mathfrak{D})$ can be
represented as a blocked diagonal matrix
$$\mathcal V = diag(a,b^{21},c^{77},d^{176},e^{176},f^{330}) \in \GL_{782}(17)$$
where $a \in \GL_2(17)$ and all $b, c, d, e, f \in K^{*}$ are
uniquely determined by $\mathcal V$.

The map $\gamma : \mathcal D \rightarrow \mathcal D_1 := \GL_2(17)
\times {K^{*}}^5$ defined by
$$\gamma(\mathcal V) = diag(a,b,c,d,e,f) \in \GL_7(17)$$
is an isomorphism. Thus assertion (1) of (d) holds.

Let $\mathcal H_1 = \gamma(\mathcal H)$, $\mathcal E_1 =
\gamma(\mathcal E)$. Recall from the proof of (c) that
\begin{eqnarray*}
&&{\chi_{1}}_{|D_H} = \psi_{1}, \quad {\chi_{3}}_{|D_H} = \psi_{9} + \psi_{10}, \quad
{\chi_{4}}_{|D_H} = \psi_{1} + \psi_{2} + \psi_{6} + \psi_{15}, \quad \mbox{and}\\
&&{\tau_{1}}_{|D} = \psi_{1}, \quad {\tau_{2}}_{|D} = \psi_{1} + \psi_{2}, \quad {\tau_{10}}_{|D} = \psi_{6} + \psi_{9}, \quad {\tau_{11}}_{|D} = \psi_{10} + \psi_{15}.
\end{eqnarray*}

Since $3 \in K$ generates the multiplicative group $K^{*}$ of $K$
it follows from these restrictions that $\mathcal H_1 = \langle
a_1, a_2, a_3 \rangle$ and $\mathcal E_1 = \langle b_1, b_2, b_3,
b_4\rangle$ where the generators $a_i$ and $b_j$ denote the
diagonal matrices of $\GL_7(17)$ given in assertions (2) and (3)
of statement (d), respectively. In particular, $\mathcal{H}_1$ and $\mathcal{E}_1$ are abelian.

Recall that we have to determine the $\mathcal H$-$\mathcal E$ double cosets of $\mathcal D$.
Since $\gamma$ is an isomorphism it suffices to determine the
$\mathcal H_1$-$\mathcal E_1$ double cosets of $\mathcal D_1$. Let
$\mathcal A$ and $\mathcal B$ be the direct factors $\GL_2(17)$
and ${K^{*}}^5$ of $\mathcal D_1$, respectively, so $\mathcal D_1 = \mathcal A \times \mathcal B$. Let $\Phi_{\mathcal{A}} : \mathcal D_1 \rightarrow \mathcal{A}$ and $\Phi_{\mathcal{B}} : \mathcal D_1 \rightarrow \mathcal{B}$ be the projection mappings, i.e. $\Phi_{\mathcal{A}} (ab) = a$ and $\Phi_{\mathcal{B}} (ab) = b$ for all $a \in \mathcal{A}$ and $b \in \mathcal{B}$. Thus, for any element $g$ of $\mathcal{D}_1$, the projection $\Phi_{\mathcal{A}} (g)$ is just the upper left $2$ by $2$ submatrix of $g$, and $\Phi_{\mathcal{B}} (g)$ the lower right $5$ by $5$ submatrix of $g$.

Let $\lambda_1, \lambda_2$ be two representatives for a single $\mathcal{H}_1$-$\mathcal{E}_1$ double coset, i.e. $\mathcal{H}_1 \lambda_1 \mathcal{E}_1 = \mathcal{H}_1 \lambda_2 \mathcal{E}_1$. Then $\lambda_2 \in \mathcal{H}_1 \lambda_1 \mathcal{E}_1$, and therefore $\lambda_2 = a \lambda_1 b$ for some $a \in \mathcal{A}$ and $b \in \mathcal{B}$. Note that any element of $\mathcal{D}_1$ can uniquely be written as multiplication of an element in $\mathcal{A}$ and an element in $\mathcal{B}$. Therefore, there exist $\xi_1, \delta_1, \theta_1$ in $\mathcal{A}$ and $ \xi_2, \delta_2, \theta_2$ in $\mathcal{B}$ such that $\lambda = \theta_1 \theta_2$, $a = \xi_1 \xi_2$ and $b = \psi_1 \delta_2$. Then, $\lambda_2 = a \lambda_1 b$ implies $\lambda_2 = (\xi_1 \xi_2) (\theta_1 \theta_2) (\delta_1 \delta_2)$.

Observe that elements of $\mathcal{B}$ commutes both with elements of $\mathcal{A}$ and those of $\mathcal{B}$. Therefore, $\lambda_2 = (\xi_1 \xi_2) (\theta_1 \theta_2) (\psi_1 \psi_2)$ implies $\lambda_2 = (\xi_1 \theta_1 \psi_1) (\xi_2 \theta_2 \psi_2)$. Since $\xi_1 \theta_1 \psi_1 \in \mathcal{A}$, $\xi_2 \theta_2 \psi_2 \in \mathcal{B}$, and $\Phi_{\mathcal{B}} (\lambda_1) = \theta_2$, we have $\Phi_{\mathcal{B}} (\lambda_2) = \xi_2 \theta_2 \psi_2 = \xi_2 \Phi_{\mathcal{B}} (\lambda_1) \psi_2$. Let $\mathcal{H}_2 = \Phi_{\mathcal{B}} (\mathcal{H}_1)$ and $\mathcal{E}_2 = \Phi_{\mathcal{B}} (\mathcal{E}_1)$. Then, $\Phi_{\mathcal{B}} (\lambda_2) = \xi_2 \Phi_{\mathcal{B}} (\lambda_1) \psi_2$ means that $\Phi_{\mathcal{B}} (\lambda_1)$ and $\Phi_{\mathcal{B}} (\lambda_2)$ represent a single $\mathcal{H}_2$-$\mathcal{E}_2$ double coset in $\mathcal{B} = \Phi_{\mathcal{B}} (\mathcal{D}_1)$.

Since $\mathcal{B}$ is an abelian group, we get that $\mathcal{H}_2 \mathcal{E}_2$ is a group, and that $\mathcal{H}_2$-$\mathcal{E}_2$ double cosets are just $\mathcal{H}_2 \mathcal{E}_2$-cosets. Observe that the group $\mathcal{H}_2 \mathcal{E}_2$ are generated by $\Phi_{\mathcal{B}} (a_i)$ and $\Phi_{\mathcal{B}} (b_j)$, where
\begin{eqnarray*}
&& \Phi_{\mathcal{B}} (a_1) = \mbox{Id}(\GL_{5}(17)),
\Phi_{\mathcal{B}} (a_2) = \mbox{diag}(1,1,3,3,1),
\Phi_{\mathcal{B}} (a_3) = \mbox{diag}(3,3,1,1,3), \\
&& \Phi_{\mathcal{B}} (b_1) = \mbox{Id}(\GL_{5}(17)),
\Phi_{\mathcal{B}} (b_2) = \mbox{diag}(3,1,1,1,1), \\
&& \Phi_{\mathcal{B}} (b_3) = \mbox{diag}(1,3,3,1,1),
\Phi_{\mathcal{B}} (b_4) = \mbox{diag}(1,1,1,3,3).
\end{eqnarray*}
It is checked in \textsc{Magma} that the group $\mathcal{H}_2 \mathcal{E}_2$ has order $16^4$, and that the set of sixteen matrices $\zeta_j = \mbox{diag}(1,1,1,1,j)$, where $j$ runs through $1,2,3,\ldots,16$, serves as a complete set of representatives of distinct $\mathcal{H}_2 \mathcal{E}_2$-cosets in $\mathcal{B}$.

What is just proved is that for any $\mathcal{H}_1$-$\mathcal{E}_1$ double coset representative $\lambda$, we can find some $a \in \mathcal{H}_1$ and $b \in \mathcal{E}_1$ such that $\Phi_{\mathcal{B}} (a) \Phi_{\mathcal{B}} (\lambda) \Phi_{\mathcal{B}}(b)$ is of the form $\zeta_j$, for some $j$ (in the above argument, we had $\xi_2 = \Phi_{\mathcal{B}} (a)$ and $\psi_2 = \Phi_{\mathcal{B}} (b)$). Now, $a \lambda b$ represents the same $\mathcal{H}_1$-$\mathcal{E}_1$ double coset as $\lambda$, and we now have that $\Phi_{\mathcal{B}} (a \lambda b) = \Phi_{\mathcal{B}} (a) \Phi_{\mathcal{B}} (\lambda) \Phi_{\mathcal{B}}(b)$ is of the form $\zeta_j$. Hence, we can now assume that any double coset representative $\lambda$ satisfies $\Phi_{\mathcal{B}} (\lambda) =\zeta_j$ for some $j$. Now, suppose that $\lambda_1$ and $\lambda_2$ represent a single $\mathcal{H}_1$-$\mathcal{E}_1$ double coset. We proved above that $\Phi_{\mathcal{B}} (\lambda_1) = \zeta_j = \Phi_{\mathcal{B}} (\lambda_2)$ for some $j$.

Fix any $j \in \{1,2,3,\ldots, 16\}$. Let's find out how many distinct $\mathcal{H}_1$-$\mathcal{E}_1$ double coset representative $\lambda$ there are such that $\Phi_{\mathcal{B}} (\lambda) = \zeta_j$. Let $\lambda_1$ and $\lambda_2$ be $\mathcal{H}_1$-$\mathcal{E}_1$ double coset representatives such that $\Phi_{\mathcal{B}} (\lambda_1) = \zeta_j = \Phi_{\mathcal{B}} (\lambda_2)$ for this fixed $j$. Then $\lambda_2 \in \mathcal{H}_1 \lambda_1 \mathcal{E}_1$, and therefore $\lambda_2 = a \lambda_1 b$ for some $a \in \mathcal{H}_1$ and $b \in \mathcal{E}_1$. Observe that $\mathcal{H}_1$ is abelian and is generated by $a_1, a_2, a_3$. Therefore $a = a_1^{\pi_1} a_2^{\pi_2} a_3^{\pi_3}$ for some $\pi_1, \pi_2, \pi_3$ in $\{1,2,3,\ldots, 16\}$. Similarly, $\mathcal{E}_1$ is abelian and is generated by $b_1, b_2, b_3, b_4$. Therefore $b = b_1^{\sigma_1} b_2^{\sigma_2} b_3^{\sigma_3} b_4^{\sigma_4}$ for some $\sigma_1, \sigma_2, \sigma_3, \sigma_4$ in $\{1,2,3,\ldots, 16\}$. We can observe by computation that
\begin{align*}
a & = a_1^{\pi_1} a_2^{\pi_2} a_3^{\pi_3}
= \mbox{diag}(3^{\pi_1}, 3^{\pi_3}, 3^{\pi_3}, 3^{\pi_3}, 3^{\pi_2}, 3^{\pi_2}, 3^{\pi_3}), \quad \mbox{and} \\
b & = b_1^{\sigma_1} b_2^{\sigma_2} b_3^{\sigma_3} b_4^{\sigma_4}
= \mbox{diag}(3^{\sigma_1}, 3^{\sigma_2}, 3^{\sigma_2}, 3^{\sigma_3}, 3^{\sigma_3}, 3^{\sigma_4}, 3^{\sigma_4}).
\end{align*}

For $i=1,2$, we know that $\lambda_i = \mbox{diag}(\Phi_{\mathcal{A}}(\lambda_i), \Phi_{\mathcal{B}}(\lambda_i))$ and that $\Phi_{\mathcal{B}}(\lambda_i) = \zeta_j = \mbox{diag}(1,1,1,1,j)$. Let $\Phi_{\mathcal{A}}(\lambda_1) = \left( \begin{smallmatrix} A & B \\ C & D \end{smallmatrix} \right)$ and $\Phi_{\mathcal{A}}(\lambda_2) = \left( \begin{smallmatrix} A' & B' \\ C' & D' \end{smallmatrix} \right)$. Thus, we can observe that
\begin{align*}
a \lambda_1 b
& = a [ \mbox{diag}(\Phi_{\mathcal{A}}(\lambda_1), \Phi_{\mathcal{B}}(\lambda_1)) ] b
= a [ \mbox{diag} (\left( \begin{smallmatrix} A & B \\ C & D \end{smallmatrix} \right), 1,1,1,1,j) ] b \\
& = \mbox{diag} (
\left( \begin{smallmatrix} 3^{\pi_1} & 0 \\ 0 & 3^{\pi_3} \end{smallmatrix} \right)
\left( \begin{smallmatrix} A & B \\ C & D \end{smallmatrix} \right)
\left( \begin{smallmatrix} 3^{\sigma_1} & 0 \\ 0 & 3^{\sigma_2} \end{smallmatrix} \right),
3^{\pi_3 + \sigma_2},
3^{\pi_3 + \sigma_3},
3^{\pi_2 + \sigma_3},
3^{\pi_2 + \sigma_4},
j 3^{\pi_3 + \sigma_4}
)
\end{align*}
We also have
\begin{align*}
\lambda_2 = \mbox{diag}(\Phi_{\mathcal{A}}(\lambda_2), \Phi_{\mathcal{B}}(\lambda_2))
= \mbox{diag} (\left( \begin{smallmatrix} A' & B' \\ C' & D' \end{smallmatrix} \right), 1,1,1,1,j),
\end{align*}
and therefore from $\lambda_2 = a \lambda_1 b$ we get $\left( \begin{smallmatrix} 3^{\pi_1} & 0 \\ 0 & 3^{\pi_3} \end{smallmatrix} \right)
\left( \begin{smallmatrix} A & B \\ C & D \end{smallmatrix} \right)
\left( \begin{smallmatrix} 3^{\sigma_1} & 0 \\ 0 & 3^{\sigma_2} \end{smallmatrix} \right) = \left( \begin{smallmatrix} A' & B' \\ C' & D' \end{smallmatrix} \right)$ and $3^{\pi_3 + \sigma_2} = 3^{\pi_3 + \sigma_3}= 3^{\pi_2 + \sigma_3} = 3^{\pi_2 + \sigma_4} = 3^{\pi_3 + \sigma_4}=1$. It is easy to see that $3^{\pi_3 + \sigma_2} = 3^{\pi_3 + \sigma_3}= 3^{\pi_2 + \sigma_3} = 3^{\pi_2 + \sigma_4} = 3^{\pi_3 + \sigma_4}=1$ implies
\begin{align*}
\pi_3 = -\sigma_2, \quad \pi_3 = -\sigma_3, \quad \pi_2 = -\sigma_3,
\quad \pi_2 = -\sigma_4, \quad \pi_3 = - \sigma_4,
\end{align*}
so that $\pi_2 = \pi_3 = -\sigma_2 = -\sigma_3 = -\sigma_4$.

Now let's look at
{\renewcommand{\arraystretch}{0.5}
\begin{align*}
\left( \begin{array}{*{2}{c@{\,}}c}
3^{\pi_1} & 0\\
0 & 3^{\pi_3}\\
\end{array} \right)
\left( \begin{array}{*{2}{c@{\,}}c}
A & B\\
C & D\\
\end{array} \right)
\left( \begin{array}{*{2}{c@{\,}}c}
3^{\sigma_1} & 0\\
0 & 3^{\sigma_2}\\
\end{array} \right) =
\left( \begin{array}{*{2}{c@{\,}}c}
A' & B'\\
C' & D'\\
\end{array} \right).
\end{align*}
}
Since $\pi_1, \pi_3, \sigma_1$ are arbitrary and $\sigma_2 = -\pi_3$, the above equation can be rewritten as
{\renewcommand{\arraystretch}{0.5}
\begin{align*}
\left( \begin{array}{*{2}{c@{\,}}c}
s & 0\\
0 & u\\
\end{array} \right)
\left( \begin{array}{*{2}{c@{\,}}c}
A & B\\
C & D\\
\end{array} \right)
\left( \begin{array}{*{2}{c@{\,}}c}
t & 0\\
0 & u^{-1}\\
\end{array} \right) =
\left( \begin{array}{*{2}{c@{\,}}c}
A' & B'\\
C' & D'\\
\end{array} \right),
\end{align*}
}
where $s,t,u$ are any elements in $K^*$. By computation we get $\left( \begin{smallmatrix} s & 0 \\ 0 & u \end{smallmatrix} \right)
\left( \begin{smallmatrix} A & B \\ C & D \end{smallmatrix} \right)
\left( \begin{smallmatrix} t & 0 \\ 0 & u^{-1} \end{smallmatrix} \right)$ $=$ $\left( \begin{smallmatrix} (stA) & (su^{-1}B) \\ (tuC) & D \end{smallmatrix} \right)$. Hence, the above equation becomes
{\renewcommand{\arraystretch}{0.5}
\begin{align*}
(*) : \quad \left( \begin{array}{*{2}{c@{\,}}c}
(stA) & (su^{-1}B)\\
(tuC) & D\\
\end{array} \right) =
\left( \begin{array}{*{2}{c@{\,}}c}
A' & B'\\
C' & D'\\
\end{array} \right).
\end{align*}
}
Define a relation $\sim$ by :  $\left( \begin{smallmatrix} A & B \\ C & D \end{smallmatrix} \right) \sim \left( \begin{smallmatrix} A' & B' \\ C' & D' \end{smallmatrix} \right)$ if $(*)$ holds. Then, it is easy to prove that $\sim$ is an equivalence relation. It is easy to observe that we have $\left( \begin{smallmatrix} A & B \\ C & D \end{smallmatrix} \right) \sim \left( \begin{smallmatrix} A' & B' \\ C' & D' \end{smallmatrix} \right)$ if and only if $D=D'$ and $\left( \begin{smallmatrix} A' & B' \\ C' & D \end{smallmatrix} \right)$ can be obtained by multiplying some numbers$\in K^*$ to the first column and the first row of $\left( \begin{smallmatrix} A & B \\ C & D \end{smallmatrix}\right)$. Therefore, it is just an easy computation by hand to get the complete list of representatives for distinct equivalence classes of $\sim$. The list of representatives is presented as follows.

For cases $D=0$,
{\renewcommand{\arraystretch}{0.5}
\begin{align*}
& \left( \begin{array}{*{2}{c@{\,}}c}
0 & 1 \\
1 & 0 \\
\end{array} \right), \quad \mbox{and} \quad
\left( \begin{array}{*{2}{c@{\,}}c}
1 & 1\\
c & 0\\
\end{array} \right)
\end{align*}
}
are the representatives for all possible distinct classes, where $c$ runs through $K^*$.

This gives $1+16 = 17$ cases. For cases $D\neq 0$ and at least one of $A,B,C$ is $0$,
{\renewcommand{\arraystretch}{0.5}
\begin{align*}
& \left( \begin{array}{*{2}{c@{\,}}c}
1 & 0 \\
1 & c \\
\end{array} \right), \quad
\left( \begin{array}{*{2}{c@{\,}}c}
1 & 1\\
0 & c\\
\end{array} \right), \quad
\left( \begin{array}{*{2}{c@{\,}}c}
0 & 1\\
1 & c\\
\end{array} \right), \quad \mbox{and} \quad
\left( \begin{array}{*{2}{c@{\,}}c}
1 & 0\\
0 & c\\
\end{array} \right)
\end{align*}
}
are the representatives for all possible distinct classes, where $c$ runs through $K^*$. This gives $4\times 16 = 64$ cases.

For cases when $A,B,C,D$ are all nonzero,
{\renewcommand{\arraystretch}{0.5}
\begin{align*}
& \left( \begin{array}{*{2}{c@{\,}}c}
1 & 1 \\
c & d \\
\end{array} \right)
\end{align*}
}
are the representatives for all possible distinct classes, where $c$ and $d$ runs through $K^*$ and $c\neq d$. Here we have $16^2 - 16= 240$ cases. Now we have the complete list of equivalence classes of $\sim$, and there are $17 + 64 + 240 = 321$ classes.

What is just proved is that, for any fixed $j \in \{1,2,3,\ldots,16\}$, there are $321$ distinct $\mathcal{H}_1$-$\mathcal{E}_1$ double coset representatives $\lambda$ such that $\Phi_\mathcal{B} (\lambda) = \zeta_j$. Therefore, the number of $\mathcal{H}_1$-$\mathcal{E}_1$ double cosets in $\mathcal{D}_1$ is exactly $16 \times 321 = 5136$ as desired in (4). We also have a method to enumerate all the double coset representatives; namely, each double coset representative is of the form
\begin{align*}
{M}_{k,j} = \mbox{diag}(m_k,1,1,1,1,j),
\end{align*}
where $j$ ranges in $K^*$ and $m_k \in \GL_{2}(17)$ runs through the $321$ choices for $\left( \begin{smallmatrix} A & B \\ C & D \end{smallmatrix} \right)$ described above. Since there are $321$ choices for $m_k$, the subscript $k$ can be considered to range in $\{1,2,\ldots,321\}$. For each $k$, let $\mathfrak{m}_{k,j} = \gamma^{-1} (M_{k,j})$, $\mathfrak{e}_{k,j} = \mathfrak{m}_{k,j}^{-1} \mathfrak{e}_1 \mathfrak{m}_{k,j}$, and
$$\mathfrak G_{k,j} = \langle \mathfrak x, \mathfrak y, \mathfrak h, \mathfrak e_{k,j}\rangle.$$
As stated in Theorem 7.2.2 of \cite{michler}, this matrix group $\mathfrak{G}_{k,j}$ is the representation which corresponds to the double coset representative $\mathfrak{m}_{k,j}$.

As mentioned in the statement (5), only first half of the Sylow $2$-subgroup test of Step 5(c) of Algorithm 7.4.8 of \cite{michler} for each $\mathfrak G_{k,j}$ is done here.
For each $k,j$, the author checked the orders of the matrices $\mathfrak{h} \mathfrak e_{k,j}$ and $\mathfrak{y} \mathfrak e_{k,j}\mathfrak{h}$ with \textsc{Magma}. If either of the two matrices
has an order which can't be the order of an element of $\Fi_{23}$ (information on largest order of elements in $\Fi_{23}$ is obtained in Atlas \cite{atlas}), then the particular pair $(k,j)$ is discarded.
Through this test, only one case survived, namely the case is with $j = 16$ and $k = k_0$ such that
{\renewcommand{\arraystretch}{0.5}
\begin{align*}
m_{k_0} = \left( \begin{array}{*{2}{c@{\,}}c}
1 &  1\\
9 & 14
\end{array} \right).
\end{align*}
}
For this $k_0$, let $\mathfrak{e} = \mathfrak{e}_{k,j}$ and $\mathfrak{G} = \mathfrak{G}_{k_0,16} = \langle \mathfrak{x}, \mathfrak{y}, \mathfrak{h}, \mathfrak{e} \rangle$. It is checked with MAGMA that the two matrices $\mathfrak{h}\mathfrak{e}$ and $\mathfrak{y} \mathfrak{e} \mathfrak{h}$ have orders $12$ and $13$, respectively. So (5) is done. The four generating matrices $\mathfrak{x}, \mathfrak{y}, \mathfrak{h}, \mathfrak{e}$ are documented in the author's website \cite{kim1} as mentioned in (6).


(e) Using the algorithm described in the proof of Theorem 6.2.1 of \cite{michler} implemented in \textsc{Magma}, a faithful permutation representation $P\mathfrak
G$ of $\mathfrak G$ of degree $31671$ with stabilizer $\mathfrak H$ is obtained. 
In order to use this algorithm for getting permutation representation, the author had to look for a short-word matrix in terms of $\mathfrak{x}, \mathfrak{y}, \mathfrak{h}, \mathfrak{e}$ which has order $23$ (since $23$ divides the order of $\mathfrak{G}$, but not the order of the permutation stabilizer $\mathfrak{H}$); the matrix $\mathfrak{x} \mathfrak{y} \mathfrak{e} \mathfrak{h}^2$ of order $23$ is used.


In particular, it is shown by means of \textsc{Magma} that $|\mathfrak G| = 2^{18}\cdot 3^{13}\cdot 5^2\cdot
7\cdot 11\cdot 13\cdot 17\cdot 23$, using the faithful permutation representation $P\mathfrak{G}$.

(f) Let $\mathfrak z = (\mathfrak x \mathfrak y^2)^7$. Using the faithful permutation representation $P\mathfrak
G$ of degree $31671$, it is verified that $C_{\mathfrak G}(\mathfrak z) = \mathfrak{H} \cong H$.

(g) Using the faithful permutation representation $P\mathfrak G$
of degree $31671$ and \\ Kratzer's Algorithm 5.3.18 of \cite{michler}, the representatives of all the conjugacy classes of $\mathfrak G$ are obtained using \textsc{Magma}, see Table \ref{Fi_23cc}.

(h) Furthermore, character table of $\mathfrak
G$ is computed by means of the above permutation representation
$P\mathfrak G$ and \textsc{Magma}. It coincides with the one of $\Fi_{23}$
in \cite{atlas}, p. 178 -179. The character table of $\mathfrak G$
implies that $\mathfrak G$ is a simple group. This completes the
proof.

\end{proof}

\begin{remark} Let $E_1 = V_1\rtimes \M_{23}$ be
the split extension of $\M_{23}$ by its simple module $V_1$ of
dimension $11$ over $F = \GF(2)$. Then $E_1$ has a unique
class of $2$-central involutions represented by some element $z'_1$.
However, when applying the Algorithm \ref{alg. simple} to $E_1$,
an overgroup of $C_{E_1}(z'_1)$ of odd index satisfying all conditions of
Algorithm \ref{alg. simple} was not found. Hence Algorithm \ref{alg. simple} can't be applied for further steps.
\end{remark}

\begin{remark} Let $E_2 = V_2\rtimes \M_{23}$ be
the split extension of $\M_{23}$ by its simple module $V_2$ of
dimension $11$ over $F = \GF(2)$. Then $E_2$ has three
classes of $2$-central involutions represented by some elements $z'_2$, $z'_3$, and $z'_4$.
However, when applying the Algorithm \ref{alg. simple} to $E_2$,
an overgroup of $C_{E_i}(z'_i)$ of odd index satisfying all conditions of
Algorithm \ref{alg. simple} was not found, for any $i=2,3,4$. Hence Algorithm \ref{alg. simple} can't be applied for further steps.
\end{remark}



\begin{appendix}

\newpage

\section{Representatives of conjugacy classes}\label{CFJcc}



\begin{cclass}\label{Fi_23cc E} Conjugacy classes of $E(\Fi_{23}) = E = \langle x_1,y_1,e_1 \rangle$, with subscripts dropped
{ \setlength{\arraycolsep}{1mm}
\renewcommand{\baselinestretch}{0.5}
\scriptsize
$$
 \begin{array}{|c|r|r|c|c|c|c|c|c|
}\hline
 \mbox{Class} &
 \multicolumn{1}{c|}{\mbox{Representative}} &
 \multicolumn{1}{c|}{|\mbox{Centralizer}|}
 &
 2{\rm P}
 & 3{\rm P}
 & 5{\rm P}
 & 7{\rm P}
 & 11{\rm P}
 & 23{\rm P}
 \\\hline
 1 & 1 & 2^{18}\cdot3^{2}\cdot5\cdot7\cdot11\cdot23
 & 1 & 1 & 1 & 1 & 1 & 1 \\\hline

 2_{1} & (y)^{7} & 2^{18}\cdot3^{2}\cdot5\cdot7\cdot11
 & 1 & 2_{1} & 2_{1} & 2_{1} & 2_{1} & 2_{1} \\\hline

 2_{2} & (y^{3}e)^{6} & 2^{18}\cdot3^{2}\cdot5\cdot7
 & 1 & 2_{2} & 2_{2} & 2_{2} & 2_{2} & 2_{2} \\\hline

 2_{3} & (xye)^{8} & 2^{18}\cdot3^{2}\cdot5
 & 1 & 2_{3} & 2_{3} & 2_{3} & 2_{3} & 2_{3} \\\hline

 2_{4} & e & 2^{14}\cdot3\cdot7
 & 1 & 2_{4} & 2_{4} & 2_{4} & 2_{4} & 2_{4} \\\hline

 2_{5} & x & 2^{14}\cdot3
 & 1 & 2_{5} & 2_{5} & 2_{5} & 2_{5} & 2_{5} \\\hline

 3 & (y^{3}e)^{4} & 2^{7}\cdot3^{2}\cdot5
 & 3 & 1 & 3 & 3 & 3 & 3 \\\hline

 4_{1} & (xy^{2}xey^{2})^{7} & 2^{11}\cdot3\cdot7
 & 2_{2} & 4_{1} & 4_{1} & 4_{1} & 4_{1} & 4_{1} \\\hline

 4_{2} & (xye)^{4} & 2^{12}\cdot3
 & 2_{3} & 4_{2} & 4_{2} & 4_{2} & 4_{2} & 4_{2} \\\hline

 4_{3} & (y^{2}e)^{2} & 2^{12}\cdot3
 & 2_{3} & 4_{3} & 4_{3} & 4_{3} & 4_{3} & 4_{3} \\\hline

 4_{4} & (y^{3}e)^{3} & 2^{11}\cdot3
 & 2_{2} & 4_{4} & 4_{4} & 4_{4} & 4_{4} & 4_{4} \\\hline

 4_{5} & (xey)^{2} & 2^{9}
 & 2_{5} & 4_{5} & 4_{5} & 4_{5} & 4_{5} & 4_{5} \\\hline

 4_{6} & (xyey^{3})^{2} & 2^{9}
 & 2_{5} & 4_{6} & 4_{6} & 4_{6} & 4_{6} & 4_{6} \\\hline

 4_{7} & xe & 2^{8}
 & 2_{4} & 4_{7} & 4_{7} & 4_{7} & 4_{7} & 4_{7} \\\hline

 5 & (xyxye)^{2} & 2^{3}\cdot3\cdot5
 & 5 & 5 & 1 & 5 & 5 & 5 \\\hline

 6_{1} & (xyxey^{2})^{5} & 2^{7}\cdot3^{2}\cdot5
 & 3 & 2_{3} & 6_{1} & 6_{1} & 6_{1} & 6_{1} \\\hline

 6_{2} & (xyexe)^{2} & 2^{7}\cdot3^{2}
 & 3 & 2_{3} & 6_{2} & 6_{2} & 6_{2} & 6_{2} \\\hline

 6_{3} & xyey^{3}e & 2^{7}\cdot3^{2}
 & 3 & 2_{1} & 6_{3} & 6_{3} & 6_{3} & 6_{3} \\\hline

 6_{4} & (y^{3}e)^{2} & 2^{6}\cdot3^{2}
 & 3 & 2_{2} & 6_{4} & 6_{4} & 6_{4} & 6_{4} \\\hline

 6_{5} & xyxexey^{2} & 2^{6}\cdot3^{2}
 & 3 & 2_{3} & 6_{5} & 6_{5} & 6_{5} & 6_{5} \\\hline

 6_{6} & y^{4}e & 2^{5}\cdot3
 & 3 & 2_{4} & 6_{6} & 6_{6} & 6_{6} & 6_{6} \\\hline

 6_{7} & xyxeyxe & 2^{5}\cdot3
 & 3 & 2_{5} & 6_{7} & 6_{7} & 6_{7} & 6_{7} \\\hline

 7_{1} & (y)^{2} & 2^{3}\cdot7
 & 7_{1} & 7_{2} & 7_{2} & 1 & 7_{1} & 7_{1} \\\hline

 7_{2} & (ye)^{2} & 2^{3}\cdot7
 & 7_{2} & 7_{1} & 7_{1} & 1 & 7_{2} & 7_{2} \\\hline

 8_{1} & (xye)^{2} & 2^{7}
 & 4_{2} & 8_{1} & 8_{1} & 8_{1} & 8_{1} & 8_{1} \\\hline

 8_{2} & y^{2}e & 2^{7}
 & 4_{3} & 8_{2} & 8_{2} & 8_{2} & 8_{2} & 8_{2} \\\hline

 8_{3} & xy^{2}ey^{2}e & 2^{7}
 & 4_{3} & 8_{3} & 8_{3} & 8_{3} & 8_{3} & 8_{3} \\\hline

 8_{4} & xey & 2^{5}
 & 4_{5} & 8_{4} & 8_{4} & 8_{4} & 8_{4} & 8_{4} \\\hline

 8_{5} & xyey^{3} & 2^{5}
 & 4_{6} & 8_{5} & 8_{5} & 8_{5} & 8_{5} & 8_{5} \\\hline

 10_{1} & (xyxey^{2})^{3} & 2^{3}\cdot3\cdot5
 & 5 & 10_{1} & 2_{3} & 10_{1} & 10_{1} & 10_{1} \\\hline

 10_{2} & xyxye & 2^{3}\cdot5
 & 5 & 10_{2} & 2_{2} & 10_{2} & 10_{2} & 10_{2} \\\hline

 10_{3} & xyxyey & 2^{3}\cdot5
 & 5 & 10_{3} & 2_{1} & 10_{3} & 10_{3} & 10_{3} \\\hline

 11_{1} & (xy^{3})^{2} & 2\cdot11
 & 11_{2} & 11_{1} & 11_{1} & 11_{2} & 1 & 11_{1} \\\hline

 11_{2} & (xy^{3})^{4} & 2\cdot11
 & 11_{1} & 11_{2} & 11_{2} & 11_{1} & 1 & 11_{2} \\\hline

 12_{1} & xyexe & 2^{5}\cdot3
 & 6_{2} & 4_{2} & 12_{1} & 12_{1} & 12_{1} & 12_{1} \\\hline

 12_{2} & xyxyxey^{2} & 2^{5}\cdot3
 & 6_{2} & 4_{3} & 12_{2} & 12_{2} & 12_{2} & 12_{2} \\\hline

 12_{3} & y^{3}e & 2^{4}\cdot3
 & 6_{4} & 4_{4} & 12_{3} & 12_{3} & 12_{3} & 12_{3} \\\hline

 12_{4} & y^{3}ey^{2}e & 2^{4}\cdot3
 & 6_{4} & 4_{1} & 12_{4} & 12_{4} & 12_{4} & 12_{4} \\\hline

 14_{1} & xyey^{2} & 2^{3}\cdot7
 & 7_{2} & 14_{2} & 14_{2} & 2_{2} & 14_{1} & 14_{1} \\\hline

 14_{2} & xy^{5} & 2^{3}\cdot7
 & 7_{1} & 14_{1} & 14_{1} & 2_{2} & 14_{2} & 14_{2} \\\hline

 14_{3} & y & 2^{2}\cdot7
 & 7_{1} & 14_{4} & 14_{4} & 2_{1} & 14_{3} & 14_{3} \\\hline

 14_{4} & ye & 2^{2}\cdot7
 & 7_{2} & 14_{3} & 14_{3} & 2_{1} & 14_{4} & 14_{4} \\\hline

 14_{5} & xyxe & 2^{2}\cdot7
 & 7_{2} & 14_{6} & 14_{6} & 2_{4} & 14_{5} & 14_{5} \\\hline

 14_{6} & xy^{3}xe & 2^{2}\cdot7
 & 7_{1} & 14_{5} & 14_{5} & 2_{4} & 14_{6} & 14_{6} \\\hline

 15_{1} & xy^{3}e & 2\cdot3\cdot5
 & 15_{1} & 5 & 3 & 15_{2} & 15_{2} & 15_{1} \\\hline

 15_{2} & (xy^{3}ey)^{2} & 2\cdot3\cdot5
 & 15_{2} & 5 & 3 & 15_{1} & 15_{1} & 15_{2} \\\hline

 16_{1} & xye & 2^{5}
 & 8_{1} & 16_{1} & 16_{2} & 16_{2} & 16_{1} & 16_{2} \\\hline

 16_{2} & xey^{3} & 2^{5}
 & 8_{1} & 16_{2} & 16_{1} & 16_{1} & 16_{2} & 16_{1} \\\hline

 22_{1} & xy^{3} & 2\cdot11
 & 11_{1} & 22_{1} & 22_{1} & 22_{2} & 2_{1} & 22_{1} \\\hline

 22_{2} & xey^{2} & 2\cdot11
 & 11_{2} & 22_{2} & 22_{2} & 22_{1} & 2_{1} & 22_{2} \\\hline

 23_{1} & xeyey & 23
 & 23_{1} & 23_{1} & 23_{2} & 23_{2} & 23_{2} & 1 \\\hline

 23_{2} & xy^{2}eyxe & 23
 & 23_{2} & 23_{2} & 23_{1} & 23_{1} & 23_{1} & 1 \\\hline

 28_{1} & xy^{2}xey^{2} & 2^{2}\cdot7
 & 14_{1} & 28_{2} & 28_{2} & 4_{1} & 28_{1} & 28_{1} \\\hline

 28_{2} & xey^{3}ey & 2^{2}\cdot7
 & 14_{2} & 28_{1} & 28_{1} & 4_{1} & 28_{2} & 28_{2} \\\hline

 30_{1} & xyxey^{2} & 2\cdot3\cdot5
 & 15_{1} & 10_{1} & 6_{1} & 30_{2} & 30_{2} & 30_{1} \\\hline

 30_{2} & xy^{3}ey & 2\cdot3\cdot5
 & 15_{2} & 10_{1} & 6_{1} & 30_{1} & 30_{1} & 30_{2} \\\hline

\end{array}
$$
}
\end{cclass}


\newpage
\begin{cclass}\label{Fi_23cc D} Conjugacy classes of $D(\Fi_{23}) = D = \langle x_1, y_1 \rangle \cong \langle x_0, y_0 \rangle$, with subscripts dropped
{ \setlength{\arraycolsep}{1mm}
\renewcommand{\baselinestretch}{0.5}
\scriptsize
$$
 \begin{array}{|c|r|r|c|c|c|c|c|
}\hline
 \mbox{Class} &
 \multicolumn{1}{c|}{\mbox{Representative}} &
 \multicolumn{1}{c|}{|\mbox{Centralizer}|}
 &
 2{\rm P}
 & 3{\rm P}
 & 5{\rm P}
 & 7{\rm P}
 & 11{\rm P}
 \\\hline
 1 & 1 & 2^{18}\cdot3^{2}\cdot5\cdot7\cdot11
 & 1 & 1 & 1 & 1 & 1 \\\hline

 2_{1} & (xy^{2})^{7} & 2^{18}\cdot3^{2}\cdot5\cdot7\cdot11
 & 1 & 2_{1} & 2_{1} & 2_{1} & 2_{1} \\\hline

 2_{2} & (y)^{7} & 2^{17}\cdot3^{2}\cdot5\cdot7
 & 1 & 2_{2} & 2_{2} & 2_{2} & 2_{2} \\\hline

 2_{3} & (xy^{5})^{7} & 2^{17}\cdot3^{2}\cdot5\cdot7
 & 1 & 2_{3} & 2_{3} & 2_{3} & 2_{3} \\\hline

 2_{4} & (xyxy^{2})^{8} & 2^{18}\cdot3\cdot5
 & 1 & 2_{4} & 2_{4} & 2_{4} & 2_{4} \\\hline

 2_{5} & (xyxy^{3})^{6} & 2^{18}\cdot3\cdot5
 & 1 & 2_{5} & 2_{5} & 2_{5} & 2_{5} \\\hline

 2_{6} & (xyxy^{2}xy^{2})^{4} & 2^{16}\cdot3^{2}
 & 1 & 2_{6} & 2_{6} & 2_{6} & 2_{6} \\\hline

 2_{7} & x & 2^{14}\cdot3
 & 1 & 2_{7} & 2_{7} & 2_{7} & 2_{7} \\\hline

 2_{8} & (xy^{3}xy^{4})^{3} & 2^{14}\cdot3
 & 1 & 2_{8} & 2_{8} & 2_{8} & 2_{8} \\\hline

 2_{9} & (xy^{2}xy^{3}xy^{2}xy^{5})^{2} & 2^{13}
 & 1 & 2_{9} & 2_{9} & 2_{9} & 2_{9} \\\hline

 3 & (xyxy^{3})^{4} & 2^{7}\cdot3^{2}
 & 3 & 1 & 3 & 3 & 3 \\\hline

 4_{1} & (xyxy^{3})^{3} & 2^{11}\cdot3
 & 2_{5} & 4_{1} & 4_{1} & 4_{1} & 4_{1} \\\hline

 4_{2} & (xyxy^{3}xyxy^{4})^{3} & 2^{11}\cdot3
 & 2_{5} & 4_{2} & 4_{2} & 4_{2} & 4_{2} \\\hline

 4_{3} & (xyxy^{2})^{4} & 2^{12}
 & 2_{4} & 4_{3} & 4_{3} & 4_{3} & 4_{3} \\\hline

 4_{4} & (xyxyxy^{2}xy^{6})^{2} & 2^{12}
 & 2_{4} & 4_{4} & 4_{4} & 4_{4} & 4_{4} \\\hline

 4_{5} & (xyxy^{2}xy^{2})^{2} & 2^{10}\cdot3
 & 2_{6} & 4_{5} & 4_{5} & 4_{5} & 4_{5} \\\hline

 4_{6} & (xyxy^{2}xyxy^{6})^{3} & 2^{10}\cdot3
 & 2_{6} & 4_{6} & 4_{6} & 4_{6} & 4_{6} \\\hline

 4_{7} & xy^{2}xy^{7}xy^{5} & 2^{10}
 & 2_{5} & 4_{7} & 4_{7} & 4_{7} & 4_{7} \\\hline

 4_{8} & (xyxy^{3}xy^{2})^{2} & 2^{9}
 & 2_{7} & 4_{8} & 4_{8} & 4_{8} & 4_{8} \\\hline

 4_{9} & (xyxy^{2}xy^{4})^{2} & 2^{9}
 & 2_{7} & 4_{9} & 4_{9} & 4_{9} & 4_{9} \\\hline

 4_{10} & xy^{2}xy^{3}xy^{5} & 2^{8}
 & 2_{8} & 4_{10} & 4_{10} & 4_{10} & 4_{10} \\\hline

 4_{11} & xy^{2}xy^{3}xy^{2}xy^{5} & 2^{8}
 & 2_{9} & 4_{11} & 4_{11} & 4_{11} & 4_{11} \\\hline

 4_{12} & xyxyxy^{4}xy^{4}xy^{2} & 2^{8}
 & 2_{9} & 4_{12} & 4_{12} & 4_{12} & 4_{12} \\\hline

 4_{13} & xyxyxyxy^{2}xy^{2} & 2^{7}
 & 2_{8} & 4_{13} & 4_{13} & 4_{13} & 4_{13} \\\hline

 5 & (xyxy^{5})^{2} & 2^{3}\cdot5
 & 5 & 5 & 1 & 5 & 5 \\\hline

 6_{1} & (xyxy^{2}xyxy^{6})^{2} & 2^{7}\cdot3^{2}
 & 3 & 2_{6} & 6_{1} & 6_{1} & 6_{1} \\\hline

 6_{2} & xy^{2}xy^{2}xy^{6}xy^{3} & 2^{7}\cdot3^{2}
 & 3 & 2_{6} & 6_{2} & 6_{2} & 6_{2} \\\hline

 6_{3} & xyxy^{2}xy^{3}xy^{2}xy^{5} & 2^{7}\cdot3^{2}
 & 3 & 2_{1} & 6_{3} & 6_{3} & 6_{3} \\\hline

 6_{4} & xyxyxy^{2}xyxyxy^{3} & 2^{5}\cdot3^{2}
 & 3 & 2_{6} & 6_{4} & 6_{4} & 6_{4} \\\hline

 6_{5} & xyxy^{2}xy^{6}xy^{4} & 2^{5}\cdot3^{2}
 & 3 & 2_{6} & 6_{5} & 6_{5} & 6_{5} \\\hline

 6_{6} & xyxy^{3}xyxy^{4}xy^{3} & 2^{5}\cdot3^{2}
 & 3 & 2_{3} & 6_{6} & 6_{6} & 6_{6} \\\hline

 6_{7} & xyxyxy^{4}xy^{3}xyxy^{2} & 2^{5}\cdot3^{2}
 & 3 & 2_{2} & 6_{7} & 6_{7} & 6_{7} \\\hline

 6_{8} & (xyxy^{3})^{2} & 2^{6}\cdot3
 & 3 & 2_{5} & 6_{8} & 6_{8} & 6_{8} \\\hline

 6_{9} & xyxy^{5}xy^{2} & 2^{6}\cdot3
 & 3 & 2_{4} & 6_{9} & 6_{9} & 6_{9} \\\hline

 6_{10} & xy^{3}xy^{4} & 2^{5}\cdot3
 & 3 & 2_{8} & 6_{10} & 6_{10} & 6_{10} \\\hline

 6_{11} & xy^{4}xy^{6} & 2^{5}\cdot3
 & 3 & 2_{7} & 6_{11} & 6_{11} & 6_{11} \\\hline

 7_{1} & (y)^{2} & 2^{2}\cdot7
 & 7_{1} & 7_{2} & 7_{2} & 1 & 7_{1} \\\hline

 7_{2} & (xy^{2})^{2} & 2^{2}\cdot7
 & 7_{2} & 7_{1} & 7_{1} & 1 & 7_{2} \\\hline

 8_{1} & (xyxy^{2})^{2} & 2^{7}
 & 4_{3} & 8_{1} & 8_{1} & 8_{1} & 8_{1} \\\hline

 8_{2} & xyxyxy^{2}xy^{6} & 2^{7}
 & 4_{4} & 8_{2} & 8_{2} & 8_{2} & 8_{2} \\\hline

 8_{3} & xyxy^{6}xy^{3}xy^{4} & 2^{7}
 & 4_{4} & 8_{3} & 8_{3} & 8_{3} & 8_{3} \\\hline

 8_{4} & xyxy^{2}xy^{2} & 2^{6}
 & 4_{5} & 8_{4} & 8_{4} & 8_{4} & 8_{4} \\\hline

 8_{5} & xy^{2}xy^{3}xy^{3} & 2^{6}
 & 4_{3} & 8_{5} & 8_{5} & 8_{5} & 8_{5} \\\hline

 8_{6} & xy^{2}xy^{2}xy^{2}xy^{3} & 2^{6}
 & 4_{5} & 8_{6} & 8_{6} & 8_{6} & 8_{6} \\\hline

 8_{7} & xyxy^{3}xy^{2} & 2^{5}
 & 4_{8} & 8_{7} & 8_{7} & 8_{7} & 8_{7} \\\hline

 8_{8} & xyxy^{2}xy^{4} & 2^{5}
 & 4_{9} & 8_{8} & 8_{8} & 8_{8} & 8_{8} \\\hline

 10_{1} & xyxy^{5} & 2^{3}\cdot5
 & 5 & 10_{2} & 2_{3} & 10_{2} & 10_{1} \\\hline

 10_{2} & xyxy^{6} & 2^{3}\cdot5
 & 5 & 10_{1} & 2_{3} & 10_{1} & 10_{2} \\\hline

 10_{3} & xyxy^{3}xy^{3} & 2^{3}\cdot5
 & 5 & 10_{3} & 2_{5} & 10_{3} & 10_{3} \\\hline

 10_{4} & xyxy^{2}xyxy^{3} & 2^{3}\cdot5
 & 5 & 10_{6} & 2_{2} & 10_{6} & 10_{4} \\\hline

 10_{5} & xy^{2}xy^{3}xy^{4} & 2^{3}\cdot5
 & 5 & 10_{5} & 2_{4} & 10_{5} & 10_{5} \\\hline

 10_{6} & xy^{2}xy^{4}xy^{3} & 2^{3}\cdot5
 & 5 & 10_{4} & 2_{2} & 10_{4} & 10_{6} \\\hline

 10_{7} & xyxyxy^{4}xy^{3}xy^{2} & 2^{3}\cdot5
 & 5 & 10_{7} & 2_{1} & 10_{7} & 10_{7} \\\hline
 11_{1} & (xy^{3})^{2} & 2\cdot11
 & 11_{2} & 11_{1} & 11_{1} & 11_{2} & 1 \\\hline

 11_{2} & (xy^{3})^{4} & 2\cdot11
 & 11_{1} & 11_{2} & 11_{2} & 11_{1} & 1 \\\hline

 12_{1} & xyxy^{2}xyxy^{6} & 2^{5}\cdot3
 & 6_{1} & 4_{6} & 12_{1} & 12_{1} & 12_{1} \\\hline

 12_{2} & xyxyxy^{2}xy^{7} & 2^{5}\cdot3
 & 6_{1} & 4_{5} & 12_{2} & 12_{2} & 12_{2} \\\hline

 12_{3} & xyxy^{3} & 2^{4}\cdot3
 & 6_{8} & 4_{1} & 12_{3} & 12_{3} & 12_{3} \\\hline

 12_{4} & xyxy^{3}xyxy^{4} & 2^{4}\cdot3
 & 6_{8} & 4_{2} & 12_{4} & 12_{4} & 12_{4} \\\hline

 14_{1} & y & 2^{2}\cdot7
 & 7_{1} & 14_{6} & 14_{6} & 2_{2} & 14_{1} \\\hline

 14_{2} & xy^{2} & 2^{2}\cdot7
 & 7_{2} & 14_{5} & 14_{5} & 2_{1} & 14_{2} \\\hline

 14_{3} & xy^{5} & 2^{2}\cdot7
 & 7_{1} & 14_{4} & 14_{4} & 2_{3} & 14_{3} \\\hline

 14_{4} & xy^{2}xy^{3} & 2^{2}\cdot7
 & 7_{2} & 14_{3} & 14_{3} & 2_{3} & 14_{4} \\\hline

 14_{5} & xy^{2}xy^{4} & 2^{2}\cdot7
 & 7_{1} & 14_{2} & 14_{2} & 2_{1} & 14_{5} \\\hline

 14_{6} & xy^{3}xy^{5} & 2^{2}\cdot7
 & 7_{2} & 14_{1} & 14_{1} & 2_{2} & 14_{6} \\\hline

 16_{1} & xyxy^{2} & 2^{5}
 & 8_{1} & 16_{1} & 16_{2} & 16_{2} & 16_{1} \\\hline

 16_{2} & xyxyxy^{2} & 2^{5}
 & 8_{1} & 16_{2} & 16_{1} & 16_{1} & 16_{2} \\\hline

 22_{1} & xy^{3} & 2\cdot11
 & 11_{1} & 22_{1} & 22_{1} & 22_{2} & 2_{1} \\\hline

 22_{2} & xyxyxy^{2}xy^{2} & 2\cdot11
 & 11_{2} & 22_{2} & 22_{2} & 22_{1} & 2_{1} \\\hline

\end{array}
$$

}

\end{cclass}


\newpage

\begin{cclass}\label{Fi_23cc H_2} Conjugacy classes of $H_2 = \langle p_2, q_2, h_2 \rangle$, with subscripts dropped
{ \setlength{\arraycolsep}{1mm}
\renewcommand{\baselinestretch}{0.5}
\scriptsize
$$
 \begin{array}{|c|r|r|c|c|c|
}\hline
 \mbox{Class} &
 \multicolumn{1}{c|}{\mbox{Representative}} &
 \multicolumn{1}{c|}{|\mbox{Centralizer}|}
 &
 2{\rm P}
 & 3{\rm P}
 & 5{\rm P}
 \\\hline
 1 & 1 & 2^{18}\cdot3^{4}\cdot5
 & 1 & 1 & 1 \\\hline

 2_{1} & (pq)^{8} & 2^{18}\cdot3^{4}\cdot5
 & 1 & 2_{1} & 2_{1} \\\hline

 2_{2} & (p^{2}q)^{5} & 2^{18}\cdot3^{4}\cdot5
 & 1 & 2_{2} & 2_{2} \\\hline

 2_{3} & (p^{2}h^{2}q)^{5} & 2^{18}\cdot3^{4}\cdot5
 & 1 & 2_{3} & 2_{3} \\\hline

 2_{4} & (q)^{3} & 2^{17}\cdot3^{4}\cdot5
 & 1 & 2_{4} & 2_{4} \\\hline

 2_{5} & (p^{2}q^{2}h)^{5} & 2^{17}\cdot3^{4}\cdot5
 & 1 & 2_{5} & 2_{5} \\\hline

 2_{6} & (qh)^{6} & 2^{16}\cdot3^{2}
 & 1 & 2_{6} & 2_{6} \\\hline

 2_{7} & (p^{2}hqh)^{6} & 2^{16}\cdot3^{2}
 & 1 & 2_{7} & 2_{7} \\\hline

 2_{8} & (p^{2}qp^{2}h)^{3} & 2^{16}\cdot3^{2}
 & 1 & 2_{8} & 2_{8} \\\hline

 2_{9} & (phphq^{2}h)^{3} & 2^{16}\cdot3^{2}
 & 1 & 2_{9} & 2_{9} \\\hline

 2_{10} & (pqh)^{6} & 2^{17}\cdot3
 & 1 & 2_{10} & 2_{10} \\\hline

 2_{11} & (pqhq)^{6} & 2^{17}\cdot3
 & 1 & 2_{11} & 2_{11} \\\hline

 2_{12} & (pqph)^{6} & 2^{15}\cdot3^{2}
 & 1 & 2_{12} & 2_{12} \\\hline

 2_{13} & (p^{2}hq^{2})^{3} & 2^{15}\cdot3^{2}
 & 1 & 2_{13} & 2_{13} \\\hline

 2_{14} & (p^{2}h^{2}qh)^{4} & 2^{16}\cdot3
 & 1 & 2_{14} & 2_{14} \\\hline

 2_{15} & (ph)^{5} & 2^{11}\cdot3^{2}\cdot5
 & 1 & 2_{15} & 2_{15} \\\hline

 2_{16} & (phqh)^{5} & 2^{11}\cdot3^{2}\cdot5
 & 1 & 2_{16} & 2_{16} \\\hline

 2_{17} & (p)^{2} & 2^{13}\cdot3
 & 1 & 2_{17} & 2_{17} \\\hline

 2_{18} & (pq^{2})^{6} & 2^{13}\cdot3
 & 1 & 2_{18} & 2_{18} \\\hline

 2_{19} & (p^{2}qp^{2}qh)^{3} & 2^{13}\cdot3
 & 1 & 2_{19} & 2_{19} \\\hline

 2_{20} & (p^{2}qpqph)^{3} & 2^{13}\cdot3
 & 1 & 2_{20} & 2_{20} \\\hline

 2_{21} & p^{2}q^{3} & 2^{12}\cdot3
 & 1 & 2_{21} & 2_{21} \\\hline

 2_{22} & (pqpq^{2}h)^{3} & 2^{12}\cdot3
 & 1 & 2_{22} & 2_{22} \\\hline

 2_{23} & p^{3}h^{2}qhqpq & 2^{13}
 & 1 & 2_{23} & 2_{23} \\\hline

 2_{24} & (p^{2}hph^{2})^{3} & 2^{10}\cdot3
 & 1 & 2_{24} & 2_{24} \\\hline

 3_{1} & h & 2^{8}\cdot3^{4}
 & 3_{1} & 1 & 3_{1} \\\hline

 3_{2} & (pqh)^{4} & 2^{9}\cdot3^{3}
 & 3_{2} & 1 & 3_{2} \\\hline

 3_{3} & (q)^{2} & 2^{6}\cdot3^{3}
 & 3_{3} & 1 & 3_{3} \\\hline

 4_{1} & (p^{2}qph)^{6} & 2^{12}\cdot3^{3}
 & 2_{1} & 4_{1} & 4_{1} \\\hline

 4_{2} & (phq)^{5} & 2^{11}\cdot3^{2}\cdot5
 & 2_{2} & 4_{2} & 4_{2} \\\hline

 4_{3} & (p^{3}hqh)^{3} & 2^{11}\cdot3^{2}\cdot5
 & 2_{2} & 4_{3} & 4_{3} \\\hline

 4_{4} & (pq)^{4} & 2^{12}\cdot3
 & 2_{1} & 4_{4} & 4_{4} \\\hline

 4_{5} & (pqh)^{3} & 2^{11}\cdot3
 & 2_{10} & 4_{5} & 4_{5} \\\hline

 4_{6} & (q^{2}h)^{3} & 2^{11}\cdot3
 & 2_{10} & 4_{6} & 4_{6} \\\hline

 4_{7} & (pqphq)^{3} & 2^{11}\cdot3
 & 2_{10} & 4_{7} & 4_{7} \\\hline

 4_{8} & (pq^{4}h)^{3} & 2^{11}\cdot3
 & 2_{11} & 4_{8} & 4_{8} \\\hline

 4_{9} & (pqphpqh)^{3} & 2^{11}\cdot3
 & 2_{11} & 4_{9} & 4_{9} \\\hline

 4_{10} & (p^{2}qpq^{2}h^{2})^{3} & 2^{11}\cdot3
 & 2_{10} & 4_{10} & 4_{10} \\\hline

 4_{11} & (qh)^{3} & 2^{10}\cdot3
 & 2_{6} & 4_{11} & 4_{11} \\\hline

 4_{12} & (qh^{2})^{3} & 2^{10}\cdot3
 & 2_{10} & 4_{12} & 4_{12} \\\hline

 4_{13} & (pqhq)^{3} & 2^{10}\cdot3
 & 2_{11} & 4_{13} & 4_{13} \\\hline

 4_{14} & (p^{2}hqh)^{3} & 2^{10}\cdot3
 & 2_{7} & 4_{14} & 4_{14} \\\hline

 4_{15} & (pq^{2}h^{2})^{3} & 2^{10}\cdot3
 & 2_{10} & 4_{15} & 4_{15} \\\hline

 4_{16} & p^{3}hph & 2^{10}\cdot3
 & 2_{7} & 4_{16} & 4_{16} \\\hline

 4_{17} & (p^{2}hph^{2}q)^{3} & 2^{10}\cdot3
 & 2_{2} & 4_{17} & 4_{17} \\\hline

 4_{18} & (p^{2}hphphq)^{3} & 2^{10}\cdot3
 & 2_{6} & 4_{18} & 4_{18} \\\hline

 4_{19} & (p^{2}hphq)^{2} & 2^{11}
 & 2_{11} & 4_{19} & 4_{19} \\\hline

 4_{20} & (pqpqhq)^{2} & 2^{11}
 & 2_{11} & 4_{20} & 4_{20} \\\hline

 4_{21} & (pq^{2})^{3} & 2^{9}\cdot3
 & 2_{18} & 4_{21} & 4_{21} \\\hline

 4_{22} & (p^{3}q)^{3} & 2^{9}\cdot3
 & 2_{17} & 4_{22} & 4_{22} \\\hline

 4_{23} & (pqph)^{3} & 2^{9}\cdot3
 & 2_{12} & 4_{23} & 4_{23} \\\hline

 4_{24} & (p^{3}qh)^{3} & 2^{9}\cdot3
 & 2_{18} & 4_{24} & 4_{24} \\\hline

 4_{25} & (p^{2}qhq)^{3} & 2^{9}\cdot3
 & 2_{12} & 4_{25} & 4_{25} \\\hline

 4_{26} & (p^{2}hph)^{3} & 2^{9}\cdot3
 & 2_{17} & 4_{26} & 4_{26} \\\hline

 4_{27} & (p^{2}h^{2}qh)^{2} & 2^{10}
 & 2_{14} & 4_{27} & 4_{27} \\\hline

 4_{28} & p^{3}qph^{2}qh & 2^{10}
 & 2_{14} & 4_{28} & 4_{28} \\\hline

 4_{29} & p & 2^{9}
 & 2_{17} & 4_{29} & 4_{29} \\\hline

 4_{30} & pq^{3} & 2^{9}
 & 2_{18} & 4_{30} & 4_{30} \\\hline

 4_{31} & phqh^{2} & 2^{9}
 & 2_{17} & 4_{31} & 4_{31} \\\hline

 4_{32} & p^{3}qp^{2}h & 2^{9}
 & 2_{11} & 4_{32} & 4_{32} \\\hline

 4_{33} & p^{2}q^{2}ph^{2} & 2^{9}
 & 2_{10} & 4_{33} & 4_{33} \\\hline

 4_{34} & p^{2}qhqhq & 2^{9}
 & 2_{17} & 4_{34} & 4_{34} \\\hline

 4_{35} & p^{2}hqhq^{2} & 2^{9}
 & 2_{17} & 4_{35} & 4_{35} \\\hline

 4_{36} & pqphqph & 2^{9}
 & 2_{18} & 4_{36} & 4_{36} \\\hline

 4_{37} & pq^{2}hphqh & 2^{9}
 & 2_{17} & 4_{37} & 4_{37} \\\hline

 4_{38} & p^{2}hphpq^{2}h & 2^{9}
 & 2_{17} & 4_{38} & 4_{38} \\\hline

 4_{39} & pq^{2}h^{2}q^{2}ph & 2^{9}
 & 2_{10} & 4_{39} & 4_{39} \\\hline

 4_{40} & pq^{2}phq & 2^{8}
 & 2_{17} & 4_{40} & 4_{40} \\\hline

 4_{41} & pqpqpqph & 2^{8}
 & 2_{17} & 4_{41} & 4_{41} \\\hline

 5 & (ph)^{2} & 2^{4}\cdot5
 & 5 & 5 & 1 \\\hline

 6_{1} & q^{3}h & 2^{8}\cdot3^{4}
 & 3_{1} & 2_{4} & 6_{5} \\\hline

\end{array}
$$

\newpage
 \normalsize
 Conjugacy classes of $H_2 = \langle p_2, q_2, h_2 \rangle$, with subscripts dropped
 (continued)
 \scriptsize
 $$
 \begin{array}{|c|r|r|c|c|c|
}\hline
 \mbox{Class} &
 \multicolumn{1}{c|}{\mbox{Representative}} &
 \multicolumn{1}{c|}{|\mbox{Centralizer}|}
 &
 2{\rm P}
 & 3{\rm P}
 & 5{\rm P}
 \\\hline

 6_{2} & (pqpqh)^{3} & 2^{8}\cdot3^{4}
 & 3_{1} & 2_{5} & 6_{3} \\\hline

 6_{3} & (pq^{2}ph)^{3} & 2^{8}\cdot3^{4}
 & 3_{1} & 2_{5} & 6_{2} \\\hline

 6_{4} & (phphq)^{2} & 2^{8}\cdot3^{4}
 & 3_{1} & 2_{1} & 6_{4} \\\hline

 6_{5} & q^{3}h^{2} & 2^{8}\cdot3^{4}
 & 3_{1} & 2_{4} & 6_{1} \\\hline

 6_{6} & (p^{2}qhqh)^{3} & 2^{8}\cdot3^{4}
 & 3_{1} & 2_{3} & 6_{6} \\\hline

 6_{7} & (pqhqph^{2})^{3} & 2^{8}\cdot3^{4}
 & 3_{1} & 2_{2} & 6_{7} \\\hline

 6_{8} & (p^{2}qph)^{4} & 2^{9}\cdot3^{3}
 & 3_{2} & 2_{1} & 6_{8} \\\hline

 6_{9} & (p^{3}hqh)^{2} & 2^{9}\cdot3^{3}
 & 3_{2} & 2_{2} & 6_{9} \\\hline

 6_{10} & p^{2}hph^{2}pq^{2} & 2^{9}\cdot3^{3}
 & 3_{2} & 2_{3} & 6_{10} \\\hline

 6_{11} & p^{3}qhpq & 2^{8}\cdot3^{3}
 & 3_{2} & 2_{5} & 6_{11} \\\hline

 6_{12} & p^{2}hpqhpq & 2^{8}\cdot3^{3}
 & 3_{2} & 2_{4} & 6_{12} \\\hline

 6_{13} & (qh)^{2} & 2^{8}\cdot3^{2}
 & 3_{1} & 2_{6} & 6_{13} \\\hline

 6_{14} & (pqph)^{2} & 2^{8}\cdot3^{2}
 & 3_{1} & 2_{12} & 6_{16} \\\hline

 6_{15} & (p^{2}hqh)^{2} & 2^{8}\cdot3^{2}
 & 3_{1} & 2_{7} & 6_{15} \\\hline

 6_{16} & (pq^{2}ph^{2})^{2} & 2^{8}\cdot3^{2}
 & 3_{1} & 2_{12} & 6_{14} \\\hline

 6_{17} & phqph^{2} & 2^{8}\cdot3^{2}
 & 3_{1} & 2_{13} & 6_{18} \\\hline

 6_{18} & phqhph^{2} & 2^{8}\cdot3^{2}
 & 3_{1} & 2_{13} & 6_{17} \\\hline

 6_{19} & pq^{3}h^{2}phq & 2^{8}\cdot3^{2}
 & 3_{1} & 2_{8} & 6_{19} \\\hline

 6_{20} & p^{3}hpqh^{2}q^{2} & 2^{8}\cdot3^{2}
 & 3_{1} & 2_{9} & 6_{20} \\\hline

 6_{21} & (pq^{2}hq^{2})^{2} & 2^{6}\cdot3^{3}
 & 3_{3} & 2_{2} & 6_{21} \\\hline

 6_{22} & p^{2}hphpq & 2^{6}\cdot3^{3}
 & 3_{3} & 2_{3} & 6_{22} \\\hline

 6_{23} & q^{2}hqhqh & 2^{6}\cdot3^{3}
 & 3_{3} & 2_{1} & 6_{23} \\\hline

 6_{24} & p^{2}qh^{2} & 2^{7}\cdot3^{2}
 & 3_{2} & 2_{6} & 6_{24} \\\hline

 6_{25} & p^{2}qp^{2}h & 2^{7}\cdot3^{2}
 & 3_{2} & 2_{8} & 6_{25} \\\hline

 6_{26} & p^{2}hphph & 2^{7}\cdot3^{2}
 & 3_{2} & 2_{7} & 6_{26} \\\hline

 6_{27} & q^{2}h^{2}qhqh^{2} & 2^{7}\cdot3^{2}
 & 3_{2} & 2_{9} & 6_{27} \\\hline

 6_{28} & q & 2^{5}\cdot3^{3}
 & 3_{3} & 2_{4} & 6_{28} \\\hline

 6_{29} & p^{3}qp^{2}hph & 2^{5}\cdot3^{3}
 & 3_{3} & 2_{5} & 6_{29} \\\hline

 6_{30} & (pqh)^{2} & 2^{8}\cdot3
 & 3_{2} & 2_{10} & 6_{30} \\\hline

 6_{31} & (pqhq)^{2} & 2^{8}\cdot3
 & 3_{2} & 2_{11} & 6_{31} \\\hline

 6_{32} & p^{2}hq^{2} & 2^{6}\cdot3^{2}
 & 3_{2} & 2_{13} & 6_{32} \\\hline

 6_{33} & phqhqh^{2} & 2^{6}\cdot3^{2}
 & 3_{2} & 2_{15} & 6_{33} \\\hline

 6_{34} & ph^{2}q^{2}h^{2} & 2^{6}\cdot3^{2}
 & 3_{2} & 2_{16} & 6_{34} \\\hline

 6_{35} & p^{2}qpq^{2}ph & 2^{6}\cdot3^{2}
 & 3_{2} & 2_{12} & 6_{35} \\\hline

 6_{36} & p^{3}h^{2}pq & 2^{7}\cdot3
 & 3_{2} & 2_{14} & 6_{36} \\\hline

 6_{37} & qhqh^{2} & 2^{5}\cdot3^{2}
 & 3_{3} & 2_{6} & 6_{37} \\\hline

 6_{38} & p^{3}qpq & 2^{5}\cdot3^{2}
 & 3_{3} & 2_{13} & 6_{41} \\\hline

 6_{39} & pqhq^{2}ph & 2^{5}\cdot3^{2}
 & 3_{3} & 2_{12} & 6_{43} \\\hline

 6_{40} & phphq^{2}h & 2^{5}\cdot3^{2}
 & 3_{3} & 2_{9} & 6_{40} \\\hline

 6_{41} & p^{2}q^{2}h^{2}qh & 2^{5}\cdot3^{2}
 & 3_{3} & 2_{13} & 6_{38} \\\hline

 6_{42} & p^{2}qhphpq & 2^{5}\cdot3^{2}
 & 3_{3} & 2_{7} & 6_{42} \\\hline

 6_{43} & p^{2}hqphpq & 2^{5}\cdot3^{2}
 & 3_{3} & 2_{12} & 6_{39} \\\hline

 6_{44} & pqh^{2}qhph & 2^{5}\cdot3^{2}
 & 3_{3} & 2_{8} & 6_{44} \\\hline

 6_{45} & p^{2}h & 2^{6}\cdot3
 & 3_{3} & 2_{17} & 6_{45} \\\hline

 6_{46} & (pq^{2})^{2} & 2^{6}\cdot3
 & 3_{3} & 2_{18} & 6_{46} \\\hline

 6_{47} & p^{2}qp^{2}qh & 2^{6}\cdot3
 & 3_{3} & 2_{19} & 6_{47} \\\hline

 6_{48} & p^{2}qpqph & 2^{6}\cdot3
 & 3_{3} & 2_{20} & 6_{48} \\\hline

 6_{49} & p^{2}qpq & 2^{4}\cdot3^{2}
 & 3_{3} & 2_{15} & 6_{49} \\\hline

 6_{50} & p^{2}qhpq^{2} & 2^{4}\cdot3^{2}
 & 3_{3} & 2_{16} & 6_{50} \\\hline

 6_{51} & p^{2}q^{3}h & 2^{5}\cdot3
 & 3_{3} & 2_{21} & 6_{51} \\\hline

 6_{52} & p^{2}hph^{2} & 2^{5}\cdot3
 & 3_{2} & 2_{24} & 6_{52} \\\hline

 6_{53} & pqpq^{2}h & 2^{5}\cdot3
 & 3_{3} & 2_{22} & 6_{53} \\\hline

 8_{1} & (p^{2}qph)^{3} & 2^{7}\cdot3
 & 4_{1} & 8_{1} & 8_{1} \\\hline

 8_{2} & (pqhqhq)^{3} & 2^{7}\cdot3
 & 4_{1} & 8_{2} & 8_{2} \\\hline

 8_{3} & (pq)^{2} & 2^{7}
 & 4_{4} & 8_{3} & 8_{3} \\\hline

 8_{4} & pqpqhq & 2^{7}
 & 4_{20} & 8_{4} & 8_{4} \\\hline

 8_{5} & pqh^{2}qh & 2^{7}
 & 4_{20} & 8_{5} & 8_{5} \\\hline

 8_{6} & p^{3}hqph & 2^{7}
 & 4_{20} & 8_{6} & 8_{6} \\\hline

 8_{7} & pqphphq & 2^{7}
 & 4_{20} & 8_{7} & 8_{7} \\\hline

 8_{8} & pq^{2}hq & 2^{6}
 & 4_{11} & 8_{8} & 8_{8} \\\hline

 8_{9} & p^{2}qpq^{2} & 2^{6}
 & 4_{11} & 8_{9} & 8_{9} \\\hline

 8_{10} & p^{2}q^{2}ph & 2^{6}
 & 4_{16} & 8_{10} & 8_{10} \\\hline

 8_{11} & p^{2}hphq & 2^{6}
 & 4_{19} & 8_{11} & 8_{11} \\\hline

 8_{12} & p^{2}h^{2}qh & 2^{6}
 & 4_{27} & 8_{12} & 8_{12} \\\hline

 8_{13} & p^{3}qphq & 2^{6}
 & 4_{27} & 8_{13} & 8_{13} \\\hline

 8_{14} & pqpqpqh & 2^{6}
 & 4_{16} & 8_{14} & 8_{14} \\\hline

 9 & (pqpqh)^{2} & 2^{3}\cdot3^{2}
 & 9 & 3_{1} & 9 \\\hline

 10_{1} & p^{2}q & 2^{4}\cdot5
 & 5 & 10_{1} & 2_{2} \\\hline

 10_{2} & p^{2}qh & 2^{4}\cdot5
 & 5 & 10_{2} & 2_{1} \\\hline

 10_{3} & p^{2}h^{2}q & 2^{4}\cdot5
 & 5 & 10_{3} & 2_{3} \\\hline

 10_{4} & ph & 2^{3}\cdot5
 & 5 & 10_{4} & 2_{15} \\\hline

\end{array}
$$

\newpage
 \normalsize
 Conjugacy classes of $H_2 = \langle p_2, q_2, h_2 \rangle$, with subscripts dropped
 (continued)
 \scriptsize
 $$
 \begin{array}{|c|r|r|c|c|c|
}\hline
 \mbox{Class} &
 \multicolumn{1}{c|}{\mbox{Representative}} &
 \multicolumn{1}{c|}{|\mbox{Centralizer}|}
 &
 2{\rm P}
 & 3{\rm P}
 & 5{\rm P}
 \\\hline

 10_{5} & p^{2}q^{2} & 2^{3}\cdot5
 & 5 & 10_{5} & 2_{4} \\\hline

 10_{6} & phqh & 2^{3}\cdot5
 & 5 & 10_{6} & 2_{16} \\\hline

 10_{7} & p^{2}q^{2}h & 2^{3}\cdot5
 & 5 & 10_{7} & 2_{5} \\\hline

 12_{1} & (p^{2}qph)^{2} & 2^{7}\cdot3^{2}
 & 6_{8} & 4_{1} & 12_{1} \\\hline

 12_{2} & (pqhqhq)^{2} & 2^{7}\cdot3^{2}
 & 6_{8} & 4_{1} & 12_{2} \\\hline

 12_{3} & p^{3}hpq & 2^{5}\cdot3^{3}
 & 6_{4} & 4_{1} & 12_{3} \\\hline

 12_{4} & p^{3}hqh & 2^{6}\cdot3^{2}
 & 6_{9} & 4_{3} & 12_{4} \\\hline

 12_{5} & ph^{2}qhqh & 2^{6}\cdot3^{2}
 & 6_{9} & 4_{2} & 12_{5} \\\hline

 12_{6} & p^{2}h^{2}pqhpq & 2^{6}\cdot3^{2}
 & 6_{8} & 4_{1} & 12_{6} \\\hline

 12_{7} & pqh & 2^{6}\cdot3
 & 6_{30} & 4_{5} & 12_{7} \\\hline

 12_{8} & q^{2}h & 2^{6}\cdot3
 & 6_{30} & 4_{6} & 12_{8} \\\hline

 12_{9} & pqphq & 2^{6}\cdot3
 & 6_{30} & 4_{7} & 12_{9} \\\hline

 12_{10} & pq^{4}h & 2^{6}\cdot3
 & 6_{31} & 4_{8} & 12_{10} \\\hline

 12_{11} & pqphpqh & 2^{6}\cdot3
 & 6_{31} & 4_{9} & 12_{11} \\\hline

 12_{12} & p^{2}qpq^{2}h^{2} & 2^{6}\cdot3
 & 6_{30} & 4_{10} & 12_{12} \\\hline

 12_{13} & pq^{2}hq^{2} & 2^{4}\cdot3^{2}
 & 6_{21} & 4_{2} & 12_{13} \\\hline

 12_{14} & pqpqhpq^{2} & 2^{4}\cdot3^{2}
 & 6_{21} & 4_{3} & 12_{14} \\\hline

 12_{15} & qh & 2^{5}\cdot3
 & 6_{13} & 4_{11} & 12_{15} \\\hline

 12_{16} & qh^{2} & 2^{5}\cdot3
 & 6_{30} & 4_{12} & 12_{16} \\\hline

 12_{17} & pqph & 2^{5}\cdot3
 & 6_{14} & 4_{23} & 12_{24} \\\hline

 12_{18} & pqhq & 2^{5}\cdot3
 & 6_{31} & 4_{13} & 12_{18} \\\hline

 12_{19} & p^{2}qhq & 2^{5}\cdot3
 & 6_{14} & 4_{25} & 12_{23} \\\hline

 12_{20} & p^{2}hqh & 2^{5}\cdot3
 & 6_{15} & 4_{14} & 12_{20} \\\hline

 12_{21} & pq^{2}h^{2} & 2^{5}\cdot3
 & 6_{30} & 4_{15} & 12_{21} \\\hline

 12_{22} & phphq & 2^{5}\cdot3
 & 6_{4} & 4_{4} & 12_{22} \\\hline

 12_{23} & pq^{2}ph^{2} & 2^{5}\cdot3
 & 6_{16} & 4_{25} & 12_{19} \\\hline

 12_{24} & p^{2}q^{2}p^{2}h & 2^{5}\cdot3
 & 6_{16} & 4_{23} & 12_{17} \\\hline

 12_{25} & p^{2}hph^{2}q & 2^{5}\cdot3
 & 6_{9} & 4_{17} & 12_{25} \\\hline

 12_{26} & p^{3}qh^{2}pq & 2^{5}\cdot3
 & 6_{15} & 4_{16} & 12_{26} \\\hline

 12_{27} & p^{2}hphphq & 2^{5}\cdot3
 & 6_{13} & 4_{18} & 12_{27} \\\hline

 12_{28} & pq^{2} & 2^{4}\cdot3
 & 6_{46} & 4_{21} & 12_{28} \\\hline

 12_{29} & p^{3}q & 2^{4}\cdot3
 & 6_{45} & 4_{22} & 12_{29} \\\hline

 12_{30} & p^{3}qh & 2^{4}\cdot3
 & 6_{46} & 4_{24} & 12_{30} \\\hline

 12_{31} & p^{2}hph & 2^{4}\cdot3
 & 6_{45} & 4_{26} & 12_{31} \\\hline

 16_{1} & pq & 2^{5}
 & 8_{3} & 16_{1} & 16_{2} \\\hline

 16_{2} & p^{3}q^{2} & 2^{5}
 & 8_{3} & 16_{2} & 16_{1} \\\hline

 18_{1} & pqpqh & 2^{3}\cdot3^{2}
 & 9 & 6_{2} & 18_{3} \\\hline

 18_{2} & pqph^{2} & 2^{3}\cdot3^{2}
 & 9 & 6_{1} & 18_{5} \\\hline

 18_{3} & pq^{2}ph & 2^{3}\cdot3^{2}
 & 9 & 6_{3} & 18_{1} \\\hline

 18_{4} & p^{2}qhqh & 2^{3}\cdot3^{2}
 & 9 & 6_{6} & 18_{4} \\\hline

 18_{5} & pq^{2}pqh & 2^{3}\cdot3^{2}
 & 9 & 6_{5} & 18_{2} \\\hline

 18_{6} & pqhqph^{2} & 2^{3}\cdot3^{2}
 & 9 & 6_{7} & 18_{6} \\\hline

 18_{7} & p^{3}hqphq & 2^{3}\cdot3^{2}
 & 9 & 6_{4} & 18_{7} \\\hline

 20_{1} & phq & 2^{3}\cdot5
 & 10_{1} & 20_{1} & 4_{2} \\\hline

 20_{2} & phq^{2}hq & 2^{3}\cdot5
 & 10_{1} & 20_{2} & 4_{3} \\\hline

 24_{1} & p^{2}qph & 2^{4}\cdot3
 & 12_{1} & 8_{1} & 24_{1} \\\hline

 24_{2} & pqhqhq & 2^{4}\cdot3
 & 12_{2} & 8_{2} & 24_{2} \\\hline

\end{array}
$$

}

\end{cclass}


\newpage
\begin{cclass}\label{Fi_23cc D_2} Conjugacy classes of $D_2 = \langle p_2, q_2 \rangle \cong \langle p_1, q_1 \rangle$, with subscripts dropped
{ \setlength{\arraycolsep}{1mm}
\renewcommand{\baselinestretch}{0.5}
\scriptsize
$$
 \begin{array}{|c|r|r|c|c|c|
}\hline
 \mbox{Class} &
 \multicolumn{1}{c|}{\mbox{Representative}} &
 \multicolumn{1}{c|}{|\mbox{Centralizer}|}
 &
 2{\rm P}
 & 3{\rm P}
 & 5{\rm P}
 \\\hline
 1 & 1 & 2^{18}\cdot3\cdot5
 & 1 & 1 & 1 \\\hline

 2_{1} & (pq)^{8} & 2^{18}\cdot3\cdot5
 & 1 & 2_{1} & 2_{1} \\\hline

 2_{2} & (p^{2}q)^{5} & 2^{18}\cdot3\cdot5
 & 1 & 2_{2} & 2_{2} \\\hline

 2_{3} & (p^{2}qpq^{2}pq^{2})^{5} & 2^{18}\cdot3\cdot5
 & 1 & 2_{3} & 2_{3} \\\hline

 2_{4} & (q)^{3} & 2^{17}\cdot3\cdot5
 & 1 & 2_{4} & 2_{4} \\\hline

 2_{5} & (p^{3}q^{2}p^{2}qpq)^{3} & 2^{17}\cdot3\cdot5
 & 1 & 2_{5} & 2_{5} \\\hline

 2_{6} & (p^{2}qpq^{2})^{4} & 2^{16}\cdot3
 & 1 & 2_{6} & 2_{6} \\\hline

 2_{7} & (p^{3}qpq^{4})^{3} & 2^{16}\cdot3
 & 1 & 2_{7} & 2_{7} \\\hline

 2_{8} & (p^{3}qp^{2}qpqpq)^{4} & 2^{16}\cdot3
 & 1 & 2_{8} & 2_{8} \\\hline

 2_{9} & (p^{3}qpqp^{2}q^{4})^{3} & 2^{16}\cdot3
 & 1 & 2_{9} & 2_{9} \\\hline

 2_{10} & (p^{3}qp^{2}q)^{4} & 2^{17}
 & 1 & 2_{10} & 2_{10} \\\hline

 2_{11} & (p^{2}qpq^{2}pqp^{2}q^{2})^{2} & 2^{17}
 & 1 & 2_{11} & 2_{11} \\\hline

 2_{12} & (p^{3}qpq)^{3} & 2^{15}\cdot3
 & 1 & 2_{12} & 2_{12} \\\hline

 2_{13} & (p^{3}qpqp^{2}q)^{3} & 2^{15}\cdot3
 & 1 & 2_{13} & 2_{13} \\\hline

 2_{14} & (p^{3}qp^{2}q^{2}pq)^{4} & 2^{16}
 & 1 & 2_{14} & 2_{14} \\\hline

 2_{15} & (pq^{2})^{6} & 2^{13}\cdot3
 & 1 & 2_{15} & 2_{15} \\\hline

 2_{16} & (p^{3}q)^{6} & 2^{13}\cdot3
 & 1 & 2_{16} & 2_{16} \\\hline

 2_{17} & (pqpqpqpq^{2})^{3} & 2^{13}\cdot3
 & 1 & 2_{17} & 2_{17} \\\hline

 2_{18} & (p^{3}qpq^{2}pqpq^{4})^{3} & 2^{13}\cdot3
 & 1 & 2_{18} & 2_{18} \\\hline

 2_{19} & (p^{3}q^{2}pqpq)^{4} & 2^{14}
 & 1 & 2_{19} & 2_{19} \\\hline

 2_{20} & (p^{3}q^{2}pqpq^{2})^{2} & 2^{14}
 & 1 & 2_{20} & 2_{20} \\\hline

 2_{21} & (pq^{2}pq^{5})^{3} & 2^{12}\cdot3
 & 1 & 2_{21} & 2_{21} \\\hline

 2_{22} & (p^{3}qpq^{2}pqpq)^{3} & 2^{12}\cdot3
 & 1 & 2_{22} & 2_{22} \\\hline

 2_{23} & (p^{2}q^{2}pqpqpq^{2}pq^{2})^{2} & 2^{13}
 & 1 & 2_{23} & 2_{23} \\\hline

 2_{24} & p^{3}qpqp^{2}qpq^{2}pqpqpq^{2} & 2^{13}
 & 1 & 2_{24} & 2_{24} \\\hline

 2_{25} & (p^{2}qpq)^{3} & 2^{11}\cdot3
 & 1 & 2_{25} & 2_{25} \\\hline

 2_{26} & (p^{3}q^{2}pq^{2}pq^{2})^{3} & 2^{11}\cdot3
 & 1 & 2_{26} & 2_{26} \\\hline

 2_{27} & (p)^{2} & 2^{12}
 & 1 & 2_{27} & 2_{27} \\\hline

 2_{28} & (pq^{3})^{2} & 2^{12}
 & 1 & 2_{28} & 2_{28} \\\hline

 2_{29} & p^{3}qp^{2}qpqpq^{2}pqp^{2}q & 2^{12}
 & 1 & 2_{29} & 2_{29} \\\hline

 2_{30} & p^{3}qpqpqpqpqpqpqpq & 2^{12}
 & 1 & 2_{30} & 2_{30} \\\hline

 2_{31} & p^{3}qpq^{2}pqp^{2}qpqp^{2}q^{2}p^{2}q & 2^{12}
 & 1 & 2_{31} & 2_{31} \\\hline

 2_{32} & p^{2}q^{3} & 2^{11}
 & 1 & 2_{32} & 2_{32} \\\hline

 2_{33} & p^{3}q^{2}pqpq^{2}pqp^{2}q & 2^{11}
 & 1 & 2_{33} & 2_{33} \\\hline

 2_{34} & p^{3}qpqpqpq^{2}pqpqp^{3}q^{2} & 2^{10}
 & 1 & 2_{34} & 2_{34} \\\hline

 3 & (q)^{2} & 2^{6}\cdot3
 & 3 & 1 & 3 \\\hline

 4_{1} & (p^{2}qpq^{4})^{3} & 2^{11}\cdot3
 & 2_{2} & 4_{1} & 4_{1} \\\hline

 4_{2} & (p^{3}qp^{3}qpq)^{3} & 2^{11}\cdot3
 & 2_{2} & 4_{2} & 4_{2} \\\hline

 4_{3} & (pq)^{4} & 2^{12}
 & 2_{1} & 4_{3} & 4_{3} \\\hline

 4_{4} & (pqpq^{2}pq^{2})^{2} & 2^{12}
 & 2_{1} & 4_{4} & 4_{4} \\\hline

 4_{5} & (p^{3}qp^{2}q)^{2} & 2^{11}
 & 2_{10} & 4_{5} & 4_{5} \\\hline

 4_{6} & (p^{2}q^{2}pqpqpq^{2})^{2} & 2^{11}
 & 2_{10} & 4_{6} & 4_{6} \\\hline

 4_{7} & p^{3}q^{2}pq^{2}pq^{2}pqpq & 2^{11}
 & 2_{10} & 4_{7} & 4_{7} \\\hline

 4_{8} & p^{3}qpq^{2}pqp^{2}qp^{2}q^{2} & 2^{11}
 & 2_{11} & 4_{8} & 4_{8} \\\hline

 4_{9} & p^{3}qp^{2}qp^{3}qpqp^{2}q^{2} & 2^{11}
 & 2_{10} & 4_{9} & 4_{9} \\\hline

 4_{10} & p^{3}qp^{2}q^{2}p^{3}qpqp^{2}q & 2^{11}
 & 2_{11} & 4_{10} & 4_{10} \\\hline

 4_{11} & p^{3}qpqpq^{2}p^{3}q^{2}p^{2}q & 2^{11}
 & 2_{11} & 4_{11} & 4_{11} \\\hline

 4_{12} & p^{3}qp^{2}qpqp^{2}q^{2}p^{3}qpq & 2^{11}
 & 2_{11} & 4_{12} & 4_{12} \\\hline

 4_{13} & (pq^{2})^{3} & 2^{9}\cdot3
 & 2_{15} & 4_{13} & 4_{13} \\\hline

 4_{14} & (p^{3}q)^{3} & 2^{9}\cdot3
 & 2_{16} & 4_{14} & 4_{14} \\\hline

 4_{15} & (p^{3}qpq^{2}pq)^{3} & 2^{9}\cdot3
 & 2_{15} & 4_{15} & 4_{15} \\\hline

 4_{16} & (p^{3}qp^{3}qpq^{2})^{3} & 2^{9}\cdot3
 & 2_{16} & 4_{16} & 4_{16} \\\hline

 4_{17} & (p^{2}qpq^{2})^{2} & 2^{10}
 & 2_{6} & 4_{17} & 4_{17} \\\hline
 4_{18} & (p^{3}qpqpq^{2})^{2} & 2^{10}
 & 2_{10} & 4_{18} & 4_{18} \\\hline

 4_{19} & (p^{3}qp^{2}q^{2}pq)^{2} & 2^{10}
 & 2_{14} & 4_{19} & 4_{19} \\\hline

 4_{20} & (p^{3}qp^{2}qpqpq)^{2} & 2^{10}
 & 2_{8} & 4_{20} & 4_{20} \\\hline

 4_{21} & p^{2}qpqp^{2}qpq^{2} & 2^{10}
 & 2_{8} & 4_{21} & 4_{21} \\\hline

 4_{22} & p^{3}qp^{2}qpqpq^{2}pq^{2} & 2^{10}
 & 2_{14} & 4_{22} & 4_{22} \\\hline

 4_{23} & p^{3}qpqpqp^{2}qpq^{4} & 2^{10}
 & 2_{10} & 4_{23} & 4_{23} \\\hline

 4_{24} & p^{3}qpqpqpq^{2}pqp^{2}q & 2^{10}
 & 2_{11} & 4_{24} & 4_{24} \\\hline

 4_{25} & p^{3}qp^{2}q^{2}p^{3}q^{2}pqpq & 2^{10}
 & 2_{11} & 4_{25} & 4_{25} \\\hline

 4_{26} & p^{3}qpq^{2}pqp^{3}q^{2}pq^{2} & 2^{10}
 & 2_{10} & 4_{26} & 4_{26} \\\hline

 4_{27} & p^{2}qp^{2}q^{2}pq^{2}pq^{2}pqp^{2}q^{2} & 2^{10}
 & 2_{2} & 4_{27} & 4_{27} \\\hline

 4_{28} & p^{3}q^{2}pqp^{2}q^{2}p^{2}q^{2}pqpq^{2} & 2^{10}
 & 2_{6} & 4_{28} & 4_{28} \\\hline

 4_{29} & (p^{3}q^{2}pqpq)^{2} & 2^{9}
 & 2_{19} & 4_{29} & 4_{29} \\\hline

 4_{30} & (p^{2}qpq^{2}p^{2}q^{2})^{2} & 2^{9}
 & 2_{19} & 4_{30} & 4_{30} \\\hline

 4_{31} & p^{3}q^{2}pq^{2}p^{2}q & 2^{9}
 & 2_{16} & 4_{31} & 4_{31} \\\hline

 4_{32} & p^{2}qpqpq^{5} & 2^{9}
 & 2_{16} & 4_{32} & 4_{32} \\\hline

 4_{33} & p^{3}qpqpqp^{2}q^{2} & 2^{9}
 & 2_{15} & 4_{33} & 4_{33} \\\hline

 4_{34} & p^{2}qpq^{2}pqp^{2}q^{2} & 2^{9}
 & 2_{11} & 4_{34} & 4_{34} \\\hline

 4_{35} & p^{3}q^{2}p^{2}q^{2}pqpq & 2^{9}
 & 2_{16} & 4_{35} & 4_{35} \\\hline

\end{array}
$$

\newpage
 \normalsize
 Conjugacy classes of $D_2 = \langle p_2, q_2 \rangle$, with subscripts dropped
 (continued)
 \scriptsize
 $$
 \begin{array}{|c|r|r|c|c|c|
}\hline
 \mbox{Class} &
 \multicolumn{1}{c|}{\mbox{Representative}} &
 \multicolumn{1}{c|}{|\mbox{Centralizer}|}
 &
 2{\rm P}
 & 3{\rm P}
 & 5{\rm P}
 \\\hline

 4_{36} & p^{2}q^{2}pqpqpqpq^{2} & 2^{9}
 & 2_{14} & 4_{36} & 4_{36} \\\hline

 4_{37} & p^{3}qp^{3}q^{2}pqp^{2}q & 2^{9}
 & 2_{16} & 4_{37} & 4_{37} \\\hline

 4_{38} & p^{3}qp^{2}q^{2}pq^{2}p^{2}q & 2^{9}
 & 2_{13} & 4_{38} & 4_{38} \\\hline

 4_{39} & p^{3}q^{2}pqpqpqpqpq & 2^{9}
 & 2_{16} & 4_{39} & 4_{39} \\\hline

 4_{40} & p^{3}qp^{3}qpq^{2}pqpq^{2} & 2^{9}
 & 2_{15} & 4_{40} & 4_{40} \\\hline

 4_{41} & p^{3}qp^{2}qpqp^{2}qp^{2}q^{2} & 2^{9}
 & 2_{14} & 4_{41} & 4_{41} \\\hline

 4_{42} & p^{3}q^{2}pqpqp^{2}qpqpq & 2^{9}
 & 2_{11} & 4_{42} & 4_{42} \\\hline

 4_{43} & p^{3}qp^{2}q^{2}pq^{2}pq^{3}pq & 2^{9}
 & 2_{13} & 4_{43} & 4_{43} \\\hline

 4_{44} & p^{2}q^{2}pqp^{2}q^{2}pqpq^{2}pq & 2^{9}
 & 2_{16} & 4_{44} & 4_{44} \\\hline

 4_{45} & p^{3}qp^{2}q^{2}p^{2}qpqpqp^{2}q & 2^{9}
 & 2_{10} & 4_{45} & 4_{45} \\\hline

 4_{46} & p^{2}qpqpq^{2} & 2^{8}
 & 2_{16} & 4_{46} & 4_{46} \\\hline

 4_{47} & p^{3}q^{2}pqpq^{2} & 2^{8}
 & 2_{20} & 4_{47} & 4_{47} \\\hline

 4_{48} & p^{3}qpqp^{3}q^{2} & 2^{8}
 & 2_{19} & 4_{48} & 4_{48} \\\hline

 4_{49} & p^{2}qpqpq^{2}p^{2}q^{2} & 2^{8}
 & 2_{20} & 4_{49} & 4_{49} \\\hline

 4_{50} & p^{2}qpqpqp^{2}qpq^{2} & 2^{8}
 & 2_{20} & 4_{50} & 4_{50} \\\hline

 4_{51} & p^{2}q^{2}pqpqpq^{2}pq^{2} & 2^{8}
 & 2_{23} & 4_{51} & 4_{51} \\\hline

 4_{52} & p^{3}qpqp^{3}qpqpqpq & 2^{8}
 & 2_{16} & 4_{52} & 4_{52} \\\hline

 4_{53} & p^{2}qp^{2}q^{2}pqpqp^{2}qpq & 2^{8}
 & 2_{19} & 4_{53} & 4_{53} \\\hline

 4_{54} & p^{2}qpqp^{2}q^{2}pqpqpq^{2} & 2^{8}
 & 2_{23} & 4_{54} & 4_{54} \\\hline

 4_{55} & p & 2^{7}
 & 2_{27} & 4_{55} & 4_{55} \\\hline

 4_{56} & pq^{3} & 2^{7}
 & 2_{28} & 4_{56} & 4_{56} \\\hline

 4_{57} & p^{2}qpqpqpq^{2} & 2^{7}
 & 2_{27} & 4_{57} & 4_{57} \\\hline

 4_{58} & p^{2}q^{2}p^{2}q^{2}pqpq & 2^{7}
 & 2_{20} & 4_{58} & 4_{58} \\\hline

 4_{59} & p^{2}qpqpqpq^{5} & 2^{7}
 & 2_{28} & 4_{59} & 4_{59} \\\hline

 5 & (p^{2}q)^{2} & 2^{3}\cdot5
 & 5 & 5 & 1 \\\hline

 6_{1} & (pq^{2})^{2} & 2^{6}\cdot3
 & 3 & 2_{15} & 6_{1} \\\hline

 6_{2} & (p^{3}q)^{2} & 2^{6}\cdot3
 & 3 & 2_{16} & 6_{2} \\\hline

 6_{3} & p^{3}qpq^{3} & 2^{6}\cdot3
 & 3 & 2_{2} & 6_{3} \\\hline

 6_{4} & pqpqpqpq^{2} & 2^{6}\cdot3
 & 3 & 2_{17} & 6_{4} \\\hline

 6_{5} & p^{3}q^{2}p^{2}qpq^{2} & 2^{6}\cdot3
 & 3 & 2_{3} & 6_{5} \\\hline

 6_{6} & p^{3}q^{2}p^{2}q^{4}pq & 2^{6}\cdot3
 & 3 & 2_{1} & 6_{6} \\\hline

 6_{7} & p^{3}qpq^{2}pqpq^{4} & 2^{6}\cdot3
 & 3 & 2_{18} & 6_{7} \\\hline

 6_{8} & q & 2^{5}\cdot3
 & 3 & 2_{4} & 6_{8} \\\hline

 6_{9} & p^{3}qpq & 2^{5}\cdot3
 & 3 & 2_{12} & 6_{10} \\\hline

 6_{10} & p^{3}qp^{2}qpq & 2^{5}\cdot3
 & 3 & 2_{12} & 6_{9} \\\hline
 6_{11} & p^{3}qpqp^{2}q & 2^{5}\cdot3
 & 3 & 2_{13} & 6_{19} \\\hline

 6_{12} & p^{3}qpq^{4} & 2^{5}\cdot3
 & 3 & 2_{7} & 6_{12} \\\hline

 6_{13} & p^{3}q^{4}pq & 2^{5}\cdot3
 & 3 & 2_{6} & 6_{13} \\\hline

 6_{14} & pq^{2}pq^{5} & 2^{5}\cdot3
 & 3 & 2_{21} & 6_{14} \\\hline

 6_{15} & p^{3}q^{2}p^{2}qpq & 2^{5}\cdot3
 & 3 & 2_{5} & 6_{15} \\\hline

 6_{16} & p^{3}qpq^{2}pqpq & 2^{5}\cdot3
 & 3 & 2_{22} & 6_{16} \\\hline

 6_{17} & p^{3}qpqp^{2}q^{4} & 2^{5}\cdot3
 & 3 & 2_{9} & 6_{17} \\\hline

 6_{18} & p^{3}qpqpq^{3}pq & 2^{5}\cdot3
 & 3 & 2_{8} & 6_{18} \\\hline

 6_{19} & p^{3}q^{2}p^{2}qpqp^{2}q & 2^{5}\cdot3
 & 3 & 2_{13} & 6_{11} \\\hline

 6_{20} & p^{2}qpq & 2^{4}\cdot3
 & 3 & 2_{25} & 6_{20} \\\hline

 6_{21} & p^{3}q^{2}pq^{2}pq^{2} & 2^{4}\cdot3
 & 3 & 2_{26} & 6_{21} \\\hline

 8_{1} & (pq)^{2} & 2^{7}
 & 4_{3} & 8_{1} & 8_{1} \\\hline

 8_{2} & p^{3}qp^{2}q & 2^{7}
 & 4_{5} & 8_{2} & 8_{2} \\\hline

 8_{3} & p^{2}q^{2}pqpq & 2^{7}
 & 4_{5} & 8_{3} & 8_{3} \\\hline

 8_{4} & pqpq^{2}pq^{2} & 2^{7}
 & 4_{4} & 8_{4} & 8_{4} \\\hline

 8_{5} & p^{2}qp^{2}qpq^{2} & 2^{7}
 & 4_{4} & 8_{5} & 8_{5} \\\hline

 8_{6} & p^{2}q^{2}pqpq^{4} & 2^{7}
 & 4_{5} & 8_{6} & 8_{6} \\\hline

 8_{7} & p^{3}qpqpq^{2}p^{2}q^{2} & 2^{7}
 & 4_{5} & 8_{7} & 8_{7} \\\hline

 8_{8} & p^{2}qpq^{2} & 2^{6}
 & 4_{17} & 8_{8} & 8_{8} \\\hline

 8_{9} & p^{3}qpq^{2} & 2^{6}
 & 4_{3} & 8_{9} & 8_{9} \\\hline

 8_{10} & p^{3}qpqpq^{2} & 2^{6}
 & 4_{18} & 8_{16} & 8_{10} \\\hline

 8_{11} & p^{2}qpqpqpq & 2^{6}
 & 4_{17} & 8_{11} & 8_{11} \\\hline

 8_{12} & p^{3}qp^{2}q^{2}pq & 2^{6}
 & 4_{19} & 8_{12} & 8_{12} \\\hline

 8_{13} & p^{3}qp^{2}qpqpq & 2^{6}
 & 4_{20} & 8_{13} & 8_{13} \\\hline

 8_{14} & p^{3}qp^{2}q^{2}pq^{2} & 2^{6}
 & 4_{19} & 8_{14} & 8_{14} \\\hline

 8_{15} & p^{2}q^{2}pqpqpq^{2} & 2^{6}
 & 4_{6} & 8_{15} & 8_{15} \\\hline

 8_{16} & p^{3}qpqpq^{5} & 2^{6}
 & 4_{18} & 8_{10} & 8_{16} \\\hline

 8_{17} & p^{3}qpq^{2}p^{2}qpq & 2^{6}
 & 4_{20} & 8_{17} & 8_{17} \\\hline

 8_{18} & p^{3}q^{2}pqpq & 2^{5}
 & 4_{29} & 8_{18} & 8_{18} \\\hline

 8_{19} & p^{2}qpq^{2}p^{2}q^{2} & 2^{5}
 & 4_{30} & 8_{19} & 8_{19} \\\hline

 10_{1} & p^{2}q & 2^{3}\cdot5
 & 5 & 10_{1} & 2_{2} \\\hline

 10_{2} & p^{2}q^{2} & 2^{3}\cdot5
 & 5 & 10_{3} & 2_{4} \\\hline

 10_{3} & pqpq^{3} & 2^{3}\cdot5
 & 5 & 10_{2} & 2_{4} \\\hline

 10_{4} & p^{2}qpq^{2}pq^{2} & 2^{3}\cdot5
 & 5 & 10_{4} & 2_{3} \\\hline

 10_{5} & p^{2}q^{2}pq^{2}pq^{2} & 2^{3}\cdot5
 & 5 & 10_{5} & 2_{1} \\\hline

 10_{6} & p^{3}qp^{3}qp^{2}q & 2^{3}\cdot5
 & 5 & 10_{7} & 2_{5} \\\hline

 10_{7} & p^{3}q^{2}pqpqpq & 2^{3}\cdot5
 & 5 & 10_{6} & 2_{5} \\\hline

\end{array}
$$

\newpage
 \normalsize
 Conjugacy classes of $D_2 = \langle p_2, q_2 \rangle$, with subscripts dropped
 (continued)
 \scriptsize
 $$
 \begin{array}{|c|r|r|c|c|c|
}\hline
 \mbox{Class} &
 \multicolumn{1}{c|}{\mbox{Representative}} &
 \multicolumn{1}{c|}{|\mbox{Centralizer}|}
 &
 2{\rm P}
 & 3{\rm P}
 & 5{\rm P}
 \\\hline

 12_{1} & pq^{2} & 2^{4}\cdot3
 & 6_{1} & 4_{13} & 12_{1} \\\hline

 12_{2} & p^{3}q & 2^{4}\cdot3
 & 6_{2} & 4_{14} & 12_{2} \\\hline

 12_{3} & p^{2}qpq^{4} & 2^{4}\cdot3
 & 6_{3} & 4_{1} & 12_{3} \\\hline

 12_{4} & p^{3}qpq^{2}pq & 2^{4}\cdot3
 & 6_{1} & 4_{15} & 12_{4} \\\hline

 12_{5} & p^{3}qp^{3}qpq & 2^{4}\cdot3
 & 6_{3} & 4_{2} & 12_{5} \\\hline

 12_{6} & p^{3}qp^{3}qpq^{2} & 2^{4}\cdot3
 & 6_{2} & 4_{16} & 12_{6} \\\hline

 16_{1} & pq & 2^{5}
 & 8_{1} & 16_{1} & 16_{2} \\\hline

 16_{2} & p^{3}q^{2} & 2^{5}
 & 8_{1} & 16_{2} & 16_{1} \\\hline

 \end{array}
 $$

}

\end{cclass}


\newpage
\begin{cclass}\label{Fi_23cc H} Conjugacy classes of $H(\Fi_{23}) = H = \langle x_0,y_0,h_0 \rangle$, with subscripts dropped
{ \setlength{\arraycolsep}{1mm}
\renewcommand{\baselinestretch}{0.5}
\scriptsize
$$
 \begin{array}{|c|r|r|c|c|c|c|c|c|
}\hline
 \mbox{Class} &
 \multicolumn{1}{c|}{\mbox{Representative}} &
 \multicolumn{1}{c|}{|\mbox{Centralizer}|}
 &
 2{\rm P}
 & 3{\rm P}
 & 5{\rm P}
 & 7{\rm P}
 & 11{\rm P}
 & 13{\rm P}
 \\\hline
 1 & 1 & 2^{18}\cdot3^{9}\cdot5^{2}\cdot7\cdot11\cdot13
 & 1 & 1 & 1 & 1 & 1 & 1 \\\hline

 2_{1} & (xy^{2})^{7} & 2^{18}\cdot3^{9}\cdot5^{2}\cdot7\cdot11\cdot13
 & 1 & 2_{1} & 2_{1} & 2_{1} & 2_{1} & 2_{1} \\\hline

 2_{2} & (y)^{7} & 2^{17}\cdot3^{6}\cdot5\cdot7\cdot11
 & 1 & 2_{2} & 2_{2} & 2_{2} & 2_{2} & 2_{2} \\\hline

 2_{3} & (xyhy)^{11} & 2^{17}\cdot3^{6}\cdot5\cdot7\cdot11
 & 1 & 2_{3} & 2_{3} & 2_{3} & 2_{3} & 2_{3} \\\hline

 2_{4} & x & 2^{18}\cdot3^{4}\cdot5
 & 1 & 2_{4} & 2_{4} & 2_{4} & 2_{4} & 2_{4} \\\hline

 2_{5} & (yh)^{10} & 2^{18}\cdot3^{4}\cdot5
 & 1 & 2_{5} & 2_{5} & 2_{5} & 2_{5} & 2_{5} \\\hline

 2_{6} & (xh)^{6} & 2^{16}\cdot3^{3}
 & 1 & 2_{6} & 2_{6} & 2_{6} & 2_{6} & 2_{6} \\\hline

 3_{1} & (xyxh)^{10} & 2^{9}\cdot3^{7}\cdot5\cdot7
 & 3_{1} & 1 & 3_{1} & 3_{1} & 3_{1} & 3_{1} \\\hline

 3_{2} & h & 2^{8}\cdot3^{9}
 & 3_{2} & 1 & 3_{2} & 3_{2} & 3_{2} & 3_{2} \\\hline

 3_{3} & (xyxy^{3})^{4} & 2^{7}\cdot3^{7}
 & 3_{3} & 1 & 3_{3} & 3_{3} & 3_{3} & 3_{3} \\\hline

 3_{4} & (xyhyh)^{6} & 2^{4}\cdot3^{7}
 & 3_{4} & 1 & 3_{4} & 3_{4} & 3_{4} & 3_{4} \\\hline

 4_{1} & (xy^{2}xh)^{3} & 2^{12}\cdot3^{3}
 & 2_{4} & 4_{1} & 4_{1} & 4_{1} & 4_{1} & 4_{1} \\\hline

 4_{2} & (yh)^{5} & 2^{11}\cdot3^{2}\cdot5
 & 2_{5} & 4_{2} & 4_{2} & 4_{2} & 4_{2} & 4_{2} \\\hline

 4_{3} & (xhy)^{5} & 2^{11}\cdot3^{2}\cdot5
 & 2_{5} & 4_{3} & 4_{3} & 4_{3} & 4_{3} & 4_{3} \\\hline

 4_{4} & (y^{2}h)^{4} & 2^{12}\cdot3
 & 2_{4} & 4_{4} & 4_{4} & 4_{4} & 4_{4} & 4_{4} \\\hline

 4_{5} & (xh)^{3} & 2^{10}\cdot3^{2}
 & 2_{6} & 4_{5} & 4_{5} & 4_{5} & 4_{5} & 4_{5} \\\hline

 4_{6} & (xyh)^{2} & 2^{10}\cdot3^{2}
 & 2_{6} & 4_{6} & 4_{6} & 4_{6} & 4_{6} & 4_{6} \\\hline

 4_{7} & (xh^{2}y^{3})^{3} & 2^{10}\cdot3
 & 2_{5} & 4_{7} & 4_{7} & 4_{7} & 4_{7} & 4_{7} \\\hline

 5 & (yh)^{4} & 2^{4}\cdot3\cdot5^{2}
 & 5 & 5 & 1 & 5 & 5 & 5 \\\hline

 6_{1} & (y^{3}h^{2})^{5} & 2^{9}\cdot3^{7}\cdot5\cdot7
 & 3_{1} & 2_{1} & 6_{1} & 6_{1} & 6_{1} & 6_{1} \\\hline

 6_{2} & (xy^{2}h^{2}yhy)^{3} & 2^{8}\cdot3^{9}
 & 3_{2} & 2_{1} & 6_{2} & 6_{2} & 6_{2} & 6_{2} \\\hline

 6_{3} & (xyxh)^{5} & 2^{8}\cdot3^{5}\cdot5
 & 3_{1} & 2_{2} & 6_{3} & 6_{3} & 6_{3} & 6_{3} \\\hline

 6_{4} & (xhy^{3})^{5} & 2^{8}\cdot3^{5}\cdot5
 & 3_{1} & 2_{3} & 6_{4} & 6_{4} & 6_{4} & 6_{4} \\\hline

 6_{5} & xyh^{2}yh^{2}xh^{2}xh & 2^{7}\cdot3^{7}
 & 3_{3} & 2_{1} & 6_{5} & 6_{5} & 6_{5} & 6_{5} \\\hline

 6_{6} & (xyxyh)^{3} & 2^{8}\cdot3^{6}
 & 3_{2} & 2_{3} & 6_{6} & 6_{6} & 6_{6} & 6_{6} \\\hline

 6_{7} & (xyxhy)^{3} & 2^{8}\cdot3^{6}
 & 3_{2} & 2_{2} & 6_{7} & 6_{7} & 6_{7} & 6_{7} \\\hline

 6_{8} & (xyhyh)^{3} & 2^{4}\cdot3^{7}
 & 3_{4} & 2_{1} & 6_{8} & 6_{8} & 6_{8} & 6_{8} \\\hline

 6_{9} & (xy^{2}xh)^{2} & 2^{8}\cdot3^{4}
 & 3_{2} & 2_{4} & 6_{9} & 6_{9} & 6_{9} & 6_{9} \\\hline

 6_{10} & (xy^{2}xyh)^{3} & 2^{8}\cdot3^{4}
 & 3_{2} & 2_{5} & 6_{10} & 6_{10} & 6_{10} & 6_{10} \\\hline

 6_{11} & (yhyh^{2})^{4} & 2^{9}\cdot3^{3}
 & 3_{1} & 2_{4} & 6_{11} & 6_{11} & 6_{11} & 6_{11} \\\hline

 6_{12} & (xh^{2}y^{3})^{2} & 2^{9}\cdot3^{3}
 & 3_{1} & 2_{5} & 6_{12} & 6_{12} & 6_{12} & 6_{12} \\\hline

 6_{13} & y^{7}h & 2^{5}\cdot3^{5}
 & 3_{3} & 2_{2} & 6_{13} & 6_{13} & 6_{13} & 6_{13} \\\hline

 6_{14} & xyxyxhyxhy^{2} & 2^{5}\cdot3^{5}
 & 3_{3} & 2_{3} & 6_{14} & 6_{14} & 6_{14} & 6_{14} \\\hline

 6_{15} & (xh)^{2} & 2^{8}\cdot3^{3}
 & 3_{2} & 2_{6} & 6_{15} & 6_{15} & 6_{15} & 6_{15} \\\hline

 6_{16} & (xhxhy^{2}h)^{2} & 2^{8}\cdot3^{3}
 & 3_{2} & 2_{6} & 6_{16} & 6_{16} & 6_{16} & 6_{16} \\\hline

 6_{17} & xyhxhxh & 2^{7}\cdot3^{3}
 & 3_{3} & 2_{6} & 6_{17} & 6_{17} & 6_{17} & 6_{17} \\\hline

 6_{18} & (xy^{2}xhyhy)^{2} & 2^{7}\cdot3^{3}
 & 3_{3} & 2_{6} & 6_{18} & 6_{18} & 6_{18} & 6_{18} \\\hline

 6_{19} & xyxy^{2}xyxyhy^{2} & 2^{7}\cdot3^{3}
 & 3_{1} & 2_{6} & 6_{19} & 6_{19} & 6_{19} & 6_{19} \\\hline

 6_{20} & (xyxy^{3})^{2} & 2^{6}\cdot3^{3}
 & 3_{3} & 2_{5} & 6_{20} & 6_{20} & 6_{20} & 6_{20} \\\hline

 6_{21} & xh^{2}yh^{2}y & 2^{6}\cdot3^{3}
 & 3_{3} & 2_{4} & 6_{21} & 6_{21} & 6_{21} & 6_{21} \\\hline

 6_{22} & xyxhy^{2}xyh & 2^{5}\cdot3^{3}
 & 3_{3} & 2_{6} & 6_{22} & 6_{22} & 6_{22} & 6_{22} \\\hline

 6_{23} & xyxh^{2}yhyh & 2^{5}\cdot3^{3}
 & 3_{3} & 2_{6} & 6_{23} & 6_{23} & 6_{23} & 6_{23} \\\hline

 6_{24} & (xyxyhyh)^{2} & 2^{4}\cdot3^{3}
 & 3_{4} & 2_{6} & 6_{24} & 6_{24} & 6_{24} & 6_{24} \\\hline

 6_{25} & xyh^{2}y^{2}xh & 2^{4}\cdot3^{3}
 & 3_{4} & 2_{6} & 6_{25} & 6_{25} & 6_{25} & 6_{25} \\\hline

 7 & (y)^{2} & 2^{2}\cdot3\cdot7
 & 7 & 7 & 7 & 1 & 7 & 7 \\\hline

 8_{1} & (yhyh^{2})^{3} & 2^{7}\cdot3
 & 4_{1} & 8_{1} & 8_{1} & 8_{1} & 8_{1} & 8_{1} \\\hline

 8_{2} & (xyhy^{2}h)^{3} & 2^{7}\cdot3
 & 4_{1} & 8_{2} & 8_{2} & 8_{2} & 8_{2} & 8_{2} \\\hline

 8_{3} & (y^{2}h)^{2} & 2^{7}
 & 4_{4} & 8_{3} & 8_{3} & 8_{3} & 8_{3} & 8_{3} \\\hline

 8_{4} & xyh & 2^{6}
 & 4_{6} & 8_{4} & 8_{4} & 8_{4} & 8_{4} & 8_{4} \\\hline

 8_{5} & xyh^{2}yh & 2^{6}
 & 4_{6} & 8_{5} & 8_{5} & 8_{5} & 8_{5} & 8_{5} \\\hline

 9_{1} & (xyxyh)^{2} & 2^{3}\cdot3^{4}
 & 9_{1} & 3_{2} & 9_{1} & 9_{1} & 9_{1} & 9_{1} \\\hline

 9_{2} & (xyhy^{3})^{2} & 2^{2}\cdot3^{4}
 & 9_{2} & 3_{2} & 9_{2} & 9_{2} & 9_{2} & 9_{2} \\\hline

 9_{3} & (xyhyh)^{2} & 2\cdot3^{3}
 & 9_{3} & 3_{4} & 9_{3} & 9_{3} & 9_{3} & 9_{3} \\\hline
 10_{1} & (y^{3}h^{2})^{3} & 2^{4}\cdot3\cdot5^{2}
 & 5 & 10_{1} & 2_{1} & 10_{1} & 10_{1} & 10_{1} \\\hline

 10_{2} & yh^{2} & 2^{3}\cdot3\cdot5
 & 5 & 10_{2} & 2_{2} & 10_{2} & 10_{2} & 10_{2} \\\hline

 10_{3} & (xhy^{3})^{3} & 2^{3}\cdot3\cdot5
 & 5 & 10_{3} & 2_{3} & 10_{3} & 10_{3} & 10_{3} \\\hline

 10_{4} & (yh)^{2} & 2^{4}\cdot5
 & 5 & 10_{4} & 2_{5} & 10_{4} & 10_{4} & 10_{4} \\\hline

 10_{5} & xh^{2}y & 2^{4}\cdot5
 & 5 & 10_{5} & 2_{4} & 10_{5} & 10_{5} & 10_{5} \\\hline

 11_{1} & (xy^{3})^{2} & 2^{2}\cdot11
 & 11_{2} & 11_{1} & 11_{1} & 11_{2} & 1 & 11_{2} \\\hline

 11_{2} & (xy^{3})^{4} & 2^{2}\cdot11
 & 11_{1} & 11_{2} & 11_{2} & 11_{1} & 1 & 11_{1} \\\hline

 12_{1} & (yhyh^{2})^{2} & 2^{7}\cdot3^{2}
 & 6_{11} & 4_{1} & 12_{1} & 12_{1} & 12_{1} & 12_{1} \\\hline

 12_{2} & (xyhy^{2}h)^{2} & 2^{7}\cdot3^{2}
 & 6_{11} & 4_{1} & 12_{2} & 12_{2} & 12_{2} & 12_{2} \\\hline

 12_{3} & xy^{2}xh & 2^{5}\cdot3^{3}
 & 6_{9} & 4_{1} & 12_{3} & 12_{3} & 12_{3} & 12_{3} \\\hline

 12_{4} & xy^{2}xy^{3}h & 2^{6}\cdot3^{2}
 & 6_{12} & 4_{3} & 12_{4} & 12_{4} & 12_{4} & 12_{4} \\\hline

 12_{5} & xyxy^{3}xh^{2} & 2^{6}\cdot3^{2}
 & 6_{12} & 4_{2} & 12_{5} & 12_{5} & 12_{5} & 12_{5} \\\hline

 12_{6} & xy^{2}xhxhyhy & 2^{6}\cdot3^{2}
 & 6_{11} & 4_{1} & 12_{6} & 12_{6} & 12_{6} & 12_{6} \\\hline

 12_{7} & xh & 2^{5}\cdot3^{2}
 & 6_{15} & 4_{5} & 12_{7} & 12_{7} & 12_{7} & 12_{7} \\\hline

 12_{8} & xyxh^{2}y & 2^{5}\cdot3^{2}
 & 6_{15} & 4_{6} & 12_{8} & 12_{8} & 12_{8} & 12_{8} \\\hline

 12_{9} & xhxhy^{2}h & 2^{5}\cdot3^{2}
 & 6_{16} & 4_{6} & 12_{9} & 12_{9} & 12_{9} & 12_{9} \\\hline

 12_{10} & xy^{2}xhyhy & 2^{5}\cdot3^{2}
 & 6_{18} & 4_{5} & 12_{10} & 12_{10} & 12_{10} & 12_{10} \\\hline

 12_{11} & xhxh^{2}y^{4} & 2^{5}\cdot3^{2}
 & 6_{18} & 4_{6} & 12_{11} & 12_{11} & 12_{11} & 12_{11} \\\hline

\end{array}
$$

\newpage
 \normalsize
 Conjugacy classes of $H = \langle x_0,y_0,h_0 \rangle$, with subscripts dropped
 (continued)
 \scriptsize
 $$
 \begin{array}{|c|r|r|c|c|c|c|c|c|
}\hline
 \mbox{Class} &
 \multicolumn{1}{c|}{\mbox{Representative}} &
 \multicolumn{1}{c|}{|\mbox{Centralizer}|}
 &
 2{\rm P}
 & 3{\rm P}
 & 5{\rm P}
 & 7{\rm P}
 & 11{\rm P}
 & 13{\rm P}
 \\\hline

 12_{12} & xhy^{4}hyh & 2^{5}\cdot3^{2}
 & 6_{16} & 4_{5} & 12_{12} & 12_{12} & 12_{12} & 12_{12} \\\hline

 12_{13} & xyxy^{3} & 2^{4}\cdot3^{2}
 & 6_{20} & 4_{2} & 12_{13} & 12_{13} & 12_{13} & 12_{13} \\\hline

 12_{14} & xyxhy^{3}h & 2^{4}\cdot3^{2}
 & 6_{20} & 4_{3} & 12_{14} & 12_{14} & 12_{14} & 12_{14} \\\hline

 12_{15} & xh^{2}y^{3} & 2^{5}\cdot3
 & 6_{12} & 4_{7} & 12_{15} & 12_{15} & 12_{15} & 12_{15} \\\hline

 12_{16} & xy^{3}hy^{2}h & 2^{5}\cdot3
 & 6_{9} & 4_{4} & 12_{16} & 12_{16} & 12_{16} & 12_{16} \\\hline

 12_{17} & xyxyhyh & 2^{3}\cdot3^{2}
 & 6_{24} & 4_{6} & 12_{17} & 12_{17} & 12_{17} & 12_{17} \\\hline

 12_{18} & xy^{3}h^{2}xh & 2^{3}\cdot3^{2}
 & 6_{24} & 4_{5} & 12_{18} & 12_{18} & 12_{18} & 12_{18} \\\hline

 13_{1} & (xy^{3}h)^{2} & 2\cdot13
 & 13_{2} & 13_{1} & 13_{2} & 13_{2} & 13_{2} & 1 \\\hline

 13_{2} & (xy^{3}h)^{4} & 2\cdot13
 & 13_{1} & 13_{2} & 13_{1} & 13_{1} & 13_{1} & 1 \\\hline

 14_{1} & xy^{2} & 2^{2}\cdot3\cdot7
 & 7 & 14_{1} & 14_{1} & 2_{1} & 14_{1} & 14_{1} \\\hline

 14_{2} & y & 2^{2}\cdot7
 & 7 & 14_{2} & 14_{2} & 2_{2} & 14_{2} & 14_{2} \\\hline

 14_{3} & xh^{2}y^{2} & 2^{2}\cdot7
 & 7 & 14_{3} & 14_{3} & 2_{3} & 14_{3} & 14_{3} \\\hline

 15 & (xyxh)^{2} & 2^{2}\cdot3\cdot5
 & 15 & 5 & 3_{1} & 15 & 15 & 15 \\\hline

 16_{1} & y^{2}h & 2^{5}
 & 8_{3} & 16_{1} & 16_{2} & 16_{2} & 16_{1} & 16_{2} \\\hline

 16_{2} & xyxyhy & 2^{5}
 & 8_{3} & 16_{2} & 16_{1} & 16_{1} & 16_{2} & 16_{1} \\\hline

 18_{1} & xhy^{2}hy^{2}h & 2^{3}\cdot3^{4}
 & 9_{1} & 6_{2} & 18_{1} & 18_{1} & 18_{1} & 18_{1} \\\hline

 18_{2} & xy^{2}h^{2}yhy & 2^{2}\cdot3^{4}
 & 9_{2} & 6_{2} & 18_{2} & 18_{2} & 18_{2} & 18_{2} \\\hline

 18_{3} & xyxyh & 2^{3}\cdot3^{3}
 & 9_{1} & 6_{6} & 18_{5} & 18_{3} & 18_{5} & 18_{3} \\\hline

 18_{4} & xyxhy & 2^{3}\cdot3^{3}
 & 9_{1} & 6_{7} & 18_{6} & 18_{4} & 18_{6} & 18_{4} \\\hline

 18_{5} & xy^{2}xhy & 2^{3}\cdot3^{3}
 & 9_{1} & 6_{6} & 18_{3} & 18_{5} & 18_{3} & 18_{5} \\\hline

 18_{6} & xhy^{3}h^{2} & 2^{3}\cdot3^{3}
 & 9_{1} & 6_{7} & 18_{4} & 18_{6} & 18_{4} & 18_{6} \\\hline

 18_{7} & xyhy^{3} & 2^{2}\cdot3^{3}
 & 9_{2} & 6_{6} & 18_{7} & 18_{7} & 18_{7} & 18_{7} \\\hline

 18_{8} & xyxh^{2}xh & 2^{2}\cdot3^{3}
 & 9_{2} & 6_{7} & 18_{8} & 18_{8} & 18_{8} & 18_{8} \\\hline

 18_{9} & xy^{2}xyh & 2^{3}\cdot3^{2}
 & 9_{1} & 6_{10} & 18_{9} & 18_{9} & 18_{9} & 18_{9} \\\hline

 18_{10} & xh^{2}yhy & 2^{3}\cdot3^{2}
 & 9_{1} & 6_{9} & 18_{10} & 18_{10} & 18_{10} & 18_{10} \\\hline

 18_{11} & xyhyh & 2\cdot3^{3}
 & 9_{3} & 6_{8} & 18_{11} & 18_{11} & 18_{11} & 18_{11} \\\hline

 20_{1} & yh & 2^{3}\cdot5
 & 10_{4} & 20_{1} & 4_{2} & 20_{1} & 20_{1} & 20_{1} \\\hline

 20_{2} & xhy & 2^{3}\cdot5
 & 10_{4} & 20_{2} & 4_{3} & 20_{2} & 20_{2} & 20_{2} \\\hline

 21 & xhyhy & 2\cdot3\cdot7
 & 21 & 7 & 21 & 3_{1} & 21 & 21 \\\hline

 22_{1} & xy^{3} & 2^{2}\cdot11
 & 11_{1} & 22_{1} & 22_{1} & 22_{3} & 2_{1} & 22_{3} \\\hline

 22_{2} & xyhy & 2^{2}\cdot11
 & 11_{1} & 22_{2} & 22_{2} & 22_{5} & 2_{3} & 22_{5} \\\hline

 22_{3} & xyxh^{2} & 2^{2}\cdot11
 & 11_{2} & 22_{3} & 22_{3} & 22_{1} & 2_{1} & 22_{1} \\\hline

 22_{4} & xyxhy^{2} & 2^{2}\cdot11
 & 11_{2} & 22_{4} & 22_{4} & 22_{6} & 2_{2} & 22_{6} \\\hline

 22_{5} & xyhxh^{2} & 2^{2}\cdot11
 & 11_{2} & 22_{5} & 22_{5} & 22_{2} & 2_{3} & 22_{2} \\\hline

 22_{6} & xy^{2}xy^{2}h & 2^{2}\cdot11
 & 11_{1} & 22_{6} & 22_{6} & 22_{4} & 2_{2} & 22_{4} \\\hline
 24_{1} & yhyh^{2} & 2^{4}\cdot3
 & 12_{1} & 8_{1} & 24_{1} & 24_{1} & 24_{1} & 24_{1} \\\hline

 24_{2} & xyhy^{2}h & 2^{4}\cdot3
 & 12_{2} & 8_{2} & 24_{2} & 24_{2} & 24_{2} & 24_{2} \\\hline

 26_{1} & xy^{3}h & 2\cdot13
 & 13_{1} & 26_{1} & 26_{2} & 26_{2} & 26_{2} & 2_{1} \\\hline

 26_{2} & xhy^{2}h & 2\cdot13
 & 13_{2} & 26_{2} & 26_{1} & 26_{1} & 26_{1} & 2_{1} \\\hline

 30_{1} & xyxh & 2^{2}\cdot3\cdot5
 & 15 & 10_{2} & 6_{3} & 30_{1} & 30_{1} & 30_{1} \\\hline

 30_{2} & xhy^{3} & 2^{2}\cdot3\cdot5
 & 15 & 10_{3} & 6_{4} & 30_{2} & 30_{2} & 30_{2} \\\hline

 30_{3} & y^{3}h^{2} & 2^{2}\cdot3\cdot5
 & 15 & 10_{1} & 6_{1} & 30_{3} & 30_{3} & 30_{3} \\\hline

 42 & xhy^{2}hy & 2\cdot3\cdot7
 & 21 & 14_{1} & 42 & 6_{1} & 42 & 42 \\\hline

\end{array}
$$

}

\end{cclass}


\newpage
\begin{cclass}\label{Fi_23cc} Conjugacy classes of $\mathfrak{G} = \langle \mathfrak{x},\mathfrak{y},\mathfrak{h},\mathfrak{e} \rangle$
{ \setlength{\arraycolsep}{1mm}
\renewcommand{\baselinestretch}{0.5}
\scriptsize
$$
 \begin{array}{|c|r|r|c|c|c|c|c|c|c|c|
}\hline
 \mbox{Class} &
 \multicolumn{1}{c|}{\mbox{Representative}} &
 \multicolumn{1}{c|}{|\mbox{Centralizer}|}
 &
 2{\rm P}
 & 3{\rm P}
 & 5{\rm P}
 & 7{\rm P}
 & 11{\rm P}
 & 13{\rm P}
 & 17{\rm P}
 & 23{\rm P}
 \\\hline
 1 & 1 & 2^{18}\cdot3^{13}\cdot5^{2}\cdot7\cdot11\cdot13\cdot17\cdot23
 & 1 & 1 & 1 & 1 & 1 & 1 & 1 & 1 \\\hline

 2_{1} & (y)^{7} & 2^{18}\cdot3^{9}\cdot5^{2}\cdot7\cdot11\cdot13
 & 1 & 2_{1} & 2_{1} & 2_{1} & 2_{1} & 2_{1} & 2_{1} & 2_{1} \\\hline

 2_{2} & e & 2^{18}\cdot3^{6}\cdot5\cdot7\cdot11
 & 1 & 2_{2} & 2_{2} & 2_{2} & 2_{2} & 2_{2} & 2_{2} & 2_{2} \\\hline

 2_{3} & x & 2^{18}\cdot3^{5}\cdot5
 & 1 & 2_{3} & 2_{3} & 2_{3} & 2_{3} & 2_{3} & 2_{3} & 2_{3} \\\hline

 3_{1} & (he)^{4} & 2^{9}\cdot3^{10}\cdot5\cdot7\cdot13
 & 3_{1} & 1 & 3_{1} & 3_{1} & 3_{1} & 3_{1} & 3_{1} & 3_{1} \\\hline

 3_{2} & h & 2^{10}\cdot3^{13}
 & 3_{2} & 1 & 3_{2} & 3_{2} & 3_{2} & 3_{2} & 3_{2} & 3_{2} \\\hline

 3_{3} & (xyeh)^{4} & 2^{7}\cdot3^{10}\cdot5
 & 3_{3} & 1 & 3_{3} & 3_{3} & 3_{3} & 3_{3} & 3_{3} & 3_{3} \\\hline

 3_{4} & (xyhyh)^{6} & 2^{4}\cdot3^{10}
 & 3_{4} & 1 & 3_{4} & 3_{4} & 3_{4} & 3_{4} & 3_{4} & 3_{4} \\\hline

 4_{1} & (xhy)^{5} & 2^{11}\cdot3^{4}\cdot5\cdot7
 & 2_{2} & 4_{1} & 4_{1} & 4_{1} & 4_{1} & 4_{1} & 4_{1} & 4_{1} \\\hline

 4_{2} & (xyh)^{2} & 2^{12}\cdot3^{4}
 & 2_{3} & 4_{2} & 4_{2} & 4_{2} & 4_{2} & 4_{2} & 4_{2} & 4_{2} \\\hline

 4_{3} & xe & 2^{11}\cdot3^{2}\cdot5
 & 2_{2} & 4_{3} & 4_{3} & 4_{3} & 4_{3} & 4_{3} & 4_{3} & 4_{3} \\\hline

 4_{4} & (xh)^{3} & 2^{12}\cdot3^{2}
 & 2_{3} & 4_{4} & 4_{4} & 4_{4} & 4_{4} & 4_{4} & 4_{4} & 4_{4} \\\hline

 5 & (yh)^{4} & 2^{4}\cdot3^{2}\cdot5^{2}\cdot7
 & 5 & 5 & 1 & 5 & 5 & 5 & 5 & 5 \\\hline

 6_{1} & (xyxh)^{5} & 2^{9}\cdot3^{7}\cdot5\cdot7
 & 3_{1} & 2_{1} & 6_{1} & 6_{1} & 6_{1} & 6_{1} & 6_{1} & 6_{1} \\\hline

 6_{2} & (xyxhy)^{3} & 2^{8}\cdot3^{9}
 & 3_{2} & 2_{1} & 6_{2} & 6_{2} & 6_{2} & 6_{2} & 6_{2} & 6_{2} \\\hline

 6_{3} & (he)^{2} & 2^{9}\cdot3^{5}\cdot5
 & 3_{1} & 2_{2} & 6_{3} & 6_{3} & 6_{3} & 6_{3} & 6_{3} & 6_{3} \\\hline

 6_{4} & (yehe)^{3} & 2^{9}\cdot3^{6}
 & 3_{2} & 2_{2} & 6_{4} & 6_{4} & 6_{4} & 6_{4} & 6_{4} & 6_{4} \\\hline

 6_{5} & xheh^{2}e & 2^{7}\cdot3^{7}
 & 3_{3} & 2_{1} & 6_{5} & 6_{5} & 6_{5} & 6_{5} & 6_{5} & 6_{5} \\\hline

 6_{6} & (xh)^{2} & 2^{10}\cdot3^{5}
 & 3_{2} & 2_{3} & 6_{6} & 6_{6} & 6_{6} & 6_{6} & 6_{6} & 6_{6} \\\hline

 6_{7} & (xehy)^{5} & 2^{7}\cdot3^{5}\cdot5
 & 3_{3} & 2_{3} & 6_{7} & 6_{7} & 6_{7} & 6_{7} & 6_{7} & 6_{7} \\\hline

 6_{8} & (xhye)^{3} & 2^{8}\cdot3^{5}
 & 3_{2} & 2_{3} & 6_{8} & 6_{8} & 6_{8} & 6_{8} & 6_{8} & 6_{8} \\\hline

 6_{9} & (xhe)^{2} & 2^{9}\cdot3^{4}
 & 3_{1} & 2_{3} & 6_{9} & 6_{9} & 6_{9} & 6_{9} & 6_{9} & 6_{9} \\\hline

 6_{10} & (xyhyh)^{3} & 2^{4}\cdot3^{7}
 & 3_{4} & 2_{1} & 6_{10} & 6_{10} & 6_{10} & 6_{10} & 6_{10} & 6_{10} \\\hline

 6_{11} & (xyeh)^{2} & 2^{6}\cdot3^{5}
 & 3_{3} & 2_{2} & 6_{11} & 6_{11} & 6_{11} & 6_{11} & 6_{11} & 6_{11} \\\hline

 6_{12} & (xyexe)^{2} & 2^{7}\cdot3^{4}
 & 3_{3} & 2_{3} & 6_{12} & 6_{12} & 6_{12} & 6_{12} & 6_{12} & 6_{12} \\\hline

 6_{13} & xh^{2}ehe & 2^{6}\cdot3^{4}
 & 3_{3} & 2_{3} & 6_{13} & 6_{13} & 6_{13} & 6_{13} & 6_{13} & 6_{13} \\\hline

 6_{14} & xyhyehe & 2^{4}\cdot3^{5}
 & 3_{4} & 2_{3} & 6_{14} & 6_{14} & 6_{14} & 6_{14} & 6_{14} & 6_{14} \\\hline

 6_{15} & (xh^{2}ey)^{2} & 2^{4}\cdot3^{4}
 & 3_{4} & 2_{3} & 6_{15} & 6_{15} & 6_{15} & 6_{15} & 6_{15} & 6_{15} \\\hline

 7 & (y)^{2} & 2^{3}\cdot3\cdot5\cdot7
 & 7 & 7 & 7 & 1 & 7 & 7 & 7 & 7 \\\hline

 8_{1} & xyh & 2^{7}\cdot3
 & 4_{2} & 8_{1} & 8_{1} & 8_{1} & 8_{1} & 8_{1} & 8_{1} & 8_{1} \\\hline

 8_{2} & (xye)^{2} & 2^{7}\cdot3
 & 4_{4} & 8_{2} & 8_{2} & 8_{2} & 8_{2} & 8_{2} & 8_{2} & 8_{2} \\\hline

 8_{3} & xey & 2^{7}\cdot3
 & 4_{2} & 8_{3} & 8_{3} & 8_{3} & 8_{3} & 8_{3} & 8_{3} & 8_{3} \\\hline

 9_{1} & (xyh^{2}e)^{3} & 2^{3}\cdot3^{7}
 & 9_{1} & 3_{2} & 9_{1} & 9_{1} & 9_{1} & 9_{1} & 9_{1} & 9_{1} \\\hline

 9_{2} & (xhye)^{2} & 2^{3}\cdot3^{6}
 & 9_{2} & 3_{2} & 9_{2} & 9_{2} & 9_{2} & 9_{2} & 9_{2} & 9_{2} \\\hline

 9_{3} & (xyhey)^{4} & 2^{3}\cdot3^{6}
 & 9_{3} & 3_{2} & 9_{3} & 9_{3} & 9_{3} & 9_{3} & 9_{3} & 9_{3} \\\hline

 9_{4} & (yh^{2}eh^{2})^{2} & 2\cdot3^{6}
 & 9_{4} & 3_{2} & 9_{4} & 9_{4} & 9_{4} & 9_{4} & 9_{4} & 9_{4} \\\hline

 9_{5} & (xyhyh)^{2} & 2\cdot3^{4}
 & 9_{5} & 3_{4} & 9_{5} & 9_{5} & 9_{5} & 9_{5} & 9_{5} & 9_{5} \\\hline

 10_{1} & yh^{2} & 2^{4}\cdot3\cdot5^{2}
 & 5 & 10_{1} & 2_{1} & 10_{1} & 10_{1} & 10_{1} & 10_{1} & 10_{1} \\\hline

 10_{2} & (yh)^{2} & 2^{4}\cdot3\cdot5
 & 5 & 10_{2} & 2_{2} & 10_{2} & 10_{2} & 10_{2} & 10_{2} & 10_{2} \\\hline

 10_{3} & xh^{2}y & 2^{4}\cdot3\cdot5
 & 5 & 10_{3} & 2_{3} & 10_{3} & 10_{3} & 10_{3} & 10_{3} & 10_{3} \\\hline

 11 & (xy^{3})^{2} & 2^{2}\cdot11
 & 11 & 11 & 11 & 11 & 1 & 11 & 11 & 11 \\\hline

 12_{1} & (xeyh^{2})^{5} & 2^{6}\cdot3^{3}\cdot5
 & 6_{3} & 4_{1} & 12_{1} & 12_{1} & 12_{1} & 12_{1} & 12_{1} & 12_{1} \\\hline

 12_{2} & (xheh)^{2} & 2^{7}\cdot3^{3}
 & 6_{9} & 4_{2} & 12_{2} & 12_{2} & 12_{2} & 12_{2} & 12_{2} & 12_{2} \\\hline

 12_{3} & xyxh^{2}y & 2^{7}\cdot3^{3}
 & 6_{6} & 4_{2} & 12_{3} & 12_{3} & 12_{3} & 12_{3} & 12_{3} & 12_{3} \\\hline

 12_{4} & xehe & 2^{5}\cdot3^{4}
 & 6_{8} & 4_{2} & 12_{4} & 12_{4} & 12_{4} & 12_{4} & 12_{4} & 12_{4} \\\hline

 12_{5} & (xyhey)^{3} & 2^{5}\cdot3^{4}
 & 6_{4} & 4_{1} & 12_{5} & 12_{5} & 12_{5} & 12_{5} & 12_{5} & 12_{5} \\\hline

 12_{6} & xh & 2^{7}\cdot3^{2}
 & 6_{6} & 4_{4} & 12_{6} & 12_{6} & 12_{6} & 12_{6} & 12_{6} & 12_{6} \\\hline

 12_{7} & xhe & 2^{7}\cdot3^{2}
 & 6_{9} & 4_{2} & 12_{7} & 12_{7} & 12_{7} & 12_{7} & 12_{7} & 12_{7} \\\hline

 12_{8} & xyxhxhe & 2^{5}\cdot3^{3}
 & 6_{12} & 4_{2} & 12_{8} & 12_{8} & 12_{8} & 12_{8} & 12_{8} & 12_{8} \\\hline

 12_{9} & he & 2^{6}\cdot3^{2}
 & 6_{3} & 4_{3} & 12_{9} & 12_{9} & 12_{9} & 12_{9} & 12_{9} & 12_{9} \\\hline

 12_{10} & xyeh & 2^{4}\cdot3^{3}
 & 6_{11} & 4_{1} & 12_{10} & 12_{10} & 12_{10} & 12_{10} & 12_{10} & 12_{10}
\\\hline

 12_{11} & xyexe & 2^{5}\cdot3^{2}
 & 6_{12} & 4_{4} & 12_{11} & 12_{11} & 12_{11} & 12_{11} & 12_{11} & 12_{11}
\\\hline

 12_{12} & xhexeh & 2^{5}\cdot3^{2}
 & 6_{8} & 4_{4} & 12_{12} & 12_{12} & 12_{12} & 12_{12} & 12_{12} & 12_{12}
\\\hline

 12_{13} & xh^{2}ey & 2^{3}\cdot3^{3}
 & 6_{15} & 4_{2} & 12_{13} & 12_{13} & 12_{13} & 12_{13} & 12_{13} & 12_{13}
\\\hline

 12_{14} & y^{3}e & 2^{4}\cdot3^{2}
 & 6_{11} & 4_{3} & 12_{14} & 12_{14} & 12_{14} & 12_{14} & 12_{14} & 12_{14}
\\\hline

 12_{15} & xeyhey & 2^{3}\cdot3^{2}
 & 6_{15} & 4_{4} & 12_{15} & 12_{15} & 12_{15} & 12_{15} & 12_{15} & 12_{15}
\\\hline

 13_{1} & yeh & 2\cdot3\cdot13
 & 13_{2} & 13_{1} & 13_{2} & 13_{2} & 13_{2} & 1 & 13_{1} & 13_{1} \\\hline

 13_{2} & (xy^{2}eh)^{2} & 2\cdot3\cdot13
 & 13_{1} & 13_{2} & 13_{1} & 13_{1} & 13_{1} & 1 & 13_{2} & 13_{2} \\\hline

 14_{1} & y & 2^{2}\cdot3\cdot7
 & 7 & 14_{1} & 14_{1} & 2_{1} & 14_{1} & 14_{1} & 14_{1} & 14_{1} \\\hline

 14_{2} & xyxe & 2^{3}\cdot7
 & 7 & 14_{2} & 14_{2} & 2_{2} & 14_{2} & 14_{2} & 14_{2} & 14_{2} \\\hline

 15_{1} & (xyxh)^{2} & 2^{3}\cdot3^{2}\cdot5
 & 15_{1} & 5 & 3_{1} & 15_{1} & 15_{1} & 15_{1} & 15_{1} & 15_{1} \\\hline

 15_{2} & (xehy)^{2} & 2\cdot3^{2}\cdot5
 & 15_{2} & 5 & 3_{3} & 15_{2} & 15_{2} & 15_{2} & 15_{2} & 15_{2} \\\hline

 16_{1} & xye & 2^{5}
 & 8_{2} & 16_{1} & 16_{2} & 16_{2} & 16_{1} & 16_{2} & 16_{1} & 16_{2} \\\hline

 16_{2} & yheh & 2^{5}
 & 8_{2} & 16_{2} & 16_{1} & 16_{1} & 16_{2} & 16_{1} & 16_{2} & 16_{1} \\\hline

 17 & xhey & 17
 & 17 & 17 & 17 & 17 & 17 & 17 & 1 & 17 \\\hline

 18_{1} & xyxhy & 2^{3}\cdot3^{4}
 & 9_{2} & 6_{2} & 18_{1} & 18_{1} & 18_{1} & 18_{1} & 18_{1} & 18_{1} \\\hline

 18_{2} & xeyxeh & 2^{2}\cdot3^{4}
 & 9_{3} & 6_{2} & 18_{2} & 18_{2} & 18_{2} & 18_{2} & 18_{2} & 18_{2} \\\hline

 18_{3} & xhye & 2^{3}\cdot3^{3}
 & 9_{2} & 6_{8} & 18_{3} & 18_{3} & 18_{3} & 18_{3} & 18_{3} & 18_{3} \\\hline

 18_{4} & yehe & 2^{3}\cdot3^{3}
 & 9_{2} & 6_{4} & 18_{4} & 18_{4} & 18_{4} & 18_{4} & 18_{4} & 18_{4} \\\hline

 18_{5} & (xyhey)^{2} & 2^{3}\cdot3^{3}
 & 9_{3} & 6_{4} & 18_{5} & 18_{5} & 18_{5} & 18_{5} & 18_{5} & 18_{5} \\\hline

  18_{6} & xhehy & 2^{3}\cdot3^{3}
 & 9_{1} & 6_{8} & 18_{6} & 18_{6} & 18_{6} & 18_{6} & 18_{6} & 18_{6} \\\hline

\end{array}
$$

\newpage
 \normalsize
 Conjugacy classes of $\mathfrak{G} = \langle \mathfrak{x},\mathfrak{y},\mathfrak{h},\mathfrak{e} \rangle$
 (continued)
 \scriptsize
 $$
 \begin{array}{|c|r|r|c|c|c|c|c|c|c|c|
}\hline
 \mbox{Class} &
 \multicolumn{1}{c|}{\mbox{Representative}} &
 \multicolumn{1}{c|}{|\mbox{Centralizer}|}
 &
 2{\rm P}
 & 3{\rm P}
 & 5{\rm P}
 & 7{\rm P}
 & 11{\rm P}
 & 13{\rm P}
 & 17{\rm P}
 & 23{\rm P}
 \\\hline
 18_{7} & xyhyh & 2\cdot3^{3}
 & 9_{5} & 6_{10} & 18_{7} & 18_{7} & 18_{7} & 18_{7} & 18_{7} & 18_{7} \\\hline

 18_{8} & yh^{2}eh^{2} & 2\cdot3^{3}
 & 9_{4} & 6_{8} & 18_{8} & 18_{8} & 18_{8} & 18_{8} & 18_{8} & 18_{8} \\\hline

 20_{1} & xhy & 2^{3}\cdot3\cdot5
 & 10_{2} & 20_{1} & 4_{1} & 20_{1} & 20_{1} & 20_{1} & 20_{1} & 20_{1} \\\hline

 20_{2} & yh & 2^{3}\cdot5
 & 10_{2} & 20_{2} & 4_{3} & 20_{2} & 20_{2} & 20_{2} & 20_{2} & 20_{2} \\\hline

 21 & yhe & 2\cdot3\cdot7
 & 21 & 7 & 21 & 3_{1} & 21 & 21 & 21 & 21 \\\hline

 22_{1} & xy^{3} & 2^{2}\cdot11
 & 11 & 22_{1} & 22_{1} & 22_{3} & 2_{1} & 22_{3} & 22_{3} & 22_{1} \\\hline

 22_{2} & xyhy & 2^{2}\cdot11
 & 11 & 22_{2} & 22_{2} & 22_{2} & 2_{2} & 22_{2} & 22_{2} & 22_{2} \\\hline

 22_{3} & xey^{2} & 2^{2}\cdot11
 & 11 & 22_{3} & 22_{3} & 22_{1} & 2_{1} & 22_{1} & 22_{1} & 22_{3} \\\hline

 23_{1} & xyeh^{2} & 23
 & 23_{1} & 23_{1} & 23_{2} & 23_{2} & 23_{2} & 23_{1} & 23_{2} & 1 \\\hline

 23_{2} & yh^{2}ye & 23
 & 23_{2} & 23_{2} & 23_{1} & 23_{1} & 23_{1} & 23_{2} & 23_{1} & 1 \\\hline

 24_{1} & xheh & 2^{4}\cdot3
 & 12_{2} & 8_{3} & 24_{1} & 24_{1} & 24_{1} & 24_{1} & 24_{1} & 24_{1} \\\hline

 24_{2} & xyhyxe & 2^{4}\cdot3
 & 12_{6} & 8_{2} & 24_{2} & 24_{2} & 24_{2} & 24_{2} & 24_{2} & 24_{2} \\\hline

 24_{3} & xyhy^{2}h & 2^{4}\cdot3
 & 12_{7} & 8_{1} & 24_{3} & 24_{3} & 24_{3} & 24_{3} & 24_{3} & 24_{3} \\\hline

 26_{1} & xy^{3}h & 2\cdot13
 & 13_{1} & 26_{1} & 26_{2} & 26_{2} & 26_{2} & 2_{1} & 26_{1} & 26_{1} \\\hline

 26_{2} & xy^{2}eh & 2\cdot13
 & 13_{2} & 26_{2} & 26_{1} & 26_{1} & 26_{1} & 2_{1} & 26_{2} & 26_{2} \\\hline

 27 & xyh^{2}e & 3^{3}
 & 27 & 9_{1} & 27 & 27 & 27 & 27 & 27 & 27 \\\hline

 28 & xhyhxe & 2^{2}\cdot7
 & 14_{2} & 28 & 28 & 4_{1} & 28 & 28 & 28 & 28 \\\hline

 30_{1} & xhy^{3} & 2^{3}\cdot3\cdot5
 & 15_{1} & 10_{2} & 6_{3} & 30_{1} & 30_{1} & 30_{1} & 30_{1} & 30_{1} \\\hline

 30_{2} & xyxh & 2^{2}\cdot3\cdot5
 & 15_{1} & 10_{1} & 6_{1} & 30_{2} & 30_{2} & 30_{2} & 30_{2} & 30_{2} \\\hline

 30_{3} & xehy & 2\cdot3\cdot5
 & 15_{2} & 10_{3} & 6_{7} & 30_{3} & 30_{3} & 30_{3} & 30_{3} & 30_{3} \\\hline

 35 & y^{2}he & 5\cdot7
 & 35 & 35 & 7 & 5 & 35 & 35 & 35 & 35 \\\hline

 36_{1} & xyhey & 2^{2}\cdot3^{2}
 & 18_{5} & 12_{5} & 36_{1} & 36_{1} & 36_{1} & 36_{1} & 36_{1} & 36_{1}
\\\hline

 36_{2} & xy^{2}eyh & 2^{2}\cdot3^{2}
 & 18_{6} & 12_{4} & 36_{2} & 36_{2} & 36_{2} & 36_{2} & 36_{2} & 36_{2}
\\\hline

 39_{1} & xhyhe & 3\cdot13
 & 39_{2} & 13_{2} & 39_{2} & 39_{2} & 39_{2} & 3_{1} & 39_{1} & 39_{1} \\\hline

 39_{2} & xh^{2}xey & 3\cdot13
 & 39_{1} & 13_{1} & 39_{1} & 39_{1} & 39_{1} & 3_{1} & 39_{2} & 39_{2} \\\hline

 42 & yeyeh & 2\cdot3\cdot7
 & 21 & 14_{1} & 42 & 6_{1} & 42 & 42 & 42 & 42 \\\hline

 60 & xeyh^{2} & 2^{2}\cdot3\cdot5
 & 30_{1} & 20_{1} & 12_{1} & 60 & 60 & 60 & 60 & 60 \\\hline

 \end{array}
$$

}

\end{cclass}

\newpage
\section{Character Tables of Local Subgroups of $\Fi_{23}$ and $2\Fi_{22}$}




\begin{ctab}\label{Fi_23ct_E} Character table of $E(\Fi_{23}) = E = \langle x_1,y_1,e_1 \rangle$
{
\renewcommand{\baselinestretch}{0.5}

{\tiny
\setlength{\arraycolsep}{0,3mm}
$$
\begin{array}{r|rrrrrrrrrrrrrrrrrrrrrrrrr}
2&18&18&18&18&14&14&7&11&12&12&11&9&9&8&3&7&7&7&6&6&5&5&3&3\\
3&2&2&2&2&1&1&2&1&1&1&1&.&.&.&1&2&2&2&2&2&1&1&.&.\\
5&1&1&1&1&.&.&1&.&.&.&.&.&.&.&1&1&.&.&.&.&.&.&.&.\\
7&1&1&1&.&1&.&.&1&.&.&.&.&.&.&.&.&.&.&.&.&.&.&1&1\\
11&1&1&.&.&.&.&.&.&.&.&.&.&.&.&.&.&.&.&.&.&.&.&.&.\\
23&1&.&.&.&.&.&.&.&.&.&.&.&.&.&.&.&.&.&.&.&.&.&.&.\\
\hline
&1a&2a&2b&2c&2d&2e&3a&4a&4b&4c&4d&4e&4f&4g&5a&6a&6b&6c&6d&6e&6f
&6g&7a&7b\\
\hline
2P&1a&1a&1a&1a&1a&1a&3a&2b&2c&2c&2b&2e&2e&2d&5a&3a&3a&3a&3a&3a&3a
&3a&7a&7b\\
3P&1a&2a&2b&2c&2d&2e&1a&4a&4b&4c&4d&4e&4f&4g&5a&2c&2c&2a&2b&2c&2d
&2e&7b&7a\\
5P&1a&2a&2b&2c&2d&2e&3a&4a&4b&4c&4d&4e&4f&4g&1a&6a&6b&6c&6d&6e&6f
&6g&7b&7a\\
7P&1a&2a&2b&2c&2d&2e&3a&4a&4b&4c&4d&4e&4f&4g&5a&6a&6b&6c&6d&6e&6f
&6g&1a&1a\\
11P&1a&2a&2b&2c&2d&2e&3a&4a&4b&4c&4d&4e&4f&4g&5a&6a&6b&6c&6d&6e&6f
&6g&7a&7b\\
23P&1a&2a&2b&2c&2d&2e&3a&4a&4b&4c&4d&4e&4f&4g&5a&6a&6b&6c&6d&6e&6f
&6g&7a&7b\\
\hline
X.1&1&1&1&1&1&1&1&1&1&1&1&1&1&1&1&1&1&1&1&1&1&1&1&1\\
X.2&22&22&22&22&6&6&4&6&6&6&6&2&2&2&2&4&4&4&4&4&.&.&1&1\\
X.3&45&45&45&45&-3&-3&.&-3&-3&-3&-3&1&1&1&.&.&.&.&.&.&.&.&A&\bar{A}\\
X.4&45&45&45&45&-3&-3&.&-3&-3&-3&-3&1&1&1&.&.&.&.&.&.&.&.&\bar{A}&A\\
X.5&230&230&230&230&22&22&5&22&22&22&22&2&2&2&.&5&5&5&5&5&1&1&-1&-1\\
X.6&231&231&231&231&7&7&6&7&7&7&7&-1&-1&-1&1&6&6&6&6&6&-2&-2&.&.\\
X.7&231&231&231&231&7&7&-3&7&7&7&7&-1&-1&-1&1&-3&-3&-3&-3&-3&1&1&.
&.\\
X.8&231&231&231&231&7&7&-3&7&7&7&7&-1&-1&-1&1&-3&-3&-3&-3&-3&1&1&.
&.\\
X.9&253&253&253&253&13&13&1&13&13&13&13&1&1&1&-2&1&1&1&1&1&1&1&1&1\\
X.10&253&-99&29&-3&29&-3&10&-15&-3&5&1&5&-3&1&3&.&-6&.&2&.&2&.&1&1\\
X.11&506&154&26&-6&42&10&11&14&-6&2&-2&6&-2&2&1&9&3&1&-1&-3&3&1&2&2\\
X.12&770&770&770&770&-14&-14&5&-14&-14&-14&-14&-2&-2&-2&.&5&5&5&5&5
&1&1&.&.\\
X.13&770&770&770&770&-14&-14&5&-14&-14&-14&-14&-2&-2&-2&.&5&5&5&5&5
&1&1&.&.\\
X.14&896&896&896&896&.&.&-4&.&.&.&.&.&.&.&1&-4&-4&-4&-4&-4&.&.&.&.\\
X.15&896&896&896&896&.&.&-4&.&.&.&.&.&.&.&1&-4&-4&-4&-4&-4&.&.&.&.\\
X.16&990&990&990&990&-18&-18&.&-18&-18&-18&-18&2&2&2&.&.&.&.&.&.&.&.
&\bar{A}&A\\
X.17&990&990&990&990&-18&-18&.&-18&-18&-18&-18&2&2&2&.&.&.&.&.&.&.&.
&A&\bar{A}\\
X.18&1035&1035&1035&1035&27&27&.&27&27&27&27&-1&-1&-1&.&.&.&.&.&.&.
&.&-1&-1\\
X.19&1288&-56&-56&8&56&-8&10&.&8&-8&.&4&4&-4&3&-10&2&-2&-2&2&2&-2&.
&.\\
X.20&1518&-594&174&-18&62&-2&15&-34&-2&14&-2&2&2&2&3&-15&-9&9&3&-3
&-1&1&-1&-1\\
X.21&2024&2024&2024&2024&8&8&-1&8&8&8&8&.&.&.&-1&-1&-1&-1&-1&-1&-1
&-1&1&1\\
X.22&2530&-990&290&-30&-46&18&10&18&18&2&-14&-2&-2&-2&.&.&-6&.&2&.&2
&.&A&\bar{A}\\
X.23&2530&-990&290&-30&-46&18&10&18&18&2&-14&-2&-2&-2&.&.&-6&.&2&.&2
&.&\bar{A}&A\\
X.24&3542&-1386&406&-42&70&6&5&-42&6&22&-10&2&2&2&-3&15&-3&-9&1&3&1
&3&.&.\\
X.25&3542&1078&182&-42&70&-26&14&42&-10&14&-6&2&-6&-2&2&6&6&-2&2&-6
&-2&-2&.&.\\
X.26&3542&-1386&406&-42&70&6&5&-42&6&22&-10&2&2&2&-3&-15&-3&9&1&-3&1
&-3&.&.\\
X.27&3795&-1485&435&-45&83&-13&15&-41&-13&11&7&3&-5&-1&.&15&-9&-9&3
&3&-1&-1&1&1\\
X.28&3795&-1485&435&-45&-13&19&15&-1&19&11&-17&-5&3&-1&.&-15&-9&9&3
&-3&-1&1&1&1\\
X.29&5313&-2079&609&-63&49&17&-15&-35&17&25&-19&-3&5&1&3&15&9&-9&-3
&3&1&-1&.&.\\
X.30&7084&2156&364&-84&140&76&19&28&-20&-4&-4&4&4&4&-1&21&3&5&-5&-3
&-1&1&.&.\\
X.31&8855&-3465&1015&-105&7&39&-10&-21&39&31&-37&3&-5&-1&.&.&6&.&-2
&.&-2&.&.&.\\
X.32&10120&3080&520&-120&168&40&-5&56&-24&8&-8&.&.&.&.&-15&3&-7&7&-3
&3&1&-2&-2\\
X.33&10626&3234&546&-126&-14&-46&6&14&2&10&-2&-2&6&2&1&-6&6&-6&6&-6
&-2&2&.&.\\
X.34&10626&3234&546&-126&-14&-46&-3&14&2&10&-2&-2&6&2&1&3&-3&3&-3&3
&1&-1&.&.\\
X.35&10626&3234&546&-126&-14&-46&-3&14&2&10&-2&-2&6&2&1&3&-3&3&-3&3
&1&-1&.&.\\
X.36&11385&-4455&1305&-135&-87&9&.&45&9&-15&-3&5&-3&1&.&.&.&.&.&.&.
&.&A&\bar{A}\\
X.37&11385&-4455&1305&-135&-87&9&.&45&9&-15&-3&5&-3&1&.&.&.&.&.&.&.
&.&\bar{A}&A\\
X.38&12880&-560&-560&80&112&-16&10&.&16&-16&.&8&8&-8&.&-10&2&-2&-2&2
&-2&2&.&.\\
X.39&12880&-560&-560&80&-112&16&10&.&-16&16&.&.&.&.&.&-10&2&-2&-2&2
&2&-2&.&.\\
X.40&12880&-560&-560&80&-112&16&10&.&-16&16&.&.&.&.&.&-10&2&-2&-2&2
&2&-2&.&.\\
X.41&14168&-616&-616&88&168&-24&20&.&24&-24&.&-4&-4&4&3&-20&4&-4&-4
&4&.&.&.&.\\
X.42&14168&4312&728&-168&56&-72&11&56&-8&24&-8&.&.&.&-2&9&3&1&-1&-3
&-1&-3&.&.\\
X.43&17710&5390&910&-210&-98&62&25&-70&14&-26&10&-6&2&-2&.&15&9&-1&1
&-9&1&-1&.&.\\
X.44&20608&-896&-896&128&.&.&-20&.&.&.&.&.&.&.&3&20&-4&4&4&-4&.&.&.
&.\\
X.45&20608&-896&-896&128&.&.&-20&.&.&.&.&.&.&.&3&20&-4&4&4&-4&.&.&.
&.\\
X.46&22770&-8910&2610&-270&162&-30&.&-78&-30&18&18&-2&-2&-2&.&.&.&.
&.&.&.&.&-1&-1\\
X.47&22770&6930&1170&-270&-126&-30&.&-42&18&-6&6&6&-2&2&.&.&.&.&.&.
&.&.&F&\bar{F}\\
X.48&22770&6930&1170&-270&-126&-30&.&-42&18&-6&6&6&-2&2&.&.&.&.&.&.
&.&.&\bar{F}&F\\
X.49&26565&-10395&3045&-315&-91&5&15&49&5&-19&1&-3&5&1&.&15&-9&-9&3
&3&-1&-1&.&.\\
X.50&28336&8624&1456&-336&112&112&-14&.&-16&-16&.&.&.&.&1&-6&-6&2&-2
&6&-2&-2&.&.\\
X.51&30360&-11880&3480&-360&-8&-8&-15&8&-8&-8&8&.&.&.&.&-15&9&9&-3
&-3&1&1&1&1\\
X.52&32384&9856&1664&-384&.&.&-16&.&.&.&.&.&.&.&-1&-24&.&-8&8&.&.&.
&2&2\\
X.53&35420&10780&1820&-420&28&-36&5&28&-4&12&-4&-4&-4&-4&.&15&-3&7
&-7&3&1&3&.&.\\
X.54&56672&-2464&-2464&352&224&-32&-10&.&32&-32&.&.&.&.&-3&10&-2&2&2
&-2&2&-2&.&.\\
X.55&57960&-2520&-2520&360&-168&24&.&.&-24&24&.&4&4&-4&.&.&.&.&.&.&.
&.&.&.\\
X.56&70840&-3080&-3080&440&-56&8&10&.&-8&8&.&-4&-4&4&.&-10&2&-2&-2&2
&-2&2&.&.\\
\end{array}
$$
} }

\newpage
Character table of $E(\Fi_{23})$ (continued){
\renewcommand{\baselinestretch}{0.5}

{\tiny
\setlength{\arraycolsep}{0,3mm}
$$
\begin{array}{r|rrrrrrrrrrrrrrrrrrrrrrrrrrr}
2&7&7&7&5&5&3&3&3&1&1&5&5&4&4&3&3&2&2&2&2&1&1&5&5&1&1\\
3&.&.&.&.&.&1&.&.&.&.&1&1&1&1&.&.&.&.&.&.&1&1&.&.&.&.\\
5&.&.&.&.&.&1&1&1&.&.&.&.&.&.&.&.&.&.&.&.&1&1&.&.&.&.\\
7&.&.&.&.&.&.&.&.&.&.&.&.&.&.&1&1&1&1&1&1&.&.&.&.&.&.\\
11&.&.&.&.&.&.&.&.&1&1&.&.&.&.&.&.&.&.&.&.&.&.&.&.&1&1\\
23&.&.&.&.&.&.&.&.&.&.&.&.&.&.&.&.&.&.&.&.&.&.&.&.&.&.\\
\hline
&8a&8b&8c&8d&8e&10a&10b&10c&11a&11b&12a&12b&12c&12d&14a&14b&14c
&14d&14e&14f&15a&15b&16a&16b&22a&22b\\
\hline
2P&4b&4c&4c&4e&4f&5a&5a&5a&11b&11a&6b&6b&6d&6d&7b&7a&7a&7b&7b&7a
&15a&15b&8a&8a&11a&11b\\
3P&8a&8b&8c&8d&8e&10a&10b&10c&11a&11b&4b&4c&4d&4a&14b&14a&14d&14c
&14f&14e&5a&5a&16a&16b&22a&22b\\
5P&8a&8b&8c&8d&8e&2c&2b&2a&11a&11b&12a&12b&12c&12d&14b&14a&14d&14c
&14f&14e&3a&3a&16b&16a&22a&22b\\
7P&8a&8b&8c&8d&8e&10a&10b&10c&11b&11a&12a&12b&12c&12d&2b&2b&2a&2a
&2d&2d&15b&15a&16b&16a&22b&22a\\
11P&8a&8b&8c&8d&8e&10a&10b&10c&1a&1a&12a&12b&12c&12d&14a&14b&14c
&14d&14e&14f&15b&15a&16a&16b&2a&2a\\
23P&8a&8b&8c&8d&8e&10a&10b&10c&11a&11b&12a&12b&12c&12d&14a&14b&14c
&14d&14e&14f&15a&15b&16b&16a&22a&22b\\
\hline
X.1&1&1&1&1&1&1&1&1&1&1&1&1&1&1&1&1&1&1&1&1&1&1&1&1&1&1\\
X.2&2&2&2&.&.&2&2&2&.&.&.&.&.&.&1&1&1&1&-1&-1&-1&-1&.&.&.&.\\
X.3&1&1&1&-1&-1&.&.&.&1&1&.&.&.&.&\bar{A}&A&A&\bar{A}&-\bar{A}&-A&.
&.&-1&-1&1&1\\
X.4&1&1&1&-1&-1&.&.&.&1&1&.&.&.&.&A&\bar{A}&\bar{A}&A&-A&-\bar{A}&.
&.&-1&-1&1&1\\
X.5&2&2&2&.&.&.&.&.&-1&-1&1&1&1&1&-1&-1&-1&-1&1&1&.&.&.&.&-1&-1\\
X.6&-1&-1&-1&-1&-1&1&1&1&.&.&-2&-2&-2&-2&.&.&.&.&.&.&1&1&-1&-1&.&.\\
X.7&-1&-1&-1&-1&-1&1&1&1&.&.&1&1&1&1&.&.&.&.&.&.&B&\bar{B}&-1&-1&.
&.\\
X.8&-1&-1&-1&-1&-1&1&1&1&.&.&1&1&1&1&.&.&.&.&.&.&\bar{B}&B&-1&-1&.
&.\\
X.9&1&1&1&-1&-1&-2&-2&-2&.&.&1&1&1&1&1&1&1&1&-1&-1&1&1&-1&-1&.&.\\
X.10&1&-3&1&1&1&-3&-1&1&.&.&.&2&-2&.&1&1&-1&-1&1&1&.&.&-1&-1&.&.\\
X.11&-2&2&-2&.&.&-1&1&-1&.&.&-3&-1&1&-1&-2&-2&.&.&.&.&1&1&.&.&.&.\\
X.12&-2&-2&-2&.&.&.&.&.&.&.&1&1&1&1&.&.&.&.&.&.&.&.&.&.&.&.\\
X.13&-2&-2&-2&.&.&.&.&.&.&.&1&1&1&1&.&.&.&.&.&.&.&.&.&.&.&.\\
X.14&.&.&.&.&.&1&1&1&D&\bar{D}&.&.&.&.&.&.&.&.&.&.&1&1&.&.&\bar{D}&D\\
X.15&.&.&.&.&.&1&1&1&\bar{D}&D&.&.&.&.&.&.&.&.&.&.&1&1&.&.&D&\bar{D}\\
X.16&2&2&2&.&.&.&.&.&.&.&.&.&.&.&A&\bar{A}&\bar{A}&A&A&\bar{A}&.&.&.
&.&.&.\\
X.17&2&2&2&.&.&.&.&.&.&.&.&.&.&.&\bar{A}&A&A&\bar{A}&\bar{A}&A&.&.&.
&.&.&.\\
X.18&-1&-1&-1&1&1&.&.&.&1&1&.&.&.&.&-1&-1&-1&-1&-1&-1&.&.&1&1&1&1\\
X.19&.&.&.&-2&2&3&-1&-1&1&1&2&-2&.&.&.&.&.&.&.&.&.&.&.&.&-1&-1\\
X.20&2&-2&-2&.&.&-3&-1&1&.&.&1&-1&1&-1&-1&-1&1&1&-1&-1&.&.&.&.&.&.\\
X.21&.&.&.&.&.&-1&-1&-1&.&.&-1&-1&-1&-1&1&1&1&1&1&1&-1&-1&.&.&.&.\\
X.22&-2&2&2&.&.&.&.&.&.&.&.&2&-2&.&\bar{A}&A&-A&-\bar{A}&\bar{A}&A&.
&.&.&.&.&.\\
X.23&-2&2&2&.&.&.&.&.&.&.&.&2&-2&.&A&\bar{A}&-\bar{A}&-A&A&\bar{A}&.
&.&.&.&.&.\\
X.24&2&-2&-2&.&.&3&1&-1&.&.&3&1&-1&-3&.&.&.&.&.&.&.&.&.&.&.&.\\
X.25&2&2&-2&.&.&-2&2&-2&.&.&2&2&.&.&.&.&.&.&.&.&-1&-1&.&.&.&.\\
X.26&2&-2&-2&.&.&3&1&-1&.&.&-3&1&-1&3&.&.&.&.&.&.&.&.&.&.&.&.\\
X.27&-1&-1&3&-1&-1&.&.&.&.&.&-1&-1&1&1&1&1&-1&-1&-1&-1&.&.&1&1&.&.\\
X.28&-1&3&-1&-1&-1&.&.&.&.&.&1&-1&1&-1&1&1&-1&-1&1&1&.&.&1&1&.&.\\
X.29&1&1&-3&-1&-1&-3&-1&1&.&.&-1&1&-1&1&.&.&.&.&.&.&.&.&1&1&.&.\\
X.30&-4&.&.&.&.&1&-1&1&.&.&1&-1&-1&1&.&.&.&.&.&.&-1&-1&.&.&.&.\\
X.31&-1&-1&3&1&1&.&.&.&.&.&.&-2&2&.&.&.&.&.&.&.&.&.&-1&-1&.&.\\
X.32&.&.&.&.&.&.&.&.&.&.&-3&-1&1&-1&2&2&.&.&.&.&.&.&.&.&.&.\\
X.33&-2&-2&2&.&.&-1&1&-1&.&.&2&-2&-2&2&.&.&.&.&.&.&1&1&.&.&.&.\\
X.34&-2&-2&2&.&.&-1&1&-1&.&.&-1&1&1&-1&.&.&.&.&.&.&B&\bar{B}&.&.&.&.\\
X.35&-2&-2&2&.&.&-1&1&-1&.&.&-1&1&1&-1&.&.&.&.&.&.&\bar{B}&B&.&.&.&.\\
X.36&1&-3&1&-1&-1&.&.&.&.&.&.&.&.&.&\bar{A}&A&-A&-\bar{A}&-\bar{A}
&-A&.&.&1&1&.&.\\
X.37&1&-3&1&-1&-1&.&.&.&.&.&.&.&.&.&A&\bar{A}&-\bar{A}&-A&-A
&-\bar{A}&.&.&1&1&.&.\\
X.38&.&.&.&.&.&.&.&.&-1&-1&-2&2&.&.&.&.&.&.&.&.&.&.&.&.&1&1\\
X.39&.&.&.&.&.&.&.&.&-1&-1&2&-2&.&.&.&.&.&.&.&.&.&.&E&\bar{E}&1&1\\
X.40&.&.&.&.&.&.&.&.&-1&-1&2&-2&.&.&.&.&.&.&.&.&.&.&\bar{E}&E&1&1\\
X.41&.&.&.&2&-2&3&-1&-1&.&.&.&.&.&.&.&.&.&.&.&.&.&.&.&.&.&.\\
X.42&.&.&.&.&.&2&-2&2&.&.&1&3&1&-1&.&.&.&.&.&.&1&1&.&.&.&.\\
X.43&2&-2&2&.&.&.&.&.&.&.&-1&1&1&-1&.&.&.&.&.&.&.&.&.&.&.&.\\
X.44&.&.&.&.&.&3&-1&-1&D&\bar{D}&.&.&.&.&.&.&.&.&.&.&.&.&.&.
&-\bar{D}&-D\\
X.45&.&.&.&.&.&3&-1&-1&\bar{D}&D&.&.&.&.&.&.&.&.&.&.&.&.&.&.&-D
&-\bar{D}\\
X.46&-2&2&2&.&.&.&.&.&.&.&.&.&.&.&-1&-1&1&1&1&1&.&.&.&.&.&.\\
X.47&-2&2&-2&.&.&.&.&.&.&.&.&.&.&.&-\bar{F}&-F&.&.&.&.&.&.&.&.&.&.\\
X.48&-2&2&-2&.&.&.&.&.&.&.&.&.&.&.&-F&-\bar{F}&.&.&.&.&.&.&.&.&.&.\\
X.49&1&1&-3&1&1&.&.&.&.&.&-1&-1&1&1&.&.&.&.&.&.&.&.&-1&-1&.&.\\
X.50&.&.&.&.&.&-1&1&-1&.&.&2&2&.&.&.&.&.&.&.&.&1&1&.&.&.&.\\
X.51&.&.&.&.&.&.&.&.&.&.&1&1&-1&-1&1&1&-1&-1&-1&-1&.&.&.&.&.&.\\
X.52&.&.&.&.&.&1&-1&1&.&.&.&.&.&.&-2&-2&.&.&.&.&-1&-1&.&.&.&.\\
X.53&4&.&.&.&.&.&.&.&.&.&-1&-3&-1&1&.&.&.&.&.&.&.&.&.&.&.&.\\
X.54&.&.&.&.&.&-3&1&1&.&.&2&-2&.&.&.&.&.&.&.&.&.&.&.&.&.&.\\
X.55&.&.&.&2&-2&.&.&.&1&1&.&.&.&.&.&.&.&.&.&.&.&.&.&.&-1&-1\\
X.56&.&.&.&-2&2&.&.&.&.&.&-2&2&.&.&.&.&.&.&.&.&.&.&.&.&.&.\\
\end{array}
$$
} }

\newpage
Character table of $E(\Fi_{23})$ (continued){
\renewcommand{\baselinestretch}{0.5}

{\tiny
\setlength{\arraycolsep}{0,3mm}
$$
\begin{array}{r|rrrrrrr}
2&.&.&2&2&1&1\\
3&.&.&.&.&1&1\\
5&.&.&.&.&1&1\\
7&.&.&1&1&.&.\\
11&.&.&.&.&.&.\\
23&1&1&.&.&.&.\\
\hline
&23a&23b&28a&28b&30a&30b\\
\hline
2P&23a&23b&14a&14b&15a&15b\\
3P&23a&23b&28b&28a&10a&10a\\
5P&23b&23a&28b&28a&6a&6a\\
7P&23b&23a&4a&4a&30b&30a\\
11P&23b&23a&28a&28b&30b&30a\\
23P&1a&1a&28a&28b&30a&30b\\
\hline
X.1&1&1&1&1&1&1\\
X.2&-1&-1&-1&-1&-1&-1\\
X.3&-1&-1&-\bar{A}&-A&.&.\\
X.4&-1&-1&-A&-\bar{A}&.&.\\
X.5&.&.&1&1&.&.\\
X.6&1&1&.&.&1&1\\
X.7&1&1&.&.&B&\bar{B}\\
X.8&1&1&.&.&\bar{B}&B\\
X.9&.&.&-1&-1&1&1\\
X.10&.&.&-1&-1&.&.\\
X.11&.&.&.&.&-1&-1\\
X.12&C&\bar{C}&.&.&.&.\\
X.13&\bar{C}&C&.&.&.&.\\
X.14&-1&-1&.&.&1&1\\
X.15&-1&-1&.&.&1&1\\
X.16&1&1&A&\bar{A}&.&.\\
X.17&1&1&\bar{A}&A&.&.\\
X.18&.&.&-1&-1&.&.\\
X.19&.&.&.&.&.&.\\
X.20&.&.&1&1&.&.\\
X.21&.&.&1&1&-1&-1\\
X.22&.&.&-\bar{A}&-A&.&.\\
X.23&.&.&-A&-\bar{A}&.&.\\
X.24&.&.&.&.&.&.\\
X.25&.&.&.&.&1&1\\
X.26&.&.&.&.&.&.\\
X.27&.&.&1&1&.&.\\
X.28&.&.&-1&-1&.&.\\
X.29&.&.&.&.&.&.\\
X.30&.&.&.&.&1&1\\
X.31&.&.&.&.&.&.\\
X.32&.&.&.&.&.&.\\
X.33&.&.&.&.&-1&-1\\
X.34&.&.&.&.&-B&-\bar{B}\\
X.35&.&.&.&.&-\bar{B}&-B\\
X.36&.&.&\bar{A}&A&.&.\\
X.37&.&.&A&\bar{A}&.&.\\
X.38&.&.&.&.&.&.\\
X.39&.&.&.&.&.&.\\
X.40&.&.&.&.&.&.\\
X.41&.&.&.&.&.&.\\
X.42&.&.&.&.&-1&-1\\
X.43&.&.&.&.&.&.\\
X.44&.&.&.&.&.&.\\
X.45&.&.&.&.&.&.\\
X.46&.&.&-1&-1&.&.\\
X.47&.&.&.&.&.&.\\
X.48&.&.&.&.&.&.\\
X.49&.&.&.&.&.&.\\
X.50&.&.&.&.&-1&-1\\
X.51&.&.&1&1&.&.\\
X.52&.&.&.&.&1&1\\
X.53&.&.&.&.&.&.\\
X.54&.&.&.&.&.&.\\
X.55&.&.&.&.&.&.\\
X.56&.&.&.&.&.&.\\
\end{array}
$$
} }
$
A = \zeta(7)_7^4 + \zeta(7)_7^2 + \zeta(7)_7,
B = -2\zeta(15)_3 \zeta(15)_5^3 - 2\zeta(15)_3 \zeta(15)_5^2 - \zeta(15)_3 -
\zeta(15)_5^3 - \zeta(15)_5^2 - 1,
C = \zeta(23)_{23}^{18} + \zeta(23)_{23}^{16} + \zeta(23)_{23}^{13} + \zeta(23)_{23}^{12} +
\zeta(23)_{23}^9 + \zeta(23)_{23}^8 + \zeta(23)_{23}^6 + \zeta(23)_{23}^4 + \zeta(23)_{23}^3 +
\zeta(23)_{23}^2 + \zeta(23)_{23},
D = \zeta(11)_{11}^9 + \zeta(11)_{11}^5 + \zeta(11)_{11}^4 + \zeta(11)_{11}^3 + \zeta(11)_{11},
E = -2 \zeta(8)_8^3 - 2 \zeta(8)_8,
F = 2 \zeta(7)_7^4 + 2 \zeta(7)_7^2 + 2\zeta(7)_7
$.
\end{ctab}

\newpage


\begin{ctab}\label{Fi_23ct_D} Character table of $D(\Fi_{23}) = D = \langle x_1,y_1 \rangle \cong \langle x_0, y_0 \rangle$
{
\renewcommand{\baselinestretch}{0.5}

{\tiny
\setlength{\arraycolsep}{0,3mm}
$$
\begin{array}{r|rrrrrrrrrrrrrrrrrrrrrrr}
2&18&18&17&17&18&18&16&14&14&13&7&11&11&12&12&10&10&10&9&9&8&8\\
3&2&2&2&2&1&1&2&1&1&.&2&1&1&.&.&1&1&.&.&.&.&.\\
5&1&1&1&1&1&1&.&.&.&.&.&.&.&.&.&.&.&.&.&.&.&.\\
7&1&1&1&1&.&.&.&.&.&.&.&.&.&.&.&.&.&.&.&.&.&.\\
11&1&1&.&.&.&.&.&.&.&.&.&.&.&.&.&.&.&.&.&.&.&.\\
\hline
&1a&2a&2b&2c&2d&2e&2f&2g&2h&2i&3a&4a&4b&4c&4d&4e&4f&4g&4h&4i&4j
&4k\\
\hline
2P&1a&1a&1a&1a&1a&1a&1a&1a&1a&1a&3a&2e&2e&2d&2d&2f&2f&2e&2g&2g&2h
&2i\\
3P&1a&2a&2b&2c&2d&2e&2f&2g&2h&2i&1a&4a&4b&4c&4d&4e&4f&4g&4h&4i&4j
&4k\\
5P&1a&2a&2b&2c&2d&2e&2f&2g&2h&2i&3a&4a&4b&4c&4d&4e&4f&4g&4h&4i&4j
&4k\\
7P&1a&2a&2b&2c&2d&2e&2f&2g&2h&2i&3a&4a&4b&4c&4d&4e&4f&4g&4h&4i&4j
&4k\\
11P&1a&2a&2b&2c&2d&2e&2f&2g&2h&2i&3a&4a&4b&4c&4d&4e&4f&4g&4h&4i&4j
&4k\\
\hline
X.1&1&1&1&1&1&1&1&1&1&1&1&1&1&1&1&1&1&1&1&1&1&1\\
X.2&21&21&21&21&21&21&21&5&5&5&3&5&5&5&5&5&5&5&1&1&1&1\\
X.3&45&45&45&45&45&45&45&-3&-3&-3&.&-3&-3&-3&-3&-3&-3&-3&1&1&1&1\\
X.4&45&45&45&45&45&45&45&-3&-3&-3&.&-3&-3&-3&-3&-3&-3&-3&1&1&1&1\\
X.5&55&55&55&55&55&55&55&7&7&7&1&7&7&7&7&7&7&7&3&3&3&-1\\
X.6&77&77&-35&-35&13&13&-3&13&13&-3&5&-7&-7&-3&5&1&1&1&-3&5&1&1\\
X.7&99&99&99&99&99&99&99&3&3&3&.&3&3&3&3&3&3&3&3&3&3&-1\\
X.8&154&154&154&154&154&154&154&10&10&10&1&10&10&10&10&10&10&10&-2
&-2&-2&2\\
X.9&176&-176&-64&64&-16&16&.&-16&16&.&5&8&-8&.&.&4&-4&.&.&.&.&4\\
X.10&176&-176&64&-64&-16&16&.&-16&16&.&5&-8&8&.&.&4&-4&.&.&.&.&4\\
X.11&210&210&210&210&210&210&210&2&2&2&3&2&2&2&2&2&2&2&-2&-2&-2&-2\\
X.12&231&231&231&231&231&231&231&7&7&7&-3&7&7&7&7&7&7&7&-1&-1&-1&-1\\
X.13&280&280&280&280&280&280&280&-8&-8&-8&1&-8&-8&-8&-8&-8&-8&-8&.&.
&.&.\\
X.14&280&280&280&280&280&280&280&-8&-8&-8&1&-8&-8&-8&-8&-8&-8&-8&.&.
&.&.\\
X.15&330&330&90&90&10&10&-6&26&26&10&6&6&6&-6&2&-2&-2&-2&-2&6&2&2\\
X.16&385&385&-175&-175&65&65&-15&17&17&1&-2&-11&-11&1&9&5&5&-3&5&-3
&1&1\\
X.17&385&385&385&385&385&385&385&1&1&1&-2&1&1&1&1&1&1&1&1&1&1&1\\
X.18&385&385&-175&-175&65&65&-15&17&17&1&7&-11&-11&1&9&5&5&-3&5&-3&1
&1\\
X.19&616&616&-56&-56&-24&-24&8&24&24&-8&4&.&.&8&-8&.&.&.&4&4&-4&4\\
X.20&616&616&-56&-56&-24&-24&8&24&24&-8&4&.&.&8&-8&.&.&.&4&4&-4&-4\\
X.21&616&616&-280&-280&104&104&-24&8&8&8&-5&-8&-8&8&8&8&8&-8&.&.&.&.\\
X.22&616&616&-280&-280&104&104&-24&8&8&8&-5&-8&-8&8&8&8&8&-8&.&.&.&.\\
X.23&672&-672&.&.&32&-32&.&-32&32&.&6&.&.&.&.&-8&8&.&.&.&.&.\\
X.24&693&693&-315&-315&117&117&-27&21&21&5&.&-15&-15&5&13&9&9&-7&-3
&5&1&1\\
X.25&770&770&-350&-350&130&130&-30&-14&-14&18&5&2&2&18&2&10&10&-14
&-2&-2&-2&-2\\
X.26&990&990&270&270&30&30&-18&-18&-18&-2&.&-6&-6&14&6&-6&-6&2&-6&2
&-2&2\\
X.27&990&990&270&270&30&30&-18&-18&-18&-2&.&-6&-6&14&6&-6&-6&2&-6&2
&-2&2\\
X.28&1056&-1056&-384&384&-96&96&.&-32&32&.&3&16&-16&.&.&8&-8&.&.&.&.
&.\\
X.29&1056&-1056&384&-384&-96&96&.&-32&32&.&3&-16&16&.&.&8&-8&.&.&.&.
&.\\
X.30&1155&1155&-525&-525&195&195&-45&35&35&-13&3&-17&-17&-13&11&-1
&-1&7&-5&3&-1&-1\\
X.31&1155&1155&-525&-525&195&195&-45&-13&-13&3&3&7&7&3&-5&-1&-1&-1&7
&-1&3&-1\\
X.32&1760&-1760&640&-640&-160&160&.&32&-32&.&5&16&-16&.&.&-8&8&.&.&.
&.&.\\
X.33&1760&-1760&-640&640&-160&160&.&32&-32&.&5&-16&16&.&.&-8&8&.&.&.
&.&.\\
X.34&1760&-1760&640&-640&-160&160&.&32&-32&.&5&16&-16&.&.&-8&8&.&.&.
&.&.\\
X.35&1760&-1760&-640&640&-160&160&.&32&-32&.&5&-16&16&.&.&-8&8&.&.&.
&.&.\\
X.36&1980&1980&540&540&60&60&-36&60&60&28&.&12&12&-4&12&-12&-12&-4&4
&4&4&.\\
X.37&2310&2310&630&630&70&70&-42&22&22&-26&6&18&18&-10&14&2&2&-6&-6
&2&-2&-2\\
X.38&2310&2310&-1050&-1050&390&390&-90&22&22&-10&-3&-10&-10&-10&6&-2
&-2&6&2&2&2&-2\\
X.39&2310&2310&630&630&70&70&-42&22&22&38&6&-6&-6&-10&-18&2&2&2&2&-6
&-2&2\\
X.40&2310&2310&630&630&70&70&-42&-10&-10&6&6&-6&-6&22&14&-14&-14&2&2
&-6&-2&-2\\
X.41&2464&-2464&-896&896&-224&224&.&-32&32&.&7&16&-16&.&.&8&-8&.&.&.
&.&.\\
X.42&2464&-2464&896&-896&-224&224&.&-32&32&.&-2&-16&16&.&.&8&-8&.&.
&.&.&.\\
X.43&2464&-2464&-896&896&-224&224&.&-32&32&.&-2&16&-16&.&.&8&-8&.&.
&.&.&.\\
X.44&2464&-2464&896&-896&-224&224&.&-32&32&.&7&-16&16&.&.&8&-8&.&.&.
&.&.\\
X.45&2640&-2640&960&-960&-240&240&.&16&-16&.&3&8&-8&.&.&-4&4&.&.&.&.
&-4\\
X.46&2640&2640&720&720&80&80&-48&16&16&16&-6&.&.&16&16&-16&-16&.&.&.
&.&.\\
X.47&2640&-2640&-960&960&-240&240&.&16&-16&.&3&-8&8&.&.&-4&4&.&.&.&.
&-4\\
X.48&3360&-3360&.&.&160&-160&.&-32&32&.&-6&.&.&.&.&-8&8&.&.&.&.&.\\
X.49&3360&-3360&.&.&160&-160&.&-32&32&.&-6&.&.&.&.&-8&8&.&.&.&.&.\\
X.50&3465&3465&-1575&-1575&585&585&-135&9&9&-7&.&-3&-3&-7&1&-3&-3&5
&1&-7&-3&1\\
X.51&3465&3465&-1575&-1575&585&585&-135&-39&-39&9&.&21&21&9&-15&-3
&-3&-3&-3&5&1&1\\
X.52&3696&-3696&-1344&1344&-336&336&.&-16&16&.&-3&8&-8&.&.&4&-4&.&.
&.&.&-4\\
X.53&3696&-3696&1344&-1344&-336&336&.&-16&16&.&-3&-8&8&.&.&4&-4&.&.
&.&.&-4\\
X.54&4620&4620&1260&1260&140&140&-84&44&44&12&-6&12&12&-20&-4&4&4&-4
&-4&-4&-4&.\\
X.55&5544&5544&-504&-504&-216&-216&72&24&24&-8&.&.&.&8&-8&.&.&.&4&4
&-4&-4\\
X.56&5544&5544&-504&-504&-216&-216&72&24&24&-8&.&.&.&8&-8&.&.&.&4&4
&-4&4\\
X.57&6160&-6160&-2240&2240&-560&560&.&16&-16&.&-5&-8&8&.&.&-4&4&.&.
&.&.&4\\
X.58&6160&6160&-560&-560&-240&-240&80&48&48&-16&4&.&.&16&-16&.&.&.
&-8&-8&8&.\\
X.59&6160&-6160&2240&-2240&-560&560&.&16&-16&.&-5&8&-8&.&.&-4&4&.&.
&.&.&4\\
X.60&6160&6160&-560&-560&-240&-240&80&-48&-48&16&4&.&.&-16&16&.&.&.
&.&.&.&.\\
X.61&6160&6160&-560&-560&-240&-240&80&-48&-48&16&4&.&.&-16&16&.&.&.
&.&.&.&.\\
X.62&6720&-6720&.&.&320&-320&.&-64&64&.&6&.&.&.&.&-16&16&.&.&.&.&.\\
X.63&6720&-6720&.&.&320&-320&.&64&-64&.&6&.&.&.&.&16&-16&.&.&.&.&.\\
X.64&6930&6930&1890&1890&210&210&-126&-30&-30&-46&.&6&6&2&10&6&6&-2
&6&-2&2&2\\
X.65&6930&6930&1890&1890&210&210&-126&-30&-30&18&.&-18&-18&2&-22&6&6
&6&-2&6&2&-2\\
X.66&7392&-7392&.&.&352&-352&.&32&-32&.&-6&.&.&.&.&8&-8&.&.&.&.&.\\
X.67&8064&-8064&.&.&384&-384&.&.&.&.&.&.&.&.&.&.&.&.&.&.&.&.\\
X.68&8064&-8064&.&.&384&-384&.&.&.&.&.&.&.&.&.&.&.&.&.&.&.&.\\
X.69&9856&9856&-896&-896&-384&-384&128&.&.&.&-8&.&.&.&.&.&.&.&.&.&.
&.\\
\end{array}
$$
} }

\newpage
Character table of $D(\Fi_{23})$ (continued){
\renewcommand{\baselinestretch}{0.5}

{\tiny
\setlength{\arraycolsep}{0,3mm}
$$
\begin{array}{r|rrrrrrrrrrrrrrrrrrrrrrrrrrrrrrr}
2&8&7&3&7&7&7&5&5&5&5&6&6&5&5&2&2&7&7&7&6&6&6&5&5&3&3&3&3&3&3\\
3&.&.&.&2&2&2&2&2&2&2&1&1&1&1&.&.&.&.&.&.&.&.&.&.&.&.&.&.&.&.\\
5&.&.&1&.&.&.&.&.&.&.&.&.&.&.&.&.&.&.&.&.&.&.&.&.&1&1&1&1&1&1\\
7&.&.&.&.&.&.&.&.&.&.&.&.&.&.&1&1&.&.&.&.&.&.&.&.&.&.&.&.&.&.\\
11&.&.&.&.&.&.&.&.&.&.&.&.&.&.&.&.&.&.&.&.&.&.&.&.&.&.&.&.&.&.\\
\hline
&4l&4m&5a&6a&6b&6c&6d&6e&6f&6g&6h&6i&6j&6k&7a&7b&8a&8b&8c&8d&8e
&8f&8g&8h&10a&10b&10c&10d&10e&10f\\
\hline
2P&2i&2h&5a&3a&3a&3a&3a&3a&3a&3a&3a&3a&3a&3a&7a&7b&4c&4d&4d&4e&4c
&4e&4h&4i&5a&5a&5a&5a&5a&5a\\
3P&4l&4m&5a&2f&2f&2a&2f&2f&2c&2b&2e&2d&2h&2g&7b&7a&8a&8b&8c&8d&8e
&8f&8g&8h&10b&10a&10c&10f&10e&10d\\
5P&4l&4m&1a&6a&6b&6c&6d&6e&6f&6g&6h&6i&6j&6k&7b&7a&8a&8b&8c&8d&8e
&8f&8g&8h&2c&2c&2e&2b&2d&2b\\
7P&4l&4m&5a&6a&6b&6c&6d&6e&6f&6g&6h&6i&6j&6k&1a&1a&8a&8b&8c&8d&8e
&8f&8g&8h&10b&10a&10c&10f&10e&10d\\
11P&4l&4m&5a&6a&6b&6c&6d&6e&6f&6g&6h&6i&6j&6k&7a&7b&8a&8b&8c&8d&8e
&8f&8g&8h&10a&10b&10c&10d&10e&10f\\
\hline
X.1&1&1&1&1&1&1&1&1&1&1&1&1&1&1&1&1&1&1&1&1&1&1&1&1&1&1&1&1&1&1\\
X.2&1&1&1&3&3&3&3&3&3&3&3&3&-1&-1&.&.&1&1&1&1&1&1&-1&-1&1&1&1&1&1&1\\
X.3&1&1&.&.&.&.&.&.&.&.&.&.&.&.&A&\bar{A}&1&1&1&1&1&1&-1&-1&.&.&.&.
&.&.\\
X.4&1&1&.&.&.&.&.&.&.&.&.&.&.&.&\bar{A}&A&1&1&1&1&1&1&-1&-1&.&.&.&.
&.&.\\
X.5&-1&-1&.&1&1&1&1&1&1&1&1&1&1&1&-1&-1&3&3&3&-1&-1&-1&1&1&.&.&.&.
&.&.\\
X.6&1&1&2&-3&-3&5&-3&-3&1&1&1&1&1&1&.&.&1&-3&1&-1&1&-1&1&1&.&.&-2&.
&-2&.\\
X.7&-1&-1&-1&.&.&.&.&.&.&.&.&.&.&.&1&1&3&3&3&-1&-1&-1&-1&-1&-1&-1
&-1&-1&-1&-1\\
X.8&2&2&-1&1&1&1&1&1&1&1&1&1&1&1&.&.&-2&-2&-2&2&2&2&.&.&-1&-1&-1&-1
&-1&-1\\
X.9&-4&.&1&-3&3&-5&3&-3&1&-1&1&-1&1&-1&1&1&.&.&.&-2&.&2&.&.&-1&-1&1
&1&-1&1\\
X.10&-4&.&1&-3&3&-5&-3&3&-1&1&1&-1&1&-1&1&1&.&.&.&2&.&-2&.&.&1&1&1
&-1&-1&-1\\
X.11&-2&-2&.&3&3&3&3&3&3&3&3&3&-1&-1&.&.&-2&-2&-2&-2&-2&-2&.&.&.&.&.
&.&.&.\\
X.12&-1&-1&1&-3&-3&-3&-3&-3&-3&-3&-3&-3&1&1&.&.&-1&-1&-1&-1&-1&-1&-1
&-1&1&1&1&1&1&1\\
X.13&.&.&.&1&1&1&1&1&1&1&1&1&1&1&.&.&.&.&.&.&.&.&.&.&.&.&.&.&.&.\\
X.14&.&.&.&1&1&1&1&1&1&1&1&1&1&1&.&.&.&.&.&.&.&.&.&.&.&.&.&.&.&.\\
X.15&2&2&.&6&6&6&.&.&.&.&-2&-2&2&2&1&1&-2&2&-2&.&-2&.&.&.&.&.&.&.&.
&.\\
X.16&1&1&.&6&6&-2&.&.&-4&-4&2&2&2&2&.&.&1&1&-3&-1&1&-1&-1&-1&.&.&.&.
&.&.\\
X.17&1&1&.&-2&-2&-2&-2&-2&-2&-2&-2&-2&-2&-2&.&.&1&1&1&1&1&1&1&1&.&.
&.&.&.&.\\
X.18&1&1&.&-9&-9&7&-3&-3&5&5&-1&-1&-1&-1&.&.&1&1&-3&-1&1&-1&-1&-1&.
&.&.&.&.&.\\
X.19&4&-4&1&-4&-4&4&2&2&-2&-2&.&.&.&.&.&.&.&.&.&.&.&.&2&-2&-1&-1&1
&-1&1&-1\\
X.20&-4&4&1&-4&-4&4&2&2&-2&-2&.&.&.&.&.&.&.&.&.&.&.&.&-2&2&-1&-1&1
&-1&1&-1\\
X.21&.&.&1&3&3&-5&3&3&-1&-1&-1&-1&-1&-1&.&.&.&.&.&.&.&.&.&.&C&-C&-1
&-C&-1&C\\
X.22&.&.&1&3&3&-5&3&3&-1&-1&-1&-1&-1&-1&.&.&.&.&.&.&.&.&.&.&-C&C&-1
&C&-1&-C\\
X.23&.&.&2&6&-6&-6&.&.&.&.&-2&2&2&-2&.&.&.&.&.&.&.&.&.&.&.&.&-2&.&2
&.\\
X.24&1&1&-2&.&.&.&.&.&.&.&.&.&.&.&.&.&1&-3&1&-1&1&-1&1&1&.&.&2&.&2&.\\
X.25&-2&-2&.&-3&-3&5&-3&-3&1&1&1&1&1&1&.&.&-2&2&2&2&-2&2&.&.&.&.&.&.
&.&.\\
X.26&2&2&.&.&.&.&.&.&.&.&.&.&.&.&A&\bar{A}&2&2&-2&.&-2&.&.&.&.&.&.&.
&.&.\\
X.27&2&2&.&.&.&.&.&.&.&.&.&.&.&.&\bar{A}&A&2&2&-2&.&-2&.&.&.&.&.&.&.
&.&.\\
X.28&.&.&1&3&-3&-3&3&-3&-3&3&3&-3&-1&1&-1&-1&.&.&.&.&.&.&.&.&-1&-1&1
&1&-1&1\\
X.29&.&.&1&3&-3&-3&-3&3&3&-3&3&-3&-1&1&-1&-1&.&.&.&.&.&.&.&.&1&1&1
&-1&-1&-1\\
X.30&-1&-1&.&3&3&3&-3&-3&-3&-3&3&3&-1&-1&.&.&-1&-1&3&1&-1&1&-1&-1&.
&.&.&.&.&.\\
X.31&-1&-1&.&3&3&3&-3&-3&-3&-3&3&3&-1&-1&.&.&3&-1&-5&1&-1&1&1&1&.&.
&.&.&.&.\\
X.32&.&.&.&-3&3&-5&-3&3&-1&1&1&-1&1&-1&A&\bar{A}&.&.&.&.&.&.&.&.&.&.
&.&.&.&.\\
X.33&.&.&.&-3&3&-5&3&-3&1&-1&1&-1&1&-1&\bar{A}&A&.&.&.&.&.&.&.&.&.&.
&.&.&.&.\\
X.34&.&.&.&-3&3&-5&-3&3&-1&1&1&-1&1&-1&\bar{A}&A&.&.&.&.&.&.&.&.&.&.
&.&.&.&.\\
X.35&.&.&.&-3&3&-5&3&-3&1&-1&1&-1&1&-1&A&\bar{A}&.&.&.&.&.&.&.&.&.&.
&.&.&.&.\\
X.36&.&.&.&.&.&.&.&.&.&.&.&.&.&.&-1&-1&-4&.&.&.&.&.&.&.&.&.&.&.&.&.\\
X.37&-2&-2&.&6&6&6&.&.&.&.&-2&-2&-2&-2&.&.&2&2&-2&.&2&.&.&.&.&.&.&.
&.&.\\
X.38&-2&-2&.&-3&-3&-3&3&3&3&3&-3&-3&1&1&.&.&2&-2&-2&2&-2&2&.&.&.&.&.
&.&.&.\\
X.39&2&2&.&6&6&6&.&.&.&.&-2&-2&-2&-2&.&.&2&-2&2&.&-2&.&.&.&.&.&.&.&.
&.\\
X.40&-2&-2&.&6&6&6&.&.&.&.&-2&-2&2&2&.&.&2&-2&2&.&2&.&.&.&.&.&.&.&.
&.\\
X.41&.&.&-1&-9&9&-7&3&-3&5&-5&-1&1&-1&1&.&.&.&.&.&.&.&.&.&.&1&1&-1
&-1&1&-1\\
X.42&.&.&-1&6&-6&2&.&.&4&-4&2&-2&2&-2&.&.&.&.&.&.&.&.&.&.&-1&-1&-1&1
&1&1\\
X.43&.&.&-1&6&-6&2&.&.&-4&4&2&-2&2&-2&.&.&.&.&.&.&.&.&.&.&1&1&-1&-1
&1&-1\\
X.44&.&.&-1&-9&9&-7&-3&3&-5&5&-1&1&-1&1&.&.&.&.&.&.&.&.&.&.&-1&-1&-1
&1&1&1\\
X.45&4&.&.&3&-3&-3&-3&3&3&-3&3&-3&-1&1&1&1&.&.&.&-2&.&2&.&.&.&.&.&.
&.&.\\
X.46&.&.&.&-6&-6&-6&.&.&.&.&2&2&-2&-2&1&1&.&.&.&.&.&.&.&.&.&.&.&.&.
&.\\
X.47&4&.&.&3&-3&-3&3&-3&-3&3&3&-3&-1&1&1&1&.&.&.&2&.&-2&.&.&.&.&.&.
&.&.\\
X.48&.&.&.&-6&6&6&.&.&.&.&2&-2&2&-2&.&.&.&.&.&.&.&.&.&.&.&.&.&.&.&.\\
X.49&.&.&.&-6&6&6&.&.&.&.&2&-2&2&-2&.&.&.&.&.&.&.&.&.&.&.&.&.&.&.&.\\
X.50&1&1&.&.&.&.&.&.&.&.&.&.&.&.&.&.&-3&5&1&-1&1&-1&1&1&.&.&.&.&.&.\\
X.51&1&1&.&.&.&.&.&.&.&.&.&.&.&.&.&.&1&-3&1&-1&1&-1&-1&-1&.&.&.&.&.
&.\\
X.52&4&.&1&-3&3&3&-3&3&3&-3&-3&3&1&-1&.&.&.&.&.&2&.&-2&.&.&-1&-1&1&1
&-1&1\\
X.53&4&.&1&-3&3&3&3&-3&-3&3&-3&3&1&-1&.&.&.&.&.&-2&.&2&.&.&1&1&1&-1
&-1&-1\\
X.54&.&.&.&-6&-6&-6&.&.&.&.&2&2&2&2&.&.&4&.&.&.&.&.&.&.&.&.&.&.&.&.\\
X.55&-4&4&-1&.&.&.&.&.&.&.&.&.&.&.&.&.&.&.&.&.&.&.&2&-2&1&1&-1&1&-1
&1\\
X.56&4&-4&-1&.&.&.&.&.&.&.&.&.&.&.&.&.&.&.&.&.&.&.&-2&2&1&1&-1&1&-1
&1\\
X.57&-4&.&.&3&-3&5&-3&3&-1&1&-1&1&-1&1&.&.&.&.&.&-2&.&2&.&.&.&.&.&.
&.&.\\
X.58&.&.&.&-4&-4&4&2&2&-2&-2&.&.&.&.&.&.&.&.&.&.&.&.&.&.&.&.&.&.&.&.\\
X.59&-4&.&.&3&-3&5&3&-3&1&-1&-1&1&-1&1&.&.&.&.&.&2&.&-2&.&.&.&.&.&.
&.&.\\
X.60&.&.&.&-4&-4&4&2&2&-2&-2&.&.&.&.&.&.&.&.&.&.&.&.&.&.&.&.&.&.&.&.\\
X.61&.&.&.&-4&-4&4&2&2&-2&-2&.&.&.&.&.&.&.&.&.&.&.&.&.&.&.&.&.&.&.&.\\
X.62&.&.&.&6&-6&-6&.&.&.&.&-2&2&-2&2&.&.&.&.&.&.&.&.&.&.&.&.&.&.&.&.\\
X.63&.&.&.&6&-6&-6&.&.&.&.&-2&2&2&-2&.&.&.&.&.&.&.&.&.&.&.&.&.&.&.&.\\
X.64&2&2&.&.&.&.&.&.&.&.&.&.&.&.&.&.&-2&-2&2&.&-2&.&.&.&.&.&.&.&.&.\\
X.65&-2&-2&.&.&.&.&.&.&.&.&.&.&.&.&.&.&-2&2&-2&.&2&.&.&.&.&.&.&.&.&.\\
X.66&.&.&2&-6&6&6&.&.&.&.&2&-2&-2&2&.&.&.&.&.&.&.&.&.&.&.&.&-2&.&2&.\\
X.67&.&.&-1&.&.&.&.&.&.&.&.&.&.&.&.&.&.&.&.&.&.&.&.&.&C&-C&1&C&-1&-C\\
X.68&.&.&-1&.&.&.&.&.&.&.&.&.&.&.&.&.&.&.&.&.&.&.&.&.&-C&C&1&-C&-1&C\\
X.69&.&.&1&8&8&-8&-4&-4&4&4&.&.&.&.&.&.&.&.&.&.&.&.&.&.&-1&-1&1&-1&1
&-1\\
\end{array}
$$
} }

\newpage
Character table of $D(\Fi_{23})$ (continued){
\renewcommand{\baselinestretch}{0.5}

{\tiny
\setlength{\arraycolsep}{0,3mm}
$$
\begin{array}{r|rrrrrrrrrrrrrrrrrr}
2&3&1&1&5&5&4&4&2&2&2&2&2&2&5&5&1&1\\
3&.&.&.&1&1&1&1&.&.&.&.&.&.&.&.&.&.\\
5&1&.&.&.&.&.&.&.&.&.&.&.&.&.&.&.&.\\
7&.&.&.&.&.&.&.&1&1&1&1&1&1&.&.&.&.\\
11&.&1&1&.&.&.&.&.&.&.&.&.&.&.&.&1&1\\
\hline
&10g&11a&11b&12a&12b&12c&12d&14a&14b&14c&14d&14e&14f&16a&16b&22a
&22b\\
\hline
2P&5a&11b&11a&6a&6a&6h&6h&7a&7b&7a&7b&7a&7b&8a&8a&11a&11b\\
3P&10g&11a&11b&4f&4e&4a&4b&14f&14e&14d&14c&14b&14a&16a&16b&22a&22b\\
5P&2a&11a&11b&12a&12b&12c&12d&14f&14e&14d&14c&14b&14a&16b&16a&22a
&22b\\
7P&10g&11b&11a&12a&12b&12c&12d&2b&2a&2c&2c&2a&2b&16b&16a&22b&22a\\
11P&10g&1a&1a&12a&12b&12c&12d&14a&14b&14c&14d&14e&14f&16a&16b&2a&2a\\
\hline
X.1&1&1&1&1&1&1&1&1&1&1&1&1&1&1&1&1&1\\
X.2&1&-1&-1&-1&-1&-1&-1&.&.&.&.&.&.&-1&-1&-1&-1\\
X.3&.&1&1&.&.&.&.&A&\bar{A}&A&\bar{A}&A&\bar{A}&-1&-1&1&1\\
X.4&.&1&1&.&.&.&.&\bar{A}&A&\bar{A}&A&\bar{A}&A&-1&-1&1&1\\
X.5&.&.&.&1&1&1&1&-1&-1&-1&-1&-1&-1&1&1&.&.\\
X.6&2&.&.&1&1&-1&-1&.&.&.&.&.&.&-1&-1&.&.\\
X.7&-1&.&.&.&.&.&.&1&1&1&1&1&1&-1&-1&.&.\\
X.8&-1&.&.&1&1&1&1&.&.&.&.&.&.&.&.&.&.\\
X.9&-1&.&.&-1&1&-1&1&-1&-1&1&1&-1&-1&.&.&.&.\\
X.10&-1&.&.&-1&1&1&-1&1&-1&-1&-1&-1&1&.&.&.&.\\
X.11&.&1&1&-1&-1&-1&-1&.&.&.&.&.&.&.&.&1&1\\
X.12&1&.&.&1&1&1&1&.&.&.&.&.&.&-1&-1&.&.\\
X.13&.&B&\bar{B}&1&1&1&1&.&.&.&.&.&.&.&.&\bar{B}&B\\
X.14&.&\bar{B}&B&1&1&1&1&.&.&.&.&.&.&.&.&B&\bar{B}\\
X.15&.&.&.&-2&-2&.&.&-1&1&-1&-1&1&-1&.&.&.&.\\
X.16&.&.&.&2&2&-2&-2&.&.&.&.&.&.&1&1&.&.\\
X.17&.&.&.&-2&-2&-2&-2&.&.&.&.&.&.&1&1&.&.\\
X.18&.&.&.&-1&-1&1&1&.&.&.&.&.&.&1&1&.&.\\
X.19&1&.&.&.&.&.&.&.&.&.&.&.&.&.&.&.&.\\
X.20&1&.&.&.&.&.&.&.&.&.&.&.&.&.&.&.&.\\
X.21&1&.&.&-1&-1&1&1&.&.&.&.&.&.&.&.&.&.\\
X.22&1&.&.&-1&-1&1&1&.&.&.&.&.&.&.&.&.&.\\
X.23&-2&1&1&2&-2&.&.&.&.&.&.&.&.&.&.&-1&-1\\
X.24&-2&.&.&.&.&.&.&.&.&.&.&.&.&-1&-1&.&.\\
X.25&.&.&.&1&1&-1&-1&.&.&.&.&.&.&.&.&.&.\\
X.26&.&.&.&.&.&.&.&-A&\bar{A}&-A&-\bar{A}&A&-\bar{A}&.&.&.&.\\
X.27&.&.&.&.&.&.&.&-\bar{A}&A&-\bar{A}&-A&\bar{A}&-A&.&.&.&.\\
X.28&-1&.&.&1&-1&1&-1&1&1&-1&-1&1&1&.&.&.&.\\
X.29&-1&.&.&1&-1&-1&1&-1&1&1&1&1&-1&.&.&.&.\\
X.30&.&.&.&-1&-1&1&1&.&.&.&.&.&.&1&1&.&.\\
X.31&.&.&.&-1&-1&1&1&.&.&.&.&.&.&-1&-1&.&.\\
X.32&.&.&.&-1&1&1&-1&A&-\bar{A}&-A&-\bar{A}&-A&\bar{A}&.&.&.&.\\
X.33&.&.&.&-1&1&-1&1&-\bar{A}&-A&\bar{A}&A&-\bar{A}&-A&.&.&.&.\\
X.34&.&.&.&-1&1&1&-1&\bar{A}&-A&-\bar{A}&-A&-\bar{A}&A&.&.&.&.\\
X.35&.&.&.&-1&1&-1&1&-A&-\bar{A}&A&\bar{A}&-A&-\bar{A}&.&.&.&.\\
X.36&.&.&.&.&.&.&.&1&-1&1&1&-1&1&.&.&.&.\\
X.37&.&.&.&2&2&.&.&.&.&.&.&.&.&.&.&.&.\\
X.38&.&.&.&1&1&-1&-1&.&.&.&.&.&.&.&.&.&.\\
X.39&.&.&.&2&2&.&.&.&.&.&.&.&.&.&.&.&.\\
X.40&.&.&.&-2&-2&.&.&.&.&.&.&.&.&.&.&.&.\\
X.41&1&.&.&1&-1&1&-1&.&.&.&.&.&.&.&.&.&.\\
X.42&1&.&.&-2&2&2&-2&.&.&.&.&.&.&.&.&.&.\\
X.43&1&.&.&-2&2&-2&2&.&.&.&.&.&.&.&.&.&.\\
X.44&1&.&.&1&-1&-1&1&.&.&.&.&.&.&.&.&.&.\\
X.45&.&.&.&1&-1&-1&1&1&-1&-1&-1&-1&1&.&.&.&.\\
X.46&.&.&.&2&2&.&.&-1&1&-1&-1&1&-1&.&.&.&.\\
X.47&.&.&.&1&-1&1&-1&-1&-1&1&1&-1&-1&.&.&.&.\\
X.48&.&B&\bar{B}&2&-2&.&.&.&.&.&.&.&.&.&.&-\bar{B}&-B\\
X.49&.&\bar{B}&B&2&-2&.&.&.&.&.&.&.&.&.&.&-B&-\bar{B}\\
X.50&.&.&.&.&.&.&.&.&.&.&.&.&.&-1&-1&.&.\\
X.51&.&.&.&.&.&.&.&.&.&.&.&.&.&1&1&.&.\\
X.52&-1&.&.&-1&1&-1&1&.&.&.&.&.&.&.&.&.&.\\
X.53&-1&.&.&-1&1&1&-1&.&.&.&.&.&.&.&.&.&.\\
X.54&.&.&.&-2&-2&.&.&.&.&.&.&.&.&.&.&.&.\\
X.55&-1&.&.&.&.&.&.&.&.&.&.&.&.&.&.&.&.\\
X.56&-1&.&.&.&.&.&.&.&.&.&.&.&.&.&.&.&.\\
X.57&.&.&.&1&-1&1&-1&.&.&.&.&.&.&.&.&.&.\\
X.58&.&.&.&.&.&.&.&.&.&.&.&.&.&.&.&.&.\\
X.59&.&.&.&1&-1&-1&1&.&.&.&.&.&.&.&.&.&.\\
X.60&.&.&.&.&.&.&.&.&.&.&.&.&.&D&\bar{D}&.&.\\
X.61&.&.&.&.&.&.&.&.&.&.&.&.&.&\bar{D}&D&.&.\\
X.62&.&-1&-1&-2&2&.&.&.&.&.&.&.&.&.&.&1&1\\
X.63&.&-1&-1&2&-2&.&.&.&.&.&.&.&.&.&.&1&1\\
X.64&.&.&.&.&.&.&.&.&.&.&.&.&.&.&.&.&.\\
X.65&.&.&.&.&.&.&.&.&.&.&.&.&.&.&.&.&.\\
X.66&-2&.&.&-2&2&.&.&.&.&.&.&.&.&.&.&.&.\\
X.67&1&1&1&.&.&.&.&.&.&.&.&.&.&.&.&-1&-1\\
X.68&1&1&1&.&.&.&.&.&.&.&.&.&.&.&.&-1&-1\\
X.69&1&.&.&.&.&.&.&.&.&.&.&.&.&.&.&.&.\\
\end{array}
$$
} }
$
A = -\zeta(7)_7^4 - \zeta(7)_7^2 - \zeta(7)_7 - 1,
B = -\zeta(11)_{11}^9 - \zeta(11)_{11}^5 - \zeta(11)_{11}^4 - \zeta(11)_{11}^3 - \zeta(11)_{11}
- 1,
C = 2\zeta(5)_5^3 + 2\zeta(5)_5^2 + 1,
D = -2\zeta(8)_8^3 - 2\zeta(8)_8
$.
\end{ctab}

\newpage


\begin{ctab}\label{Fi_23ct_H} Character table of $H(\Fi_{23}) = H = \langle x_0,y_0,h_0 \rangle$
{
\renewcommand{\baselinestretch}{0.5}

{\tiny
\setlength{\arraycolsep}{0,3mm}
$$

$$
} } where
$
A = -\zeta(11)_{11}^9 - \zeta(11)_{11}^5 - \zeta(11)_{11}^4 - \zeta(11)_{11}^3 - \zeta(11)_{11}
- 1,
B = 6\zeta(3)_3 + 3,
C = \zeta(13)_{13}^{11} + \zeta(13)_{13}^8 + \zeta(13)_{13}^7 + \zeta(13)_{13}^6 +
\zeta(13)_{13}^5 + \zeta(13)_{13}^2 + 1,
D = -\zeta(13)_{13}^{11} - \zeta(13)_{13}^8 - \zeta(13)_{13}^7 - \zeta(13)_{13}^6 -
\zeta(13)_{13}^5 - \zeta(13)_{13}^2,
E = -2\zeta(8)_8^3 - 2\zeta(8)_8
$.
\end{ctab}

\newpage


\begin{ctab}\label{Fi_23ct_H_2} Character table of $H(2\Fi_{22}) = H_2 = \langle p_2,q_2,h_2 \rangle$
{
\renewcommand{\baselinestretch}{0.5}

{\tiny
\setlength{\arraycolsep}{0,3mm}
$$

$$
} } where
{\scriptsize $
A = -12\zeta(3)_3 - 6,
B = 4\zeta(3)_3 + 2,
C = 2\zeta(3)_3 + 2,
D = 2\zeta(3)_3 + 1,
E = -2\zeta(8)_8^3 - 2\zeta(8)_8,
F = -24\zeta(3)_3 - 12,
G = -8\zeta(3)_3 - 4,
H = -16\zeta(3)_3 - 8,
I = -48\zeta(3)_3 - 24,
J = 4\zeta(3)_3 + 1,
K = -36\zeta(3)_3 - 18
$}.
\end{ctab}

\newpage


\begin{ctab}\label{Fi_23ct_D_2} Character table of $D(2\Fi_{22}) = D_2 = \langle p_2,q_2 \rangle \cong \langle p_1, q_1 \rangle$
{
\renewcommand{\baselinestretch}{0.5}

{\tiny
\setlength{\arraycolsep}{0,3mm}
$$

$$
} } where
$
A = -4\zeta(4)_4,
B = -2\zeta(5)_5^3 - 2\zeta(5)_5^2 - 1,
C = -4\zeta(3)_3 - 2,
D = 2\zeta(8)_8^3 + 2\zeta(8)_8
$.
\end{ctab}

\end{appendix}

\newpage



\begin{thebibliography}{13}


\bibitem{aschbacher}
M. Aschbacher.
\newblock {\em $3$-transposition groups}.
\newblock Cambridge University Press, Cambridge, 1997.


\bibitem{MAGMA}
J. Cannon and C. Playoust.
\newblock {\em An Introduction to {\sc Magma}}.
\newblock School of Mathematics and Statistics, University of Sydney, 1993.


\bibitem{carter}
R.W.\ Carter.
\newblock {\em Simple groups of Lie type}.
\newblock John Wiley and Sons, London, 1972.



\bibitem{atlas}
J.H.\ Conway, R.T.\ Curtis, S.P.\ Norton, R.A.\ Parker, and R.A.\
Wilson.
\newblock {\em Atlas of finite groups}.
\newblock Clarendon Press, Oxford, 1985.

\bibitem{fischer}
B. Fischer.
\newblock {Finite groups generated by $3$-transpositions}.
\newblock{\em Inventiones Math.}, 13:232-246, 1971.

\bibitem{fischer1}
B. Fischer.
\newblock {Finite groups generated by $3$-transpositions}.
\newblock Lecture Notes, University of Warwick, Coventry, 1979.


\bibitem{holt}
D.F. Holt.
\newblock {The mechanical computation of first and second cohomology groups}.
\newblock{\em J. Symbolic Computation}, 1:351-361, 1985.




\bibitem{james}
G.\ James.
\newblock The modular characters of the Mathieu groups.
\newblock {\em J. Algebra}, 27:57-111, 1973.

\bibitem{kim}
H. Kim, G. O. Michler.
\newblock Simultaneous Constructions of the Sporadic Groups $\Co_2$ and $\Fi_{22}$,
\newblock in (L. -C. Kappe, A. Magidin, R. F. Morse, eds.) \textit{Computational Group Theory and the Theory of Groups}, Contemporary Mathematics Proceedings, Amer. Math. Soc, Providence, RI.  (to appear)

\bibitem{kim1}
H. Kim.
\newblock {Matrices for $\Fi_{23}$ and $2\Fi_{22}$}.\\
\newblock \texttt{http://www.hyunkyukim.net/math/seniorthesis.html}

\bibitem{michler1}
G. O. Michler.
\newblock Constructing finite simple groups from irreducible subgroups of $\GL_n(2)$,
\newblock in (L. -C. Kappe, A. Magidin, R. F. Morse, eds.) \textit{Computational Group Theory and the Theory of Groups}, Contemporary Mathematics Proceedings, Amer. Math. Soc, Providence, RI.  (to appear)


\bibitem{michler}
G. O. Michler.
\newblock Theory of finite simple groups.
\newblock Cambridge University Press, Cambridge, 2006.

\bibitem{praeger}
C. E. Praeger, L.H. Soicher.
\newblock Low rank representations and graphs for sporadic groups.
\newblock Australian Math. Soc. Lecture Series {\bf 8}, Cambridge
University Press, Cambridge, 1997.

\bibitem{cannon}
John~J. Cannon, Derek~F. Holt.
\newblock Automorphism group computation and isomorphism
testing in finite groups.
\newblock {\em J. Symbolic Computat.}, {\bf 35}:241--267, 2003.


\end{thebibliography}
\end{document}